\documentclass[a4paper,12pt]{amsart}
 \input{macros}
 \usepackage[margin=1.0in]{geometry}
 \setlist{ 
	listparindent=\parindent,
	parsep=0pt,
 }

 \numberwithin{equation}{section}

 \author[J. Baudin]{Jefferson Baudin}
 \address{\'Ecole Polytechnique F\'ed\'erale de Lausanne, Chair of Algebraic Geometry \newline 
 \indent MA C3 575 (Bâtiment MA), Station 8, CH-1015 Lausanne}
 \email{jefferson.baudin@epfl.ch}

 \author[K. Rülling]{Kay Rülling}
 \address{School of Mathematics and Natural Sciences, Bergische Universität Wuppertal, \newline 
 \indent Gaußstraße 20, 42119 Wuppertal, Germany}
 \email{ruelling@uni-wuppertal.de}

 \title[Cartier crystals and dual-constructible $p$-torsion complexes]{Cartier crystals and dual-constructible $p$-torsion complexes}
 
 \subjclass[2020]{14G17, 13A35, 14F20} 
 \keywords{Cartier modules, Riemann--Hilbert correspondence, flat and relatively perfect site}

 \AtBeginDocument{%
 \def\MR#1{}
 }

\begin{document}

\begin{abstract}
    We show that given a Noetherian and $F$-finite scheme, the fixed point functor on Cartier crystals is fully faithful to the category of sheaves on the flat and relatively perfect site defined by Kato, and its essential image consists of the subcategory of dual-constructible sheaves, which we introduce.
\end{abstract} 

\maketitle

%======================================================================================================
%======================================================================================================

\tableofcontents

\section{Introduction}

Over the complex numbers, the Riemann--Hilbert correspondence is an equivalence of categories between regular holonomic $\cD$-modules and constructible sheaves on the analytic topology \cite{Hotta_Takeuchi_Tanisaki_D_modules_perverse_sheaves_and_representation_theory}, generalizing the equivalence between vector bundles with connections and local systems given by taking horizontal sections. In positive characteristic, we now have different (equivalent) analogues of this result \cite{Emerton_Kisin_Riemann-Hilbert_correspondence, Ohkawa_RH_embeddable_varieties, Bockle_Pink_Cohomological_Theory_of_crystals_over_function_fields, Bhatt_Lurie_RH_corr_pos_char, Mathew_RH_via_perfect_site}. The analogues of $\cD$-modules in these cases are \emph{Frobenius modules}, namely pairs $(\cM, \tau_{\cM})$ where $\tau_{\cM}$ is a $p$-linear endomorphism of $\cM$. The Riemann--Hilbert correspondence, in this case, asserts that taking fixed points of the $p$-linear operator is fully faithful ``up to nilpotence'' (when restricted to coherent objects), and the image consists of constructible étale $\bF_p$-sheaves. This result has found numerous applications to commutative algebra and algebraic geometry \cite{Emerton_Kisin_Riemann-Hilbert_correspondence, Bockle_Pink_Cohomological_Theory_of_crystals_over_function_fields, Bhatt&Co_Applications_of_perverse_sheaves_in_commutative_algebra, Bhatt&Co_MMP_for_3folds_in_mixed_char}.

Nevertheless, there are other objects of interest in positive characteristic that admit ``Frobenius type structures'', such as \emph{Cartier modules}, introduced in \cite{Blickle_Bockle_Cartier_modules_finiteness_results} (see also \cite{Schedlmeier_Cartier_crystals_and_perverse_sheaves}). A Cartier module consists of a pair $(\cN, \kappa_{\cN})$ where $\kappa_{\cN}$ is a $p^{-1}$-linear  endomorphism of $\cN$ (the standard example is the Cartier operator on $\omega_X \coloneqq \det(\Omega_X^1)$, where $X$ is a smooth scheme over a perfect field $k$, see \cite{Cartier_une_nouvelle_operation_sur_les_formes_differentielles}). It is therefore natural to ask whether taking fixed points also retains information in this case. Unlike the case of Frobenius modules, the correct topological setup in this case is not the étale site, but the \emph{flat and relatively perfect site} (in short, FRP site), see \cite{Kato_Duality_theories_for_the_p_primary_etale_cohomology_I, Breen_Extensions_du_groupe_additif, Breen_Extensions_du_groupe_additif_sur_le_site_parfait}. Here is our main result:

\begin{thmletter}[{\autoref{Sol_fully_faithful}, \autoref{cor:Sol-CartierCrys}}]\label{main_thm}
    Let $X$ be a Noetherian and $F$-finite scheme. Then the functor of fixed points \[ \Sol \colon \Crys_X^C \longrightarrow \Sh(X_{\FRP}, \bF_p) \] is fully faithful, as well as its induced functor on derived categories
    \[ \Sol \colon D^b(\Crys_X^C) \to D^b(X_{\FRP}, \bF_p). \] The essential image consists of the category of dual-constructible complexes $D^b_{dc}(X_{\FRP}, \bF_p)$.
\end{thmletter}

The category $\Crys_X^C$ in the above theorem denotes the category of coherent Cartier modules ``up to nilpotence'' (see \autoref{def Cartier module} for a precise definition). The notion of a dual-constructible complex is introduced in \autoref{sec:essential-image}.
It is a complex of sheaves $\cF^\bullet$ of $\bF_p$-vector spaces on $X_{\FRP}$ with the property that for 
any non-empty closed immersion $Z\inj X$  there exists a regular dense open $Z_0\subset  Z$
such that $h^!\cF^{\bullet}$ has dual-lisse cohomology sheaves, where $h\colon Z_0\inj X$ is the immersion and a dual-lisse sheaf
on $Z_{0, \FRP}$ is a sheaf of the form $E\otimes_{\bF_p} \Sol(\Omega^d_{Z_0/\bF_p})$, 
where $d={\rk}(\Omega^1_{Z_0/\bF_p})$ and $E$ is a lisse sheaf of $\bF_p$-vector spaces. 
We refer the reader to \autoref{def_dual_constr_cpx} for more details.
We also have a version of this result for Cartier crystals over $W_nX$, as introduced in \cite{Baudin_Duality_between_Witt_Cartier_crystals_and_perverse_sheaves}.

If we actually restrict to the étale site, the functor $\Sol$ above is not full (\autoref{RH_not_okay_with_etake_site}). Nevertheless, it remains faithful (see \autoref{Sol_faithful_on_etale_and_FRP_site}). The reason why the étale site was required in the case of Frobenius modules was to ensure to have enough fixed points. In the case of Cartier modules however, the situation is drastically different, and there are \emph{many} fixed points (even in the Zariski site, see \autoref{suprising_Zariski_faithfulness}). The purpose of introducing the FRP site is therefore to reduce the amount of morphisms between the sheaves in order to achieve fullness.

The main ingredient of the proof is the following commutativity result:

\begin{thmletter}[{\autoref{main_commutativity_thm}, \autoref{thm:constr-dual}}]\label{intro_commutativity}
    Let $X$ be a Noetherian and $F$-finite scheme with a unit dualizing complex $\omega_X^{\bullet}$. Then the diagram 
    \[ \xymatrix{D^b(\Crys_X^F)^{op}\ar[r]^{\bD_G}\ar[d]^{\Sol} & D^b(\Crys_X^C)\ar[d]^{\Sol}\\
    D^b(X_{\FRP},\bF_p)^{op}\ar[r]^{\bD_{K}} & D^b(X_{\FRP}, \bF_p)}
    \] commutes. Furthermore, the functor $\bD_K$ induces an equivalence \[ D^b_c(X_{\et}, \bF_p)^{op} \xlongrightarrow{\cong} D^b_{dc}(X_{\FRP}, \bF_p).  \]
\end{thmletter}

In the above theorem, the functor $\bD_G$ is the ``Grothendieck duality type'' equivalence of categories induced by $\omega_X^{\bullet}$ (see \cite{Baudin_Duality_between_perverse_sheaves_and_Cartier_crystals, Baudin_Duality_between_Witt_Cartier_crystals_and_perverse_sheaves}), and $\bD_K \coloneqq \cR\HHom(-, \Sol(\omega_X^{\bullet}))$ is Kato's duality functor (see \cite{Kato_Duality_theories_for_the_p_primary_etale_cohomology_I, Kato_Duality_theories_for_p_primary_etale_coh_II}). Since $\bD_G$, $\bD_K$ and the $\Sol$ functor on Frobenius crystals are known to be fully faithful, \autoref{main_thm} follows from \autoref{intro_commutativity}. The proof of \autoref{intro_commutativity} goes through a dévissage argument, and the necessary theory is developed in \autoref{section_FRP_site}.

As a corollary of the above theorems and our theory of dual-constructible sheaves, we obtain surprisingly strong results about finiteness of higher direct images of top logarithmic forms (note that there is no proper assumption):

\begin{corletter}[{\autoref{cor:sep-base}}]{\label{intro_cor:sep-base}}
Let $K$ be a separably closed field which has a $p$-basis of length $e$.
Let $X$ be a $d$-dimensional regular connected separated $K$-scheme, and fix $n \geq 1$. Then for all $i \geq 0$ 
\[H^i(X_{\et}, W_n\Omega^{d+e}_{X_{\et},\log})= M_i \otimes_{\bZ/p^n\bZ} W_n\Omega^e_{K,\log},\]
for some finite $\bZ/p^n\bZ$-module $M_i$. 
\end{corletter}

In the above, $W_n\Omega^i_{\log}$ denotes logarithmic forms in the $i$-th piece of the de Rham--Witt complex of Bloch--Deligne--Illusie \cite{Illusie_Complexe_de_de_Rham_Witt_et_cohomologie_cristalline}.

\subsection{Acknowledgements}

We would like to thank Léo Navarro Chafloque, Jakub Witaszek and Bogdan Zavyalov for interesting discusions related to the content of this article. The first author was funded by grant \#20021/231484 from the
Swiss National Science Foundation.

\subsection{Notation}

\begin{enumerate}
    \item Throughout, we fix an integer $n \geq 1$, a prime number $p$, an integer $r \geq 1$, and we set $q \coloneqq p^r$. 
    %All schemes will be $\bF_q$-schemes.
    \item Given a scheme $X$ of positive characteristic, we denote by $W_n\cO_X$ the $n$-truncated Witt vectors.	
    The pair consisting of the underlying topological space of $X$ and the sheaf of rings $W_n\cO_X$ defines a scheme, which we denote by $W_nX$. If $f \colon X \to Y$ is a morphism of schemes in positive characteristic, then the induced morphism $W_nX \to W_nY$ will still be denoted by $f$. In particular, we will denote by $F\colon W_nX\to W_nX$ the morphism induced 
    by the absolute Frobenius on $X$.
    \item Given a finite morphism $f \colon X \to Y$ of Noetherian schemes, the right adjoint of $f_* \colon \QCoh_X \to \QCoh_Y$ will be denoted $f^{\flat} \colon \QCoh_Y \to \QCoh_X$ (see \stacksproj{08YP}), and we shall write $f^! \coloneqq Rf^{\flat}$. For example, if $f$ is a closed immersion with associated ideal $\cI \inc \cO_Y$, then $f^{\flat}(
    \cM)$ is the sheaf of $\cI$-torsion elements in $\cM$ seen as a quasi-coherent $\cO_X=\cO_Y/\cI$-module.
		
         More generally, given a separated morphism of finite type $f \colon X \to Y$ of Noetherian schemes, the functor $f^!$ is the one defined in \stacksproj{0A9Y}.

    \item Let $R$ be a ring, and set $X = \Spec(R)$. Given an $R$-module $M$, the associated quasi-coherent sheaf of $X$ will be called $\ttilde{M}$.
\end{enumerate}
%======================================================================================================

\section{The flat and relatively perfect site}\label{section_FRP_site}

In this section, we study the flat and relatively perfect site from \cite{Kato_Duality_theories_for_p_primary_etale_coh_II}. We will 
(re)define basic functors and prove key properties which we will use later on. 

\medskip

Throughout, let $X$ be a Noetherian $F$-finite $\bF_q$-scheme and let $\Lambda$ be a finite ring, which will serve as coefficients for our sheaves. The interesting case for us is $\Lambda = W_n(\bF_q)$.

\begin{definition}[{\cite[Definition 1.1]{Kato_Duality_theories_for_the_p_primary_etale_cohomology_I}},
{\cite[\S 2]{Kato_Duality_theories_for_p_primary_etale_coh_II}}]
    \begin{enumerate}
        \item We say that a morphism $Y \to X$ is \emph{relatively perfect} if the relative Frobenius $F_{Y/X}$ 
             is an isomorphism.
        \item The \emph{flat and relatively perfect site} (in short, FRP site) of $X$ has as underlying category 
        the full subcategory of all $X$-schemes which are flat and relatively perfect over $X$, together with the étale topology. This site will be denoted $X_{\FRP}$. 
    \end{enumerate}
\end{definition}

\begin{remark}\label{basic_remark}
    Throughout the article, we will implicitly use the following two results:
    \begin{enumerate}[label=(\alph*)]
        \item a morphism $f\colon Y\to X$ locally of finite presentation is relatively perfect if and only if it is étale (see \cite[Lemma 1.3]{Kato_Duality_theories_for_the_p_primary_etale_cohomology_I});
        \item\label{basic_remarkb} if $f \colon Y \to X$ is a flat and relatively perfect morphism, then for all $n \geq 1$, 
        the morphism $W_nY \to W_nX$ is also flat and the diagram 
        \[ \begin{tikzcd}
            W_nY \arrow[d, "f"'] \arrow[r, "F"] & W_nY \arrow[d, "f"] \\
            W_nX \arrow[r, "F"] & W_nX
        \end{tikzcd} \]
        is a pullback square (see \cite[Lemma 2]{Kato_A_generalization_of_local_class_field_theory}).
    \end{enumerate}
\end{remark}

The relevant examples of FRP sheaves will be induced by \'etale sheaves of $\Lambda$-modules and 
quasi-coherent sheaves as follows:

\begin{lemma}\label{adjunction_FRP_and_etale}
\begin{enumerate}[label=(\arabic*)]
 \item\label{adjunction_FRP_and_etale1} The inclusion functor $X_{\et}\to X_{\FRP}$ induces a morphism of topoi $\rho\colon \Sh(X_{\FRP})\to \Sh(X_{\et})$,
 such that 
 \begin{enumerate}[label=(\alph*)]
 \item\label{adjunction_FRP_and_etale1a} 
       $\rho_*$ and $\rho^{-1}$ are exact, $\rho_*\circ \rho^{-1}=\id$, and $\rho^{-1}$ is fully faithful;
 \item $\rho_*\colon \Sh(X_{\FRP})\to \Sh(X_{\et})$ is the restriction, i.e., $\rho_*\cG=\cG|_{X_{\et}}$;
 \item\label{adjunction_FRP_and_etale1c} $\rho^{-1}\colon \Sh(X_{\et})\to \Sh(X_{FRP})$ sends $\cF$ to the sheaf, which on an object 
       $f \colon Y\to X$ in $X_{\FRP}$ is given by $\cF_{\FRP}(Y) = \Gamma(Y, f^{-1}(\cF))$.
 \end{enumerate}
 \item \label{FRP_embeds_in_etale} The constant sheaf defined by $\Lambda$ on $X_{\et}$ (resp. $X_{\FRP}$) is by abuse of notation again denoted by 
    $\Lambda$  (so that $\rho^{-1}\Lambda=\Lambda$). Then $\rho$ from \ref{adjunction_FRP_and_etale1} induces 
    an adjoint pair between sheaves of $\Lambda$-modules
    \[(-)_{\FRP}\coloneqq\rho^{-1}\colon \Sh(X_{\et},\Lambda)\rightleftarrows \Sh(X_{\FRP},\Lambda)\colon (-)|_{X_{\et}}=\rho_*,\]
    which satisfies the same properties as in \ref{adjunction_FRP_and_etale1}\ref{adjunction_FRP_and_etale1a}. Moreover, this extends 
    to an adjoint pair between the derived categories of $\Lambda$-modules
  \[(-)_{\FRP}\colon  D(X_{\et},\Lambda)\leftrightarrows D(X_{FRP},\Lambda): (-)|_{X_{\et}}\]
  satisfying the same properties as in \ref{adjunction_FRP_and_etale1}\ref{adjunction_FRP_and_etale1a}.
 \item\label{QCoh-FRP} Denote by $W_n\cO_{X_{\FRP}}$ the sheaf on $X_{\FRP}$ given on an object $f\colon Y\to X$ in $X_{\FRP}$ by
      \[W_n\cO_{X_{\FRP}}(Y)= \Gamma(Y, W_n\cO_Y).\]
      By adjunction the identity $W_n\cO_{X_{\et}}\to \rho_*W_n\cO_{X_{\FRP}}$ induces a morphism of sheaves of rings
      $\rho^{-1}W_n\cO_{X_{\et}}\to W_n\cO_{X_{\FRP}}$, which gives rise to an adjoint pair between sheaves of
      $W_n\cO$-modules
\[ \rho^* \colon \Sh(X_{\et}, W_n\cO_{X_{\et}})\rightleftarrows \Sh(X_{\FRP},W_n\cO_{X_{\FRP}}):\rho_*,\]
which satisfies the analog properties of \ref{adjunction_FRP_and_etale1}\ref{adjunction_FRP_and_etale1a}. 
Moreover, this adjunction induces an equivalence between the categories of quasi-coherent sheaves (in the sense of \stacksproj{03DL})
and an equivalence between the derived categories of quasi-coherent sheaves
\[ \rho^* \colon D(\QCoh(X_{\et}, W_n\cO_{X_{\et}}))\overset{\simeq}{\rightleftarrows} 
D(\QCoh(X_{\FRP}, W_n\cO_{X_{\FRP}})): \rho_*.\]
\end{enumerate}
 \end{lemma}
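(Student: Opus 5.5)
The plan is to treat the inclusion $u\colon X_{\et}\to X_{\FRP}$ as a continuous functor between sites (both carry the étale topology) and to invoke the standard comparison results for such functors. The argument has four steps.

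\textbf{Step 1: $u$ defines a morphism of topoi.} Objects of $X_{\et}$ are flat and relatively perfect by \autoref{basic_remark}(a), so $u$ makes sense. It preserves fibre products, because a fibre product of étale $X$-schemes is the same in both categories. It preserves the final object $X$ and sends coverings to coverings. Hence $u$ is continuous and left exact, and therefore induces a morphism of topoi $\rho$. Here $\rho_*$ is restriction along $u$, and $\rho^{-1}$ is the sheafification of the left Kan extension $u_p$.

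\textbf{Step 2: the explicit formula (c) for $\rho^{-1}$.} Take $f\colon Y\to X$ in $X_{\FRP}$ and an étale sheaf $\cF$. The presheaf $u_p\cF$ is the colimit of $\cF(U)$ over factorizations $Y\to U\to X$ with $U$ étale over $X$. This is the colimit defining the presheaf inverse image $f^{-1}_{p}\cF$ on the small étale site of $Y$. Its sheafification on $Y_{\et}$ is $f^{-1}\cF$. Every étale $Y$-scheme is again flat and relatively perfect over $X$, so the covering families of $Y$ in $X_{\FRP}$ include the étale coverings. One then checks that $Y\mapsto \Gamma(Y,f^{-1}\cF)$ is a sheaf on $X_{\FRP}$. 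This uses the compatibility $(f\circ g)^{-1}=g^{-1}f^{-1}$ and étale descent for the sheaves $f^{-1}\cF$. Since it is the sheafification of $u_p\cF$, it equals $\rho^{-1}\cF$.

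\textbf{Step 3: the remaining properties in (a).} On $U\in X_{\et}$ the formula gives $\Gamma(U,\cF|_U)=\cF(U)$, so $\rho_*\rho^{-1}=\id$. Full faithfulness of $\rho^{-1}$ then follows formally from adjunction, since the unit is an isomorphism. The functor $\rho_*$ is exact because it commutes with stalks/points, as étale sheafification on objects of $X_{\et}$ is unchanged. Exactness of $\rho^{-1}$ is automatic for an inverse image of topoi.

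\textbf{Step 4: parts (2) and (3).} For $\Lambda$-modules, (2) is immediate from (1), since $\rho^{-1}\Lambda=\Lambda$. Because both functors are exact, they pass to derived categories without deriving. The identity $\rho_*\rho^{-1}=\id$ persists, and the adjunction persists with an isomorphic unit, which gives full faithfulness. For (3), $\rho^*=W_n\cO_{X_{\FRP}}\otimes_{\rho^{-1}W_n\cO}\rho^{-1}(-)$ is the module pullback. For a quasi-coherent $\cM$, I expect $\rho^*\cM(Y)=\Gamma(W_nY,f^*\cM)$. This holds because it is the sheafification of a tensor product, and that sheafification is computed étale-locally on $W_nY$.

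\textbf{Main obstacle.} The key and hardest point is exactness of $\rho^*$ on quasi-coherent sheaves and the equivalence $\QCoh(X_{\et})\simeq \QCoh(X_{\FRP})$. Exactness comes from flatness of $W_nY\to W_nX$, by \autoref{basic_remark}\ref{basic_remarkb}. For the equivalence, quasi-coherence in the sense of \stacksproj{03DL} means local presentation $W_n\cO^{(I)}\to W_n\cO^{(J)}\to\cG\to0$. Such a presentation can be refined to an étale, indeed Zariski, covering of $X$, and $\rho_*$ then recovers a quasi-coherent sheaf. Conversely, for such $\cG$ one must show $\rho^*\rho_*\cG\cong\cG$. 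This is where I would use that both sides are cokernels of the same map of free modules and that $\rho^*$ is right exact. For the derived statement, I would show $\rho^*$ and $\rho_*$ are exact on quasi-coherent objects, which uses flat base change and étale descent for higher cohomology. They are then mutually inverse equivalences of abelian categories, and hence of their derived categories.
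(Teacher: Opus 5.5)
Your proposal is correct and is essentially the paper's argument with its citations unpacked. The paper gets (1) from the Stacks lemma on the continuous and cocontinuous inclusion $X_{\et}\to X_{\FRP}$, together with Kato's explicit description, and gets (2) and (3) from exactness, derived adjunction, the flatness of $W_nY\to W_nX$ in \autoref{basic_remark}, and the local-presentation argument for quasi-coherent sheaves. Your remark that sheafification on objects of $X_{\et}$ is unchanged is exactly that cocontinuity. One simplification: the derived equivalence in (3) needs no flat base change or higher cohomology, since an equivalence $\QCoh(X_{\et},W_n\cO_{X_{\et}})\simeq\QCoh(X_{\FRP},W_n\cO_{X_{\FRP}})$ of abelian categories automatically induces one on derived categories.
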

\begin{proof}
The statements in \ref{adjunction_FRP_and_etale1} 
 follow from \stacksproj{00XU} and \cite[Proposition 1.1]{Kato_Duality_theories_for_p_primary_etale_coh_II}.
 The statements in \ref{FRP_embeds_in_etale} 
 follow from this and \stacksproj{03D7} (adjunction), \stacksproj{04JC} (exactness of $\rho^{-1}$)
 and \stacksproj{0FND} (derived adjunction).
 Similarly we get \ref{QCoh-FRP}, where we additionally use that $W_n\cO_{X_{\FRP}}$ is a sheaf by \'etale descent and 
 that $\rho^{-1}W_n\cO_{X_{et}}\to W_n\cO_{X_{\FRP}}$ is flat, by \autoref{basic_remark}.
 The statements about quasi-coherent sheaves follow from the arguments in \stacksproj{03DX}.
 \end{proof}

\begin{definition}\label{defn:FRP}
We denote by 
\[(-)_{\FRP}\colon \QCoh_{W_nX}\to \Sh(X_{\FRP}, W_n(\bF_q)), \quad \cF\mapsto \cF_{\FRP},\]
the composition 
\[\QCoh_{W_nX}\xrightarrow{\simeq} \QCoh(X_{\et}, W_n\cO_{X_{\et}})\xrightarrow{\simeq} \QCoh(X_{\FRP}, W_n\cO_{X_{\FRP}})\to \Sh(X_{\FRP}, W_n(\bF_q)),\]
where the first equivalence is the classical one (e.g. \stacksproj{03DX}), the second equivalence is the one from  \autoref{adjunction_FRP_and_etale}.\ref{QCoh-FRP}, and the last map is the forgetful functor. 
Concretely, the sections of $\cF_{\FRP}$ over $f\colon Y\to X$ in $X_{\FRP}$ are given by
\[\cF_{\FRP}(Y)=\Gamma(Y, W_n\cO_Y\otimes_{f^{-1}W_n\cO_X} f^{-1}\cF) = \Gamma(Y, f^*\cF).\]
As $(-)_{\FRP}$ is exact it extends to an exact functor between  derived categories
\[(-)_{\FRP}\colon D(\QCoh_{W_nX})\to D(X_{\FRP}, W_n(\bF_q)), \quad \cF^\bullet\mapsto \cF^\bullet_{\FRP}.\]
\end{definition}

%Let us now focus on functors on this site. Our reference for morphisms of sites and so on will be \stacksproj{00UZ}.

\begin{construction}[Direct images, inverse images and extension by zero]
    Let $f \colon Y \to X$ be a morphism of Noetherian and $F$-finite $\bF_q$-schemes. The inverse image functor $X_{\FRP} \to Y_{\FRP}$ is  continuous and
    induces a pair of adjoint functors 
    \[f^{-1}\colon \Sh(X_{\FRP}, \Lambda) \rightleftarrows \Sh(Y_{\FRP}, \Lambda)\colon f_*,\] 
    with $f^{-1}$ exact, by \stacksproj{00X6}. %and \cite[Proposition 1.1]{Kato_Duality_theories_for_p_primary_etale_coh_II}. 
    If $f \colon Y \to X$ is FRP, then we will denote by $f_! \colon \Sh(Y_{\FRP}, \Lambda) \to \Sh(X_{\FRP}, \Lambda)$ the left adjoint of $f^{-1}$ (see \stacksproj{03DI}).
\end{construction}

We will need the following result several times:

\begin{lemma}\label{pushforwards_of_qcoh_commute_with_FRP}
    Let $f \colon X \to Y$ be a morphism of Noetherian and $F$-finite $\bF_q$-schemes. 
    Then the diagram \[ \begin{tikzcd}
        D^+(\QCoh_{W_nX}) \arrow[rr, "Rf_*"] \arrow[d, "(-)_{\FRP}"] &  & D^+(\QCoh_{W_nY}) \arrow[d, "(-)_{\FRP}"] \\
        D^+(X_{\FRP}, W_n(\bF_q)) \arrow[rr, "Rf_*"]                              &  & D^+(Y_{\FRP}, W_n(\bF_q))
    \end{tikzcd} \] commutes.
\end{lemma}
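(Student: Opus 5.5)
We first construct a natural comparison map and then check that it is an isomorphism by computing stalks, or rather sections over FRP objects. Work with the equivalence $\rho^*$ of \autoref{adjunction_FRP_and_etale}.\ref{QCoh-FRP}, which identifies $D^+(\QCoh_{W_nX})$ with quasi-coherent complexes of $W_n\cO_{X_{\FRP}}$-modules. Note that $f$ induces a morphism of ringed topoi $(\Sh(X_{\FRP}),W_n\cO_{X_{\FRP}})\to(\Sh(Y_{\FRP}),W_n\cO_{Y_{\FRP}})$. Moreover, pushforward of modules is computed on underlying sheaves of $W_n(\bF_q)$-modules, since injective $W_n\cO$-modules are flasque and hence $Rf_*$-acyclic. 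It therefore suffices to show that for $\cF\in D^+(\QCoh_{W_nX})$ the natural base change map
\[ (Rf_*\cF)_{\FRP}\to Rf_*(\cF_{\FRP}) \]
is an isomorphism. This map comes by adjunction from $f^{-1}$ applied to $(Rf_*\cF)_{\FRP}$ followed by the counit, using that $(f^*\cG)_{\FRP}=f^{-1}(\cG_{\FRP})\otimes W_n\cO_{X_{\FRP}}$.

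To check the isomorphism, we compute cohomology sheaves. The $i$-th cohomology of $Rf_*(\cF_{\FRP})$ is the sheafification of
\[ (g\colon V\to Y)\mapsto H^i(X_V,\FRP;\cF_{\FRP}), \]
where $X_V\coloneqq X\times_Y V$ is FRP over $X$. Flatness and relative perfectness are stable under base change, so the slice site $X_{\FRP}/X_V$ identifies with $(X_V)_{\FRP}$. The key step is the comparison
\[ H^i((X_V)_{\FRP},\cF_{\FRP})\cong H^i(X_V,g'^*\cF), \]
Zariski cohomology of the quasi-coherent pullback on $W_nX_V$. We would prove this by Čech/descent: quasi-coherent sheaves satisfy fpqc (in particular étale) descent, and higher étale cohomology of quasi-coherent sheaves on an affine scheme vanishes. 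Hence for every affine FRP object $U$ we get $H^i(U_{\FRP},\cF_{\FRP})=0$ for $i>0$. By a Cartan–Leray/Čech argument, cohomology on $(X_V)_{\FRP}$ is then computed by Čech complexes of affine Zariski covers, which agrees with the Zariski computation. This step is the main obstacle. The issue is that $X_V$ need not be Noetherian or of finite type, so we must check that Čech arguments with affine covers do not need any Noetherianity. Quasi-separatedness of $X_V$, if needed, is inherited from $X$ being Noetherian.

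Finally, on the other side, $\Gamma(V,(Rf_*\cF)_{\FRP})=\Gamma(V,g^*R^if_*\cF)$ locally. Flat base change for the flat morphism $W_nV\to W_nY$ (flat by \autoref{basic_remark}\ref{basic_remarkb}, with $f$ quasi-compact and quasi-separated) gives
\[ g^*R^if_*\cF\cong R^if'_*(g'^*\cF). \]
On affine $V$, this says $\Gamma(V,g^*R^if_*\cF)=H^i(X_V,g'^*\cF)$. Comparing the two presheaves before sheafification shows the map is an isomorphism on all cohomology sheaves. For bounded below complexes, the way-out argument, or a spectral sequence, reduces the statement to single quasi-coherent sheaves.
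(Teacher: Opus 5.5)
Your argument is correct and rests on the same two inputs as the paper. The first is flat base change along the flat map $W_nV\to W_nY$. The second is that the cohomology of $\cF_{\FRP}$ on an FRP object agrees with the Zariski cohomology of the corresponding quasi-coherent sheaf. The paper obtains the latter by restricting to the small étale site and citing \stacksproj{03P2}, which holds for arbitrary schemes, so the non-Noetherianity of $X_V$ that you flag as the main obstacle is not a real difficulty.

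The difference lies in how the argument is organised. The paper notes that the underived square commutes by flat base change and that $(-)_{\FRP}$ is exact. It then only needs the vanishing $R^if_*(\cI_{\FRP})=0$ for $i>0$ and $\cI$ injective in $\QCoh_{W_nX}$. Resolving by such injectives handles bounded-below complexes directly. This avoids constructing the derived base change map and avoids the way-out step. Your route is somewhat longer but identifies each $R^if_*(\cF_{\FRP})$ with $(R^if_*\cF)_{\FRP}$ explicitly.

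One small slip: the slice site $X_{\FRP}/X_V$ is not literally $(X_V)_{\FRP}$. A morphism between FRP $X$-schemes need not be flat; for example, $\Spec\bF_p\to\Spec\bF_p[t^{1/p^\infty}]$ over $X=\Spec\bF_p$ is not flat. This is harmless, since the relevant $H^i(X_V,\cF_{\FRP})$ is computed in $\Sh(X_{\FRP})$, and your Cartan argument runs verbatim on the affine objects of $X_{\FRP}$.
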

\begin{proof}
    Since this diagram commutes at the non-derived level by flat base change (\stacksproj{02KH}) and $(-)_{\FRP}$ is exact, it is enough to show that for any injective object $\cI \in \QCoh_{W_nX}$ and $i > 0$, we have $R^if_*(\cI_{\FRP}) = 0$.
    
    Let $g \colon Y' \to Y$ be a flat and relatively perfect morphism, and consider the pullback diagram 
    \[ \begin{tikzcd}
        X' \arrow[r, "f'"] \arrow[d, "g'"] & Y' \arrow[d, "g"] \\
        X \arrow[r, "f"]             & Y.
    \end{tikzcd} \]
    Note that by definition of the FRP site, we have to show that $R^if'_{\et, *}\left((g')^*\cI)_{\et}\right) = 0$, for all $i > 0$. By \stacksproj{03P2}, it is enough to show the vanishing  $R^if'_*\left((g')^*\cI\right) = 0$ on $Y'_{\Zar}$, for all $i > 0$, which holds by flat base change and the injectivity of $\cI$.
\end{proof}

We will end this section by emphasizing the case of closed immersions. Throughout, fix a closed immersion $i \colon Z \to X$, and let $j \colon U \to X$ is the open immersion corresponding to the complement of $Z$. 

\begin{lemma}\label{etale_functors_and_FRP_functors_agree}
    Let $f \colon Y \to X$ be a morphism (resp. an étale morphism, proper morphism) of Noetherian $F$-finite $\bF_q$-schemes.
    Then for any $\cF \in \Sh(X_{\et}, \Lambda)$ (resp. $\cG \in \Sh(Y_{\et}, \Lambda)$), we have that 
    \[(f^{-1}(\cF))_{\FRP} = f^{-1}(\cF_{\FRP}) \quad (\text{resp. }  (f_!\cG)_{\FRP} = f_!(\cG_{\FRP}), \quad 
    (f_*\cG)_{\FRP} = f_*(\cG_{\FRP})).\]
    
    In particular, given a closed immersion $i \colon Z \to X$ whith complement $j \colon U \to X$, then we have a natural short exact sequence on $X_{\FRP}$ \[ \begin{tikzcd}
        0 \arrow[rr] &  & j_!\Lambda_U \arrow[rr] &  & \Lambda_X \arrow[rr] &  & i_*\Lambda_Z \arrow[rr] &  & 0.
    \end{tikzcd} \]
\end{lemma}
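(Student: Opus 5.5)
We prove each of the three identities by comparing sections over an arbitrary object $g\colon X'\to X$ of $X_{\FRP}$ (resp. $g\colon Y'\to Y$ of $Y_{\FRP}$), using the explicit description of $\rho^{-1}$ in \autoref{adjunction_FRP_and_etale}\ref{adjunction_FRP_and_etale1c}: $\cF_{\FRP}(X')=\Gamma(X',g^{-1}\cF)$.

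\emph{Inverse images.} Both sides are exact, and both are left adjoints. On the étale side $f^{-1}$ is left adjoint to $f_*$, and the composite $(f^{-1}(-))_{\FRP}$ is left adjoint to $\rho_{Y,*}f_*$. On the FRP side $f^{-1}((-)_{\FRP})$ is left adjoint to $\rho_{X,*}f_*$. Since $f_*$ commutes with restriction to the small étale sites, we have $f_*\circ\rho_{Y,*}=\rho_{X,*}\circ f_*$. This equality holds because the étale site of $X$ maps into that of $Y$ by base change, and both functors send $\cG$ to $U\mapsto\cG(U\times_XY)$. By uniqueness of adjoints, $(f^{-1}\cF)_{\FRP}\cong f^{-1}(\cF_{\FRP})$.

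\emph{Extension by zero for $f$ étale.} Here $f_!$ is left adjoint to $f^{-1}$ on both sites, and $\rho^{-1}$ is left adjoint to $\rho_*$. So $(f_!-)_{\FRP}$ is left adjoint to $\rho_{Y,*}f^{-1}$, and $f_!((-)_{\FRP})$ is left adjoint to $\rho_{X,*}f^{-1}$ composed appropriately. It therefore suffices that $f^{-1}$ commutes with restriction $\rho_*$. For $f$ étale, $f^{-1}$ on either site is just restriction to the localized site over $Y$, and $Y_{\et}\subset Y_{\FRP}$ corresponds to $X_{\et}/Y\subset X_{\FRP}/Y$. This is immediate.

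\emph{Direct images for $f$ proper.} This is the main case. Let $g\colon Y'\to Y$ be FRP, with base change $g'\colon X'\to X$ and $f'\colon X'\to Y'$. We need
\[
\Gamma(X',g'^{-1}\cG)=\Gamma(Y',g^{-1}f_*\cG),
\]
naturally in $Y'$, i.e. $g^{-1}f_*\cG\cong f'_*g'^{-1}\cG$. This is proper base change for étale sheaves, which holds for arbitrary $g$, in particular flat, non-finite-type ones. The one delicate point is that proper base change is usually stated for torsion sheaves; this is satisfied because $\Lambda$ is finite, so every $\Lambda$-module sheaf is torsion. I expect invoking proper base change in this generality to be the key input. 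We must also check compatibility with the restriction maps of the sheaves, which follows from the naturality of the base change morphism.

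\emph{The exact sequence.} Start from the classical short exact sequence $0\to j_!\Lambda_U\to\Lambda_X\to i_*\Lambda_Z\to0$ on $X_{\et}$, and apply the exact functor $(-)_{\FRP}$. Then:
\begin{itemize}
\item $j$ is étale, so $(j_!\Lambda_U)_{\FRP}=j_!\Lambda_U$;
\item $i$ is proper, so $(i_*\Lambda_Z)_{\FRP}=i_*\Lambda_Z$;
\item $(\Lambda_X)_{\FRP}=\Lambda_X$ by \autoref{adjunction_FRP_and_etale}\ref{FRP_embeds_in_etale}.
\end{itemize}
This yields the stated short exact sequence on $X_{\FRP}$.
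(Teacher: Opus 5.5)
Your proposal is correct and follows essentially the same route as the paper. For $f^{-1}$ you use that restriction to the small \'etale sites commutes with $f_*$ (equivalently, that the morphisms of topoi commute); for \'etale $f$ you use $f^{-1}\circ\rho_{X,*}=\rho_{Y,*}\circ f^{-1}$ and pass to left adjoints; for proper $f$ you use proper base change; and you finish by applying the exact functor $(-)_{\FRP}$ to the \'etale sequence. Degree-$0$ proper base change already holds for sheaves of sets, so your torsion remark is harmless but unnecessary.

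The only slips are notational and do not affect the argument. The right adjoints should read $f_*\circ\rho_{Y,*}$ and $f^{-1}\circ\rho_{X,*}$, and in the \'etale case you attached the two adjoints to the wrong functors. In the proper case you interchanged the roles of $X$ and $Y$ relative to the statement.
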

\begin{proof}
    By \autoref{adjunction_FRP_and_etale}.\ref{FRP_embeds_in_etale}, the compatibility $(f^{-1}(\cF))_{\FRP} = f^{-1}(\cF_{\FRP})$ is an immediate consequence of \stacksproj{03CB}. If $f$ is étale, then we have an equality of 
    functors $f^{-1} \circ (-)|_{X_{\et}} = (-)|_{Y_{\et}} \circ f^{-1}$, by \stacksproj{04J0}. Passing to left adjoints, 
    see \autoref{adjunction_FRP_and_etale}, gives precisely that $f_!$ commutes with $(-)_{\FRP}$. If $f$ is proper,
    then the compatibility $(f_*\cG)_{\FRP} = f_*(\cG_{\FRP})$ is a consequence of the proper base change theorem, see \cite[Expos\'e XII, Th{\'e}or{\`e}me 5.1(i)]{SGA_4_tome3}.

    The statement after ``In particular'' now follows from the previous statements by applying the exact functor $(-)_{\FRP}$ to the analogous short exact sequence in the étale site.
\end{proof}

\begin{lemma}\label{adjunction_restriction_and_lower_shriek}
    Let $j \colon U \to X$ be an open immersion, and let $\cF^{\bullet}\in D(U_{\FRP}, \Lambda)$ and $\cG^{\bullet} \in D(X_{\FRP}, \Lambda)$. Then there is natural isomorphism \[ Rj_*\cR\HHom(\cF^{\bullet}, \cG^{\bullet}|_U) \to R\HHom(j_!\cF^{\bullet}, \cG^{\bullet}). \]
\end{lemma}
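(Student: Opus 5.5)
The plan is to deduce the statement from the underived adjunction $j_! \dashv j^{-1}$ on $U_{\FRP}$ and $X_{\FRP}$. Here $j$ is an open immersion, hence étale, hence flat and relatively perfect, so $j_!$ is defined as in the Construction above. Two inputs are needed. First, $j_!$ is exact. Second, $j^{-1}$ has an exact left adjoint, so $j^{-1}$ preserves K-injective complexes.

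\textbf{Step 1: the sheaf-level isomorphism.} For sheaves $\cF$ on $U_{\FRP}$ and $\cG$ on $X_{\FRP}$ we want
\[ j_*\HHom(\cF, \cG|_U) \cong \HHom(j_!\cF, \cG). \]
We check this on sections over an object $g \colon V \to X$ of $X_{\FRP}$, with $g' \colon V_U = V\times_X U \to U$. The left side gives $\Hom_{V_U}(\cF|_{V_U}, \cG|_{V_U})$. The right side gives $\Hom_V((j_!\cF)|_V, \cG|_V)$. Since $j_!$ commutes with restriction to $V$ (localization of the topos, \stacksproj{03DI}-type compatibilities), the right side equals $\Hom_V(j_{V!}(\cF|_{V_U}), \cG|_V)$. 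By adjunction this is $\Hom_{V_U}(\cF|_{V_U}, \cG|_{V_U})$, and the identification is natural in $V$.

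\textbf{Step 2: exactness of $j_!$.} I expect this to be the main point needing care. For an open immersion, $j_!\cF$ is the sheafification of the presheaf sending $V$ to $\cF(V)$ if $V \to X$ factors through $U$, and to $0$ otherwise. Its stalks at points of $X_{\FRP}$ are either stalks of $\cF$ or $0$, which gives exactness. Alternatively, one can argue on sections: $V \to X$ factors through $U$ exactly when $V_U = V$, and exactness follows from the local description. If points of the FRP topos are awkward to use, I would instead use that $j_!$ on $\Lambda$-modules is the extension by zero of a localization morphism $U_{\FRP} = X_{\FRP}/U$, which is exact by the Stacks Project (\stacksproj{03DJ}-type results for localizations along subobjects of the final object).

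\textbf{Step 3: deriving.} Exactness of $j_!$ means $j^{-1}$ sends K-injective complexes to K-injective complexes, since $\Hom(-, j^{-1}I) = \Hom(j_!-, I)$ preserves acyclicity. Choose a K-injective resolution $\cG^\bullet \to I^\bullet$. Then $\cR\HHom(j_!\cF^\bullet, \cG^\bullet) = \HHom^\bullet(j_!\cF^\bullet, I^\bullet)$, and by Step 1 applied termwise (the total Hom complex is built from products of sheaf Homs, which $j_*$ respects) this is $j_*\HHom^\bullet(\cF^\bullet, I^\bullet|_U)$.

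It remains to compare this with $Rj_*\cR\HHom(\cF^\bullet, \cG^\bullet|_U)$. We have $\cR\HHom(\cF^\bullet, \cG^\bullet|_U) = \HHom^\bullet(\cF^\bullet, I^\bullet|_U)$ because $I^\bullet|_U$ is K-injective. A Hom complex into a K-injective complex is K-flasque, or at least $j_*$-acyclic; this is the standard statement that $\HHom^\bullet(\cF^\bullet, I^\bullet)$ is K-limp (\stacksproj{0FNQ}-type results). Hence $j_*$ computes $Rj_*$ on it, and we obtain the natural isomorphism.
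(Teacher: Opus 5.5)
Your proposal is correct and follows the paper's argument: resolve $\cG^\bullet$ by a K-injective complex, note that restriction preserves K-injectives because $j_!$ is exact, use the sheaf-level identity $j_*\HHom(\cF,\cJ|_U)\cong\HHom(j_!\cF,\cJ)$, and observe that $j_*$ computes $Rj_*$ on the resulting Hom complex. The one difference is in that last step: the paper argues termwise, using that $\HHom(\cF,\cJ)$ is $j_*$-acyclic for injective $\cJ$ (citing Milne), whereas your K-limpness of the whole Hom complex also covers the unbounded complexes allowed in the statement.
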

\begin{proof}
    Let $\cG^{\bullet} \to \cI^{\bullet}$ be a quasi-isomorphism, where $\cI^{\bullet}$ is a K-injective complex and each
    $\cI^k$ is an injective object, see \stacksproj{079P}. Since $(-)|_U$ is right adjoint to the exact functor $j_!$, it preserves K-injective complexes, see \stacksproj{08BJ}.
    Given sheaves $\cF, \cJ \in \Sh(U_{\FRP}, W_n(\bF_q))$ with $\cJ$ injective, then $\HHom(\cF, \cJ)$ is $j_*$-acyclic,
    by \cite[Corollary III.2.13]{Milne_Etale_Cohomology}. Therefore the isomorphism from the statement is the composition
    \begin{multline*}
    Rj_*\cR\HHom(\cF^{\bullet}, \cG^{\bullet}|_U) \cong Rj_*\HHom(\cF^{\bullet} ,\cI^{\bullet}|_U)
    = j_*\HHom(\cF^{\bullet} ,\cI^{\bullet}|_U) \\
    \cong \HHom(j_!\cF^{\bullet}, \cI^{\bullet})  \cong R\HHom(j_!\cF^{\bullet}, \cG^{\bullet}).
    \end{multline*}
\end{proof}

Given an $X$-scheme $Y$, we will write $Y_Z \coloneqq Y \times_X Z$.

\begin{lemma}[{Proof of \cite[Proposition 1.1]{Kato_Duality_theories_for_p_primary_etale_coh_II}}]\label{Kato_key_lemma_closed_immersions}
    Assume that $X$ is affine, and let $T \to Z$ be an FRP morphism with $T$ affine. Then the category of commutative squares 
    \[ \begin{tikzcd}
        T \arrow[d] \arrow[r] & Y \arrow[d] \\
        Z \arrow[r, hook]     & X
    \end{tikzcd} \] with $Y \to X$ FRP and $Y$ affine admits an initial element $T_X^{\can} \to X$, and the natural morphism $T \to (T_X^{\can})_Z$ is an isomorphism. Moreover, if the above diagram is cartesian, then $T_X^{\can}=\Spec \widehat{B}$,
    where $B=\cO_Y(Y)$ and $\widehat{B}$ is the completion of $B$ along the ideal cutting out $Z$ in $X$.
\end{lemma}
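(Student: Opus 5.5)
The plan is to build $T_X^{\can}$ by lifting $T$ along the nilpotent thickenings of $Z$ in $X$, then completing and algebraizing. Write $X=\Spec A$, $Z=\Spec A/I$ and $T=\Spec C$, so that $A/I\to C$ is flat and relatively perfect. The key input is that relatively perfect morphisms deform uniquely along nilpotent thickenings, just as étale morphisms do. More precisely, the category of flat relatively perfect $A$-algebras $B$ is equivalent, via $B\mapsto B/IB$, to the category of flat relatively perfect $A/I$-algebras, at least after restricting to $I$-adically complete $B$.

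\textbf{Step 1 (deformation along $A/I^m$).} First I would show that a flat relatively perfect $A/I$-algebra $C$ lifts uniquely to a flat relatively perfect $A/I^m$-algebra $C_m$, with unique liftings of morphisms. Since $F$ is nilpotent modulo the kernel of $A/I^{m}\to A/I$ only after enough iterations, I would argue by the relative Frobenius. Choose $N$ with $q^N\ge m$, so that $F^N\colon A/I^m\to A/I^m$ factors through $A/I$. Then set
\[
C_m := C\otimes_{A/I,F^N} A/I^m .
\]
Relative perfectness of $C$ (i.e.\ $F_{C/(A/I)}$ an isomorphism), together with \autoref{basic_remark}\ref{basic_remarkb}, should show that this is independent of $N$ and is flat and relatively perfect over $A/I^m$ with $C_m/I=C$. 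This is Kato's trick. Uniqueness follows from the same formula: any flat relatively perfect lift $B'$ satisfies $B'\cong B'\otimes_{F^N}A/I^m$, and the latter depends only on $B'/I$. The same formula handles morphisms.

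\textbf{Step 2 (completion and initiality).} Let $\widehat{B}:=\varprojlim_m C_m$, which is $I$-adically complete, with $\widehat{B}/I\widehat{B}=C$. I need $\Spec\widehat{B}\to X$ to be flat and relatively perfect. Flatness follows from the local flatness criterion for adic completions, using that $A$ is Noetherian and each $C_m$ is flat over $A/I^m$. Relative perfectness should follow because $F$-finiteness of $A$ makes $F_*A$ finite, so the relative Frobenius commutes with the inverse limit. For initiality, take any square with $Y=\Spec B$ FRP and a map $C\leftarrow B$. Step 1 then gives compatible maps $C_m\leftarrow B/I^mB$, hence a unique map $\widehat{B}\leftarrow B$ factoring through $\widehat{B_I}$. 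Uniqueness comes from $\widehat B$ being the limit of the $C_m$ together with the uniqueness in Step 1. Finally, $(T_X^{\can})_Z=\Spec \widehat B/I\widehat B=T$.

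\textbf{Step 3 (cartesian case).} If the square is cartesian, then $C=B/IB$ and $C_m=B/I^mB$ by uniqueness, so $T_X^{\can}=\Spec\widehat{B}$ as claimed.

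The main obstacle is Step 2, specifically verifying that the completion is flat and relatively perfect over $A$, since $\widehat B$ need not be Noetherian. I would first try the formula $\widehat{B}\cong \widehat{B}\otimes_{A,F}A$, passing the tensor product through the limit using finiteness of $F_*A$.
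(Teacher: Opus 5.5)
Your architecture is the paper's. You lift $T$ uniquely along the thickenings $\Spec A/I^m$, set $\widehat B=\varprojlim_m C_m$, and get flatness and $\widehat B/I^m\widehat B=C_m$ from the Noetherian flatness lemma. Initiality then follows from uniqueness of lifts, and the cartesian case is read off directly. Your Step 1, via the twist $C\otimes_{A/I,F^N}A/I^m$, is a correct self-contained proof that relatively perfect algebras are invariant under nilpotent thickenings; the paper simply imports this from Kato.

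The gap is in the step you flag yourself: relative perfectness of $\widehat B$ over $A$. Finiteness of $F_*A$ only gives $F_*A\otimes_A\widehat B\cong\varprojlim_m(F_*A\otimes_A C_m)$. It does not make the relative Frobenius an isomorphism, because the levelwise maps $F_*A\otimes_AC_m\to F_*C_m$ are \emph{not} isomorphisms: $C_m$ is relatively perfect over $A/I^m$, not over $A$. Concretely, write $(I^m)^{[p]}$ for the ideal generated by $p$-th powers of elements of $I^m$. Then
\[F_*A\otimes_AC_m\cong F_*\bigl(A/(I^m)^{[p]}\bigr)\otimes_{A/I^m}C_m\longrightarrow F_*(A/I^m)\otimes_{A/I^m}C_m\cong F_*C_m\]
is surjective, but its kernel is nonzero in general. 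For $A=\bF_p[t]$, $I=(t)$, $C=\bF_p$ and $m=1$, the source has dimension $p$ while the target is $\bF_p$.

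The missing idea is that these surjections form a pro-isomorphism. Since $I$ is finitely generated, $I^{m'}\subseteq(I^m)^{[p]}$ for $m'\gg m$, so the transition maps between the kernels vanish for $m'\gg m$. A pro-zero system has vanishing $\varprojlim$ and $\varprojlim^1$, hence $\varprojlim_m(F_*A\otimes_AC_m)\to\varprojlim_mF_*C_m=F_*\widehat B$ is an isomorphism. This is exactly the second half of the paper's proof of the claim. Without it you have not shown that $\Spec\widehat B\to X$ is FRP, i.e.\ that it is even an object of the category of squares, so initiality is not yet established either.
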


\begin{proof}
    Write $X=\Spec R$ and denote by $I$ the ideal cutting out $Z$ in $X$.
    Let $m \geq 1$, and denote by $Z_m=\Spec R/I^m$ the $m$'th thickening of $Z$ in $X$. By \cite[Lemma 1.3]{Kato_Duality_theories_for_p_primary_etale_coh_II} the restriction functor $(Z_m)_{\FRP} \to Z_{\FRP}$ is an equivalence of categories. In particular, we obtain a unique inverse system of flat and relatively perfect morphisms $T_m=\Spec S_m \to Z_m$. 
    Set $S \coloneqq \lim S_m$, where the transition maps are given by $S_m\to S_m\otimes_{R/I^m} R/I^{n-1}=S_{m-1}$,
    and $T_X^{\can} \coloneqq \Spec(S)$. By \stacksproj{0912}, $T_X^{\can}$ is flat over $X$ and each morphism $T_m \to (T_X)^{\can} \times_X Z_m$, $m \geq 1$, is an isomorphism. We claim that $T_X^{\can} \to X$ is relatively perfect.
    Assuming the claim, the fact that $T_X^{\can}$ is an initial object follows by construction and the fact that each functor $(Z_m)_{\FRP} \to Z_{\FRP}$ is an equivalence. Moreover, in case $Y=\Spec B$ and $T=\Spec B/IB$, then $S_m=B/I^m B$ by uniqueness, and we get $S=\bighat{B}$.
    
    Thus it remains to prove the claim, i.e., that $R \to S$ is relatively perfect, or equivalently that the composition 
    \[ \begin{tikzcd}
        F_*R \otimes_R \lim S_m \arrow[rr] &  & \lim \:(F_*R \otimes_R S_m) \arrow[rr] &  & \lim F_*S_m
    \end{tikzcd} \] is an isomorphism. The fact that the first arrow is an isomorphism follows again from \stacksproj{0912} and the $F$-finiteness of $R$, so let us focus on the second arrow. Denote by $I^{[p^m]}$ the ideal generated by all $p^m$-powers of elements of $I$. Since all maps in the inverse system $\{ f_m \colon R/I^{[p^m]} \to R/I^m\}$ are surjective
     and the induced maps $\ker(f_{m'}) \to \ker(f_m)$ are zero for $m' \gg m$, we see that the same properties holds for the inverse system of morphisms \[ \left\{F_*R \otimes_R S_m \cong F_*(R/I^{[p^m]}) \otimes_{R/I^m} S_m \to F_*(R/I^m) \otimes_{R/I^m} S_m \cong F_*S_m\right\}_m, \] where we used that $S_m$ is relatively perfect over $R/I^m$ for the last isomorphism. 
     Hence, we obtain an isomorphism of associated inverse limits, which is precisely what we needed.
\end{proof}

\begin{corollary}[{\cite[Lemma 2.2]{Kato_Duality_theories_for_p_primary_etale_coh_II}}]\label{pushforward_from_closed_immersion_fully_faithful}
    The functor $i_* \colon \Sh(Z_{\FRP}, \Lambda) \to \Sh(X_{\FRP}, \Lambda)$ is exact, fully faithful, and the counit $i^{-1} \circ i_* \to \id$ is an isomorphism. The same holds at the level of derived categories.
\end{corollary}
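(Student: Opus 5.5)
The plan is to reduce everything to the single claim that the counit $i^{-1}i_*\cF \to \cF$ is an isomorphism for every $\cF \in \Sh(Z_{\FRP}, \Lambda)$. Full faithfulness of $i_*$ follows formally from this claim, because $i^{-1}$ is left adjoint to $i_*$. For exactness, $i_*$ is already left exact as a right adjoint. To get right exactness, we will show that $i_*$ can be computed on a cofinal system of objects where no sheafification is needed, as follows.

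\textbf{Computing $i_*$ and $i^{-1}$.} Fix $\cF$ on $Z_{\FRP}$. For $Y \to X$ in $X_{\FRP}$ we have $(i_*\cF)(Y) = \cF(Y_Z)$, and base change preserves FRP morphisms, so $Y_Z \in Z_{\FRP}$. The sheaf $i^{-1}\cG$ is the sheafification of the presheaf
\[ T \mapsto \operatorname{colim} \cG(Y), \]
where the colimit runs over squares $T \to Y$ lying over $Z \hookrightarrow X$ with $Y \in X_{\FRP}$. Both sides of the counit are sheaves, so the question is étale local. We may therefore assume that $X$ and $T$ are affine, and it suffices to check the isomorphism on affine $T$. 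For such $T$, \autoref{Kato_key_lemma_closed_immersions} says the indexing category has an initial object $T_X^{\can}$. After restricting to the cofinal subcategory of affine $Y$ (affine étale covers are cofinal), the colimit collapses to $\cG(T_X^{\can})$. Taking $\cG = i_*\cF$, the presheaf value at $T$ is
\[ \cF\bigl((T_X^{\can})_Z\bigr) = \cF(T), \]
by the isomorphism $T \cong (T_X^{\can})_Z$ from the same lemma. This shows the presheaf pullback already agrees with $\cF$ on the basis of affines, so the counit is an isomorphism after sheafification.

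\textbf{Exactness.} Take a surjection $\cF \to \cF''$ on $Z_{\FRP}$ and a section $s \in \cF''(Y_Z)$ with $Y$ affine. There is an étale cover $\{T_k \to Y_Z\}$ with affine $T_k$ on which $s$ lifts to $\cF$. The key step is to lift this étale cover of $Y_Z$ to an étale cover of $Y$. Étale morphisms lift along the closed immersion $Y_Z \hookrightarrow Y$ only Zariski-locally; more precisely, by Elkik/Gabber étale maps to $Y_Z$ lift to the henselization. Instead I would use the canonical objects: the maps $(T_k)_Y^{\can} \to Y$ are FRP, and their pullbacks to $Z$ recover $T_k$. The family $\{(T_k)^{\can}_Y\} \cup \{Y \setminus Y_Z\}$ is jointly surjective, because it is surjective over $Z$, and it is flat. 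This is the step I expect to be the main obstacle, since the topology is étale rather than fpqc. One needs either that the canonical lifts are étale-local, or a direct argument that $i_*$ preserves epimorphisms. That direct argument would check on stalks at geometric points of $Y$, using that a strictly henselian local $Y$ satisfies: étale covers of $Y_Z$ lift, by the henselian property of the pair $(Y, Y_Z)$. Working at such points then gives surjectivity of $(i_*\cF)_y \to (i_*\cF'')_y$, because $Y_Z$ is again strictly henselian local, or empty.

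\textbf{Derived statement.} Since $i_*$ is exact, $Ri_* = i_*$. Since $i^{-1}$ is exact, the derived adjunction holds and the counit $i^{-1}i_* \to \id$ is an isomorphism termwise on complexes. Hence $i_*$ is fully faithful on $D(Z_{\FRP}, \Lambda)$ as well.
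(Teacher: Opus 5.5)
Your counit, full-faithfulness and derived arguments are the paper's (Kato's initial object collapses the colimit to $\Gamma(T,\cF)$). For exactness, you are right to discard the canonical lifts: they are flat and relatively perfect but not étale over $Y$, so they give no covering in $X_{\FRP}$. Your fallback is the paper's route: $(i_*\cF)|_{Y_{\et}} = i_*(\cF|_{(Y_Z)_{\et}})$ for every $Y$ in $X_{\FRP}$, so one only needs the classical exactness of pushforward along the closed immersion $Y_Z \hookrightarrow Y$ on small étale sites, which your stalk/henselian-pair argument reproves. In fact the Zariski-local lifting you dismissed already suffices: refine $\{T_k\}$ by Zariski opens that lift to étale $V \to Y$ with $V \times_Y Y_Z$ equal to that open, and add $Y \setminus Y_Z$, on which $i_*\cF''$ vanishes.
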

\begin{proof}
    To see exactness, note that it holds by definition that for any FRP morphism $Y \to X$ and $\cF \in \Sh(Z_{\FRP}, \Lambda)$, we have that $(i_*\cF)|_{Y_{\et}} = i_*(\cF|_{Y_{Z,{\et}}})$ in $\Sh(Y_{\et}, \Lambda)$. 
    Given that $i_* \colon \Sh(Y_{Z,{\et}}, \Lambda) \to \Sh(Y_{\et}, \Lambda)$ is exact, the first statement is proven.

    Let us show that for any $\cF \in \Sh(Z_{\FRP}, \Lambda)$, the natural map $i^{-1}i_*\cF \to \cF$ is an isomorphism. We may assume that $X$ is affine. We then see from \autoref{Kato_key_lemma_closed_immersions} that $i^{-1}i_*\cF$ is the sheafification of the 
    presheaf $\cG$ sending an affine scheme $T \in Z_{\FRP}$ to $\Gamma(T_X^{\can}, i_*\cF) = \Gamma(T, \cF)$, and it follows by construction that the natural map of presheaves $\cG \to \cF$ is an isomorphism on $T$. Thus, the induced map at the sheafification is an isomorphism.
    
    This implies formally that $i_* \colon \Sh(Z_{\FRP}, \Lambda) \to \Sh(X_{\FRP}, \Lambda)$ is fully faithful, and the same holds at the level of derived categories since both $i_*$ and $i^{-1}$ are exact functors.
\end{proof}

\begin{definition}\label{defn:FRP-upper-shriek}
    Let $\cF \in \Sh(X_{\FRP}, \Lambda)$. We define \[ \Gamma_Z(\cF) \coloneqq \ker(\cF \to j_*\cF|_U) \in \Sh(X_{\FRP}, \Lambda), \] and \[ i^{\flat}\cF \coloneqq i^{-1}\Gamma_Z(\cF) \in \Sh(Z_{\FRP}, \Lambda). \]
Note that both functors $\Gamma_Z$ and $i^{\flat}$ are left exact. We will denote by 
\[\RGamma_Z \colon D^+(X_{\FRP}, \Lambda) \to D^+(X_{\FRP}, \Lambda)\quad \text{and}\quad i^!= Ri^{\flat} \colon D^+(X_{\FRP}, \Lambda) \to D^+(Z_{\FRP}, \Lambda)\] 
their respective right derived functors.
\end{definition}

\begin{lemma}\label{derived_version_of_iflat}
    For any $\cF^{\bullet} \in D^+(X_{\FRP}, \Lambda)$, there is a natural exact triangle 
    \[ \begin{tikzcd}
        \RGamma_Z(\cF^{\bullet}) \arrow[rr] &  & \cF^{\bullet} \arrow[rr] &  & Rj_*\cF^{\bullet}|_U \arrow[rr, "+1"] & & {}.
    \end{tikzcd} \] 
\end{lemma}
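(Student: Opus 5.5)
We represent $\cF^{\bullet}$ by a bounded below complex of injectives $\cI^{\bullet}$ and reduce to a short exact sequence of complexes
\[ 0 \to \Gamma_Z(\cI^{\bullet}) \to \cI^{\bullet} \to j_*(\cI^{\bullet}|_U) \to 0. \]
By \autoref{defn:FRP-upper-shriek}, $\RGamma_Z(\cF^{\bullet}) = \Gamma_Z(\cI^{\bullet})$. Since $(-)|_U = j^{-1}$ has the exact left adjoint $j_!$, it preserves injectives (the same argument as in the proof of \autoref{adjunction_restriction_and_lower_shriek}), so $j_*(\cI^{\bullet}|_U)$ computes $Rj_*\cF^{\bullet}|_U$. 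The sequence is left exact by the definition of $\Gamma_Z$ as a kernel, and it is functorial in $\cF^{\bullet}$. Once it is shown to be termwise exact, its associated distinguished triangle is the one claimed, and naturality follows from functoriality of injective resolutions in $D^+$.

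The key point is surjectivity of $\cI \to j_*(\cI|_U)$ for a single injective $\cI \in \Sh(X_{\FRP},\Lambda)$. Since $j^{-1}$ has left adjoint $j_!$ and right adjoint $j_*$, we have $\Hom(\cG, j_*\cI|_U) = \Hom(j_!j^{-1}\cG, \cI)$. I will apply this with representable (free) sheaves $\cG = \Lambda_Y$, where $Y \to X$ is FRP. Then a section $s \in \Gamma(Y_U, \cI) = \Gamma(Y, j_*\cI|_U)$ corresponds to a map $j_!\Lambda_{Y_U} \to \cI$. We next use the injection $j_!\Lambda_{Y_U} \to \Lambda_Y$. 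This is the short exact sequence of \autoref{etale_functors_and_FRP_functors_agree} pulled back to $Y$: extension by zero commutes with restriction to the slice $X_{\FRP}/Y$, which is a localization. Injectivity of $\cI$ lets the map extend to $\Lambda_Y \to \cI$, i.e.\ to a section $t \in \Gamma(Y,\cI)$ with $t|_{Y_U}=s$. Hence $\cI \to j_*(\cI|_U)$ is surjective even on sections, not just as sheaves.

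The main obstacle is the identifications $j_!j^{-1}\Lambda_Y = j_!\Lambda_{Y_U}$ and the injectivity of $j_!\Lambda_{Y_U} \to \Lambda_Y$ on the FRP site. For the first, one can check that $j^{-1}$ of a representable sheaf is represented by $Y_U$; this holds because $j$ is an open immersion, so $Y\times_X U$ is FRP over $U$. For the second, both sides are sheafifications of presheaves and the map is already injective at the presheaf level, as in the étale case. If these become delicate, an alternative route is to compute the stalks at objects and use \autoref{adjunction_restriction_and_lower_shriek} with $\cG^{\bullet}=\cI$: there $R\HHom(j_!\Lambda_U,\cI)=j_*\cI|_U$, and applying $\HHom(-,\cI)$, which is exact because $\cI$ is injective, to the sequence $0\to j_!\Lambda_U\to\Lambda_X\to i_*\Lambda_Z\to 0$ gives the surjection $\cI\to j_*\cI|_U$ directly.
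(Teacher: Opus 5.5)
Your proof is correct and follows essentially the same route as the paper: reduce to surjectivity of $\cI \to j_*\cI|_U$ for injective $\cI$, and get it by applying $\HHom(-,\cI)$ to $0\to j_!\Lambda_U\to\Lambda_X\to i_*\Lambda_Z\to 0$ together with the $(j_!,j^{-1})$ adjunction. Your fallback is exactly the paper's argument, and your main route is the same sequence localized at each FRP object $Y\to X$ (using $\Hom$ against the free sheaves $\Lambda_Y$ and $j_!\Lambda_{Y_U}$); your checks that $j^{-1}\Lambda_Y=\Lambda_{Y_U}$ and that $j_!\Lambda_{Y_U}\to\Lambda_Y$ is injective are sound.
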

\begin{proof}
    This is standard, and follows from the fact that if $\cI \in \Sh(X_{\FRP}, \Lambda)$ is an injective object, then $\cI \to j_*\cI|_U$ is surjective, as follows by applying $\HHom(-, \cI)$ to the exact sequence in \autoref{etale_functors_and_FRP_functors_agree} and using the adjunction between $j_!$ and $j^*$. 
\end{proof}

One would hope that $i^{\flat}$ gives a right adjoint to the direct image functor $i_*$, but this seems not to hold in general. We will study when this fact and its derived version are actually correct.

\begin{construction}\label{constr:upper-shriek-FRP}
    Let $\cF \in \Sh(X_{\FRP}, \Lambda)$ and $\cG \in \Sh(Z_{\FRP}, \Lambda)$.
    \begin{enumerate}
        \item By definition and adjunction between $i^{-1}$ and $i_*$, there exists a natural morphism \begin{equation*}\label{nat_tranfso_upper_shriek_not_adj}
            \theta_{\cF} \colon \Gamma_Z(\cF) \longrightarrow i_*i^{\flat}\cF,
        \end{equation*}
        this extends to the derived category, i.e., for any $\cF^\bullet\in D^+(X_{\FRP}, \Lambda)$ we have a natural map
        \[\theta_{\cF^\bullet}\colon R\Gamma_Z(\cF^\bullet)\longrightarrow i_*i^!\cF^\bullet.\]
        
        \item For any $\cF^\bullet\in D^+(X_{\FRP}, \Lambda)$, $\cG^\bullet\in D^+(Z_{\FRP}, \Lambda)$  
        there exists a natural transformation \begin{equation*}\label{nat_transfo_upper_shriek_adj}
            \adj_{\cG^{\bullet}, \cF^{\bullet}} \colon \cR\HHom(i_*\cG^\bullet, \cF^\bullet) \to i_*\cR\HHom(\cG^{\bullet}, i^!\cF^{\bullet})
        \end{equation*} defined as the following composition
        \begin{multline*}
        \cR\HHom(i_*\cG^\bullet, \cF^{\bullet})\simeq \cR\HHom(i_*\cG^{\bullet}, R\Gamma_Z(\cF^{\bullet})) 
        \xrightarrow{\theta_{\cF^{\bullet}}} \cR\HHom(i_*\cG^{\bullet}, i_*i^!\cF^{\bullet})\\
        \simeq         i_*\cR\HHom(i^{-1}i_*\cG^{\bullet}, i^!\cF^{\bullet})\simeq i_*\cR\HHom(\cG^{\bullet}, i^!\cF^{\bullet}),
        \end{multline*}
        where the first equivalence is induced by the vanishing $(i_*\cG^{\bullet})|_U=0$ together with the exact triangle from \autoref{derived_version_of_iflat}, the second equivalence is adjunction of $(i^{-1}, i_*)$ , and the third equivalence
        holds by \autoref{pushforward_from_closed_immersion_fully_faithful}.
    \end{enumerate}
\end{construction}

\begin{lemma}\label{adjoint_iff_}
    Let $\cF^{\bullet} \in D^+(X_{\FRP}, \Lambda)$. Then the following statements are equivalent:
    \begin{enumerate}[label=(\roman*)]
        \item\label{itm:theta_G iso} the morphism $\theta_{\cF^{\bullet}}$ is an isomorphism;
        \item\label{itm:adj_works} for any $\cG^{\bullet} \in D^+(Z_{\FRP},\Lambda)$, the morphism 
        \[ \adj_{\cG^{\bullet}, \cF^{\bullet}} \colon \cR\HHom(i_*\cG, \cF) \to i_*\cR\HHom(\cG, i^!\cF) \] 
        is an equivalence.
    \end{enumerate}
\end{lemma}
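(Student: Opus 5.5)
The plan is to prove both implications directly from the definition of $\adj_{\cG^{\bullet},\cF^{\bullet}}$ in \autoref{constr:upper-shriek-FRP}. By construction, $\adj$ is the composite of three isomorphisms with the single map
\[\cR\HHom(i_*\cG^{\bullet}, \theta_{\cF^{\bullet}})\colon \cR\HHom(i_*\cG^{\bullet}, R\Gamma_Z(\cF^{\bullet}))\to \cR\HHom(i_*\cG^{\bullet}, i_*i^!\cF^{\bullet}).\]
The three isomorphisms come from the triangle of \autoref{derived_version_of_iflat}, from the adjunction $(i^{-1},i_*)$, and from \autoref{pushforward_from_closed_immersion_fully_faithful}. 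So $\adj_{\cG^{\bullet},\cF^{\bullet}}$ is an isomorphism if and only if $\cR\HHom(i_*\cG^{\bullet},\theta_{\cF^{\bullet}})$ is one.

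For \ref{itm:theta_G iso} $\Rightarrow$ \ref{itm:adj_works}: if $\theta_{\cF^{\bullet}}$ is an isomorphism, then applying the functor $\cR\HHom(i_*\cG^{\bullet},-)$ gives an isomorphism, so $\adj$ is an equivalence for every $\cG^{\bullet}$. This direction is formal.

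For \ref{itm:adj_works} $\Rightarrow$ \ref{itm:theta_G iso}, we specialize to $\cG^{\bullet}=\Lambda_Z$. Then $\cR\HHom(i_*\Lambda_Z, \theta_{\cF^{\bullet}})$ is an isomorphism, and we must extract from this that $\theta_{\cF^{\bullet}}$ itself is one. Both source and target of $\theta$ are supported on $Z$: for $i_*i^!\cF^{\bullet}$ this is clear, and for $R\Gamma_Z$ it follows from \autoref{derived_version_of_iflat} after restricting to $U$. So it suffices to show that for any $\cH^{\bullet}\in D^+(X_{\FRP},\Lambda)$ with $\cH^{\bullet}|_U=0$, the natural map
\[\cR\HHom(i_*\Lambda_Z,\cH^{\bullet})\to \cR\HHom(\Lambda_X,\cH^{\bullet})=\cH^{\bullet}\]
is an isomorphism, and that this identification is natural in $\cH^{\bullet}$, hence compatible with $\theta$. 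To see this, apply $\cR\HHom(-,\cH^{\bullet})$ to the short exact sequence $0\to j_!\Lambda_U\to\Lambda_X\to i_*\Lambda_Z\to 0$ of \autoref{etale_functors_and_FRP_functors_agree}. The third term becomes $\cR\HHom(j_!\Lambda_U,\cH^{\bullet})\cong Rj_*\cR\HHom(\Lambda_U,\cH^{\bullet}|_U)=0$ by \autoref{adjunction_restriction_and_lower_shriek}. Applying this to both $R\Gamma_Z\cF^{\bullet}$ and $i_*i^!\cF^{\bullet}$ identifies $\cR\HHom(i_*\Lambda_Z,\theta_{\cF^{\bullet}})$ with $\theta_{\cF^{\bullet}}$, which concludes.

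The main point needing care is checking that the restriction map along $\Lambda_X\to i_*\Lambda_Z$ commutes with $\theta$; this is just naturality of that restriction in the second variable, so no obstacle is expected. One should also note that the first isomorphism in \autoref{constr:upper-shriek-FRP} is itself induced by this same vanishing, so all identifications are compatible.
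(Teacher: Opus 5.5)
Your proposal is correct and follows essentially the same route as the paper. Both arguments specialize to $\cG^{\bullet}=\Lambda_Z$ and then use the sequence $0\to j_!\Lambda_U\to\Lambda_X\to i_*\Lambda_Z\to 0$, together with the vanishing on $U$ of $R\Gamma_Z(\cF^{\bullet})$ and of $i_*i^!\cF^{\bullet}$, to identify $\cR\HHom(i_*\Lambda_Z,\theta_{\cF^{\bullet}})$ with $\theta_{\cF^{\bullet}}$. The only cosmetic difference is that you show restriction along $\Lambda_X\to i_*\Lambda_Z$ is an isomorphism directly for any complex supported on $Z$, whereas the paper gets the top row from the fact that the composite is an isomorphism of $i_*i^!\cF^{\bullet}$.
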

\begin{proof} 
    The fact that \ref{itm:theta_G iso} implies \ref{itm:adj_works} is immediate. For the converse implication consider the diagram
    \[\begin{tikzcd} 
    i_*\cR\HHom(\Lambda_Z, i^!\cF^{\bullet})\arrow[r] & \cR\HHom(i_*\Lambda_Z, i_*i^!\cF^{\bullet})\arrow[r] 
               &\cR\HHom(\Lambda_X, i_*i^!\cF^{\bullet})\\
    \cR\HHom(i_*\Lambda_Z, \cF^{\bullet})\arrow[r]\arrow[u, "\adj_{\Lambda_Z,\cF^{\bullet}}"] & \cR\HHom(i_*\Lambda_Z, R\Gamma_Z(\cF^{\bullet}))\arrow[r]\arrow[u, "\theta_{\cF^{\bullet}}"] &   \cR\HHom(\Lambda_X, R\Gamma_Z(\cF^{\bullet})).\arrow[u, "\theta_{\cF^{\bullet}}"]
            \end{tikzcd}\]
    The square on the left commutes by definition of the maps involved and the square on the right commutes by functoriality.
    The  horizontal map on the lower left is an equivalence by \autoref{derived_version_of_iflat} and 
    the vanishing  $j^{-1}i_*\Lambda_Z=0$,
    the horizontal map on the lower right is an isomorphism by \autoref{etale_functors_and_FRP_functors_agree} 
    and as $j^{-1}R\Gamma_Z(\cF^{\bullet})=0$, 
    the composition of the top horizontal map is an isomorphism as source and target are isomorphic to
    $i_*i^!\cF$. If \ref{itm:adj_works} holds, then the vertical map on the left is an equivalence and hence
    so is the vertical map on the right, which gives \ref{itm:theta_G iso}.    
\end{proof}

\begin{lemma}\label{qcoh_is_good_for_right_adjoint_non_derived_version}
   Let $\cM^\bullet \in D^+(\QCoh_{W_nX})$, and set $\Lambda = W_n(\bF_q)$. Then $\theta_{\cM^{\bullet}_{\FRP}}$ is an isomorphism. 
\end{lemma}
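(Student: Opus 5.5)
The plan is to reduce the statement about $\theta$ to one about $I$-power torsion quasi-coherent sheaves, and then check that case by hand using the canonical lifts of \autoref{Kato_key_lemma_closed_immersions}.

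\textbf{Step 1: rewrite $i^!$ and $\theta$.} The functor $\Gamma_Z$ is right adjoint to the exact inclusion of the full subcategory of sheaves $\cH$ with $\cH|_U=0$. Hence $\Gamma_Z$ preserves injectives. Since $i^{-1}$ is exact, we get $i^!=R(i^{-1}\Gamma_Z)\simeq i^{-1}\RGamma_Z$. Under this identification, $\theta_{\cF^\bullet}$ becomes the unit map
\[\RGamma_Z(\cF^\bullet)\longrightarrow i_*i^{-1}\RGamma_Z(\cF^\bullet)\]
of the adjunction $(i^{-1},i_*)$.

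\textbf{Step 2: compute $\RGamma_Z(\cM^\bullet_{\FRP})$.} Apply \autoref{pushforwards_of_qcoh_commute_with_FRP} to $j$. This gives $Rj_*(\cM^\bullet_{\FRP}|_U)\simeq (Rj_*\cM^\bullet|_U)_{\FRP}$. By \autoref{derived_version_of_iflat} and exactness of $(-)_{\FRP}$, we obtain $\RGamma_Z(\cM^\bullet_{\FRP})\simeq (\RGamma_Z\cM^\bullet)_{\FRP}$. Its cohomology sheaves are $(\cH^k_Z(\cM^\bullet))_{\FRP}$, and these are quasi-coherent $W_n\cO_X$-modules supported on $Z$. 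Since $i_*i^{-1}$ is exact, it suffices to check on cohomology sheaves. So the lemma reduces to the following claim.

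\emph{Claim.} For a quasi-coherent $W_n\cO_X$-module $\cN$ that is locally killed by a power of an ideal defining $Z$ in $W_nX$, the unit $\cN_{\FRP}\to i_*i^{-1}\cN_{\FRP}$ is an isomorphism.

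\textbf{Step 3: prove the claim on affines.} The question is local, so take $X=\Spec R$ affine and let $N=\Gamma(X,\cN)$. This module is a filtered colimit of submodules killed by $J^m$, where $J\subset W_n(R)$ is the ideal of $Z$ in $W_nX$. Let $Y=\Spec B\to X$ be an affine FRP map. Using the description in the proof of \autoref{pushforward_from_closed_immersion_fully_faithful}, $(i_*i^{-1}\cN_{\FRP})(Y)$ is the sheafification of the presheaf $T\mapsto \Gamma(T^{\can}_X,\cN_{\FRP})$ on $Z_{\FRP}$, evaluated at $Y_Z$. By \autoref{Kato_key_lemma_closed_immersions}, $(Y_Z)^{\can}_X=\Spec\widehat B$. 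So the presheaf value is $N\otimes_{W_n(R)}W_n(\widehat B)$.

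Now $W_n(\widehat B)$ and $W_n(B)$ have the same quotients modulo powers of $J$, because $W_n$ of a thickening is a thickening of $W_n$. Since $N$ is $J$-power torsion, we get $N\otimes W_n(\widehat B)=N\otimes W_n(B)=\cN_{\FRP}(Y)$. The resulting map of presheaves $\cN_{\FRP}\to(\text{presheaf})$ is compatible with restriction. It is an isomorphism on all affine $T\in Z_{\FRP}$, because every affine $T$ is $(T^{\can}_X)_Z$ and $T^{\can}_X$ is itself an affine FRP object over $X$. Since $\cN_{\FRP}$ is already a sheaf, sheafifying yields the isomorphism $\cN_{\FRP}\xrightarrow{\sim}i_*i^{-1}\cN_{\FRP}$.

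\textbf{Expected main obstacle.} I expect Step 3 to be the delicate part, in two places. First, one must pass from \autoref{Kato_key_lemma_closed_immersions}, which is stated over $X$ and $Z$, to the Witt-vector level $W_nX$, $W_nZ$. Second, one must verify the identity $N\otimes W_n(\widehat B)\cong N\otimes W_n(B)$ for torsion $N$. For the first point I would use that $W_nX$ and $W_nZ$ have the same underlying spaces as $X$ and $Z$, and that by \autoref{basic_remark}\ref{basic_remarkb} the FRP site of $X$ controls $W_nY\to W_nX$. I would then argue with the $J$-adic filtration, where $J$ contains $W_n(I)$ and is contained in its radical.
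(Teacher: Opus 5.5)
Your Steps 1--2 are correct and amount to the paper's reduction. The paper uses that quasi-coherent injectives give $\Gamma_Z$-acyclic FRP sheaves and that $\theta_{\cF}=\theta_{\Gamma_Z(\cF)}$; you instead check the unit on the cohomology sheaves $(\cH^k_Z(\cM^\bullet))_{\FRP}$. Your computation in Step 3, that $\Gamma(Y,\cN_{\FRP})\to\Gamma((Y_Z)^{\can}_X,\cN_{\FRP})$ is bijective for affine $Y=\Spec B$, is also the last step of the paper's proof. One minor point: ``$W_n$ of a thickening is a thickening'' alone does not give $W_n(B)/J^mW_n(B)\cong W_n(\widehat B)/J^mW_n(\widehat B)$. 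You need the systems $\{J^mW_n(B)\}$ and $\{W_n(I^mB)\}$ to be cofinal, which uses relative perfectness, e.g.\ $B=R\cdot B^{p^i}$. The paper is equally terse on this.

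The genuine gap is your final sentence. Write $\mathcal{P}(T)=\Gamma(T^{\can}_X,\cN_{\FRP})$ for affine $T\in Z_{\FRP}$. You have shown $\cN_{\FRP}(Y)\cong\mathcal{P}(Y_Z)$ for affine $Y$, whereas $(i_*i^{-1}\cN_{\FRP})(Y)=\mathcal{P}^{\#}(Y_Z)$. So you need $\mathcal{P}$ to already be a sheaf on $Z_{\FRP}$. This does not follow from $\cN_{\FRP}$ being a sheaf, because $T\mapsto T^{\can}_X$ does not send étale coverings to étale coverings. For instance, if $T^{\can}_X=\Spec S$ and $U=D(f)\subset T$, then $U^{\can}_X$ is the spectrum of the $I$-adic completion of $S[1/f]$, which is flat but not étale over $\Spec S$.

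Proving this sheaf property is the main body of the paper's proof. It writes $N=\bigcup_m N[I^m]$ and uses flatness to get $\Gamma(T^{\can}_X,\cN_{\FRP})[I^m]=(N[I^m])_{T^{\can}_X}$, which depends only on the thickening $T_m$. It then uses \cite[Lemma 1.3]{Kato_Duality_theories_for_p_primary_etale_coh_II} to see that $U_m\to T_m$ is étale with $(U\times_TU)_m=U_m\times_{T_m}U_m$, so ordinary étale descent for quasi-coherent modules applies.

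Alternatively, you can repair your own argument. Since $S$ is $I$-adically complete, $(S,IS)$ is a henselian pair, so an étale covering $\{U_k\to T\}$ lifts to étale maps $V_k\to T^{\can}_X$ with $(V_k)_Z=U_k$. These are jointly surjective: the union of the images is open and contains $V(IS)$, hence every closed point. Applying your isomorphism $\Gamma(V,\cN_{\FRP})\cong\mathcal{P}(V_Z)$ to the $V_k$ and the $V_k\times_{T^{\can}_X}V_l$ then transfers the sheaf condition from $\cN_{\FRP}$ to $\mathcal{P}$. You then get $i_*\mathcal{P}\cong\cN_{\FRP}$ and conclude by \autoref{pushforward_from_closed_immersion_fully_faithful}. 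One of these arguments has to be supplied; as written the proof is incomplete.
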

\begin{proof}
 If each $\cM^j$ is injective, then combining \autoref{pushforwards_of_qcoh_commute_with_FRP} and \autoref{derived_version_of_iflat}, 
 we deduce that each $\cM^j$ is $\Gamma_Z$-acyclic and therefore $i^{\flat}$-acyclic too. Thus, by considering injective resolutions by quasi-coherent $W_n\cO_X$-modules, it suffices to show that 
    $\theta_{\cM}\colon \Gamma_Z(\cM)\to i_*i^{\flat}\cM$ is an isomorphism for $\cM\in \QCoh_{W_nX}$.

   To this end, first note that given a sheaf $\cF \in \Sh(X_{\FRP}, \Lambda)$, it holds by construction that $\theta_{\cF} = \theta_{\Gamma_Z(\cF)}$. By exactness of $(-)_{\FRP}$ and \autoref{pushforwards_of_qcoh_commute_with_FRP}, we have that $(\Gamma_Z(\cM))_{\FRP} = \Gamma_Z(\cM_{\FRP})$, where $\Gamma_Z(\cM) \coloneqq \ker(\cM \to j_*\cM|_U) \in \QCoh_{W_nX}$. 
    Thus, we may assume that $\cM = \Gamma_Z(\cM)$, i.e. $\cM|_U = 0$. We may also assume that $X$ is affine.

    We will construct a sheaf $\cF \in \Sh(Z_{\FRP}, \Lambda)$ such that $i_*\cF \cong \cM_{\FRP}$. Since the morphism $\theta_{i_*\cF}$ is an isomorphism by \autoref{pushforward_from_closed_immersion_fully_faithful}, this will conclude the proof. To do so, consider the affine FRP site on $Z$, namely the site whose objects are FRP morphisms $f \colon Y \to Z$ with $Y$ affine, together with the étale topology, the induced category of sheaves is therefore equivalent to that of FRP sheaves. Consider the presheaf $\cG$ on the affine FRP site of $Z$ defined by sending $(T \to Z)$ to $\Gamma(T_X^{\can}, \cM_{\FRP})$ (see \autoref{Kato_key_lemma_closed_immersions}). We start with showing that this is a sheaf. Therefore, let $T \to Z$ be an FRP morphism and let $U \to T$ be an étale cover, with both $U$ and $T$ affine. Set $M_{T_X^{\can}} \coloneqq \Gamma(T_X^{\can}, \cM_{\FRP})$ and define $M_{U_X^{\can}}$ and $M_{(U \times_T U)_X^{\can}}$ similarly. We also set $M = \Gamma(X, \cM)$, and we shall use the same notations for any quasi-coherent module on $X$. By definition, we need to show that the diagram \[ \begin{tikzcd}
        M_{T_X^{\can}} \arrow[rr] &  & M_{U_X^{\can}} \arrow[rr, shift left] \arrow[rr, shift right] &  & M_{(U \times_T U)_X^{\can}}
    \end{tikzcd} \] is an equalizer. Let $I$ be the ideal cutting out $Z \inc X$. To ease notations, we will still denote by $I$ the image of this ideal to $\cO_{T_X^{\can}}$, and so on, and we denote by $M[I^m]$ the $I^m$-torsion submodule of $M$. For any $m \geq 1$, it follows by flatness of $T_X^{\can} \to X$ that $M_{T_X^{\can}}[I^m] = (M[I^m])_{T_X^{\can}}$ and similarly for $U_X^{\can}$ and $(U \times_T U)_X^{\can}$. By exactness of colimits and the facts that $M = \bigcup_m M[I^m]$ and that the functor $M\mapsto M_{T^{\can}_X}$ commutes with colimits, it is therefore enough to show that each diagram 
    \[ \begin{tikzcd}
        {M[I^m]_{T_X^{\can}}} \arrow[rr] &  & {M[I^m]_{U_X^{\can}}} \arrow[rr, shift left] \arrow[rr, shift right] &  & {M[I^m]_{(U \times_T U)_X^{\can}}}
    \end{tikzcd} \] is an equalizer. Fix $m \geq 1$, set $Z_m = V(I^m) \inc X$, and write $T_m \coloneqq T_X^{\can} \times_X Z_m$ and similarly for $U_m$ and $(U \times_Z U)_m$. Given that $T_1 = T$, $U_1 = U$ and $(U \times_T U)_1 = U \times_T U$ by \autoref{Kato_key_lemma_closed_immersions}, we obtain by \cite[Lemma 1.3]{Kato_Duality_theories_for_p_primary_etale_coh_II} that $U_m \to T_m$ is étale and $(U \times_T U)_m = U_m \times_{T_m} U_m$. The diagram above is therefore an equalizer by usual étale descent for quasi-coherent sheaves. Thus, we have shown that $\cG$ is in fact a sheaf in the affine FRP site of $Z$, and hence gives rise to a sheaf on $Z_{\FRP}$.

    Let us conclude this proof by showing that $i_*\cG = \cM_{\FRP}$. It is enough to verify this when applied to an FRP morphism $f \colon Y \to X$ with $Y$ affine. By construction, we have $\Gamma(Y, \cM_{\FRP}) = M_Y$, while $\Gamma(Y, i_*\cG) = M_{(Y_Z)_X^{\can}}$, so we want to show that the natural map $M_Y \to M_{(Y_Z)_X^{\can}}$ is an isomorphism. Write $Y = \Spec(S)$. 
    By  \autoref{Kato_key_lemma_closed_immersions} we have  $(Y_Z)_X^{\can} = \Spec(\widehat{S})$, where $\widehat{S}$ 
    denotes the $I$-adic completion of $S$. Thus, we have to show that $M_S \to M_S \otimes_S \widehat{S}$ is an isomorphism. 
    Since $S \to \widehat{S}$ becomes an isomorphism modulo $I^m$ for all $m$ by \stacksproj{0912},
    and $M_S = \bigcup_m M_S[I^m]$, the result follows by a colimit argument.
\end{proof}

Now we consider  $\Lambda = W_n(\bF_q)$. 

\begin{definition}\label{defn:C_adj}
We define $\cC_{X, \adj} \inc D^+(X_{\FRP}, W_n(\bF_q))$ to be the full subcategory consisting of complexes $\cF^{\bullet}$ such that 
for any closed immersion $i\colon Z\inj X$ the natural transformation 
$\theta_{\cF^{\bullet}}\colon R\Gamma_Z(\cF^\bullet)\to i_*i^!\cF^{\bullet}$ is an isomorphism.
\end{definition}

\begin{corollary}\label{exact_upper_shriek_triangle}
\begin{enumerate}[label=(\arabic*)]
    \item\label{exact_upper_shriek_triangle1} The category $\cC_{X,\adj}$ is a full triangulated subcategory of the derived category 
    $D^+(X_{\FRP}, W_n(\bF_q))$, and contains all elements of the form $\cM^{\bullet}_{\FRP}$, 
    with $\cM^{\bullet} \in D^+(\QCoh_{W_n X})$.
    \item \label{exact_upper_shriek_triangle2} If $i\colon Z\inj X$ is a closed immersion, then $i^!(\cC_{X,\adj})\subset \cC_{Z,\adj}$.
    \item\label{exact_upper_shriek_triangle3} If $j\colon U\inj X$ is an open immersion, then $j^{-1}(\cC_{X,\adj})\subset \cC_{U,\adj}$.
\end{enumerate}        
\end{corollary}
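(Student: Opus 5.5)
For \ref{exact_upper_shriek_triangle1} the plan is to use that $\theta$ is a natural transformation of triangulated functors $R\Gamma_Z\to i_*i^!$ on $D^+(X_{\FRP},W_n(\bF_q))$. Both $R\Gamma_Z$ and $i_*i^!$ are exact functors of triangulated categories: they are right derived functors of left exact functors, and $i_*$ is exact by \autoref{pushforward_from_closed_immersion_fully_faithful}. So for a distinguished triangle $\cF_1^\bullet\to\cF_2^\bullet\to\cF_3^\bullet\to$ with two of the $\theta_{\cF_k^\bullet}$ isomorphisms, the five lemma shows the third is an isomorphism. Stability under shifts and isomorphisms is clear, so $\cC_{X,\adj}$ is a full triangulated subcategory. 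The containment of $\cM^\bullet_{\FRP}$ is exactly \autoref{qcoh_is_good_for_right_adjoint_non_derived_version}, applied to every closed immersion $Z\inj X$.

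For \ref{exact_upper_shriek_triangle2}, let $\cF^\bullet\in\cC_{X,\adj}$ and let $i'\colon Z'\inj Z$ be a closed immersion, so $i i'\colon Z'\inj X$ is a closed immersion. I first want to show $i'^!i^!\cF^\bullet\simeq (ii')^!\cF^\bullet$ and $R\Gamma_{Z'}(i^!\cF^\bullet)$ compatible with $R\Gamma_{Z'}(\cF^\bullet)$. The key is that $i_*$ is fully faithful and exact with $i^{-1}i_*=\id$, and that $i_*$ commutes with $\Gamma_{Z'}$: by \autoref{pushforward_from_closed_immersion_fully_faithful} and the formula $(i_*\cG)|_{Y_\et}=i_*(\cG|_{Y_{Z,\et}})$, we have $i_*\Gamma_{Z'}(\cG)=\Gamma_{Z'}(i_*\cG)$ at the level of sheaves, where the complement $Z\setminus Z'=U'\cap Z$. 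Exactness of $i_*$, together with the fact that $i_*$ sends injectives to $\Gamma_{Z'}$-acyclics, then gives the derived version, and I would expect the triangle of \autoref{derived_version_of_iflat} to supply the acyclicity. Hence
\[ i_*R\Gamma_{Z'}(i^!\cF^\bullet)\simeq R\Gamma_{Z'}(i_*i^!\cF^\bullet)\simeq R\Gamma_{Z'}R\Gamma_Z\cF^\bullet\simeq R\Gamma_{Z'}\cF^\bullet, \]
using $\theta$ for $Z$ and $Z'\subset Z$. On the other side, $i_*i'_*i'^!i^!\cF^\bullet$ is compared with $(ii')_*(ii')^!\cF^\bullet$. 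Here I would check that $(ii')^!\simeq i'^!i^!$ holds in the derived sense. For this, note that $i^{\flat}$ preserves injectives: it has the exact left adjoint $i_*$ on objects of the form $\Gamma_Z$-torsion, or, more concretely, use $\theta$ on injectives. Unwinding shows $\theta_{i^!\cF^\bullet}$ for $Z'$ corresponds to $\theta_{\cF^\bullet}$ for $Z'$, which is an isomorphism; then fully faithfulness of $i_*$ lets us drop it.

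For \ref{exact_upper_shriek_triangle3}, given $\cF^\bullet\in\cC_{X,\adj}$ and a closed immersion $i_U\colon Z_U\inj U$, I would take $Z=\overline{Z_U}$, the scheme-theoretic closure in $X$, so that $Z\cap U=Z_U$. Then $j^{-1}$ commutes with $R\Gamma_Z$, $i_*$ and $i^!$, since restriction to an open preserves injectives (it has the exact left adjoint $j_!$) and commutes with kernels and $j_*$ by base change of open immersions. So $j^{-1}\theta_{\cF^\bullet}=\theta_{j^{-1}\cF^\bullet}$, and the latter is therefore an isomorphism.

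I expect the main obstacle to be \ref{exact_upper_shriek_triangle2}, specifically the derived identification $i'^!i^!\simeq(ii')^!$ and the compatibility of the $\theta$'s. Without a genuine adjunction, one must verify directly that $i^{\flat}$ sends injectives to $i'^{\flat}$-acyclics. If that fails in general, I would instead restrict to $\cF^\bullet\in\cC_{X,\adj}$ and compute with $i_*i^!\cF^\bullet\simeq R\Gamma_Z\cF^\bullet$ to avoid it.
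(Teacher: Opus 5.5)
Your treatment of (1) and (3) is fine and agrees with the paper. In (3) you make explicit the passage to the closure of $Z_U$, which the paper leaves implicit.

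The gap is in the main line of (2). There you want $(ii')^!\simeq i'^!i^!$ as an identity of derived functors on all of $D^+$. You justify it by saying that $i^{\flat}$ preserves injectives because it has the exact left adjoint $i_*$ on $\Gamma_Z$-torsion objects. That adjunction is exactly what is \emph{not} available on the FRP site.

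On a sheaf $\cF$ supported on $Z$ one has $i^{\flat}\cF=i^{-1}\cF$, and $i^{-1}$ is the \emph{left} adjoint of $i_*$. It behaves as a right adjoint at $\cF$ precisely when the unit $\theta_{\cF}\colon\cF\to i_*i^{-1}\cF$ is an isomorphism (the underived form of \autoref{adjoint_iff_}). That is the same as asking that $\cF$ lie in the essential image of $i_*$. A sheaf supported on $Z$ need not do so on $X_{\FRP}$. This is why the paper warns that $i^{\flat}$ need not be right adjoint to $i_*$, and why \autoref{qcoh_is_good_for_right_adjoint_non_derived_version} requires real work.

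Your alternative, ``use $\theta$ on injectives'', is circular: $\cF^\bullet\in\cC_{X,\adj}$ says nothing about $\theta$ on the terms of an injective resolution. Even the underived identity $(ii')^{\flat}\cong i'^{\flat}i^{\flat}$ is only clear when $\theta$ is an isomorphism, since it requires commuting $i^{-1}$ past $\Gamma_{Z'}$ on $\Gamma_Z\cF$.

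The detour is unnecessary, and the fallback you mention at the end is the correct argument; it is the paper's. You have already shown, for $\cF^\bullet\in\cC_{X,\adj}$, that $i_*R\Gamma_{Z'}(i^!\cF^\bullet)\simeq R\Gamma_{Z'}(\cF^\bullet)$. The acyclicity you need there is immediate, because $i_*$ preserves injectives, being right adjoint to the exact functor $i^{-1}$.

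\begin{enumerate}
\item Apply $(ii')^{-1}=i'^{-1}i^{-1}$ and $i^{-1}i_*=\id$ (\autoref{pushforward_from_closed_immersion_fully_faithful}). This gives $(ii')^!\cF^\bullet\simeq i'^!i^!\cF^\bullet$ for this particular $\cF^\bullet$, with no statement about $i^{\flat}$ on injectives.
\item Under these identifications, use naturality of the unit of $(i^{-1},i_*)$ and the factorization of the unit of $((ii')^{-1},(ii')_*)$. They turn $\theta_{\cF^\bullet}$ for $Z'\subset X$ into $i_*\theta_{i^!\cF^\bullet}$ for $Z'\subset Z$. The former is an isomorphism since $\cF^\bullet\in\cC_{X,\adj}$.
\item Applying $i^{-1}$ then shows that $\theta_{i^!\cF^\bullet}$ is an isomorphism.
\end{enumerate}

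So the hypothesis $\cF^\bullet\in\cC_{X,\adj}$ is used twice, once for $Z$ and once for $Z'$, and nothing more is needed. You should make this the argument rather than the fallback.
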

\begin{proof}
\ref{exact_upper_shriek_triangle1} follows directly from \autoref{qcoh_is_good_for_right_adjoint_non_derived_version}.
\ref{exact_upper_shriek_triangle3} follows from the fact that for any closed immersion $i\colon Z\inj X$
we have 
\[(j^{-1}\theta_{\cF}\colon j^{-1}\Gamma_Z(\cF)\to j^{-1}(i_*i^{-1}\Gamma_Z(\cF)))= 
(\theta_{j^{-1}\cF}\colon \Gamma_{Z\cap U} (j^{-1}\cF))\to i_{U*}i_U^{-1}\Gamma_{Z\cap U}(j^{-1}\cF)),\]
where $i_U\colon Z\cap U \inj U$ is the closed immersion.
For \ref{exact_upper_shriek_triangle2}, let $i_1\colon Z_1\inj Z$ be a closed immersion, we will write $i(Z_1)$ 
when viewing it as a closed subscheme of $X$. Let $\cF^{\bullet}\in \cC_{X,\adj}$. We have to show that
\begin{equation}\label{exact_upper_shriek_triangle4}
  \theta_{i^!\cF^{\bullet}}:  R\Gamma_{Z_1}(i^{-1}R\Gamma_Z(\cF^\bullet))\to i_{1*}i_1^!R\Gamma_{Z_1}(i^{-1}R\Gamma_Z(\cF^{\bullet}))
\end{equation}
is an isomorphism.
Since $\cF^{\bullet}\in \cC_{X,\adj}$ we have an isomorphism
\begin{equation}\label{exact_upper_shriek_triangle5}
R\Gamma_{i(Z_1)}(\cF^{\bullet})\xrightarrow{\simeq} (i\circ i_1)_* (i\circ i_1)^{-1}R\Gamma_{i(Z_1)}(\cF^{\bullet}).
\end{equation}
Again using that $\cF^{\bullet}\in \cC_{X,\adj}$ we can identify the left hand side with
\[R\Gamma_{i(Z_1)}(\cF^{\bullet})=R\Gamma_{i(Z_1)}R\Gamma_Z(\cF^{\bullet})=R\Gamma_{i(Z_1)}i_*i^{-1}R\Gamma_Z(\cF^{\bullet})=
i_*R\Gamma_{Z_1}(i^{-1}R\Gamma_Z(\cF^{\bullet})).\]
Hence using $i^{-1}i_*=\id$ by \autoref{pushforward_from_closed_immersion_fully_faithful},  
the right hand side of \eqref{exact_upper_shriek_triangle5} becomes
\[i_* (i_{1*}i_1^{-1}R\Gamma_{Z_1}(i^{-1}R\Gamma_Z(\cF^{\bullet}))).\]
Thus applying $i^{-1}$ to \eqref{exact_upper_shriek_triangle5} yields \eqref{exact_upper_shriek_triangle4}.
\end{proof}

\section{Frobenius modules, Cartier modules and their solution functors}

Throughout, we fix a Noetherian and $F$-finite $\bF_q$-scheme $X$.

\subsection{First definitions and duality}

Let us first recall the basic definitions of Frobenius/Cartier modules and crystals, following \cite{Bockle_Pink_Cohomological_Theory_of_crystals_over_function_fields, Blickle_Bockle_Cartier_modules_finiteness_results}.

\begin{definition}\label{def Frob module}
An {\em $r$-Frobenius module over $W_n X$} is a pair $(\cM, \tau_{\cM})$ consisting of a quasi-coherent $W_n\cO_X$-module $\cM$ and an $W_n\cO_X$-linear morphism $\tau_{\cM} \colon \cM \to F^r_*\cM$, called the \emph{structural morphism}. We will usually write $\cM$  instead of $(\cM,\tau_{\cM})$, and since $r$ and $n$ are fixed we will simply refer to $\cM$ as a Frobenius module. A morphism $(\cM, \tau_{\cM})\to (\cN,\tau_{\cN})$ between two Frobenius modules is a $W_n\cO_X$-linear map $\theta \colon \cM\to \cN$ such that the square
    \[ \begin{tikzcd}
     \cM \arrow[r, "\theta"] \arrow[d, "\tau_{\cM}"'] & \cN \arrow[d, "\tau_{\cN}"] \\
     F^r_*\cM \arrow[r, "F^r_*\theta"'] & F^r_*\cN
     \end{tikzcd} \] 
commutes. The category of Frobenius modules is denoted by $\QCoh_{W_nX}^{F^r}$. 
 \begin{enumerate}
    \item We say that the Frobenius module $\cM$ is \emph{coherent} if it is coherent as an element in $\QCoh_{W_nX}$. The full subcategory of coherent Frobenius modules over $W_nX$ is denoted $\Coh_{W_nX}^{F^r}$.
    \item We say that the Frobenius module $\cM$ is \emph{ind-coherent} if it is the union of its coherent sub-Frobenius modules. The full subcategory of ind-coherent Frobenius modules is denoted $\IndCoh_{W_nX}^{F^r}$.
    \item We say that $\cM$ is \emph{nilpotent} if $\tau_{\cM}^n = 0$ for some $n \geq 0$, where we make the abuse of notations \[ \tau_{\cM}^n \coloneqq F^{r(n - 1)}_*\tau_{\cM} \circ \dots \circ F^r_*\tau_{\cM} \circ \tau_{\cM}, \] and we say that it is locally nilpotent if it is a union of nilpotent Frobenius modules.
    
    \item We denote by $\Crys_{W_nX}^{F^r}$ (resp. $\IndCrys_{W_nX}^{F^r}$, $\QCrys_{W_nX}^{F^r}$) the Serre quotients of $\Coh_{W_nX}^{F^r}$ (resp. $\IndCoh_{W_nX}^{F^r}$, $\QCoh_{W_nX}^{F^r}$) by locally nilpotent Frobenius modules.

    \item Note that by adjunction, a morphism $\tau_{\cM} \colon \cM \to F^r_*\cM$ corresponds to a morphism $\tau_{\cM}^* \colon F^{r, *}\cM \to \cM$, called the \emph{adjoint structural morphism}. We say that the Frobenius module $\cM$ is \emph{unit} if 
    $\tau_{\cM}^*$ is an isomorphism.
    \end{enumerate}
\end{definition}

\begin{definition}\label{def Cartier module}
   An {\em $r$-Cartier module over $W_n X$} is a pair $(\cM, \kappa_{\cM})$ consisting of a quasi-coherent $W_n\cO_X$-module $\cM$ and an $W_n\cO_X$-linear morphism $\kappa_{\cM} \colon F^r_*\cM \to \cM$, called the \emph{structural morphism}. We will usually write $\cM$ instead of $(\cM,\kappa_{\cM})$, and since $r$ and $n$ are fixed we will simply refer to $\cM$ as a Cartier module. A morphism $(\cM, \kappa_{\cM})\to (\cN,\kappa_{\cN})$ between two Cartier modules is a $W_n\cO_X$-linear map $\theta \colon \cM\to \cN$ such that the square
     \[ \begin{tikzcd}
    F^r_*\cM \arrow[r, "F^r_*\theta"] \arrow[d, "\kappa_{\cM}"'] & F^r_*\cN \arrow[d, "\kappa_{\cN}"] \\
    \cM \arrow[r, "\theta"'] & \cN
    \end{tikzcd} \]
    commutes. The category of Cartier modules is denoted $\QCoh_{W_nX}^{C^r}$. We define the notions of coherent, ind-coherent, nilpotent and locally nilpotent Cartier modules as in \autoref{def Frob module}. We also define the corresponding Serre quotients.

    As in the Frobenius module case, note that by adjunction, a morphism $\kappa_{\cM} \colon F^r_*\cM \to \cM$ corresponds to a morphism $\kappa_{\cM}^{\flat} \colon \cM \to F^{r, \flat}\cM$, called the \emph{adjoint structural morphism}. We say that a Cartier module $\cM$ is \emph{unit} if $\kappa_{\cM}^{\flat}$ is an isomorphism. The full subcategory of unit Cartier modules is denoted by $\QCoh_{W_nX}^{C^r, \unit}$, and that of ind-coherent and unit Cartier modules is denoted $\IndCoh_{W_nX}^{C^r, \unit}$.
\end{definition}

\begin{remark}
    \begin{enumerate}[label=(\arabic*)]    
        \item It follows by construction that we can pushforward both Frobenius modules and Cartier modules (by quasi-compact and quasi-separated morphisms). Using the reformulation of Frobenius (resp. Cartier) modules in terms of their adjoint structural morphisms, we see that we can pullback Frobenius modules by any morphism (resp. apply the functor $f^{\flat}$ for $f$ a finite morphism). We will see this again later more explicitly.
        \item At certain times in this article, we will implicitly use that injective objects in $\IndCoh_{W_nX}^{C^r}$ are also injective in $\Mod(W_n\cO_X)$ (\cite[Proposition 4.2.4]{Baudin_Duality_between_Witt_Cartier_crystals_and_perverse_sheaves}), for example to argue that taking derived pushforwards in the category of ind-coherent Cartier modules and then forgetting the Cartier structure corresponds to the usual derived pushforward of sheaves.
    \end{enumerate}
\end{remark}

Frobenius and Cartier modules are related through an enhancement of Grothendieck duality, as we recall now.

\begin{construction}[{\cite[Construction 5.1.2]{Baudin_Duality_between_Witt_Cartier_crystals_and_perverse_sheaves}}]\label{construction_pairings}
    Let us define the pairings that give rise to Grothendieck duality in this context.
    \begin{enumerate}
		\item Let $\cM \in \Coh_{W_nX}^{F^r}$ and $\cN \in \IndCoh_{W_nX}^{C^r}$. Define the following Cartier module structure on $\HHom_{W_n\cO_X}(\cM, \cN)$: \[ \left(f \colon \cM|_U \ra \cN|_U \right) \longmapsto \left(\kappa_{\cN} \circ F^r_*f \circ \tau_{\cM} \colon \cM|_U \to \cN|_U \right) \] for all opens $U \inc X$. Due to the presence of $F^r_*f$ in the formula, this morphism is indeed $q^{-1}$-linear. We will often drop the symbol $W_n\cO_X$ in $\HHom_{W_n\cO_X}(-, -)$.
		\item Let $\cM \in \Coh_{W_nX}^{C^r}$ and $\cN \in \IndCoh_{W_nX}^{C^r, \unit}$. Define the following Frobenius module structure on $\HHom(\cM, \cN)$: 
		\[ \left(f \colon \cM|_U \to \cN|_U \right) \longmapsto \left((\kappa_{\cN}^{\flat})^{-1} \circ F^{r, \flat}f \circ \kappa_{\cM}^{\flat} \colon \cM|_U \ra \cN|_U \right) \] for all opens $U \inc X$. Due to the presence of $F^{r, \flat}f$ in the formula, this morphism is indeed $q$-linear.
    \end{enumerate}
    We can take derived functor versions of these two constructions. Given a unit dualizing complex $W_n\omega_X^{\bullet}$, its associated Grothedieck duality functor is denoted \[ \bD_G \coloneqq \cR\HHom(-, W_n\omega_X^{\bullet}). \]
\end{construction}

Let us define the analogue of a dualizing complex in our context.

\begin{definition}
    Let $\iota \colon  \QCoh_{W_nX}^{C^r, \unit} \to \QCoh_{W_nX}$ denote the inclusion. We say that $W_n\omega_X^{\bullet} \in D(\QCoh_{W_nX}^{C^r, \unit})$ is a \emph{unit dualizing complex over $W_nX$} (or simply unit dualizing complex) if $R\iota(W_n\omega_X^{\bullet}) \in D(\QCoh_{W_nX})$ is a dualizing complex for the scheme $W_nX$ (see \stacksproj{0A87}). In this case, we say that $R\iota(W_n\omega_X^{\bullet})$ underlies a unit dualizing complex (and by abuse of notation we will simply denote it by $W_n\omega_X^{\bullet}$.
\end{definition}

\begin{example}\label{basic_example_unit_dc}
    Let $X$ be a separated scheme of finite type over a perfect field $k$, with structural morphism $\pi \colon X \to \Spec k$. Then by \cite[Corollary 5.1.16]{Baudin_Duality_between_Witt_Cartier_crystals_and_perverse_sheaves}, the usual dualizing complex $W_n\omega_X^{\bullet} \coloneqq \pi^!W_n\cO_{\Spec k}$ naturally underlies a unit dualizing complex. In fact, an explicit complex of quasi-coherent unit Cartier modules representing $W_n\omega^\bullet$
    is the residual complex $\pi^{\Delta}W_n(k)$, with Cartier operator induced by the trace along $F^r$, see
    \cite[IV, Theorem 3.1  and Theorem 4.2]{Hartshorne_Residues_and_Duality}.
\end{example}

\begin{thm}\label{duality_Frobenius_and_Cartier}
    Let $X$ be a separated, Noetherian and $F$-finite $\bF_q$-scheme with a unit dualizing complex $W_n\omega_X^{\bullet}$. Then $\bD_G$ satisfies $\bD_G \circ \bD_G \cong id$ and factors through crystals. In particular, it induces an equivalence of categories \[ D^b(\Crys_{W_nX}^{F^r})^{op} \longrightarrow D^b(\Crys_{W_nX}^{C^r}). \] 
\end{thm}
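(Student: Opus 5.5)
Since \autoref{duality_Frobenius_and_Cartier} is essentially the main duality result of \cite{Baudin_Duality_between_Witt_Cartier_crystals_and_perverse_sheaves}, the plan is to reduce to the underlying statement about complexes of quasi-coherent modules and then check that everything is compatible with the Frobenius/Cartier structures and with nilpotence. Concretely, I would first make sense of $\bD_G$ as a functor $D^b(\Coh^{F^r}_{W_nX})^{op}\to D^b(\Coh^{C^r}_{W_nX})$ and backwards. Choose a representative of $W_n\omega_X^{\bullet}$ by a bounded complex of injective objects of $\IndCoh^{C^r,\unit}_{W_nX}$; these are also injective $W_n\cO_X$-modules, as recalled in the remark above. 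Then apply \autoref{construction_pairings}(1) termwise to define $\HHom(\cM^\bullet,W_n\omega_X^\bullet)$ as a complex of Cartier modules. Symmetrically, apply \autoref{construction_pairings}(2) to get a Frobenius structure in the other direction. Forgetting the structures, this is the usual Grothendieck dual, so coherence of cohomology and boundedness follow from $W_n\omega_X^\bullet$ being dualizing on the Noetherian scheme $W_nX$.

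Next I would construct the evaluation map $\cM^\bullet\to\bD_G\bD_G\cM^\bullet$. On underlying complexes it is the classical biduality isomorphism (\stacksproj{0A87}). The substantive check is that it commutes with the structure maps. For a Frobenius module $\cM$, one has to verify that evaluation $m\mapsto(f\mapsto f(m))$ intertwines $\tau_\cM$ with the Frobenius structure on $\HHom(\HHom(\cM,\omega),\omega)$. Unwinding the formulas, this reduces to the identity $(\kappa^\flat_\omega)^{-1}\circ F^{r,\flat}(\kappa_\omega\circ F^r_*f\circ\tau_\cM)\circ\kappa_\omega^\flat = \dots$. This identity should follow from the adjunction $(F^r_*,F^{r,\flat})$ and the unit property of $\omega$. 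The Cartier case is analogous. Hence $\bD_G\circ\bD_G\cong\id$ on $D^b$ of coherent Frobenius (resp.\ Cartier) modules.

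For the factorization through crystals, I would show that $\bD_G$ sends complexes with locally nilpotent cohomology to complexes with locally nilpotent cohomology. By dévissage along the standard truncation triangles, it suffices to treat a single coherent nilpotent module $\cM$ with $\tau^N_\cM=0$. The structure on $\HHom(\cM,\omega^j)$ then satisfies $\kappa^N(f)=\kappa^N_\omega\circ F^{rN}_*f\circ\tau^N_\cM=0$, so each term is nilpotent with a uniform bound. Therefore the cohomology is nilpotent, and a complex of nilpotent modules of bounded length has locally nilpotent cohomology.

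Finally one needs $D^b(\Crys)\simeq D^b(\Coh)/D^b_{\mathrm{nil}}(\Coh)$ (the Verdier quotient), which is the one point I expect to be the main obstacle. I would invoke the standard criterion for Serre quotients of abelian categories (e.g.\ Miyachi's): every coherent module is an extension of a crystal-minimal one by a nilpotent one, and the nilpotent subcategory is closed under extensions. Granting this, $\bD_G$ descends to the quotients, and $\bD_G\circ\bD_G\cong\id$ persists. So $\bD_G$ is an equivalence $D^b(\Crys^{F^r}_{W_nX})^{op}\to D^b(\Crys^{C^r}_{W_nX})$, with quasi-inverse $\bD_G$. Separatedness is used only to have the unit dualizing complex and the global $\HHom$ formalism as in the cited reference.
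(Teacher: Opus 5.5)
The paper gives no argument of its own here; it quotes \cite[Theorem 5.1.12]{Baudin_Duality_between_Witt_Cartier_crystals_and_perverse_sheaves}. Your module-level checks are fine. If you characterize $\tau(g)(n)$ as the unique $w$ with $\kappa_\omega(aw)=g(\kappa_{\cN}(an))$ for all $a$, you get $\tau(\mathrm{ev}_m)=\mathrm{ev}_{\tau_{\cM}(m)}$, and dually $\kappa(\mathrm{ev}_n)=\mathrm{ev}_{\kappa_{\cN}(n)}$. The nilpotence estimates are also right.

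There is, however, a genuine gap at ``Hence $\bD_G\circ\bD_G\cong\id$ on $D^b$ of coherent Frobenius (resp.\ Cartier) modules''. With an injective or residual model $\cI^\bullet$ of $W_n\omega_X^\bullet$, the dual of a coherent Cartier module $\cN$ is the complex $\HHom(\cN,\cI^\bullet)$ of Frobenius modules. Its terms are in general neither coherent nor ind-coherent. For example, take $R=k[[x]]$ with $k$ perfect, $n=r=1$, $\cN=\omega_R=R\,dx$, and the residual complex $K\,dx\to (K/R)\,dx$ with the Cartier operator. Then \autoref{construction_pairings} yields $K\to K/R$ with Frobenius $f\mapsto f^p$. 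Every finitely generated $F$-stable submodule of $K$ lies in $R$, and $K/R$ has none except $0$.

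So you have only produced $\bD_G$ from $D^b(\Coh^{C^r}_{W_nX})^{op}$ into the derived category of quasi-coherent Frobenius modules, not into $D^b(\Coh^{F^r}_{W_nX})$ or $D^b(\Crys^{F^r}_{W_nX})$. The argument identifying $D^b$ of coherent objects with complexes of ind-coherent objects having coherent cohomology works on the Cartier side. It does not work on the Frobenius side, where even the usual criterion fails: for $n=1$ and $X$ integral, the free Frobenius module on one generator surjects onto $\cO_X$ but has no nonzero coherent Frobenius submodule. A biduality isomorphism in the big category then gives neither fullness nor essential surjectivity on the small ones.

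The missing ingredient is a comparison theorem of B\"ockle--Pink / Blickle--B\"ockle type. It identifies $D^b(\Crys^{F^r}_{W_nX})$ (resp.\ $D^b(\Crys^{C^r}_{W_nX})$) with the complexes in $D^b(\QCrys^{F^r}_{W_nX})$ (resp.\ $D^b(\IndCrys^{C^r}_{W_nX})$) whose cohomology crystals are coherent. This is a statement about crystals, so the order of your argument must be reversed. First set up $\bD_G$ between these big derived categories of crystals; your nilpotence and evaluation computations are the right local inputs there. Only then restrict to $D^b(\Crys)$ via the comparison. This is what the cited theorem does.

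Conversely, the step you flagged as the main obstacle is standard. $D^b(\mathcal{A})/D^b_{\mathcal{B}}(\mathcal{A})\simeq D^b(\mathcal{A}/\mathcal{B})$ holds for every Serre subcategory $\mathcal{B}\subset\mathcal{A}$ by Miyachi's theorem, so no ``crystal-minimal'' hypothesis is needed.

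Finally, the remark you invoke concerns injectives of $\IndCoh^{C^r}_{W_nX}$, not of the unit subcategory, and says nothing about bounded resolutions. It is simpler to use the residual complex with its induced unit Cartier structure (cf.\ \autoref{basic_example_unit_dc}).
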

\begin{proof}
    See \cite[Theorem 5.1.12]{Baudin_Duality_between_Witt_Cartier_crystals_and_perverse_sheaves}.
\end{proof}

\subsection{Solution functors on Frobenius modules}

Let us start with recalling the Riemann--Hilbert correspondence for Frobenius modules \cite{Emerton_Kisin_Riemann-Hilbert_correspondence, Bockle_Pink_Cohomological_Theory_of_crystals_over_function_fields, Bhatt_Lurie_RH_corr_pos_char}.

\begin{definition}\label{definition_Sol_Frobenius_modules}
    Let $\cM$ be a Frobenius module on $X$, and let $f \colon Y \to X$ be any morphism of schemes.
    As already mentioned, we can endow $f^*\cM$ with a Frobenius structure via the composition 
    \[ \begin{tikzcd}
        f^*\cM \arrow[rr, "f^*\tau_{\cM}"] &  & f^*F^r_*\cM \arrow[rr] &  & F^r_*f^*\cM,
    \end{tikzcd} \] where the second arrow comes from the natural transformation $f^*F^r_* \to F^r_*f^*$.

    In particular, if $\cM_{\et}$ denotes the extension of the quasi-coherent $W_n\cO_X$-module $\cM$ to the étale site of $W_nX$, 
    we can endow it with a Frobenius module structure (which we will still denote by $\tau_{\cM}$). We then define the solution functor
    \begin{equation}\label{eq:FSol-func}
    \begin{tikzcd}[row sep = tiny]
    \QCoh_{W_nX}^{F^r} \arrow[rr] &  & {\Sh(X_{\et}, W_n(\bF_q))}                       \\
    \cM \arrow[rr, maps to]       &  & \Sol_{\et}(\cM) \coloneqq \ker(\tau_{\cM} - 1 \colon \cM_{\et} \to \cM_{\et})
    \end{tikzcd}
    \end{equation}
\end{definition}

\begin{thm}[\cite{Bockle_Pink_Cohomological_Theory_of_crystals_over_function_fields, Bhatt_Lurie_RH_corr_pos_char}]\label{Riemann_Hilbert_Frobenius_modules} 
    The functor \eqref{eq:FSol-func} annihilates nilpotent Frobenius modules, and induces an equivalence of categories \[ \begin{tikzcd}
        \Crys_{W_nX}^{F^r} \arrow[rr, "\cong"] &  & {\Sh_c(X_{\et}, W_n(\bF_q))},
    \end{tikzcd}\] where the right hand side denotes the category of \'etale  constructible $W_n(\bF_q)$-sheaves.
\end{thm}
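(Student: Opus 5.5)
Since \autoref{Riemann_Hilbert_Frobenius_modules} is a known theorem cited from \cite{Bockle_Pink_Cohomological_Theory_of_crystals_over_function_fields, Bhatt_Lurie_RH_corr_pos_char}, the plan is to reconstruct the argument in three steps: exactness and killing of nilpotents, full faithfulness, and essential surjectivity onto constructible sheaves. First, nilpotent Frobenius modules are killed: if $\tau_{\cM}^m=0$, then a local section $s$ with $\tau(s)=s$ satisfies $s=\tau^m(s)=0$. For locally nilpotent modules we pass to the union. Next, $\Sol_{\et}$ is exact on coherent Frobenius modules. Left exactness is formal. Right exactness follows from the Artin--Schreier--Witt type surjectivity: étale locally, the equation $x-\tau(x)=y$ can be solved for a coherent module, because the equation $\tau^*$-twisted defines a finite étale cover (the Lang isogeny). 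Hence $\Sol_{\et}$ factors through the Serre quotient $\Crys^{F^r}_{W_nX}$.

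For full faithfulness and essential surjectivity, I would construct a quasi-inverse. To a constructible sheaf $L$ of $W_n(\bF_q)$-modules we associate $\cM_L := L\otimes_{W_n(\bF_q)} W_n\cO_{X_{\et}}$. This is pushed down from the étale site as in the proper base change setting: concretely, if $L=\pi_*\Lambda$ for $\pi$ finite, take $\pi_*W_n\cO$ with its Frobenius. One then checks that $\Sol_{\et}(\cM_L)\cong L$ via Artin--Schreier--Witt theory. In the other direction, for a unit coherent crystal $\cM$, the map $\Sol_{\et}(\cM)\otimes W_n\cO_{X_{\et}}\to \cM_{\et}$ is an isomorphism. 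Unit-ness matters because every crystal is represented up to nilpotence by a module whose adjoint structural map $\tau^*$ becomes an isomorphism after passing to the unit-ification (a colimit of $F^{r*}$-pullbacks). The reduction to $n=1$ proceeds by dévissage along $0\to p^{n-1}\cM\to\cM\to\cM/p^{n-1}\to0$ and the five lemma, using exactness.

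The main obstacle is essential surjectivity onto constructible sheaves, and in particular constructibility of $\Sol_{\et}(\cM)$ for coherent $\cM$. I would handle it by Noetherian induction. On a dense regular open where $\cM$ is locally free and (after replacing by the unit part) $\tau^*$ is an isomorphism, Lang's theorem makes $\Sol_{\et}(\cM)$ locally constant with finite stalks of the right rank. On the closed complement, $i^*$ and $j_!$ are compatible with $\Sol_{\et}$, so we can glue along $j_!j^{-1}\to \id\to i_*i^{-1}$. Full faithfulness on crystals then reduces, via the same open/closed dévissage and exactness, to the lisse case, where it is the classical equivalence between unit $F$-crystals that are locally free and étale local systems (Katz).
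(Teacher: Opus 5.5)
The paper gives no argument of its own for this statement. It quotes \cite[Theorem 3.2.3]{Baudin_Duality_between_Witt_Cartier_crystals_and_perverse_sheaves}, \cite[Theorem 10.3.6]{Bockle_Pink_Cohomological_Theory_of_crystals_over_function_fields} and \cite[Theorems 9.1.6 and 10.2.7]{Bhatt_Lurie_RH_corr_pos_char}, so your reconstruction has to stand on its own.

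\textbf{What works.} The first half is essentially the standard argument.
\begin{itemize}
\item Nilpotents are killed.
\item Right exactness follows from \'etale-local solvability of $\tau(x)-x=y$. The cover is \'etale surjective but in general not finite (e.g.\ $tx^q-x=y$ over $\bF_q[t]$), and for non-free $\cM$ you must first lift $\tau$ to a local free presentation.
\item Constructibility follows by Noetherian induction from a generic unit locally free part, Katz--Lang, and base change for $i^{-1}$.
\end{itemize}

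\textbf{Gap 1: the inverse functor.} The functor $L\mapsto L\otimes_{W_n(\bF_q)}W_n\cO_{X_{\et}}$ does not land in $\QCoh_{W_nX}$ unless $L$ is lisse.
\begin{itemize}
\item Take $L=j_!W_n(\bF_q)$, with $j\colon U\inj X$ the dense complement of a closed $Z$. You get $j_!W_n\cO_{U_{\et}}$, whose stalks vanish at points of $Z$ but not at their generizations in $U$. This is impossible for a sheaf of the form $\cF_{\et}$ with $\cF$ quasi-coherent.
\item The crystal that actually corresponds to $j_!\bF_q$ (with $n=1$) is the ideal $\cI_Z\subset\cO_X$ with the restricted Frobenius: $x\in\cI_Z\cO^{\mathrm{sh}}_{X,\bar z}$ and $x^q=x$ force $x=0$.
\item Your ``concrete'' recipe $\pi_*\Lambda\mapsto\pi_*W_n\cO_Y$ contradicts the tensor formula, since $i_*\Lambda\otimes W_n\cO_{X_{\et}}=i_*i^{-1}W_n\cO_{X_{\et}}\neq i_*W_n\cO_{Z_{\et}}$. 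It also does not define a functor on all constructible sheaves.
\end{itemize}
The unit step fails for the same reason.
\begin{itemize}
\item For $n=1$ a coherent unit Frobenius module is locally free: its Fitting ideals satisfy $J=J^{[q]}$, hence are locally $0$ or $\cO_X$. So its solutions are lisse, and $\cI_Z$ has no coherent unit representative.
\item With $\tau^*\colon F^{r*}\cM\to\cM$, the iterated pullbacks form an inverse system, so there is no ``colimit of pullbacks''. The perfection is a colimit of pushforwards and is not coherent.
\item The map $\Sol_{\et}(\cM)\otimes W_n\cO_{X_{\et}}\to\cM_{\et}$ is an isomorphism only where $\Sol_{\et}(\cM)$ is lisse. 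For $\cM=\cI_Z$ its source vanishes at points of $Z$ while its target does not. So it cannot serve as unit or counit of an equivalence.
\end{itemize}

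\textbf{Gap 2: full faithfulness.} The reduction ``by open/closed d\'evissage and exactness'' is not justified as written.
\begin{itemize}
\item A five-lemma argument on $\Hom$ along $0\to\cM_1\to\cM\to\cM_2\to 0$ needs injectivity of $\mathrm{Ext}^1(\cM_2,\cN)\to\mathrm{Ext}^1(\Sol_{\et}\cM_2,\Sol_{\et}\cN)$.
\item It also needs a recollement on the crystal side: $j_!$, $i^*$, $i_*$ with their adjunctions, compatible with $\Sol_{\et}$.
\item No intrinsic $j_!$ on Frobenius crystals is available a priori; this paper \emph{defines} it through the very theorem at hand. So the reduction to Katz's lisse case is circular, and the same objection applies to your reduction to $n=1$.
\end{itemize}

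\textbf{A workable repair.}
\begin{itemize}
\item Prove full faithfulness honestly, e.g.\ following \cite{Bhatt_Lurie_RH_corr_pos_char}: pass to perfections, use the fully faithful left adjoint $\mathrm{RH}$ of $\Sol$, and identify coherent crystals with holonomic modules.
\item Then obtain essential surjectivity without any explicit inverse. By SGA~4, Exp.~IX and self-injectivity of $W_n(\bF_q)$, every constructible $L$ is the kernel of a map $A\to B$ between finite sums of sheaves $\pi_*W_n(\bF_q)$ with $\pi$ finite.
\item Since $\Sol_{\et}(\pi_*W_n\cO_Y)=\pi_*W_n(\bF_q)$, and the essential image of an exact fully faithful functor is closed under kernels, $L$ lies in the image.
\end{itemize}
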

\begin{proof}
    This is \cite[Theorem 3.2.3]{Baudin_Duality_between_Witt_Cartier_crystals_and_perverse_sheaves} (see also \cite[Theorem 10.3.6]{Bockle_Pink_Cohomological_Theory_of_crystals_over_function_fields} and \cite[Theorems 9.1.6 and 10.2.7]{Bhatt_Lurie_RH_corr_pos_char}).
\end{proof}

Let us define a variant of the above version of the Riemann--Hilbert correspondence with the FRP site.

\begin{construction}
    Let $\cM \in \QCoh_{W_nX}^{F^r}$. Given $f \colon Y \to X$ an FRP morphism, we can proceed as in \autoref{definition_Sol_Frobenius_modules} to endow $\cM_{\FRP}$ from  \autoref{defn:FRP} with a Frobenius module structure
    (still denoted by $\tau_{\cM}$)
    and set \[ \Sol(\cM) \coloneqq \ker(\tau_{\cM} - 1) \in \Sh(X_{\FRP}, W_n(\bF_q)). \]
\end{construction}

\begin{lemma}\label{Sol_on_Frobenius_modules_comes_from_etale_site}
    The diagram 
    \[ \begin{tikzcd}
        \Coh_{W_nX}^{F^r} \arrow[d, "\Sol_{\et}"'] \arrow[rrd, "\Sol"] &  &                           \\
        {\Sh(X_{\et}, W_n(\bF_q))} \arrow[rr, "(-)_{\FRP}"]                                &  & {\Sh(X_{\FRP}, W_n(\bF_q))}
    \end{tikzcd} \] commutes, where $(-)_{\FRP}$ is the functor from  \autoref{adjunction_FRP_and_etale}\ref{FRP_embeds_in_etale}.
\end{lemma}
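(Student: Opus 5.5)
The plan is to compare both sheaves section by section on an arbitrary object $f\colon Y\to X$ of $X_{\FRP}$. I will first build a natural comparison map $(\Sol_{\et}\cM)_{\FRP}\to\Sol(\cM)$ and then prove it is an isomorphism, reducing to affine $Y$ because both sides are sheaves. For the map: by \autoref{adjunction_FRP_and_etale}.\ref{FRP_embeds_in_etale}, giving it is the same as giving a map $\Sol_{\et}(\cM)\to \Sol(\cM)|_{X_{\et}}$. On $X_{\et}$ we have $(\cM_{\FRP})|_{X_{\et}}=\cM_{\et}$, with the same Frobenius structure, since the structure on both is obtained by the same recipe from \autoref{definition_Sol_Frobenius_modules}. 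Moreover $\rho_*$ is exact, so it commutes with taking $\ker(\tau-1)$. Hence $\Sol(\cM)|_{X_{\et}}=\Sol_{\et}(\cM)$, and the comparison map is the adjoint of this identity.

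For the isomorphism I will compute both sides on $Y$. By \autoref{adjunction_FRP_and_etale}\ref{adjunction_FRP_and_etale1c}, the left side is $\Gamma(Y,f^{-1}\Sol_{\et}(\cM))$. The right side is $\ker(\tau-1)$ on $\Gamma(Y,f^*\cM)$, which equals $\Gamma(Y,\Sol_{\et}(f^*\cM))$, where $f^*\cM$ carries the pulled-back Frobenius structure. It therefore suffices to show that the natural map
\[ f^{-1}\Sol_{\et}(\cM)\longrightarrow \Sol_{\et}(f^*\cM) \]
of étale sheaves on $Y$ is an isomorphism. This is a base change statement for the Riemann--Hilbert solution functor along the flat, relatively perfect, but possibly non-finite-type morphism $f$.

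To prove it, I will use that $\cM$ is coherent. By \autoref{Riemann_Hilbert_Frobenius_modules} and its proof in \cite{Bhatt_Lurie_RH_corr_pos_char}, $\Sol_{\et}$ is insensitive to nilpotents, and $\Sol_{\et}(\cM)\cong\Sol_{\et}(\cM_u)$, where $\cM_u$ is the unit Frobenius module generated by $\cM$, namely $\varinjlim_e F^{re,*}\cM$. Pullback commutes with $F^*$ and with colimits, so $f^*$ of the unitalization is the unitalization of $f^*\cM$. For unit (or perfect) modules, Bhatt--Lurie's description of $\Sol$ as $R\Gamma$ of an étale Frobenius-fixed sheaf is compatible with arbitrary pullback. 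Equivalently, $\Sol_{\et}$ of a unit module is computed by $\ker(\tau-1)$, and the cokernel of $\tau-1$ is étale-locally zero by Artin--Schreier--Lang, so $0\to\Sol\to\cM_{u,\et}\xrightarrow{\tau-1}\cM_{u,\et}\to0$ is exact. Taking $f^{-1}$ of this sequence, and comparing with the analogous sequence on $Y$ via $f^*$ (a tensor operation that is right exact and, since $f$ is flat, also left exact), we obtain the identification. I would reduce the Witt case $n>1$ to $n=1$ by dévissage along $p$, using the exactness of all functors involved.

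The main obstacle is the last step. Ordinary base change for $\Sol_{\et}$ is usually stated for finite-type situations, whereas FRP morphisms such as completions are not of finite type. So the argument must rely only on flatness and the surjectivity of $\tau-1$ étale-locally. Relative perfectness, via the pullback square in \autoref{basic_remark}\ref{basic_remarkb}, enters to ensure $f^*F^r_*\cM\cong F^r_*f^*\cM$, so that the Frobenius structure on $f^*\cM$ is the expected one. I would check this carefully first.
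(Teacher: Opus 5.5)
Your first step matches the paper's reduction exactly: it suffices to show that the base change map $f^{-1}\Sol_{\et}(\cM)\to\Sol_{\et}(f^*\cM)$ is an isomorphism on $Y_{\et}$. The gap is in how you prove this.

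\textbf{Why the comparison argument fails.} Comparing the sequences
\[
0\to f^{-1}\Sol_{\et}(\cM_u)\to f^{-1}\cM_{u,\et}\to f^{-1}\cM_{u,\et}\to 0
\quad\text{and}\quad
0\to\Sol_{\et}(f^*\cM_u)\to(f^*\cM_u)_{\et}\to(f^*\cM_u)_{\et}\to 0
\]
proves nothing, for three reasons.
\begin{enumerate}
\item The middle comparison map $f^{-1}\cM_{u,\et}\to(f^*\cM_u)_{\et}$ is not an isomorphism.
\item The second sequence is not obtained from the first by tensoring. The map $\tau-1$ is only additive, not $W_n\cO_X$-linear: on $f^*\cM$ it is $b\otimes m\mapsto b^q\otimes\tau(m)-b\otimes m$, not $1\otimes(\tau-1)$. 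Moreover, $\Sol_{\et}(\cM_u)$ is not a $W_n\cO_X$-module.
\item Flatness says nothing about kernels of semilinear maps. By the snake lemma, what you would actually need is that $\tau-1$ is injective on $(f^*\cM_u)_{\et}/f^{-1}\cM_{u,\et}$. That is again a structural statement about $\cM$.
\end{enumerate}

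\textbf{Two smaller issues.}
\begin{itemize}
\item The colimit $\varinjlim_e F^{re,*}\cM$ is not defined here, since the adjoint structure map goes $F^{r,*}\cM\to\cM$. You want the perfection $\varinjlim_e F^{re}_*\cM$, as in the proof of surjectivity of $\tau-1$ on the FRP site.
\item Relative perfectness is not needed to define the Frobenius structure on $f^*\cM$; the base change map $f^*F^r_*\to F^r_*f^*$ suffices. It is only relevant for identifying $f^*$ of the perfection with the perfection of $f^*\cM$.
\end{itemize}

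\textbf{What is missing.} You need the Riemann--Hilbert structure of coherent Frobenius modules. One option is Bhatt--Lurie's compatibility of $\mathrm{RH}$ with arbitrary pullback together with $\Sol\circ\mathrm{RH}\cong\mathrm{id}$: then the perfection of $f^*\cM$ is $\mathrm{RH}(f^{-1}\Sol_{\et}\cM)$. In the lisse case, Katz's \'etale-local trivialization plus Artin--Schreier on $Y$ would do.

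\textbf{What the paper does.} Base change for $\Sol_{\et}$ is known for finite type morphisms (B\"ockle--Pink, Prop.~10.2.2; Baudin, Thm.~3.2.3). It then holds for an arbitrary morphism by a limit argument:
\begin{itemize}
\item write an affine $Y$ as a cofiltered limit of finite type affine $X$-schemes with affine transition maps;
\item \'etale $Y$-schemes, and sections of $f^{-1}\cF$ and of $f^*\cM$, come from finite stages;
\item $\ker(\tau-1)$ commutes with filtered colimits.
\end{itemize}
So the obstacle you identified (FRP morphisms such as completions are not of finite type) is handled by passing to limits. Flatness plays no role in the base change statement.
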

\begin{proof}
    Let $\cM \in \Coh_{W_nX}^{F^r}$, and let $f \colon Y \to X$ be an FRP morphism. 
    By \autoref{adjunction_FRP_and_etale}\ref{adjunction_FRP_and_etale1}\ref{adjunction_FRP_and_etale1c}, we have to show that the natural map $f^{-1}(\Sol_{\et}(\cM)) \to \Sol_{\et}(f^*\cM)$ is an isomorphism. This is in fact true for any morphism $g$. Indeed, when $g$ is of finite type this follows from \cite[Theorem 3.2.3]{Baudin_Duality_between_Witt_Cartier_crystals_and_perverse_sheaves} (see also \cite[Proposition 10.2.2]{Bockle_Pink_Cohomological_Theory_of_crystals_over_function_fields}). The general case follows by taking colimits.
\end{proof}

\begin{corollary}\label{Sol_Frobenius_modules_fully_faithful_and_exact}
    The functor $\Sol \colon \Crys_{W_nX}^{F^r} \to \Sh(X_{\FRP}, W_n(\bF_q))$ is fully faithful and exact. The functor $\Sol \colon D^b(\Crys_{W_nX}^{F^r}) \to D^b(X_{\FRP}, W_n(\bF_q))$ is also fully faithful.
\end{corollary}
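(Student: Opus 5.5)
By \autoref{Sol_on_Frobenius_modules_comes_from_etale_site}, the functor $\Sol$ on $\Coh_{W_nX}^{F^r}$ factors as $(-)_{\FRP}\circ \Sol_{\et}$. The plan is to push this factorization down to crystals and then to derived categories, and to combine the étale Riemann--Hilbert correspondence with the full faithfulness and exactness of $\rho^{-1}=(-)_{\FRP}$.

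First, $\Sol_{\et}$ kills locally nilpotent Frobenius modules by \autoref{Riemann_Hilbert_Frobenius_modules}. Hence so does $\Sol$, and $\Sol$ factors through the Serre quotient $\Crys_{W_nX}^{F^r}$. This gives the abelian-level factorization
\[\Sol\colon \Crys_{W_nX}^{F^r}\xrightarrow{\ \simeq\ }\Sh_c(X_{\et},W_n(\bF_q))\hookrightarrow \Sh(X_{\et},W_n(\bF_q))\xrightarrow{(-)_{\FRP}}\Sh(X_{\FRP},W_n(\bF_q)).\]
The first arrow is an exact equivalence. The second is the exact, fully faithful inclusion, since constructible sheaves form a Serre subcategory. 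The third is exact and fully faithful by \autoref{adjunction_FRP_and_etale}\ref{FRP_embeds_in_etale}. So $\Sol$ is exact and fully faithful on crystals.

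For the derived statement, exactness lets us extend $\Sol$ termwise to $D^b(\Crys_{W_nX}^{F^r})\to D^b(X_{\FRP},W_n(\bF_q))$, and the same factorization holds at the derived level:
\[D^b(\Crys_{W_nX}^{F^r})\simeq D^b(\Sh_c(X_{\et},W_n(\bF_q)))\to D^b(X_{\et},W_n(\bF_q))\xrightarrow{(-)_{\FRP}} D^b(X_{\FRP},W_n(\bF_q)).\]
The last functor is fully faithful by the derived part of \autoref{adjunction_FRP_and_etale}\ref{FRP_embeds_in_etale}, because $\rho_*\rho^{-1}=\id$ holds on derived categories.

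The main obstacle is the middle arrow: showing that the natural functor $D^b(\Sh_c(X_{\et},W_n(\bF_q)))\to D^b_c(X_{\et},W_n(\bF_q))$ is fully faithful. This is not formal. I would deduce it from the derived form of the Riemann--Hilbert correspondence of \cite{Baudin_Duality_between_Witt_Cartier_crystals_and_perverse_sheaves} (see also \cite{Bhatt_Lurie_RH_corr_pos_char}), which states that $\Sol_{\et}\colon D^b(\Crys_{W_nX}^{F^r})\to D^b_c(X_{\et},W_n(\bF_q))$ is an equivalence. Composing this with the fully faithful $(-)_{\FRP}$ then finishes the proof.

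If I had to prove the middle step by hand, I would verify the standard effaceability criterion. It suffices to show that every class in $\Ext^i_{X_{\et}}(\cF,\cG)$ with $\cF,\cG$ constructible is killed by some monomorphism $\cG\hookrightarrow\cG'$ with $\cG'$ constructible. By dévissage on $\cF$ one reduces to $\cF=j_!\Lambda$ with $j$ étale of finite type. Then $\Ext^i(j_!\Lambda,\cG)=H^i(U_{\et},\cG|_U)$, and on the Frobenius side this is computed by $R\Gamma$ of $\tau-1$ on coherent modules. One kills cohomology classes by passing to finite étale extensions, which replaces $\cG$ by the constructible sheaves $\pi_*\pi^{-1}\cG$.
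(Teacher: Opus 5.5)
Your argument is correct and is essentially the paper's: you factor $\Sol=(-)_{\FRP}\circ\Sol_{\et}$ via \autoref{Sol_on_Frobenius_modules_comes_from_etale_site} and combine the Riemann--Hilbert correspondence \autoref{Riemann_Hilbert_Frobenius_modules} with the full faithfulness of $(-)_{\FRP}$ from \autoref{adjunction_FRP_and_etale}\ref{FRP_embeds_in_etale}. You are right that the derived statement needs the derived form $D^b(\Crys_{W_nX}^{F^r})\simeq D^b_c(X_{\et},W_n(\bF_q))$ of that correspondence, which the paper uses only implicitly through its references; citing it suffices, so your closing effaceability sketch is unnecessary (and as written it is too loose to stand in for that citation).
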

\begin{proof}
    This follows from \autoref{adjunction_FRP_and_etale}\ref{FRP_embeds_in_etale}, \autoref{Sol_on_Frobenius_modules_comes_from_etale_site} and \autoref{Riemann_Hilbert_Frobenius_modules}.
\end{proof}

We will need the following result later on:

\begin{lemma}\label{surjectivity of F - 1}
    Let $\cM \in \IndCoh_{W_nX}^{F^r}$. Then $\tau_\cM - 1$ is surjective on the $\FRP$ site.
\end{lemma}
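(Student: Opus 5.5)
We reduce to the coherent case and then to an explicit étale-local equation; the FRP site has the same topology as the étale site, so étale-local surjectivity suffices.

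Since $\cM$ is the union of its coherent sub-Frobenius modules $\cM_\lambda$, and $(-)_{\FRP}$ commutes with filtered colimits, we have $\cM_{\FRP}=\varinjlim \cM_{\lambda,\FRP}$. Surjectivity of a map of sheaves is checked on local sections. A local section of $\cM_{\FRP}$ over an affine FRP object $Y=\Spec S\to X$ lies in some $\Gamma(Y,f^*\cM_\lambda)$, since $Y$ is quasi-compact. Hence it suffices to treat coherent $\cM$. We may also assume $X=\Spec R$ affine.

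So fix an affine FRP $Y=\Spec S\to X$ and $m\in M_S=M\otimes_R S$. By \autoref{basic_remark}\ref{basic_remarkb}, $F$ on $W_nY$ is the base change of $F$ on $W_nX$, so $M_S$ carries the Frobenius structure $\tau$ induced from $M$. We must find an étale cover $S\to S'$ and $x\in M_{S'}$ with $\tau(x)-x=m$.

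The natural approach is the Artin--Schreier type construction used by Katz, Bhatt--Lurie and Böckle--Pink. Choose generators $e_1,\dots,e_k$ of $M$ over $W_n(R)$ and write $\tau(e_i)=\sum_j a_{ij}e_j$. Consider the $S$-algebra
\[ S'' = S[x_1,\dots,x_k]/\bigl(x_j^{q}\text{-type equations}\bigr) \]
encoding $\tau(\sum x_ie_i)-\sum x_ie_i=m$. For $n=1$ one takes $x_i^q$-coefficients: solve $\sum_i x_i^q a_{ij}-x_j=m_j$ for $j=1,\dots,k$. This is a standard étale algebra, because the Jacobian of these equations with respect to the $x_j$ is $-\mathrm{Id}$, since the $q$-th powers have zero derivative. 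It is also finite, hence surjective onto $\Spec S$, because the equations are monic-like in the $x_j$ after iterating. Hence $S\to S''$ is an étale cover on which $m$ lies in the image of $\tau-1$. Because $M$ may not be free, one presents it as a quotient of a free module $W_n(R)^k$ with a lifted Frobenius structure. Such a lift always exists, since the structure map $\tau\colon M\to F^r_*M$ lifts along the surjection from a free module. Surjectivity for the free module then implies surjectivity for the quotient. For general $n$ we use dévissage along $0\to p^{n-1}\text{-part}\to\cM\to\cM/p^{n-1}\to0$, or equivalently along the $V$-filtration, with each graded piece a coherent Frobenius module on $X$ (i.e. $n=1$). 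Then $\ker(\tau-1)$ is left exact, and a snake lemma argument reduces surjectivity on $\cM$ to surjectivity on the outer terms after further étale refinement.

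The main obstacle is the finiteness, i.e. surjectivity of the étale map $\Spec S''\to\Spec S$. Since $(a_{ij})$ need not be invertible, the system is not obviously monic. We would handle this by using that étale maps which are standard smooth of relative dimension zero are étale regardless, and then checking surjectivity fibrewise over fields. Over a field $K$ the equation $\tau(x)-x=m$ in a finite-dimensional space with $q$-linear $\tau$ always has a solution over $K^{\mathrm{sep}}$: this is Lang's lemma for the $q$-linear map $\tau$, obtained by decomposing into the nilpotent and bijective parts, where $1-\tau$ is bijective on the nilpotent part. This shows every fibre is nonempty, giving an étale cover.
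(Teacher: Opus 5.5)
Your argument is correct, and at the key step it takes a genuinely different route from the paper. Both proofs make the same reductions: to coherent $\cM$, then to $n=1$ by a filtration and the snake lemma.

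The paper then passes to the perfection $(f^*\cM)^{1/p^\infty}$ and disposes of the locally nilpotent kernel and cokernel, on which $\tau-1$ is invertible. It then quotes Bhatt--Lurie's result that $\tau-1$ is étale-locally surjective on holonomic Frobenius modules. This is short, but it relies on their theory over arbitrary $\bF_p$-schemes. That matters here because an FRP scheme $Y$ need not be Noetherian or of finite type over $X$.

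You instead lift $\tau$ along a free presentation $R^k\twoheadrightarrow M$. For Frobenius structures this lift is automatic, whereas in the Cartier case (\autoref{surjectivity of C - 1 easier}) regularity and Kunz's theorem were needed to lift $\kappa$. You then solve $\tau(x)-x=m$ on the explicit algebra $S[x_1,\dots,x_k]/(\sum_i a_{ij}x_i^q-x_j-m_j)_j$, which is étale because its Jacobian is $-\mathrm{Id}$. This is Katz's classical Artin--Schreier--Lang argument. It is fully elementary, needs no perfection, and does not care about finiteness properties of $Y$.

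A few things need tidying.

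\begin{enumerate}
\item The $p$-adic filtration is not equivalent to the $V$-filtration. For $n\ge 2$ and $X$ not perfect, $pW_n\cO_X=VFW_{n-1}\cO_X\subsetneq VW_{n-1}\cO_X$, so the graded pieces of $p^k\cM$ are only $W_n\cO_X/p$-modules. Use $V^kW_{n-k}\cO_X\cdot\cM$ instead: it is $\tau$-stable because $F$ and $V$ commute, and its graded pieces are killed by $VW_{n-1}\cO_X$. Do this reduction before writing $M_S=M\otimes_R S$, since for general $n$ that module is $W_n(S)\otimes_{W_n(R)}M$.
\item $S''$ is not finite over $S$ in general. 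For $k=1$ and the equation $tx^q-x=m$, geometric fibres have $q$ points where $t\neq 0$ but only one where $t=0$. Drop that claim; étale and surjective on spectra is all you need, since then $\Spec S''\to Y$ is a covering in $X_{\FRP}$.
\item For the fibres, avoid decomposing over an imperfect $K^{\mathrm{sep}}$. There, the map $(x,y)\mapsto(x^q+ty^q,0)$ with $t\notin K^q$ is injective on points but not surjective, so $K^2$ has no splitting into a $\tau$-nilpotent and a $\tau$-bijective subspace. Instead, note that the fibre over $s$ is étale over $k(s)$, so it is non-empty iff it has an $\overline{k(s)}$-point. Over an algebraically closed field, $x\mapsto A^{T}x^{(q)}-x$ is an étale homomorphism $\bG_a^k\to\bG_a^k$. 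Its image is therefore an open subgroup of a connected group, hence everything. Your nilpotent/bijective decomposition also works if carried out over the perfect field $\overline{k(s)}$.
\end{enumerate}
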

\begin{proof}
   Since the result is closed under colimits, we may assume that $\cM \in \Coh_{W_nX}^{F^r}$. Since the result is furthermore closed under extensions, filtering $\cM$ by its $\cI$-power torsion submodules, where $\cI=\Ker( W_n\cO_X\to \cO_X)$, we may assume $\cM\in \Coh_X^{F^r}$. Let $f \colon Y \to X$ be a flat and relatively perfect morphism. We have to show that $\tau_{f^*\cM} - 1$ is surjective on $Y_{\et}$. In general, if a given Frobenius module $\cN$ is nilpotent, then $\tau_{\cN} - 1$ is surjective, since its inverse is $-\sum \tau_{\cN}^i$. This also settles the locally nilpotent case. In particular, the result is known for the the kernel and cokernel of the perfection map
    $f^*\cM \to (f^*\cM)^{1/p^{\infty}}$. It is therefore enough to show the result for $(f^*\cM)^{1/p^{\infty}}$. Since this Frobenius module is holomonic in the sense of \cite[Definition 10.1.7]{Bhatt_Lurie_RH_corr_pos_char}, the result follows from \cite[Remark 10.2.6]{Bhatt_Lurie_RH_corr_pos_char}.
\end{proof}

\subsection{Solution functors on Cartier modules}

Our objective here is to define the solution functor for Cartier crystals, and show that it is exact and faithful on both the étale and FRP sites (along with certain compatibilities). However, we will show that it is not full on the étale site, contrary to the situation for Frobenius module. The proof of fullness in the FRP site will be in \autoref{section_main_thm}.

\begin{construction}
    Let $\cM \in \QCoh_{W_nX}^{C^r}$. Given a flat and relatively perfect morphism $f \colon Y \to X$, we can pullback the Cartier module structure on $\cM$. Indeed, given that
    \[ \begin{tikzcd}
    W_nY \arrow[d, "f"'] \arrow[r, "F"] & W_nY \arrow[d, "f"] \\
    W_nX \arrow[r, "F"] & W_nX \end{tikzcd} \] is a pullback square (\autoref{basic_remark}), we know by flat base change (\stacksproj{02KH}) that the natural map \[ f^*F_*(\cM) \to F_*f^*(\cM) \] is an isomorphism. Therefore, the composition \[ F_*(f^*\cM) \cong f^*F_*\cM \xrightarrow{f^*\kappa_{\cM}} f^*\cM \] naturally gives $f^*\cM$ the structure of a Cartier module on $Y$.

    This way, we can endow $\cM_{\FRP}$ from \autoref{defn:FRP} with the structure of a Cartier module, 
    which we will still denote by $\kappa_{\cM}$. The restriction of $\cM_{\FRP}$ to the étale site of $X$ is denoted $\cM_{\et}$. 
\end{construction}

\begin{definition}\label{defn:SolC}
    Let $\cM \in \QCoh_{W_nX}^{C^r}$. We define \[ \Sol(\cM) \coloneqq \ker(\kappa_\cM - 1 \colon \cM_{\FRP} \to \cM_{\FRP}) \in \Sh(X_{\FRP}, W_n(\bF_q)). \]
    The restriction of $\Sol(\cM)$ to the étale site of $X$ (resp. the Zariski site) is denoted $\Sol(\cM)_{\et}$ (resp. $\Sol(\cM)_{\Zar})$.
\end{definition}

\begin{proposition}\label{surjectivity of C - 1 easier}
    Let $\cM \in \IndCoh_{W_nX}^{C^r}$. Then \[ \kappa_{\cM} - 1 \colon \cM_{\FRP} \to \cM_{\FRP} \] is surjective on $X_{\FRP}$.
\end{proposition}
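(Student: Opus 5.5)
We reduce to the case of a coherent Cartier module on an affine FRP object and then solve $\kappa(y)-y=x$ étale-locally.

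\textbf{Step 1: reductions.} Surjectivity of a map of sheaves is preserved under filtered colimits and extensions, because $(-)_{\FRP}$ is exact and commutes with colimits. So, as in the proof of \autoref{surjectivity of F - 1}, we may assume $\cM\in\Coh_{W_nX}^{C^r}$.

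Filter $\cM$ by $\cI^k\cM$, where $\cI=\ker(W_n\cO_X\to\cO_X)$. The Cartier operator preserves this filtration because $F_*(\cI^k\cM)$ maps into $\cI^k\cM$: indeed $\cI$ is generated by $V$-images and $\kappa$ is $W_n\cO_X$-linear. We may therefore assume $\cM\in\Coh_X^{C^r}$.

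Fix an FRP morphism $f\colon Y\to X$. The pullback $f^*\cM$ is a coherent-type Cartier module on $Y$, and it suffices to show that $\kappa-1$ is surjective on $Y_{\et}$. By \autoref{basic_remark}\ref{basic_remarkb} this is the same problem on $Y$. So we may replace $X$ by the Noetherian $F$-finite scheme $Y$, as long as we only use that $\cM$ is a quasi-coherent module that is a filtered union of coherent Cartier submodules, which is how $f^*\cM$ arises. Concretely, we must show: for a local section $x\in\Gamma(V,\cM)$ there is an étale cover on which $x=\kappa(y)-y$.

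\textbf{Step 2: nilpotent and unit-like parts.} If $\cM$ is nilpotent, then $\kappa-1$ is invertible with inverse $-\sum_i\kappa^i$, and the same holds for locally nilpotent modules. By the Blickle--Böckle theory, a coherent Cartier module has a stable part: the descending chain $\kappa^e(F^{er}_*\cM)$ stabilizes to a submodule $\cM_s$ on which $\kappa$ is surjective, and $\cM/\cM_s$ is nilpotent. Using the extension argument again, we reduce to the case where $\kappa\colon F^r_*\cM\to\cM$ is surjective.

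\textbf{Step 3: solve the Artin--Schreier type equation étale-locally.} Work over an affine $V=\Spec R$ and a section $x\in M$. Since $\kappa$ is surjective, write $x=\kappa(z)$. The equation $\kappa(y)-y=x$ is $p^{-1}$-linear, so the natural idea is to dualize it into a Frobenius-linear (Artin--Schreier) equation, which is étale-locally solvable.

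My first attempt uses the adjoint description. Choose a presentation $F^r_*R^{\oplus m}\to M$ compatible with $\kappa$, so that a Cartier structure on a free $F^r_*$-module comes from a Frobenius structure on the dual. Then solving $\kappa(y)-y=x$ amounts to finding $y=\sum c_i e_i$ with coefficients satisfying $c_i^{q}$-type equations. The equation system $c - A c^{(q)} = b$, or its transpose, defines a finite étale $R$-algebra: the polynomial $T^q - aT - b$ with $a$ invertible, or after rescaling, is étale. Its spectrum gives the required étale cover.

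An alternative route avoids explicit equations. Reduce to the universal cases $\cM=F^{r}_*\cO$ with the trace-type operator, or to $\omega$ on a smooth model. Then use Cartier isomorphism exact sequences such as $0\to\Omega_{\log}\to Z\Omega\xrightarrow{C-1}\Omega\to0$, which is exact étale-locally by Kato and Milne.

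\textbf{Main obstacle.} The hardest step is Step 3 in general. It requires producing étale-local solutions of a $p^{-1}$-linear equation on an arbitrary Noetherian $F$-finite base, where no smoothness is available. I would first try to deduce it from \autoref{surjectivity of F - 1} by duality. Embed $\cM$ (surjective $\kappa$) as a quotient of $F^{er}_*$ of a module, then relate it via the pairing of \autoref{construction_pairings} with a unit Cartier module such as $F^{!}$ of a dualizing-type module. If that fails, I would use the explicit Artin--Schreier polynomial approach after a Noether-type reduction to a finite free $F^r_*$-presentation.
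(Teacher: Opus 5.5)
Your Steps 1--2 (filtered colimits and extensions, removing the nilpotent part, passing to the stable image so that $\kappa$ is surjective) match the opening of the paper's proof. Step 3, however, is the entire content of the proposition, and it is not carried out. You flag it as the main obstacle and offer two speculative routes, and neither works as written.

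The equation $\kappa(s)-s=x$ is $q^{-1}$-linear in $s$. So writing $s=\sum c_ie_i$ does not produce a system $c-Ac^{(q)}=b$, because $\kappa(c\,e)$ is not a polynomial in $c$. To get polynomial equations you must expand $s$ over an $R$-basis of $F^r_*P$, for some free $P$ carrying a Cartier structure lifting $\kappa$. On a singular $X$ you have neither such a $P$ nor a free $F^r_*\cO_X$. Even then you get $m$ equations in $m\cdot\mathrm{rk}(F^r_*R)$ unknowns, which is not an Artin--Schreier system. The ``universal cases'' route comes with no reduction from an arbitrary coherent Cartier module to $F^r_*\cO$ or to $\omega$ of a smooth model.

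The missing idea is how the paper turns the $q^{-1}$-linear problem into a $q$-linear one:
\begin{enumerate}
\item With $X$ affine, embed it (Gabber) into a regular $F$-finite affine $Y=\Spec R$ and replace $\cM$ by $\cN=i_*\cM$. This is harmless because $i_*$ on FRP sheaves is exact, fully faithful and commutes with $(-)_{\FRP}$ (\autoref{pushforward_from_closed_immersion_fully_faithful}, \autoref{pushforwards_of_qcoh_commute_with_FRP}).
\item Near a point $v\in Y$ the module $F^r_*R$ is free (Kunz). So for a free $P\surj N$ that is an isomorphism on $k(v)$, the map $\kappa_N$ lifts to $\kappa_P\colon F^r_*P\to P$, which is surjective near $v$ by Nakayama.
\item Since $P$ is free, $\kappa_P$ has a section $\tau_P$, which is a Frobenius structure, and
\[ 1-\kappa_P=\kappa_P\circ(\tau_P-1). \]
\item Here $\tau_P-1$ is surjective on the FRP site by \autoref{surjectivity of F - 1} (ultimately Bhatt--Lurie). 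Moreover $\kappa_P$ stays surjective after any flat pullback, and $P_{\FRP}\to N_{\FRP}$ is surjective. This concludes.
\end{enumerate}

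There are two smaller problems.

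First, the filtration by $\cI^k\cM$ is not $\kappa$-stable in general. Linearity only gives $\kappa(F^r(a)m)=a\,\kappa(m)$, and $F^r(\cI)$ does not generate $\cI$ over an imperfect base. For instance, over $k=\bF_p(t)$ with $n=2$, $r=1$, take $M=W_2(k)\oplus k$ and $\kappa(x,z)=(0,\phi(x))$. Here $\phi\colon F_*W_2(k)\to k$ is $W_2(k)$-linear with $\phi(V(t))\neq 0$; it exists because $V(t)\notin F(\cI)W_2(k)=V(k^p)$. Then $(V(t),0)\in\cI M=V(k)\oplus 0$, but $\kappa(V(t),0)\notin\cI M$. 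Use the $\cI$-power torsion filtration $\cM[\cI^k]$ instead. It is stable because $a\,\kappa(m)=\kappa(F^r(a)m)$ and $F^r(\cI^k)\subseteq\cI^k$.

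Second, you cannot ``replace $X$ by $T$'' for an FRP object $T\to X$. Such $T$ need not be Noetherian (for example, ind-\'etale $X$-schemes), so Gabber's embedding and coherence arguments are unavailable there. The paper performs all module-theoretic constructions on $X$ and only uses flat pullback at the end.
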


\begin{proof}
    First of all, note that the statement is closed under extensions and filtered colimits, 
    see \autoref{adjunction_FRP_and_etale} and \autoref{defn:FRP}, so we may assume that $\cM \in \Coh_{X}^{C^r}$. Note also that whenever $\cM$ is nilpotent, $\kappa_\cM - 1$ is automatically bijective, since $-\sum_i\kappa_{\cM}^i$ is its inverse. 
    
    Let $\cM'$ denote the image of $\kappa_{\cM}^e \colon F^e_*\cM \to \cM$ for $e \gg 0$ (this stabilizes by \cite[Lemma 13.1]{Gabber_notes_on_some_t_structures}). Then the quotient $\cM/\cM'$ is nilpotent, so by the previous paragraph it is enough to show the result for $\cM'$, whose structural morphism is surjective by construction. 
    
    The statement is local, so we may also assume that $X$ is affine. By \cite[Remark 13.6]{Gabber_notes_on_some_t_structures}, there exists a closed immersion $i \colon X \hookrightarrow Y$ with $Y$ regular and $F$-finite. Combining \autoref{pushforward_from_closed_immersion_fully_faithful} and \autoref{pushforwards_of_qcoh_commute_with_FRP}, it is therefore enough to show the result for $\cN \coloneqq i_*\cM'$. 
    Write $Y = \Spec R$, fix $y \in Y$ and let $N = \Gamma(\Spec R, \cN)$. Throughout the proof, we will allow ourselves to restrict the situation Zariski locally around $y$.
    
    Let $P \surj N$ be a surjection from a finite free $R$-module which is an isomorphism at the residue field at $y$ (which we denote $k(y)$). We therefore have a diagram of modules
    \[ \begin{tikzcd}
        F^r_*P \arrow[d, two heads]   &  & P \arrow[d, two heads] \\
        F^r_*N \arrow[rr, "\kappa_N"] &  & N. \end{tikzcd} \]
    Since $R$ is regular and we work Zariski locally around $y$, we may assume that the module $F^r_*R$ is free by \cite[Theorem 2.1]{Kunz_Characterizations_of_regular_local_rings_of_char_p}, so we can complete this diagram as follows:
    \[ \begin{tikzcd}
        F^r_*P \arrow[d, two heads] \arrow[rr, "\kappa_P"] &  & P \arrow[d, two heads] \\
        F^r_*N \arrow[rr, "\kappa_N"]                      &  & N.   
    \end{tikzcd} \]
    Since $P \to N$ is surjective, so is the induced morphism of FRP sheaves $\ttilde{P}_{\FRP} \to \ttilde{N}_{\FRP}$. Hence, it is enough to show that $\kappa_{\ttilde{P}} - 1 \colon \ttilde{P}_{\FRP} \to \ttilde{P}_{\FRP}$ is surjective. Given that $\kappa_N \colon F^r_*N \to N$ is surjective and $P \to N$ is an isomorphism at $k(y)$, we deduce that $\kappa_P \colon F^r_*P \to P$ must be surjective at $k(y)$ too. By Nakayama's lemma, $\kappa_P$ is therefore surjective around $y$. Since $P$ is free, we then obtain the existence of a splitting $\tau_P \colon P \to F^r_*P$ of $\kappa_P$. We then know by \autoref{surjectivity of F - 1} that $\tau_{\ttilde{P}} - 1$ is surjective on the FRP site. Since $\kappa_{\ttilde{P}} \colon \ttilde{P}_{\FRP} \to \ttilde{P}_{\FRP}$ is surjective, we deduce that the composition $1 - \kappa_{\ttilde{P}} = \kappa_{\ttilde{P}} \circ (\tau_{\ttilde{P}} - 1)$ is also surjective on the FRP site.
\end{proof}

\begin{corollary}\label{Sol_exact_and_commutes_with_pushforwards}
    The functor $\Sol$ is exact and commutes with filtered colimits on ind-coherent Cartier modules. Moreover, for any separated morphism $f \colon X \to Y$ of finite type between Noetherian and $F$-finite schemes, the following diagram \[ \begin{tikzcd}
        D^+(\IndCoh_{W_nX}^{C^r}) \arrow[rr, "Rf_*"] \arrow[d, "\Sol"] &  & D^+(\IndCoh_{W_nY}^{C^r}) \arrow[d, "\Sol"] \\
        {D^+(X_{\FRP}, W_n(\bF_q))} \arrow[rr, "Rf_*"]                 &  & {D^+(Y_{\FRP}, W_n(\bF_q))}.
    \end{tikzcd} \] commutes. The same statements hold for $\Sol_{\et}$.
\end{corollary}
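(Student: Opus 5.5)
Exactness comes first. For a short exact sequence $0\to\cM'\to\cM\to\cM''\to 0$ in $\IndCoh_{W_nX}^{C^r}$, the sequence $0\to\cM'_{\FRP}\to\cM_{\FRP}\to\cM''_{\FRP}\to 0$ is exact on $X_{\FRP}$, because $(-)_{\FRP}$ is exact (\autoref{defn:FRP}). The maps $\kappa-1$ are compatible with these morphisms, so the snake lemma gives a long exact sequence
\[0\to\Sol(\cM')\to\Sol(\cM)\to\Sol(\cM'')\to\coker(\kappa_{\cM'}-1)\to\cdots.\]
By \autoref{surjectivity of C - 1 easier}, $\coker(\kappa_{\cM'}-1)=0$, and exactness follows. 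For filtered colimits, $(-)_{\FRP}$ commutes with them: on quasi-compact objects sections of $f^*\cM$ commute with filtered colimits, and one then sheafifies. Kernels commute with filtered colimits of sheaves, so $\Sol$ commutes with filtered colimits as well. The same argument works verbatim on the étale site, since $\Sol_{\et}$ is obtained by applying the exact restriction $\rho_*$.

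For the pushforward compatibility, I would first check the underived statement. If $f$ is quasi-compact and quasi-separated, then $f_*$ of a Cartier module carries the pushed-forward Cartier structure. By flat base change (\autoref{pushforwards_of_qcoh_commute_with_FRP} in degree zero) we have $(f_*\cM)_{\FRP}=f_*(\cM_{\FRP})$, compatibly with $\kappa$. Since $f_*$ is left exact, it commutes with kernels, and so $\Sol(f_*\cM)=f_*\Sol(\cM)$.

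To derive this, compute $Rf_*$ on $D^+(\IndCoh^{C^r}_{W_nX})$ using injective resolutions $\cI^\bullet$ in $\IndCoh^{C^r}_{W_nX}$. These objects are injective as $W_n\cO_X$-modules by the remark citing \cite[Proposition 4.2.4]{Baudin_Duality_between_Witt_Cartier_crystals_and_perverse_sheaves}, so forgetting the structure computes the usual $Rf_*$. Since $\Sol$ is exact, $\Sol(\cI^\bullet)$ represents $\Sol$ of the complex, and it suffices to show that $\Sol(\cI)$ is $f_*$-acyclic on $X_{\FRP}$ for each such injective $\cI$. Applying $Rf_*$ to the short exact sequence $0\to\Sol(\cI)\to\cI_{\FRP}\xrightarrow{\kappa-1}\cI_{\FRP}\to0$ (exact by \autoref{surjectivity of C - 1 easier}), and using that $R^kf_*(\cI_{\FRP})=0$ for $k>0$ (the proof of \autoref{pushforwards_of_qcoh_commute_with_FRP}), gives $R^kf_*\Sol(\cI)=0$ for $k\ge2$ and
\[R^1f_*\Sol(\cI)=\coker\bigl(\kappa-1\colon (f_*\cI)_{\FRP}\to(f_*\cI)_{\FRP}\bigr).\]
Now $f_*\cI$ is ind-coherent, since $f$ is of finite type and coherent pushforwards of coherent Cartier submodules stay ind-coherent via finiteness of Cartier crystals under $f_*$ (Blickle–Böckle). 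Applying \autoref{surjectivity of C - 1 easier} on $Y$, this cokernel vanishes.

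The main obstacle is ensuring that $f_*\cI$ lies in $\IndCoh^{C^r}_{W_nY}$, so that the surjectivity proposition applies. I would argue this by writing $\cI$ as a union of coherent Cartier submodules $\cN$ and using that $f_*$ commutes with filtered colimits. Each $f_*\cN$ is quasi-coherent with a Cartier structure and should be ind-coherent, by the result that pushforwards of coherent Cartier modules along finite type morphisms are ind-coherent up to nilpotence (Blickle–Böckle). If only ``up to nilpotence'' is available, that is still enough: nilpotent Cartier modules have bijective $\kappa-1$, so surjectivity persists. The étale version follows in the same way, or by restricting along $\rho_*$, which is exact and commutes with $Rf_*$.
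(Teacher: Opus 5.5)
Your proposal is correct and follows the paper's proof essentially verbatim. Exactness comes from \autoref{surjectivity of C - 1 easier} and the snake lemma, and the pushforward compatibility comes from the $f_*$-acyclicity of $\Sol(\cI)$ for injective ind-coherent $\cI$, deduced from $0\to\Sol(\cI)\to\cI_{\FRP}\to\cI_{\FRP}\to 0$, the acyclicity of $\cI_{\FRP}$, and the surjectivity of $\kappa_{f_*\cI}-1$; your explicit check that $f_*\cI$ is ind-coherent, so that the surjectivity proposition applies on $Y$, spells out a point the paper leaves implicit, and your fallback argument via locally nilpotent kernels and cokernels is also valid.
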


\begin{proof}
    Thanks to \autoref{surjectivity of C - 1 easier} and the snake lemma, the functor $\Sol$ is exact. It commutes with filtered colimits as so does $(-)_{\FRP}$, see  \autoref{adjunction_FRP_and_etale} and \autoref{defn:FRP}.
     Given that $\Sol$ and $f_*$ commute, it remains to show that if $\cI \in \IndCoh_{W_nX}^{C^r}$ is an injective object, 
     then $\Sol(\cI)$ is $f_*$-acyclic. 
    
    First of all, note that $\cI$ is injective as an object in $\Mod(W_n\cO_X)$ by \cite[Proposition 4.2.4]{Baudin_Duality_between_Witt_Cartier_crystals_and_perverse_sheaves}, so we deduce that $\cI_{\FRP}$ is $f_*$-acyclic by \autoref{pushforwards_of_qcoh_commute_with_FRP}. By definition and \autoref{surjectivity of C - 1 easier}, we have a short exact sequence \[ \begin{tikzcd}
        0 \arrow[rr] &  & \Sol(\cI) \arrow[rr] &  & \cI_{\FRP} \arrow[rr, "\kappa_{\cI} - 1"] &  & \cI_{\FRP} \arrow[rr] &  & 0.
    \end{tikzcd} \]
    We therefore see from the $f_*$-acyclicity of $\cI_{\FRP}$ and the surjectivity of $\kappa_{f_*\cI} - 1 \colon f_*\cI_{\FRP} \to f_*\cI_{\FRP}$ (see \autoref{surjectivity of C - 1 easier}) that also $\Sol(\cI)$ is $f_*$-acyclic. The same proof goes through with $\Sol_{\et}$ instead of $\Sol$.
\end{proof}

Note that the functor $\Sol$ vanishes on locally nilpotent objects. By exactness (see \autoref{Sol_exact_and_commutes_with_pushforwards}) and \stacksproj{02MS}, we obtain a factorization $\Sol \colon \IndCrys_{W_nX}^{C^r} \to \Sh(X_{\FRP}, W_n(\bF_q))$. The same holds for $\Sol_{\et}$.

\begin{proposition}\label{Sol_faithful_on_etale_and_FRP_site}
     The solution functors $\Sol_{\et} \colon \IndCrys_{W_nX}^{C^r} \to \Sh(X_{\et}, W_n(\bF_q))$ and $\Sol \colon \IndCrys_{W_nX}^{C^r} \to \Sh(X_{\FRP}, W_n(\bF_q))$ are faithful.
\end{proposition}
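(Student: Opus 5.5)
Since $\Sol_{\et}$ is obtained from $\Sol$ by restricting to the étale site, faithfulness of $\Sol_{\et}$ implies that of $\Sol$. So it suffices to treat $\Sol_{\et}$. Both functors are exact on $\IndCrys_{W_nX}^{C^r}$ by \autoref{Sol_exact_and_commutes_with_pushforwards}. An exact functor between abelian categories is faithful as soon as it kills no nonzero object, because a morphism $\varphi$ with $\Sol(\varphi)=0$ satisfies $\Sol(\operatorname{im}\varphi)=\operatorname{im}\Sol(\varphi)=0$. Hence the claim reduces to the following: if $\cM\in\IndCoh_{W_nX}^{C^r}$ and $\Sol_{\et}(\cM)=0$, then $\cM$ is locally nilpotent.

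We next reduce to the coherent case over $X$ itself. The functor $\Sol_{\et}$ commutes with filtered colimits and is exact, so if $\Sol_{\et}(\cM)=0$ then every coherent sub-Cartier module $\cN\subset\cM$ also satisfies $\Sol_{\et}(\cN)=0$. Since $\cM$ is the union of such $\cN$, it suffices to show that each $\cN$ is nilpotent. Filter $\cN$ by the $\cI$-power torsion submodules, where $\cI=\ker(W_n\cO_X\to\cO_X)$; these are sub-Cartier modules, and the graded pieces are Cartier modules on $X$ with vanishing $\Sol_{\et}$ by exactness. Nilpotence is stable under extensions, so we may assume $\cN\in\Coh_X^{C^r}$. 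As in the proof of \autoref{surjectivity of C - 1 easier}, we can replace $\cN$ by the stable image $\cN'=\operatorname{im}(\kappa^e)$ for $e\gg0$, because $\cN/\cN'$ is nilpotent. The structure map of $\cN'$ is then surjective, and we must show that $\cN'=0$.

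The key step is to produce many fixed points when $\kappa$ is surjective. Suppose $\cN'\neq0$ and choose a point $y$ in its support. As in \autoref{surjectivity of C - 1 easier}, work Zariski locally near $y$, embed into a regular $F$-finite $Y$, and choose a free module $P\twoheadrightarrow N$ that is an isomorphism at $k(y)$, together with a surjective lift $\kappa_P$ and a splitting $\tau_P$. A fixed point of $\kappa_P$ maps to a fixed point of $\kappa_N$. It is therefore enough to show that the unit-type Frobenius structure yields nonzero $\kappa$-fixed sections whose image in $N\otimes k(y)$ is nonzero, at least étale locally. The idea is that $\Sol_{\et}$ of the Frobenius module $(P,\tau_P)$, restricted to the closed point, is nonzero after passing to $k(y)^{\mathrm{sep}}$, by Lang's theorem for the semilinear endomorphism $\tau_P\otimes k(y)$. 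I do not expect this to be the best route, however. A simpler route is to work directly over the residue field: by \autoref{pushforward_from_closed_immersion_fully_faithful} and exactness, compare with $i^\flat$ at a generic point of the support. On a reduced $F$-finite field $K$ with a nonzero surjective Cartier map $\kappa\colon F^r_*V\to V$ on a finite-dimensional space $V$, one must produce a nonzero fixed vector after a finite separable extension. Dually, $\kappa^\flat$ gives $V^\vee$ a Frobenius structure that is injective, hence nonnilpotent, and Lang's theorem for Frobenius modules (\autoref{Riemann_Hilbert_Frobenius_modules}) yields nonzero étale fixed points. Pairing these against $V$ via \autoref{construction_pairings} gives nonzero $\kappa$-fixed elements.

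The main obstacle is the last step: turning the nonvanishing of fixed points at a generic point of the support into a nonzero section of $\Sol_{\et}(\cN')$ on an étale neighbourhood in $X$, rather than only on $\Spec$ of a field. The approach I would try first is to localize to a dense open of the support, where $\cN'$ is locally free on a regular reduced scheme and $\kappa$ is surjective. There, by Kunz's theorem, I would lift fixed points from the residue field using surjectivity of $\kappa-1$ (\autoref{surjectivity of C - 1 easier}) on the kernel of reduction, which gives a short exact sequence on $\Sol$. Since $\Sol_{\et}$ is exact and this open has nonzero $\Sol_{\et}$, this contradicts $\Sol_{\et}(\cN')=0$.
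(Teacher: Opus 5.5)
Your reductions (exactness reduces faithfulness to showing that $\Sol_{\et}(\cM)=0$ forces $\cM$ to be locally nilpotent, then pass to $\cN\in\Coh_X^{C^r}$ with surjective $\kappa$) are fine and match the paper. The gap is in the key step.

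\textbf{The dual route does not produce fixed vectors.} In your ``simpler route'' the dual $V^\vee=\HHom(V,\omega_K)$ does become a non-nilpotent Frobenius module with nonzero étale-local fixed points. But a $\tau$-fixed $\phi\in V^\vee$ is exactly a morphism of Cartier modules $V\to\omega_K$. It carries $\kappa$-fixed vectors of $V$ to fixed vectors of $\omega_K$, and no pairing manufactures a $\kappa$-fixed vector of $V$ out of it. Making duality work would require trivializing $V^\vee$ étale locally, deducing $V\cong\omega_K^{\oplus k}$ as Cartier modules, and separately proving $\Sol(\omega_K)\neq0$. Your sketch does none of this.

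\textbf{The missing idea is splitting $\kappa$ on the module itself.} You wrote this down in your first route and then dropped it, without the one-line consequence that makes it work. Let $M=\cN'_\eta$ at a generic point $\eta$ of the support.
\begin{enumerate}
\item Since $\fm^nM=0$ and $\kappa^n$ is surjective, for $a\in\fm$ you get $a\,\kappa^n(y)=\kappa^n(a^{q^n}y)=0$, as $a^{q^n}\in\fm^n$. So $M$ is a $k(\eta)$-vector space. Your appeal to $i^\flat$ would not obviously keep $\kappa$ surjective.
\item Thus $\kappa\colon F^r_*M\to M$ is a surjection of $k(\eta)$-vector spaces and has a linear section $\tau$. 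The paper gets $\tau$ from a free $P$ over a regular $R\surj\cO_{X,\eta}$ with split $\kappa_P$, using that $\ker(P\to M)=\fm P$ is $\tau_P$-stable.
\item Because $\kappa\tau=\id$, the map $\tau$ is injective, so $(M,\tau)$ is not nilpotent.
\item Every $\tau$-fixed $v$ is $\kappa$-fixed, since $\kappa(v)=\kappa(\tau(v))=v$. By \autoref{Riemann_Hilbert_Frobenius_modules}, $(M,\tau)$ has nonzero étale fixed points, hence so does $(M,\kappa)$.
\end{enumerate}

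\textbf{Your ``main obstacle'' is formal, and your proposed fix fails.} In general $\fm_z\cN'$ is not $\kappa$-stable; on $\omega_{\bA^1}$ one has $C(t^{p-1}dt)=dt$. So there is no short exact sequence of Cartier modules ``kernel of reduction'' to which exactness of $\Sol$ could be applied.

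The passage between $X$ and $\eta$ goes as follows. Étale schemes over $\Spec\cO_{X,\eta}$ are limits of étale schemes over Zariski neighbourhoods of $\eta$, and sections of quasi-coherent modules commute with such limits. Hence $\Sol_{\et}(\cN')=0$ gives $\Sol_{\et}(M)=0$ on $(\Spec\cO_{X,\eta})_{\et}$. Since $M$ is pushed forward along the closed immersion $\Spec k(\eta)\inj\Spec\cO_{X,\eta}$, this is the same as vanishing on $(\Spec k(\eta))_{\et}$.

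So $(M,\tau)$ has no étale fixed points and is nilpotent by Riemann--Hilbert. Being injective, it must be zero, which contradicts $\eta\in\Supp(\cN')$. This is precisely the paper's argument.
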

\begin{proof}
    By exactness of these functors (see \autoref{Sol_exact_and_commutes_with_pushforwards}) and \stacksproj{06XK}, we have to show that given $\cM \in \IndCoh_{W_nX}^{C^r}$, if $\Sol(\cM) = 0$ (resp. $\Sol(\cM)_{\et}) = 0$), then $\cM$ is locally nilpotent. Since $\Sol(\cM) = 0$ implies in particular that $\Sol(\cM)_{\et} = 0$, it is enough to show that if $\Sol(\cM)_{\et} = 0$, then $\cM$ is locally nilpotent.

    Assume therefore that $\Sol(\cM)_{\et} = 0$ and that $\cM$ is not locally nilpotent. Again by exactness and a colimit argument using \autoref{Sol_exact_and_commutes_with_pushforwards}, we may assume that $\cM \in \Coh_{X}^{C^r}$. As in the proof of \autoref{surjectivity of C - 1 easier}, we may also assume that $\kappa_{\cM} \colon F^r_*\cM \to \cM$ is surjective. Let $\eta$ be the generic point of an irreducible component of $\Supp(\cM)$, and set $S \coloneqq \cO_{X, \eta}$. By \cite[Remark 13.6]{Gabber_notes_on_some_t_structures}, there exists a regular $F$-finite local ring $R$ together with a surjection $R \surj S$. We may therefore see $M \coloneqq \cM_{\eta}$ as a Cartier module on $R$ supported at the closed point $\eta = \{\fm\}$ of $\Spec R$. Since $\fm^nM = 0$ for some $n \geq 1$ and $\kappa_M^n: F^{nr}_* M\to M$ is surjective, we have $\fm M = 0$.
    
    As in the proof of  \autoref{surjectivity of C - 1 easier}, there exists a finite free $R$-module $P$ together with a Cartier module structure $\kappa_P$ with the following properties: 
    \begin{enumerate}
        \item there exists a surjection $P \surj M$ of Cartier modules, such that the underlying morphism of $R$-modules is an isomorphism at the generic point;
        \item the morphism $\kappa_P \colon F^r_*P \to P$ admits a splitting $\tau_P \colon P \to F^r_*P$ (in particular this gives $P$ the structure of a Frobenius module).
    \end{enumerate}
    
    Let $K$ denote the kernel of $P \to M$. Note that since $P \to M$ is an isomorphism at $\eta$ and $\fm M = 0$, we must have that $K = \fm P$. In particular, $\tau_P \colon P \to F^r_*P$ also induces a Frobenius module structure $\tau_K$ on $K$ splitting $\kappa_K$. The cokernel $M$ then acquires a Frobenius module structure $\tau_M$ splitting $\kappa_M$ too. In particular, any $\tau_M$-fixed point is also a $\kappa_M$-fixed point. We therefore see that $M_{\et}$ admits no non-trivial $\tau_M$-fixed point, so $(M, \tau_M)$ is a nilpotent Frobenius module by the Riemann-Hilbert correspondence \autoref{Riemann_Hilbert_Frobenius_modules}. However, $\tau_M$ is injective since $\kappa_M \circ \tau_M = \id$, so we deduce that $M = 0$.
\end{proof}

So far, everything on the Cartier module side seems to reflect the situation in the Frobenius module side. Let us now point out striking differences:

\begin{proposition}\label{suprising_Zariski_faithfulness}
    Assume that $X$ is of finite type over an algebraically closed field $k$, and let $\cM \in \IndCoh_{W_nX}^{C^r}$. Then $\Sol(\cM)_{\Zar} = 0$ if and only if $\cM \sim_C 0$.
\end{proposition}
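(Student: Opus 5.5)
The easy direction is formal. If $\cM\sim_C 0$, i.e.\ $\cM$ is locally nilpotent, then $\kappa_\cM-1$ is bijective on $\cM_{\FRP}$, with inverse $-\sum_i \kappa_\cM^i$ as in the proof of \autoref{surjectivity of C - 1 easier}. Hence $\Sol(\cM)=0$, and so $\Sol(\cM)_{\Zar}=0$.

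For the converse I would argue by contradiction, following the reductions in the proof of \autoref{Sol_faithful_on_etale_and_FRP_site}. Suppose $\Sol(\cM)_{\Zar}=0$ but $\cM$ is not locally nilpotent. Since $\Sol$ is exact and commutes with filtered colimits (\autoref{Sol_exact_and_commutes_with_pushforwards}), and $\Sol(\cN)_{\Zar}\subset \Sol(\cM)_{\Zar}$ for a sub-Cartier module $\cN\subset\cM$, I can pass to a coherent Cartier submodule that is not nilpotent. Filtering by the $\Ker(W_n\cO_X\to\cO_X)$-adic pieces and using left exactness of $\Sol(-)_{\Zar}$, the bottom non-nilpotent piece again has no Zariski fixed points, so I may assume $\cM\in\Coh_X^{C^r}$. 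Replacing $\cM$ by the stable image of $\kappa^e_\cM$ keeps it a subobject, so I may also assume $\kappa_\cM$ is surjective. Then I localize at the generic point $\eta$ of a component of $\Supp\cM$. Shrinking $X$ to an affine open $U$ around $\eta$ on which $\cM$ is a nonzero module killed by the ideal of a smooth integral $k$-variety $V\subset X$ (possible since $k$ is perfect), I may further assume $\cM=i_*\cM'$ with $\cM'$ locally free on $V$ and $\kappa_{\cM'}$ surjective. By \autoref{pushforward_from_closed_immersion_fully_faithful} it then suffices to produce a nonzero section of $\Sol(\cM')$ on some nonempty Zariski open of $V$.

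As in the proof of \autoref{Sol_faithful_on_etale_and_FRP_site}, after shrinking $V$ there is a splitting $\tau\colon\cM'\to F^r_*\cM'$ of $\kappa$. It is injective, hence unit by counting ranks, and every $\tau$-fixed point is $\kappa$-fixed. By \autoref{Riemann_Hilbert_Frobenius_modules}, $\Sol_{\et}(\cM',\tau)$ is a nonzero lisse sheaf, which is trivialized on some finite étale Galois cover $g\colon V'\to V$ with group $G$. Thus $\cM'_{V'}$ is spanned over $\cO_{V'}$ by $\kappa$-fixed sections $v_1,\dots,v_m$. Since $k$ is algebraically closed, $V'$ is still a variety over $k$, and the Cartier structure is compatible with the trace $\mathrm{Tr}\colon g_*g^*\cM'\to\cM'$, by flat base change and the relative perfectness of $g$. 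Hence $\mathrm{Tr}$ maps $\Sol(\cM')(V')$ into $\Sol(\cM')(V)$.

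The main obstacle is showing that this image is nonzero, since the $\tau$-fixed points alone may have no Galois invariants (Artin--Schreier twists). Here I would exploit that $\kappa$-fixed points are much more plentiful than $\tau$-fixed ones. For $a\in\cO(V')$ and a fixed $v$, the element
\[ w_a\coloneqq\sum_{j\ge0}\kappa^j(a^{q}v)-a^qv\cdot[\,\cdot\,] \]
should be formed as follows: look for solutions of $\kappa(x)=x$ of the form $x=\sum_i b_i v_i$. This amounts to the $p^{-1}$-linear system $\kappa(b_i^{\,}v_i)=C(b_i)v_i$, where $C$ is a Cartier-type operator on $\cO_{V'}$ coming from the splitting. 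Concretely, I expect the fixed points to contain $\{\sum c_iv_i : C(c_i)=c_i\}$, and the $c$ with $C(c)=c$ form a $G$-stable $\bF_p$-space of functions that is infinite-dimensional and separates points (e.g.\ the analogues of logarithmic forms $\prod t_i^{a_i}$ with $q\mid a_i$ suitably normalized). On such a $G$-representation the trace map $\sum_{\sigma\in G}\sigma$ is nonzero for at least one fixed point, because the normal basis theorem applies to the $G$-action on a generically large enough span. This gives a nonzero element of $\Sol(\cM')(V)$, which is the desired contradiction. Making the description of this large supply of $\kappa$-fixed points precise is the step I expect to require the most work.
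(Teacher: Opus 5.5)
\paragraph{Verdict.} The proposal has a genuine gap, and it sits in the one step that carries the content of the proposition. The easy direction and your first reductions are fine. One small correction: passing to the lowest non-nilpotent graded piece $\cM_j/\cM_{j-1}$ is justified by the snake lemma on affine opens, using that $\kappa-1$ is bijective on the nilpotent $\cM_{j-1}$, not by left exactness.

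\paragraph{The missing step.} You never actually produce a nonzero $\kappa$-fixed section over a Zariski open of $V$, and the mechanism you sketch does not work as stated.
\begin{enumerate}
\item In a frame $v_1,\dots,v_m$ of $\tau$-fixed sections you only know $\kappa(F^r_*(b^qv_i))=bv_i$. For general $c$, $\kappa(F^r_*(cv_i))$ has components along all the $v_j$. So the fixed points are not $\{\sum_i c_iv_i : C(c_i)=c_i\}$.
\item $G$ acts on the $v_i$ through the monodromy of the local system, not only on coefficient functions.
\item The normal basis theorem only shows that the trace is nonzero on functions on $V'$. It says nothing about the trace restricted to the $\bF_p$-subspace of $\kappa$-fixed sections. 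That subspace can be killed by $\sum_{\sigma\in G}\sigma$ when $p$ divides $|G|$, for instance if $G$ acts trivially on it.
\item The displayed $w_a$ is not a well-defined expression.
\end{enumerate}
A symptom of the gap is that the algebraic closedness of $k$ plays no real role in your argument, yet it is indispensable. For $p$ odd, the Cartier module $(\bF_p,\kappa=2)$ on $\Spec\bF_p$ is not nilpotent and has no fixed points.

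Separately, the claim that the splitting $\tau$ is unit ``by counting ranks'' is false. Over $k(t)$, the map $\tau(ae_1+be_2)=(a^q+tb^q)e_1$ is injective and splits a surjective $\kappa$, but its adjoint has rank one. So the $\tau$-fixed sections need not span $\cM'$, even \'etale locally.

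\paragraph{The idea you are missing.} Avoid \'etale covers entirely by pushing forward to the point, where algebraic closedness of $k$ gives fixed points directly. The paper's argument runs as follows.
\begin{enumerate}
\item Pick $x$ with $\cM_x\not\sim_C 0$.
\item By \cite[Lemma 4.4.10]{Baudin_Duality_between_Witt_Cartier_crystals_and_perverse_sheaves}, there is an open $U\ni x$ such that $\Gamma(U,\cM)$, viewed via pushforward as an ind-coherent Cartier module over $\Spec k$, is not locally nilpotent.
\item Some finite-dimensional sub-Cartier module is then non-nilpotent (after reducing to $\cO_X$-modules as you did). Its stable image $M'$ is nonzero and $\kappa$ is bijective on it.
\item So $\kappa^{-1}$ is a bijective $q$-linear endomorphism of a finite-dimensional $k$-vector space. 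Since $k$ is algebraically closed, it has a basis of fixed vectors by \cite[Corollary p.~143]{Mumford_Abelian_Varieties}.
\item Any such vector is a nonzero element of $\Sol(\cM)(U)$.
\end{enumerate}
This uses only finiteness of Cartier modules under pushforward and the hypothesis on $k$. It sidesteps the descent problem that your trace argument leaves open.
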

\begin{remark}
    The analogous result is definitely wrong for Frobenius modules. For example, assume that $X$ admits a non-trivial finite étale cover $f \colon Y \to X$. Then the Frobenius module associated to the sheaf $f_!(\bF_q)$ under the Riemann--Hilbert correspondence has no fixed point on $X_{\Zar}$, but it is not nilpotent. 

    Here is a concrete instance of the general example above: take $X = \Spec k[t^{\pm 1}]$ and $\tau \colon \cO_X \to F^r_*\cO_X$ defined by $\tau(s) = ts^q$. Then in order to obtain a non-zero fixed point, one must give a $(q - 1)$'th root of $t^{-1}$ (which is impossible to achieve on $X_{\Zar})$.

    One way to interpret this discrepancy is that Cartier modules have a tremendeous amount of fixed points. Another instance of this observation is that if $\cM$ is a Cartier module, then $\Sol(\cM)_{\et}$ is in general \emph{not} constructible, unlike the Frobenius module case. For example, if $X$ is a smooth {\em $k$-curve} and $r=1$ (i.e. $q=p$), 
    then $\Sol(\omega_X)_{\et}\cong \cO^\times_X/(\cO^{\times}_X)^p$, by
    e.g. \cite[0, Corollaire 2.1.18]{Illusie_Complexe_de_de_Rham_Witt_et_cohomologie_cristalline}.
\end{remark}

\begin{proof}
    Assume that there exists $x \in X$ such that $\cM_x \not\sim_C 0$. We will find a Zariski open neighborhood $U$ of $x$ and $m \in H^0(U, \cM)$ such that $\kappa_\cM(m) = m$.

     Let us first show the result for $X = \Spec k$, and set $M = \Gamma(\Spec k, \cM)$. We may assume that $M$ is finitely generated. Let $M' \inc M$ denote the image of $F^{er}_*(M) \to M$ for $e \gg 0$, so that $F^r_*M' \to M'$ is an isomorphism (note that $M' \neq 0$ since $M$ is not nilpotent). We can therefore take the inverse of the structural morphism to obtain a (perfect) Frobenius module $M' \to F^r_*M'$. We then obtain from \cite[Corollary p.143]{Mumford_Abelian_Varieties} that it has in particular a fixed point, so we are done in this case.

    Let us now tackle the general case. By \cite[Lemma 4.4.10]{Baudin_Duality_between_Witt_Cartier_crystals_and_perverse_sheaves}, we know that there exists an open neighborhood $U \inc X$ of $x$ such that $\Gamma(U, \cM) \not\sim_C 0$ (we see it as an ind-coherent Cartier module over $\Spec k$ by \cite[Proposition 4.4.4]{Baudin_Duality_between_Witt_Cartier_crystals_and_perverse_sheaves}). We then know by the previous case that the Cartier module $\Gamma(U, \cM)$ has a non-zero fixed point, so the proof is complete.
\end{proof}

\begin{remark}\label{RH_not_okay_with_etake_site}
    Here are two other fundamental differences with the case of Frobenius modules:
    \begin{enumerate}
        \item Even though we have seen that the functor $\Sol_{\et} \colon \Crys_X^C \to \Sh(X_{\et}, \bF_p)$ is faithful in \autoref{Sol_faithful_on_etale_and_FRP_site}, this functor is not full in general. For example, let $K$ denote the separable closure of $\bF_p(x)$, and consider $\omega_K$ together with its canonical Cartier module structure. Then on the one hand, \[ \Hom_{\Crys_X^C}(\omega_K, \omega_K) = \Hom_{\Sh(X_{\et}, \bF_p)}(\bF_p, \bF_p) = \bF_p, \]  (we used \autoref{Riemann_Hilbert_Frobenius_modules} and \autoref{duality_Frobenius_and_Cartier}, even though one can see this equality above more elementarily). On the other hand, $\Hom_{\Sh(X_{\et}, \bF_p)}(\Sol(\omega_K), \Sol(\omega_K))$ is infinite dimensional, since $X_{\et}$ is trivial and $\Sol(\omega_K)$ is an infinite dimensional $\bF_p$-vector space (all elements $d\log(u)$ for $u \in K^{\times} \setminus (K^{\times})^p$ are non-zero). We believe that $\Hom_{\Sh(\bA^1_{\et}, \bF_p)}(\Sol(\omega_{\bA^1}), \Sol(\omega_{\bA^1}))$ is also large (and hence $\Sol_{\et}$ would also not be full on $\bA^1$ in this case), but this seems trickier to show.
        \item Contrary to \autoref{Sol_on_Frobenius_modules_comes_from_etale_site}, the functor $\Sol$ on Cartier modules does not factor through the étale site. This is already wrong in the case of $\bA^1$ with the Cartier module $\omega_{\bA^1}$ 
        with $r=1$, since in that case $\Sol(\omega_{\bA^1})=\cO^\times_{X_{\FRP}}/(\cO^\times_{X_{\FRP}})^p$, see \autoref{rmk:dualizing-dRW} 
        below,
        and for a general FRP scheme $f \colon Y \to \bA^1$, not all elements of $\Gamma(Y, \cO^\times_Y/(\cO^\times_Y)^p)$ 
        come from a unit on some étale open over $\bA^1$.
    \end{enumerate}
\end{remark}

Recall that if $i \colon Z \inj X$ is a closed immersion and $\cN$ is a Cartier module on $Z$, then $i^{\flat}(\cN) = i^{-1}\HHom(W_n\cO_Z, \cN)$ is a Cartier module via \autoref{construction_pairings}, where we give $W_n\cO_Z$ its canonical Frobenius module structure. On Cartier modules, the functor $i^{\flat}$ is right adjoint to $i_*$, as is direct to check. 
This derives to adjunctions
\[i_* : D^+(\QCoh^{C^r}_{W_nZ})\rightleftarrows  D^+(\QCoh^{C^r}_{W_nX}): i^!,\]
\[i_* : D^+(\IndCoh^{C^r}_{W_nZ})\rightleftarrows  D^+(\IndCoh^{C^r}_{W_nX}): i^!,\]
\[i_* : D^+(\IndCrys^{C^r}_{W_nZ})\rightleftarrows  D^+(\IndCrys^{C^r}_{W_nX}): i^!,\]
where $i^!(-)=i^{-1}\cR\HHom(W_n\cO_Z, -)$.

Recall that we also defined the functors $i^{\flat}$ and $i^!$ on the FRP site in \autoref{section_FRP_site}, and that we introduced the triangulated subcategory $\cC_{\adj}$ on $D^+(X_{\FRP}, W_n(\bF_q))$ 
on which these functors are adjoint above \autoref{exact_upper_shriek_triangle}.

\begin{lemma}\label{Sol_of_Cartier_modules_behave_well_with_adjunction}
    Let $\cM^{\bullet} \in D^+(\IndCoh_{W_nX}^{C^r})$. Then $\Sol(\cM^{\bullet}) \in \cC_{\adj}$.
\end{lemma}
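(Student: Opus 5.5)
We plan to use that $\cC_{X,\adj}$ is a full triangulated subcategory containing every $\cM^{\bullet}_{\FRP}$ with $\cM^{\bullet}\in D^+(\QCoh_{W_nX})$ (\autoref{exact_upper_shriek_triangle}\ref{exact_upper_shriek_triangle1}), together with the short exact sequence defining $\Sol$.

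The first step is to represent $\cM^{\bullet}$ by a bounded below complex $\cI^{\bullet}$ of injective objects of $\IndCoh_{W_nX}^{C^r}$. By \autoref{Sol_exact_and_commutes_with_pushforwards}, $\Sol$ is exact on ind-coherent Cartier modules, so its derived functor is computed termwise and $\Sol(\cM^{\bullet})$ is represented by $\Sol(\cI^{\bullet})$. We do not even need injectivity for this: termwise application of the exact functor $\Sol$ to any representative works.

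Next, \autoref{surjectivity of C - 1 easier} gives for each term a short exact sequence
\[0\to \Sol(\cM^k)\to \cM^k_{\FRP}\xrightarrow{\kappa-1}\cM^k_{\FRP}\to 0\]
in $\Sh(X_{\FRP}, W_n(\bF_q))$. These sequences are functorial in $\cM^k$ and compatible with the differentials, since the differentials are morphisms of Cartier modules. They therefore assemble into a short exact sequence of complexes
\[0\to \Sol(\cM^{\bullet})\to \cM^{\bullet}_{\FRP}\xrightarrow{\kappa-1}\cM^{\bullet}_{\FRP}\to 0,\]
and hence into a distinguished triangle
\[\Sol(\cM^{\bullet})\to \cM^{\bullet}_{\FRP}\xrightarrow{\kappa-1}\cM^{\bullet}_{\FRP}\xrightarrow{+1}\]
in $D^+(X_{\FRP}, W_n(\bF_q))$. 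Here $\cM^{\bullet}_{\FRP}$ means $(-)_{\FRP}$ applied to the underlying complex of quasi-coherent $W_n\cO_X$-modules. The map $\kappa-1$ is only $W_n(\bF_q)$-linear, not $W_n\cO_X$-linear, but that causes no problem because it is still a morphism in $D^+(X_{\FRP}, W_n(\bF_q))$.

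Since the two other vertices of the triangle lie in $\cC_{X,\adj}$ by \autoref{exact_upper_shriek_triangle}\ref{exact_upper_shriek_triangle1}, the two-out-of-three property of the triangulated subcategory gives $\Sol(\cM^{\bullet})\in\cC_{X,\adj}$. More concretely: $\theta$ is a natural transformation of triangulated functors $R\Gamma_Z\to i_*i^!$, both exact, so the five lemma applies for each closed immersion $i\colon Z\inj X$.

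The only subtle point is checking that the exact triangle of functors behind $\cC_{X,\adj}$ being triangulated is compatible with morphisms that are not $W_n\cO$-linear. This is harmless, because $\theta$ is natural for all morphisms in $D^+(X_{\FRP},W_n(\bF_q))$. One should also confirm that the underlying complex of ind-coherent Cartier modules indeed lies in $D^+(\QCoh_{W_nX})$, which is immediate since the terms are quasi-coherent.
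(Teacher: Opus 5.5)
Your proposal is correct and follows essentially the same route as the paper. The paper likewise uses \autoref{surjectivity of C - 1 easier} to obtain the exact triangle $\Sol(\cM^{\bullet})\to\cM^{\bullet}_{\FRP}\xrightarrow{\kappa-1}\cM^{\bullet}_{\FRP}\xrightarrow{+1}$, and then concludes from \autoref{exact_upper_shriek_triangle}.
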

\begin{proof}
    It follows from \autoref{surjectivity of C - 1 easier} that there is an exact triangle \[ \begin{tikzcd}
        \Sol(\cM^{\bullet}) \arrow[rr] &  & \cM^{\bullet}_{\FRP} \arrow[rr, "\kappa - 1"] &  & \cM^{\bullet}_{\FRP} \arrow[rr, "+1"] &  & {}
    \end{tikzcd} \] in $D(X_{\FRP}, W_n(\bF_q))$. The result then follows from \autoref{exact_upper_shriek_triangle}.
\end{proof}

\begin{lemma}\label{Sol_commutes_with_upper_shriek}
    Let $\cM^{\bullet} \in D^b(\IndCoh_{W_nX}^{C^r})$, and let $i \colon Z \inj X$ be a closed immersion. Then there is a natural isomorphism $\Sol(i^!\cM^{\bullet}) \to i^!\Sol(\cM^{\bullet})$. 
\end{lemma}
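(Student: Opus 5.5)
We will obtain the isomorphism from the exact triangle defining $\Sol$ together with the compatibility of $(-)_{\FRP}$ with $i^!$ on quasi-coherent modules. First we produce a natural map. Let $\cM^\bullet \to \cI^\bullet$ be an injective resolution in $\IndCoh^{C^r}_{W_nX}$. Each $\cI^k$ is injective in $\Mod(W_n\cO_X)$, so $i^!\cM^\bullet$ is computed by the complex of Cartier modules $i^\flat\cI^\bullet$. Since $\Gamma_Z(\cI^k)$ is $\cI_Z$-power torsion, it is supported on $Z$, and $i_*i^\flat\cI^k = \Gamma_Z(\cI^k)$. This inclusion into $\cI^k$ commutes with the Cartier structures. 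Applying $\Sol$ and using exactness of $(-)_{\FRP}$ and \autoref{pushforward_from_closed_immersion_fully_faithful}, we get a map $i_*\Sol(i^\flat\cI^\bullet)=\Sol(i_*i^\flat\cI^\bullet)\to \Sol(\cI^\bullet)$. Its source is killed by $j^{-1}$, so it factors through $R\Gamma_Z\Sol(\cM^\bullet)$. By \autoref{Sol_of_Cartier_modules_behave_well_with_adjunction}, $\Sol(\cM^\bullet)\in\cC_{X,\adj}$, so $R\Gamma_Z\Sol(\cM^\bullet)\simeq i_*i^!\Sol(\cM^\bullet)$. Applying $i^{-1}$ and using $i^{-1}i_*=\id$ yields the natural map $\Sol(i^!\cM^\bullet)\to i^!\Sol(\cM^\bullet)$.

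To show it is an isomorphism, we compare the triangles $\Sol(\cN^\bullet)\to\cN^\bullet_{\FRP}\xrightarrow{\kappa-1}\cN^\bullet_{\FRP}$. These exist for any complex of ind-coherent Cartier modules by \autoref{surjectivity of C - 1 easier}, so they apply to $\cN^\bullet=i^\flat\cI^\bullet$ on $Z$ and to $\cI^\bullet$ on $X$. Applying $i_*$ to the first triangle and $R\Gamma_Z$ to the second gives a morphism of triangles. By the five lemma it suffices to show that $(i_*i^\flat\cI^\bullet)_{\FRP}=(\Gamma_Z\cI^\bullet)_{\FRP}\to R\Gamma_Z(\cI^\bullet_{\FRP})$ is an isomorphism compatible with $\kappa$. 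Compatibility with $\kappa$ is clear, since both sides sit inside $\cI^\bullet_{\FRP}$ and $\kappa$ is the restricted structure. For the isomorphism, \autoref{pushforwards_of_qcoh_commute_with_FRP} applied to $j$ gives $R j_*(\cI^k|_U)_{\FRP}=(j_*\cI^k|_U)_{\FRP}$. Moreover $\cI^k\to j_*\cI^k|_U$ is surjective for injective quasi-coherent modules. Combined with \autoref{derived_version_of_iflat}, this shows that $R\Gamma_Z(\cI^k_{\FRP})$ is concentrated in degree $0$ and equals $\Gamma_Z(\cI^k)_{\FRP}$, as in the first step of the proof of \autoref{qcoh_is_good_for_right_adjoint_non_derived_version}. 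Finally, $i^{-1}$ of the resulting isomorphism of triangles is the claimed map.

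The main obstacle is checking that the map obtained via $\theta$ agrees with the one coming from the morphism of triangles, that is, that the identification $R\Gamma_Z\simeq i_*i^!$ on $\Sol(\cM^\bullet)$ is compatible with its analogue on $\cI^\bullet_{\FRP}$. We will handle this through functoriality of $\theta$ together with \autoref{exact_upper_shriek_triangle}, which places all three terms of the triangle in $\cC_{X,\adj}$. Boundedness of $\cM^\bullet$ guarantees that we stay within $D^+$, where all the functors above are defined.
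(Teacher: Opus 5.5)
Your construction of the map is fine; it is the paper's (apply $\Sol$ to the counit $i_*i^\flat\cI^\bullet\to\cI^\bullet$, then lift through $R\Gamma_Z$ and $\theta$). The isomorphism step, however, rests on the false identification $i_*i^\flat\cI^k=\Gamma_Z(\cI^k)$. Here $i^\flat\cN=i^{-1}\HHom(W_n\cO_Z,\cN)$ is the subsheaf of sections \emph{annihilated} by the ideal $J$ of $W_nZ$. By contrast, $\Gamma_Z(\cN)=\bigcup_m\cN[J^m]$ consists of sections annihilated by \emph{some power} of $J$. Being supported on $Z$ does not make a module an $\cO_Z$-module.

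For injectives the two genuinely differ. Take $X=\bA^1_k$ with $k$ perfect, $Z$ the origin, $n=1$, and $\cI$ an injective ind-coherent Cartier module containing $i_*\omega_Z\neq 0$. Then $\Gamma_Z(\cI)$ is a nonzero injective, hence divisible, $k[t]$-module, so it is not killed by $t$. Consequently the middle terms $(i_*i^\flat\cI^\bullet)_{\FRP}$ and $R\Gamma_Z(\cI^\bullet_{\FRP})\simeq\Gamma_Z(\cI^\bullet)_{\FRP}$ of the two triangles you compare are not isomorphic, and the five-lemma step fails as stated. The compatibility issue you single out as the main obstacle is not the real difficulty.

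The missing idea is the reason the paper passes to crystals (``we may work up to nilpotence'', via \cite[Remark 5.3.6]{Baudin_Duality_between_perverse_sheaves_and_Cartier_crystals} and \cite[Theorem 4.1.1]{Blickle_Bockle_Cartier_crystals}). For any Cartier module $\cN$, the inclusion $i_*i^\flat\cN\subseteq\Gamma_Z(\cN)$ has locally nilpotent cokernel. Indeed, $a\,\kappa^e(s)=\kappa^e(F^{re}(a)s)$, and $F^{re}(J)\subseteq J^m$ once $e\gg0$, so $\kappa^e$ maps $J^m$-torsion into $J$-torsion. This is exactly what makes $i_*i^!\cM^\bullet\to\cM^\bullet\to Rj_*\cM^\bullet|_U$ an exact triangle in $D^b(\IndCrys^{C^r}_{W_nX})$.

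Since $\Sol$ is exact and kills locally nilpotent objects, $\Sol(i_*i^\flat\cI^\bullet)\to\Sol(\Gamma_Z\cI^\bullet)$ is an isomorphism. Your $\kappa-1$ comparison then goes through verbatim with the complex of ind-coherent Cartier modules $\Gamma_Z(\cI^\bullet)$ in place of $i_*i^\flat\cI^\bullet$. It uses your correct computation $R\Gamma_Z(\cI^k_{\FRP})=\Gamma_Z(\cI^k)_{\FRP}$, together with \autoref{surjectivity of C - 1 easier} applied to $\Gamma_Z(\cI^k)$.

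Repaired this way, your argument is essentially the paper's. The paper compares the crystal-level triangle with its FRP counterpart and uses that $\Sol$ commutes with $Rj_*$, which rests on the same input you invoke via \autoref{pushforwards_of_qcoh_commute_with_FRP}.
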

\begin{proof}
    By \autoref{Sol_of_Cartier_modules_behave_well_with_adjunction}, the datum of a natural transformation $\Sol(i^!\cM^{\bullet}) \to i^!\Sol(\cM^{\bullet})$ is equivalent to a natural transformation $i_*\Sol(i^!\cM^{\bullet}) \to \Sol(\cM^{\bullet})$. Since $\Sol$ and $i_*$ commute, by \autoref{Sol_exact_and_commutes_with_pushforwards}, we can therefore apply $\Sol$ to the counit $i_*i^!\cM^{\bullet} \to \cM^{\bullet}$. To show that this natural transformation is an isomorphism, it is enough to show that it is an isomorphism after applying $i_*$ by \autoref{pushforward_from_closed_immersion_fully_faithful}. Moreover, we may work up to nilpotence. One can deduce from \cite[Remark 5.3.6]{Baudin_Duality_between_perverse_sheaves_and_Cartier_crystals} (see also \cite[Theorem 4.1.1]{Blickle_Bockle_Cartier_crystals}) and a dévissage argument that we have a natural exact triangle 
    \[ \begin{tikzcd}
        i_*i^!\cM^{\bullet} \arrow[rr] &  & \cM^{\bullet} \arrow[rr] &  & Rj_*\cM^{\bullet}|_U \arrow[rr, "+1"] &  & {}
    \end{tikzcd} \] in $D^b(\IndCrys_{W_nX}^{C^r})$, where $j \colon U \inj X$ is the open immersion of the complement of $Z$. On the other hand, we know by \autoref{exact_upper_shriek_triangle} and \autoref{derived_version_of_iflat} that there is a natural exact triangle 
    \[ \begin{tikzcd} i_*i^!\Sol(\cM^{\bullet}) \arrow[rr]           &  & \Sol(\cM^{\bullet}) \arrow[rr]                &  & Rj_*\Sol(\cM^{\bullet})|_U \arrow[rr, "+1"]           &  & {}
    \end{tikzcd} \] and it follows from the definitions that the diagram
    \[ \begin{tikzcd}
        \Sol(i_*i^!\cM^{\bullet}) \arrow[rr] \arrow[d] &  & \Sol(\cM^{\bullet}) \arrow[rr] \arrow[d, "="] &  & \Sol(Rj_*\cM^{\bullet}|_U) \arrow[rr, "+1"] \arrow[d] &  & {} \\
        i_*i^!\Sol(\cM^{\bullet}) \arrow[rr]           &  & \Sol(\cM^{\bullet}) \arrow[rr]                &  & Rj_*\Sol(\cM^{\bullet})|_U \arrow[rr, "+1"]           &  & {}
    \end{tikzcd} \] is commutative, where the left arrow is precisely the natural transformation we constructed. Since both the middle and right arrows are isomorphisms (see \autoref{Sol_exact_and_commutes_with_pushforwards}), the left one is too.
\end{proof}

\section{Link with Kato's duality}\label{section_main_thm}

Throughout, we fix a Noetherian and $F$-finite $\bF_q$-scheme $X$.

\subsection{Kato's duality for $W_n(\bF_q)$-sheaves}

\begin{definition}\label{def:D0}
    We define $D_0(X, W_n(\bF_q))$ to be the full triangulated subcategory of $D(X_{\FRP}, W_n(\bF_q))$ generated by coherent $W_n\cO_X$-modules.
\end{definition}

\begin{remark}
    This category is vast and mysterious. Although it contains coherent $W_n\cO_X$-modules, it also contains any sheaf of the form $\Sol(\cM)$, for $\cM$ a coherent Frobenius or Cartier module. In particular, it contains all sheaves of the form $\cF_{\FRP}$ with $\cF$ a constructible étale sheaf by \autoref{Riemann_Hilbert_Frobenius_modules}. It follows from \autoref{pushforwards_of_qcoh_commute_with_FRP} that it is stable under derived proper pushforwards, but this seems to be the only obviously definable functor on this category (apart from restriction to open subsets). It is also a priori unclear whether it endows any $t$-structure.
\end{remark}

\begin{definition}\label{def:Kato-dual}
    Let $W_n\omega_X^{\bullet}$ be a unit dualizing complex over $W_nX$. We set $W_n\omega_{X, \log}^{\bullet} \coloneqq \Sol(W_n\omega_X^{\bullet})$, and define the functor \[ \bD_K \colon D(X_{\FRP}, W_n(\bF_q))^{op} \longrightarrow D(X_{\FRP}, W_n(\bF_q)) \] given by $\bD_K \coloneqq \cR\HHom_{\FRP}(-, W_n\omega_{X, \log}^{\bullet})$.
\end{definition}

\begin{remark}\label{rmk:dualizing-dRW} Let $k$ be a perfect $\bF_q$-field and  $X$ a $k$-scheme of finite type and of pure dimension $d$. 
Denote by $\pi: W_nX\to \Spec W_n k$ the map induced by the structure map of $X$. 
Then we can take $W_n\omega_X^{\bullet}=\pi^! W_n(k)$ as unit dualizing complex sitting in degrees $[-d,0]$.
Assume $r=1$, i.e., $q=p$. In this case $W_n\omega_{X,\log}^\bullet=\Sol(\pi^!W_n(k))$ 
is equivalent to Kato's complex $K_{n,X}$ from 
\cite[(3.1)]{Kato_Duality_theories_for_p_primary_etale_coh_II}\footnote{In fact Kato's complex is defined (up to sign)
as  ${\rm cone}((\pi^{\Delta}W_n k)_{\FRP}\xrightarrow{C-1} (\pi^{\Delta}W_n k)_{\FRP})[-1]$, 
where $\pi^{\Delta}W_n k$ denotes the residual complex representing $\pi^!W_n k$.} 
and $\bD_K$ agrees with Kato's dualizing functor \cite[p. 265]{Kato_Duality_theories_for_p_primary_etale_coh_II}.
We remark furthermore that the restriction to the \'etale site of $W_n\omega^\bullet_{X,\log}$ is 
equivalent to the \'etale cycle complex $\bZ^c_X(0)/p^n$\footnote{For $U$ an \'etale $X$-scheme the degree $-i$ part of 
$\bZ^c_X(0)(U)$ is the free abelian group generated by closed integral subschemes in $X\times \Delta^i$ of dimension $i$.} 
considered in \cite[2.]{Geisser_Duality_via_cycle_complexes}, i.e.,
\[W_n\omega_{X,\log}^\bullet|_{X_{\et}}\simeq \bZ^c_X(0)/p^n,\]
see \cite[Theorem 6.1]{Ren_Bloch_cycle_complex_coherent_dualizing_complexes}.

In case $X$ is additionally smooth, we have $W_n\omega_X^\bullet= W_n\Omega^d_X[d]$, 
by \cite[I, Theorem 4.1]{Ekedahl_Duality_Hodge_Witt}, where $W_n\Omega^d_X$ denotes the top degree of the de Rham--Witt complex of Bloch--Deligne--Illusie \cite{Illusie_Complexe_de_de_Rham_Witt_et_cohomologie_cristalline}. 
If $Y\to X$ is in $X_{\FRP}$,
then with the notation from \autoref{defn:FRP} we have 
$\Gamma(Y, (W_n\Omega^d_X)_{\FRP})= \Gamma(Y,W_n\Omega^d_Y)$, see \cite[(4.1)]{Kato_Duality_theories_for_the_p_primary_etale_cohomology_I}. Denote by $W_n\Omega^d_{X,\log,\FRP}$
the subsheaf of $(W_n\Omega^d_X)_{\FRP}$ which is locally generated by forms
$\dlog[u_1]\cdots \dlog[u_d]$,  $u_i\in \cO^\times_{X_{\FRP}}$.
By \cite[Proposition (3.4)]{Kato_Duality_theories_for_p_primary_etale_coh_II} we have 
\[W_n\omega^\bullet_{X,\log}= W_n\Omega^d_{X,\log,\FRP}[d].\]
\end{remark}

Next, we prove the following version of Kato's duality theorem \cite{Kato_Duality_theories_for_the_p_primary_etale_cohomology_I, Kato_Duality_theories_for_p_primary_etale_coh_II}:

\begin{thm}\label{Kato_duality}
    The natural transformation given by the evaluation $id \to \bD_K \circ \bD_K$ is an equivalence on objects in $D_0(X_{\FRP}, W_n(\bF_q))$.
\end{thm}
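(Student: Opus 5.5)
Since both $\id$ and $\bD_K\circ\bD_K$ are exact functors of triangulated categories and the evaluation map is a natural transformation of triangulated functors, the full subcategory of $D(X_{\FRP},W_n(\bF_q))$ on which it is an isomorphism is triangulated. Hence, by \autoref{def:D0}, it suffices to check that $\cM_{\FRP}\to\bD_K\bD_K(\cM_{\FRP})$ is an isomorphism for every coherent $W_n\cO_X$-module $\cM$. The strategy is to compute $\bD_K(\cM_{\FRP})$ in terms of Grothendieck duality on $W_nX$, and then reduce to $\bD_G\circ\bD_G\cong\id$.

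\textbf{Step 1: the defining triangle for the dualizing complex.} Since $W_n\omega_X^\bullet$ is a complex of unit (hence ind-coherent, after choosing a suitable representative) Cartier modules, \autoref{surjectivity of C - 1 easier} gives an exact triangle
\[ W_n\omega^\bullet_{X,\log}\to (W_n\omega_X^\bullet)_{\FRP}\xrightarrow{\kappa-1}(W_n\omega_X^\bullet)_{\FRP}\xrightarrow{+1}. \]
Applying $\cR\HHom_{\FRP}(\cM_{\FRP},-)$ yields a triangle expressing $\bD_K(\cM_{\FRP})$ as the fibre of $\kappa-1$ acting on $\cR\HHom_{\FRP}(\cM_{\FRP},(W_n\omega_X^\bullet)_{\FRP})$.

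\textbf{Step 2: comparison of Hom sheaves.} This is the step I expect to be the main obstacle. The claim to prove is
\[ \cR\HHom_{\FRP}(\cM_{\FRP},\cN^\bullet_{\FRP})\;\cong\;\cR\HHom_{W_n\cO_X}(\cM,\cN^\bullet)_{\FRP} \]
for $\cM$ coherent and $\cN^\bullet$ quasi-coherent, where the left-hand side is taken in $\bF_q$-sheaves rather than in $W_n\cO$-modules. The $W_n\cO$-linear version holds by the equivalence of \autoref{adjunction_FRP_and_etale}\ref{QCoh-FRP} together with flatness of FRP maps. The difficulty is passing from $\bF_q$-linear to $\cO$-linear Hom. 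I would handle it as in Kato's papers and Breen's computation of $\Ext(\mathbb G_a,\mathbb G_a)$ on the perfect site. Via the map $\cN^\bullet\to F^{r}_{*}F^{r,\flat}\cN^\bullet$, the $\bF_q$-linear Homs become $\cO$-linear Homs twisted by Frobenius, namely the colimit over $e$ of $\cR\HHom_{\cO}(F^{e}_*\cM,\cN^\bullet)$ or its adjoint. Composed with $\kappa-1$, which is built from the unit isomorphism $\kappa^\flat$, the fibre should collapse to $\cR\HHom_{\cO}(\cM,W_n\omega^\bullet)$ equipped with the Frobenius structure of \autoref{construction_pairings}(2).

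The outcome I aim for is therefore the formula
\[ \bD_K(\cM_{\FRP})\cong \bigl(\bD_G\cM\bigr)_{\FRP}\ \text{ as a Frobenius-type object, i.e.\ } \bD_K(\cM_{\FRP})\cong \mathrm{fib}(\tau-1\text{ on } (\bD_G\cM)_{\FRP}) = \Sol(\bD_G\cM), \]
where $\cM$ is viewed as a Cartier module with zero structure. More precisely, I would first treat $\cM=W_n\cO_X$ and $\cM=i_*\cO_Z$ by dévissage using \autoref{exact_upper_shriek_triangle} and \autoref{Sol_commutes_with_upper_shriek}, and reduce the general case via finite filtrations and surjections from free modules Zariski locally.

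\textbf{Step 3: double dual.} Applying $\bD_K$ again and running Step 2 with the roles of Frobenius and Cartier structures exchanged (via \autoref{construction_pairings}(1)), I obtain
\[ \bD_K\bD_K(\cM_{\FRP})\cong \bigl(\bD_G\bD_G\cM\bigr)_{\FRP}, \]
where the two Frobenius twists cancel. It then remains to check that the evaluation map is identified with $(\cM\to\bD_G\bD_G\cM)_{\FRP}$, which is an isomorphism by \autoref{duality_Frobenius_and_Cartier} (or simply because $W_n\omega_X^\bullet$ is dualizing). This naturality check is a compatibility of evaluation maps and should be routine.
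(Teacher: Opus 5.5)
Your first reduction is fine: the locus where the evaluation map is an isomorphism is triangulated, so it suffices to treat $\cM_{\FRP}$ with $\cM$ coherent. The rest of the argument, however, hinges on Step 2, which is both unproved and aimed at a wrong formula.

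\textbf{The target formula is wrong.} If $\cM$ carries the zero Cartier structure, then $\kappa_{\cM}^\flat=0$. So the Frobenius structure on $\bD_G\cM$ from \autoref{construction_pairings}(2) is zero, and $\mathrm{fib}(\tau-1)=\mathrm{fib}(-1)=0$; your target would say $\bD_K(\cM_{\FRP})=0$. This is false. Take $X=\Spec k$ with $k$ perfect and $n=r=1$. Then $X_{\FRP}$ consists of perfect $k$-schemes and $W_1\omega_{X,\log}=\bF_p$. The Artin--Schreier extension $0\to\bF_p\to\bG_a\xrightarrow{F-1}\bG_a\to 0$ does not split, since there is no non-zero map from $\bG_a$ to the constant sheaf $\bF_p$. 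It therefore gives a non-zero class in $\mathrm{Ext}^1(\bG_a,\bF_p)$; in fact $\bD_K(\bG_a)\simeq\bG_a[-1]$ by Breen's computation. The expected shape is $\bD_K(\cM_{\FRP})\simeq(\bD_G\cM)_{\FRP}[-1]$. That is, the fibre of $\kappa-1$ on the $\bF_q$-linear $\cR\HHom$ is a shifted cokernel, not a kernel, and no Frobenius or Cartier structure on $\cM$ plays any role.

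\textbf{The key step is missing.} Proving any formula of this kind requires computing the $\bF_q$-linear $\mathcal{E}xt$-sheaves between quasi-coherent sheaves on $X_{\FRP}$, an analogue of Breen's theorem. Your description as a colimit over $e$ of $\cR\HHom_{\cO}(F^e_*\cM,\cN^\bullet)$ comes with no transition maps and no control of the higher $\mathcal{E}xt$'s. The known computation of this type (Kato's, on which \autoref{Kato_duality_for_regular} rests) is carried out for regular schemes using a $p$-basis. So for singular $X$ and an arbitrary unit dualizing complex, your proposal contains no proof of the key step, and Step 3 inherits the gap.

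\textbf{How the paper avoids this.} The paper never computes such $\mathcal{E}xt$'s. It proceeds as follows.
\begin{enumerate}
\item It quotes Kato's theorem for regular $X$ with $W_n\Omega^d_X$ (\autoref{lem:dual-regular-case}, \autoref{Kato_duality_for_regular}).
\item It shows the statement is independent of the unit dualizing complex. Two such complexes differ by a shift and a line bundle with unit Frobenius structure, which is \'etale locally trivial (\autoref{comparison_unit_dc}, \autoref{sol_of_unit_Frobenius_module}, \autoref{Kato_duality_doesnt_depend_on_unit_dc}).
\item For affine $X$ it chooses a closed immersion $i\colon X\hookrightarrow Y$ into a regular $Y$ and replaces $W_n\omega_{X,\log}$ by $i^!W_n\omega_{Y,\log}$ (\autoref{Sol_commutes_with_upper_shriek}).
\item It uses $i_*\cR\HHom(-,i^!W_n\omega_{Y,\log})\simeq\cR\HHom(i_*(-),W_n\omega_{Y,\log})$, which holds because $W_n\omega_{Y,\log}\in\cC_{\adj}$ (\autoref{Sol_of_Cartier_modules_behave_well_with_adjunction}), together with full faithfulness of $i_*$ (\autoref{pushforward_from_closed_immersion_fully_faithful}).
\end{enumerate}
If you want to keep your direct approach, the same embedding reduces your Step 2 to the regular case. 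There, however, you would still need Kato's $\mathcal{E}xt$ computation, at which point quoting his duality theorem directly is simpler.
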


Kato proved in \cite{Kato_Duality_theories_for_the_p_primary_etale_cohomology_I} that in the regular case, \autoref{Kato_duality} holds for a specific choice of a unit dualizing complex. We will deduce the general statement from this.

\begin{construction}
    Let $\cL$ be line bundle admitting the structure of a unit Frobenius module and let $\cN$ be a Cartier module. 
    We can endow $\cL \otimes \cN$ with the structure of a Cartier module as follows:
    \[ \begin{tikzcd}
        F^r_*(\cL \otimes \cN) \arrow[rr, "F^r_*((\tau_\cL^*)^{-1} \otimes \: id)"] &  & {F^r_*(F^{r, *}\cL \otimes \cN)} \arrow[rr, "\cong"] &  & \cL \otimes F^r_*\cN \arrow[rr, "id \:\otimes\: \kappa_{\cN}"] &  & \cL \otimes \cN.
    \end{tikzcd} \]
\end{construction}
\begin{remark}
    \begin{enumerate}
        \item  Although the above could be phrased for vector bundles too, we will not since we do not need them.
        \item  One has to be careful though. When $n = 1$, a unit Frobenius module is automatically a vector bundle \cite[Proposition 6.9.3]{Emerton_Kisin_Riemann-Hilbert_correspondence}. This is not true for higher $n \geq 2$, as for example $W_n\cO_X/p$ with its canonical Frobenius module structure makes it unit.
    \end{enumerate}
\end{remark}

\begin{lemma}\label{comparison_unit_dc}
    Assume that $X$ is separated and connected, and let $W_n\omega_{X, 1}^{\bullet}$ and $W_n\omega_{X, 2}^{\bullet}$ be two unit dualizing complexes on $X$. Then there exists a line bundle $\cL$ with the 
    structure of a unit Frobenius module such that $W_n\omega_{X, 2}^{\bullet} \cong W_n\omega_{X, 1}^{\bullet} \otimes \cL[m]$ as Cartier modules for some $m \in \bZ$.
\end{lemma}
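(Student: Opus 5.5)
The plan is to compare the two complexes through the standard uniqueness of dualizing complexes, and then upgrade that comparison to one respecting the Cartier structures.

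\emph{Step 1: the underlying comparison.} Since $W_nX$ is Noetherian, separated and connected, the uniqueness of dualizing complexes (\stacksproj{0A7F}) gives an invertible object $\cL[m]$, with $\cL$ a line bundle on $W_nX$, such that
\[ W_n\omega_{X,2}^{\bullet}\cong W_n\omega_{X,1}^{\bullet}\otimes \cL[m] \]
in $D(\QCoh_{W_nX})$. Concretely, we may take
\[ \cL[m]\cong \cR\HHom(W_n\omega_{X,1}^{\bullet}, W_n\omega_{X,2}^{\bullet}). \]

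\emph{Step 2: a Frobenius structure on $\cL$.} We put a Frobenius module structure on $\cL$ by applying the derived version of \autoref{construction_pairings}(2) to the pair $W_n\omega_{X,1}^{\bullet}$, $W_n\omega_{X,2}^{\bullet}$. This pairing endows $\cR\HHom(W_n\omega_{X,1}^{\bullet}, W_n\omega_{X,2}^{\bullet})$ with the structure of a complex of Frobenius modules, and since this complex is concentrated in the single degree $-m$, $\cL$ inherits a Frobenius structure $\tau_\cL$. To see that $\cL$ is unit, we must show that $\tau_\cL^*\colon F^{r,*}\cL\to\cL$ is an isomorphism. I would check this using the unit hypotheses on both complexes: the adjoint $\kappa^\flat$ maps identify $W_n\omega_{X,i}^{\bullet}\cong F^{r,!}W_n\omega_{X,i}^{\bullet}$, and for finite flat-type computations one has
\[ \cR\HHom(F^!A, F^!B)\cong F^{*}\cR\HHom(A,B) \]
when $A$ is dualizing (equivalently, $F^!$ is an equivalence on dualizing complexes, twisting by $F^*$ of invertible objects, because $F^!(A\otimes\cL)=F^!A\otimes F^*\cL$ for $\cL$ invertible). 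Under these identifications $\tau_\cL^*$ becomes the identity.

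\emph{Step 3: compatibility with the Cartier structures.} We then show that the evaluation map
\[ W_n\omega_{X,1}^{\bullet}\otimes\cR\HHom(W_n\omega_{X,1}^{\bullet}, W_n\omega_{X,2}^{\bullet})\to W_n\omega_{X,2}^{\bullet} \]
is a morphism of Cartier modules, where the source carries the tensor Cartier structure of the construction preceding the lemma. This reduces to unwinding the formulas $f\mapsto (\kappa^\flat_2)^{-1}\circ F^{r,\flat}f\circ\kappa^\flat_1$ and the definition of the tensor Cartier structure; the two resulting compositions agree by adjunction between $F^r_*$ and $F^{r,\flat}$. The evaluation map is an isomorphism on underlying complexes by Step 1, hence it is an isomorphism of Cartier module complexes.

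The main obstacle is making all of this rigorous at the derived level, since structures on complexes in $D(\QCoh^{C^r,\unit})$ are not determined by their images in $D(\QCoh)$. To avoid this, I would represent both complexes by complexes of injective (or residual-type) unit Cartier modules, as in \autoref{basic_example_unit_dc}. Then $\HHom$ can be computed termwise and the pairings of \autoref{construction_pairings} are defined honestly on complexes. In parallel, I would verify that the quasi-isomorphism between the $\HHom$ complex and $\cL[m]$ respects the Frobenius structures; this holds after truncation, since the complex has a single nonzero cohomology sheaf.
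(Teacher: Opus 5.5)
Your Steps 1 and 2 are sound. They mirror the paper with the roles of the two complexes swapped. The paper applies $\bD_G=\cR\HHom(-,W_n\omega^\bullet_{X,1})$ to $W_n\omega^\bullet_{X,2}$, obtains $\cL^\vee[-m]$ with a unit Frobenius structure, and then uses unitness to transport that structure to $\cL=(\cL^\vee)^\vee$. You instead get $\cL[m]=\cR\HHom(W_n\omega^\bullet_{X,1},W_n\omega^\bullet_{X,2})$ with its Frobenius structure directly. The identity behind your Step 3 is also correct: $\kappa_{2}\circ F^r_*(\tau_\cL(f))=f\circ\kappa_{1}$ follows from $\kappa=\epsilon\circ F^r_*\kappa^\flat$ and naturality of the counit $\epsilon$ of $F^r_*\dashv F^{r,\flat}$. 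So when both dualizing complexes are single sheaves, your argument is complete.

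The gap is Step 3 for genuine complexes. This is the obstacle you flag yourself, and the remedy you propose does not resolve it.
\begin{itemize}
\item The tensor Cartier structure is only defined on $\cN\otimes\cL$ when $\cL$ is a unit Frobenius line bundle. With $\cI_1^\bullet,\cI_2^\bullet$ your injective representatives, the Frobenius structure on $C=\HHom^\bullet(\cI_1^\bullet,\cI_2^\bullet)$ need not be termwise unit.
\item Truncation only gives a zigzag $\cL[m]\xleftarrow{\sim}\tau_{\le -m}C\to C$ of Frobenius complexes, not a chain map $\cL[m]\to C$.
\item Consequently you do not yet have an evaluation morphism $W_n\omega^\bullet_{X,1}\otimes\cL[m]\to W_n\omega^\bullet_{X,2}$ in the derived category of Cartier modules. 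The complex $\cI_1^\bullet\otimes\tau_{\le-m}C$ carries no Cartier structure, and $\cI_1^\bullet\otimes-$ need not preserve quasi-isomorphisms.
\item Local lifts of a generator of $\cL$ to cocycles of $C$ are in general compatible with the Frobenius structures only up to homotopy. That does not define a morphism of Cartier complexes.
\end{itemize}
What you would need is a derived tensor--Hom adjunction between unit Frobenius complexes and Cartier complexes, and you do not construct one.

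The paper sidesteps this. Since $\cL^\vee$ is a single locally free unit Frobenius module, $\bD_G(\cL^\vee[-m])=\HHom(\cL^\vee,\cI_1^\bullet)[m]\cong\cI_1^\bullet\otimes\cL[m]$ is a termwise computation; it is your identity, in the form of \autoref{construction_pairings}(1). The comparison with $W_n\omega^\bullet_{X,2}$ then comes from the already established biduality $\bD_G\circ\bD_G\cong\id$ of \autoref{duality_Frobenius_and_Cartier}, which is where separatedness enters. Routing your Step 3 through this double-duality argument repairs the proof. Your direct route, if completed, would have the merit of not needing separatedness.
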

\begin{proof}
    By \cite[V, Theorem 3.1]{Hartshorne_Residues_and_Duality} (or \stacksproj{0ATP}), 
    we have an isomorphism $W_n\omega_{X, 2}^{\bullet} \cong W_n\omega_{X, 1}^{\bullet} \otimes \cL[m]$ in $D^b(\QCoh_{W_nX})$,
    for some line bundle $\cL$ on $W_nX$ and some $m \in \bZ$. Let $\bD_G$ be the Grothendieck duality functor (see \autoref{construction_pairings}) associated to $W_n\omega_{X, 1}^{\bullet}$. Since $W_n\omega_{X, 2}^{\bullet}$ is unit, it follows from the constructions that $\bD_G(W_n\omega_{X, 2}^{\bullet})$ is a unit Frobenius module. As there is a canonical isomorphism of coherent $W_n\cO_X$-modules $\bD_G(W_n\omega_{X, 2}^{\bullet}) \cong \cL^{\vee}[-m]$, 
    the sheaf $\cL^{\vee}$ admits the structure of a unit Frobenius module. 
    Hence $\cL=(\cL^\vee)^\vee$ admits a Frobenius module structure as well (the unit part is crucial here). A similar computation as that of \cite[Lemma 4.1.7]{Baudin_Duality_between_perverse_sheaves_and_Cartier_crystals}, which uses separatedness, shows that $\bD_G(\cL^{\vee}[-m]) \cong W_n\omega_{X, 1}^{\bullet} \otimes \cL[m]$ is an isomorphism of Cartier modules. 
    Since $\bD_G(\cL^{\vee}[-m]) \cong \bD_G(\bD_G(W_n\omega_{X, 2}^{\bullet})) \cong W_n\omega_{X, 2}^{\bullet}$, the proof is complete.
\end{proof}

Compare the following with \cite[Proposition 4.1.1]{Katz_P_adic_properties_of_modular_schemes_and_modular_forms} and its proof.

\begin{lemma}\label{sol_of_unit_Frobenius_module}
    Let $\cL$ be a line bundle underlying the structure of a unit Frobenius module. Then étale locally, $\cL$ is isomorphic to $W_n\cO_X$ with its standard Frobenius module structure. 
\end{lemma}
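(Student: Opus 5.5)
We may work locally, so assume $X=\Spec R$ affine and $\cL$ trivial, $\cL\cong W_n\cO_X$ with generator $e$. Then the Frobenius structure is determined by $\tau(e)=u\cdot e$ for some $u\in W_n(R)$. The unit condition says that $\tau^*\colon F^{r,*}\cL\to\cL$, which sends $1\otimes e\mapsto ue$, is an isomorphism, so $u\in W_n(R)^\times$. Changing basis by $e'=ve$ with $v\in W_n(R)^\times$ gives $\tau(e')=F^r(v)\,u\,v^{-1}e'$. So the problem reduces to the following: étale locally, find a unit $v$ with $F^r(v)=u^{-1}v$, i.e. $v^{q-1}$-type equation twisted by Witt Frobenius. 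Note $F^r(v)v^{-1}=u^{-1}$ is exactly the fixed-point equation for $\tau$ on the generator $v^{-1}e$ rewritten; equivalently we want a fixed point $s=ae$ of $\tau$ with $a$ a unit, i.e. $F^r(a)u=a$.

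The plan is to solve $F^r(a)u=a$ by induction on $n$. For $n=1$, $a^q u=a$ with $a\in R^\times$ means $a^{q-1}=u^{-1}$, and adjoining a root of $T^{q-1}-u^{-1}$ gives a finite étale cover since $q-1$ is prime to $p$ and $u$ is a unit; any root is automatically a unit. For the inductive step, suppose $a_0\in W_n(R)$ is a unit with $F^r(a_0)u\equiv a_0$ modulo $V^{n-1}W_1$ (the kernel of $W_n\to W_{n-1}$, a square-zero ideal $I$ with $I\cong F^{n-1}_*\cO$-ish as module). Write $a=a_0(1+x)$ with $x\in I$. Since $I^2=0$ and Frobenius on $I=V^{n-1}(R)$ acts by $V^{n-1}(b)\mapsto V^{n-1}(b^{q})$, the equation becomes an Artin--Schreier type equation $b^{q}\cdot c - b = d$ in $R$ for some unit $c$ and some $d$ determined by the error term (after multiplying through by units). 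Such an equation $c b^q-b=d$ defines a finite étale $R$-algebra (derivative $-1$ is a unit), so after an étale cover it has a solution. This lifts $a_0$ to a solution modulo one more step, and since we only need finitely many ($n$) steps we get étale locally a unit fixed point $a$, whence $e'=ae$ gives $\cL\cong W_n\cO_X$ with $\tau(e')=e'$, the standard structure.

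The main obstacle is the bookkeeping in the inductive step: correctly identifying how $F^r$ and multiplication by $u$ and $a_0$ act on the ideal $V^{n-1}W_n(R)$, to confirm that the linearized equation really is of the form $cb^q-b=d$ with $c$ a unit (rather than something degenerate). I would handle this by using $V^{n-1}(b)\cdot w=V^{n-1}(b\,w_0^{p^{n-1}})$ for $w\in W_n(R)$ with zeroth component $w_0$, so that units act on $I$ through units of $R$, and $F^r V^{n-1}(b)=V^{n-1}(b^q)$ in characteristic $p$; then $c$ is a power of the zeroth component of $u a_0^{q-1}$ up to the reduction, which is a unit. Alternatively, one could appeal to \autoref{Riemann_Hilbert_Frobenius_modules}: $\Sol_{\et}(\cL)$ is a constructible sheaf whose stalks are free of rank one over $W_n(\bF_q)$ by the explicit étale-local solvability above, but the direct Lang-type argument seems cleanest.
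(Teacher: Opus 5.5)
Your proposal is correct and follows essentially the same route as the paper: both reduce to étale-local surjectivity of the Lang map $s\mapsto F^r(s)s^{-1}$ on $(W_n\cO_X)^\times$, treat $n=1$ by extracting $(q-1)$-st roots of a unit, and pass from $W_{n-1}$ to $W_n$ by solving an Artin--Schreier equation on the kernel $1+V^{n-1}(\cO_X)$; the paper phrases this last step with the snake lemma, while you do it by explicit successive approximation. In fact the constant $c$ in your linearized equation is $1$, because $F^r(a_0)u\equiv a_0$ modulo $V^{n-1}$ forces equal zeroth Witt components, so your computation recovers exactly the paper's identity $L(1+V^{n-1}(b))=1+V^{n-1}(b^q-b)$.
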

\begin{proof}
    This is a local computation, so we may assume $\cL \cong W_n\cO_X$ as coherent sheaves. The composition 
    \[ \begin{tikzcd}
        W_n\cO_X \arrow[rr, "nat"] &  & {F^{r, *}W_n\cO_X} \arrow[rr, "\tau^*"] &  & W_n\cO_X
    \end{tikzcd} \] must then be given by multiplication by a unit $u \in \Gamma(X, (W_n\cO_X)^{\times})$. 
    By construction, the morphism $W_n\cO_X \to F^r_*W_n\cO_X$ is then given by $f \mapsto uF^r(f)$. 
    
    This Frobenius module is \'etale locally isomorphic to $(W_n\cO_X, F^r)$ if and only if we
    find some \'etale covering $U\to X$ and $s\in \Gamma(U,W_n(\cO_U)^\times)$ such that 
    $F^r(s)s^{-1}=u$.  This follows from the well-known fact that the Lang map
    \[L: (W_n\cO_X)^\times\longrightarrow (W_n\cO_X)^\times,\quad s\mapsto F^r(s)s^{-1},\]
    is surjective in the \'etale topology. Indeed, for $n=1$ this holds as the $(q-1)$st power map on $\bG_m$ is surjective in the 
     \'etale topology. In general, this follows by induction and the usual Artin-Schreier sequence from the exact sequence
    \[\begin{tikzcd}
     0\ar[r] & \cO_X\ar[rr, "a\mapsto 1+V^{n}(a)"] & & (W_{n+1}\cO_X)^\times \ar[r, "R"] & (W_n\cO_X)^\times\ar[r] & 0,   
    \end{tikzcd}\]
    the fact that the Lang map commutes with $R$, the equality $L(1+V^n(a))= 1+V^n(a^q-a)$, and the snake lemma.
\end{proof}

%\begin{lemma}\label{Sol_of_tensor_product}
%    Let $\cM$ be a $W_n$-Cartier module, and $\cL$ a line bundle admitting the structure of a unit $W_n$-Frobenius module. Then $\Sol(\cL \otimes \cM) \cong \Sol(\cL) \otimes \Sol(\cM)$.
%\end{lemma}
%\begin{proof}
%    By construction of an adjunction, if $s_1 \in \Sol(\cL)$, then the image of $F^{r, *}s_1$ via $F^{r, *}\cL \to \cL$ is exactly $s_1$. It then follows immediately from the explicit construction of the Cartier structure on $\cL \otimes \cN$ that the $\Sol(\cL) \otimes \Sol(\cN)$ is mapped to $\Sol(\cL \otimes \cN)$. This morphism is étale locally an isomorphism by \autoref{sol_of_unit_Frobenius_module}.
%\end{proof}

\begin{corollary}\label{Kato_duality_doesnt_depend_on_unit_dc}
    Let $W_n\omega_{X, 1}^{\bullet}$ and $W_n\omega_{X, 2}^{\bullet}$ be two unit dualizing complexes on $X$. Then the conclusion of \autoref{Kato_duality} holds for $(W_n\omega_{X, 1}^{\bullet})_{\log}$ if and only if it does for $(W_n\omega_{X, 2}^{\bullet})_{\log}$.
\end{corollary}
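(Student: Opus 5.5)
The plan is to show that the two log dualizing complexes differ étale locally (hence FRP locally) by tensoring with an invertible, locally constant twist, and that such a twist does not affect biduality. First, the statement is local for the Zariski topology: the evaluation map $\cF^\bullet \to \bD_K\bD_K\cF^\bullet$ is an isomorphism if and only if it is so after restriction to each member of an open cover. Indeed, $\cR\HHom$ commutes with restriction to opens, and the generators of $D_0$ (coherent $W_n\cO_X$-modules) restrict to generators of $D_0$ on opens. So we may assume $X$ affine (in particular separated) and connected.

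By \autoref{comparison_unit_dc} we then have $W_n\omega_{X,2}^\bullet \cong W_n\omega_{X,1}^\bullet\otimes \cL[m]$ as complexes of Cartier modules, where $\cL$ is a line bundle carrying a unit Frobenius structure. Set $\cE\coloneqq \Sol(\cL)$, the FRP sheaf of $\tau_\cL$-fixed points. By \autoref{sol_of_unit_Frobenius_module}, étale locally $(\cL,\tau_\cL)\cong (W_n\cO_X,F^r)$. Since $\Sol(W_n\cO_X)=W_n(\bF_q)$ on the FRP site (by \autoref{Sol_on_Frobenius_modules_comes_from_etale_site} and Artin--Schreier--Witt), $\cE$ is an invertible, étale locally constant sheaf of $W_n(\bF_q)$-modules. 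Its inverse is $\cE^\vee=\HHom(\cE,W_n(\bF_q))$.

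Next we construct a natural map $\cE\otimes \Sol(\cN)\to \Sol(\cL\otimes\cN)$ for any Cartier module $\cN$. If $s\in \Sol(\cL)$ and $t\in\Sol(\cN)$, then by the definition of the tensor Cartier structure $s\otimes t$ is again fixed, because $(\tau^*_\cL)^{-1}(s)=F^{r,*}s$. Étale locally $\cL\cong W_n\cO_X$ compatibly with the Frobenius structures, and there this map is the identity, so it is an isomorphism. Applying it termwise to a complex of injective (or simply ind-coherent) Cartier modules representing $W_n\omega_{X,1}^\bullet$ and using exactness of $\Sol$ (\autoref{Sol_exact_and_commutes_with_pushforwards}), we get
\[(W_n\omega_{X,2}^\bullet)_{\log}\cong \cE\otimes_{W_n(\bF_q)}(W_n\omega_{X,1}^\bullet)_{\log}[m].\]

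It follows that $\bD_{K,2}(-)\cong \cE\otimes\bD_{K,1}(-)[m]$, since tensoring with the invertible locally constant sheaf $\cE$ commutes with $\cR\HHom$ into the second variable. Consequently
\[\bD_{K,2}\bD_{K,2}\cong \cR\HHom\bigl(\cE\otimes\bD_{K,1}(-)[m],\cE\otimes(W_n\omega^\bullet_{X,1})_{\log}[m]\bigr)\cong \bD_{K,1}\bD_{K,1},\]
because the twists cancel. One must check that under this identification the two evaluation maps correspond. Both are defined by the same universal pairing, so this should hold; since the identification is canonical, the check can be done étale locally where $\cE$ is trivial. Hence biduality holds for one dualizing complex exactly when it holds for the other.

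The main obstacle is the compatibility of $\Sol$ with the twist at the level of complexes, namely making sure that the tensor construction on Cartier modules passes to an actual representing complex and is compatible with derived $\Sol$. A second point to verify is the compatibility of the evaluation maps. I would handle the first by noting that $\cL\otimes-$ is exact and preserves ind-coherence, so the identification holds termwise on any representative.
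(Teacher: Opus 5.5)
Your proposal is correct and follows essentially the paper's route: reduce to a local question, use \autoref{comparison_unit_dc} to write $W_n\omega_{X,2}^\bullet\cong W_n\omega_{X,1}^\bullet\otimes\cL[m]$, and use \autoref{sol_of_unit_Frobenius_module} to trivialize the unit Frobenius module $\cL$ \'etale locally. The paper checks the evaluation isomorphism, which is a local property, directly on an \'etale cover where $\cL\cong(W_n\cO_X,F^r)$; there the two log complexes agree up to shift, so your global twist $\cE=\Sol(\cL)$, the isomorphism $\bD_{K,2}\cong\cE\otimes\bD_{K,1}[m]$, and the comparison of evaluation maps are harmless but not needed.
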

\begin{proof}
    Since the question is local, this is immediate from \autoref{sol_of_unit_Frobenius_module} and \autoref{comparison_unit_dc}.
\end{proof}

\begin{lemma}\label{lem:dual-regular-case}
Assume $X=\Spec R$ is affine, regular and connected, and set $d\coloneqq {\rm rank}_R(\Omega^1_{R/{\bF_p}})$. Then $W_n\Omega^d_X$, the $d$-th degree  in the de Rham--Witt complex of $X$, is a unit dualizing complex on $X$,
with the unit Cartier module structure the one defined by Kato in 
\cite[Lemma 4.1.2]{Kato_Duality_theories_for_the_p_primary_etale_cohomology_I}.
In particular, for any unit dualizing complex $W_n\omega_X^{\bullet}$, there is an invertible $W_n\cO_X$-module $\cL$ with the structure of a unit Frobenius module and an integer $m$, such that 
$W_n\omega^\bullet_X\cong W_n\Omega^d_X\otimes_{W_n\cO_X}\cL[m]$.    
\end{lemma}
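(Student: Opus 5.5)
The plan has two parts. First I show that $W_n\Omega^d_X$, with Kato's Cartier structure, underlies a unit dualizing complex. Then I deduce the "in particular" statement from \autoref{comparison_unit_dc}.

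\textbf{Dualizing.} Since $R$ is regular and $F$-finite, $\Omega^1_{R/\bF_p}$ is a finite projective $R$-module by Kunz. Because $X$ is connected, it has constant rank $d$, so $\Omega^d_X$ is an invertible $\cO_X$-module. I would then proceed by induction on $n$ using the standard exact sequences of the de Rham--Witt complex, namely
\[0\to \mathrm{gr}^{n}W_{n+1}\Omega^d_X\to W_{n+1}\Omega^d_X\to W_n\Omega^d_X\to 0.\]
In top degree, $\mathrm{gr}^n$ is identified via $V^n$ and $dV^n$ with quotients of $F^n_*\Omega^d$ and $F^n_*\Omega^{d-1}$. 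This should show that $W_n\Omega^d_X$ is Cohen--Macaulay of dimension $\dim X$ over $W_nX$, with $\Omega^d_X = \HHom_{W_n\cO_X}(\cO_X, W_n\Omega^d_X)$. Granting that, $W_n\Omega^d_X$ is a dualizing module by the criterion that a finite module $\omega$ over a local ring is dualizing if $\omega\otimes^L$ (equivalently $R\HHom(\cO_X,-)$) reduces to a dualizing module modulo a nilpotent ideal. Concretely, $R\HHom_{W_n\cO_X}(\cO_X, W_n\Omega^d_X)\simeq \Omega^d_X$, which is dualizing on the regular scheme $X$, and dualizing complexes can be detected after such a finite nilpotent-thickening restriction (\stacksproj{0A7I}-type arguments). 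This is exactly Ekedahl's result in the smooth case, and I expect Kato's paper already contains the needed computation: \cite[Lemma 4.1.2 and its surrounding discussion]{Kato_Duality_theories_for_the_p_primary_etale_cohomology_I} establishes that the Cartier operator $C\colon F^r_*W_n\Omega^d_X\to W_n\Omega^d_X$ is such that the adjoint $W_n\Omega^d_X\to F^{r,\flat}W_n\Omega^d_X$ is an isomorphism. I would therefore cite Kato for both the Cartier structure and its unitness. The main point to check, and the step I expect to be the main obstacle, is this unitness statement. For $n=1$ it reduces to the classical fact that the Cartier operator induces $F_*\Omega^d\cong \HHom(F_*\cO_X,\Omega^d)$, a perfect duality via Kunz and the Cartier isomorphism. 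For general $n$ it follows by the same dévissage along the above filtration together with the five lemma, using that $F^{r,\flat}$ is exact on these Cohen--Macaulay modules.

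\textbf{Deduction.} Once $W_n\Omega^d_X$ is known to be a unit dualizing complex, the "in particular" statement follows immediately. Affine schemes are separated and $X$ is connected, so \autoref{comparison_unit_dc} applies with $W_n\omega^\bullet_{X,1}=W_n\Omega^d_X$ and $W_n\omega^\bullet_{X,2}=W_n\omega^\bullet_X$. It gives an isomorphism of Cartier modules $W_n\omega^\bullet_X\cong W_n\Omega^d_X\otimes\cL[m]$, where $\cL$ is a line bundle carrying a unit Frobenius module structure.
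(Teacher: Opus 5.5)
Your closing deduction from \autoref{comparison_unit_dc} matches the paper's. The first part, which is the real content of the lemma, has a genuine gap: for $n\ge 2$ you prove neither the dualizing property nor unitness.

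Your criterion needs the \emph{derived} statement $\cR\HHom_{W_n\cO_X}(\cO_X,W_n\Omega^d_X)\simeq\Omega^d_X$. The $V$-filtration argument can at best give two weaker facts: that $W_n\Omega^d_X$ is maximal Cohen--Macaulay, and that the underived $\HHom_{W_n\cO_X}(\cO_X,W_n\Omega^d_X)$ is $\Omega^d_X$. What is missing is the vanishing of $\mathcal{E}xt^i_{W_n\cO_X}(\cO_X,W_n\Omega^d_X)$ for $i>0$. This is not a depth statement, since $\cO_X$ has infinite projective dimension over $W_n\cO_X$ (the kernel is a nonzero nilpotent ideal). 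In fact those two facts do not imply dualizing. Take $A=k[x]/(x^2)$ and $M=k$: then $M$ is maximal Cohen--Macaulay and $\Hom_A(A/(x),M)=A/(x)$ is dualizing on the regular ring $A/(x)$, yet $\mathrm{Ext}^1_A(k,k)\neq 0$ and $M$ is not dualizing.

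The unitness step has the same flaw. Exactness of $F^{r,\flat}=\HHom_{W_n\cO_X}(F^r_*W_n\cO_X,-)$ along the filtration needs $\mathcal{E}xt^1_{W_n\cO_X}(F^r_*W_n\cO_X,-)$ to vanish on the subobjects. That holds for a canonical module, not for a general Cohen--Macaulay module. You would also need $C$ to preserve the filtration and to induce maps with invertible adjoints on graded pieces such as $\Omega^{d-1}/Z_i$, which are not dualizing modules in any evident way. Citing \cite[Lemma 4.1.2]{Kato_Duality_theories_for_the_p_primary_etale_cohomology_I} does not close this: the paper uses that lemma only for the existence of the operator, not for unitness.

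The missing idea is to avoid any d\'evissage on $X$ and instead reduce to the smooth case by flat base change.
Working locally, choose a $p$-basis $r_1,\dots,r_d$ of $R$. Then $f\colon X\to T=\Spec\bF_p[t_1,\dots,t_d]$, $t_i\mapsto r_i$, is relatively perfect, hence flat by Gabber's theorem \cite[Proposition 5.2]{Kato_Duality_theories_for_the_p_primary_etale_cohomology_I}. By \autoref{basic_remark}\ref{basic_remarkb}, the map $W_nX\to W_nT$ is flat and the Frobenius squares are cartesian. This gives $f^*W_n\Omega^d_T\cong W_n\Omega^d_X$, and Ekedahl's theorem on the smooth scheme $T$, namely $W_n\Omega^d_T[d]\cong\pi^!W_n(\bF_p)$, transfers to $X$ in two ways:
\begin{itemize}
\item Dualizing: $\cR\HHom(W_n\Omega^d_X,W_n\Omega^d_X)\cong f^*\cR\HHom(W_n\Omega^d_T,W_n\Omega^d_T)\cong W_n\cO_X$.
\item Unitness: the unit Cartier structure (trace of Frobenius) pulls back, because flat base change on the cartesian square gives $f^*F^{r,\flat}\cong F^{r,\flat}f^*$.
\end{itemize}
Finally, one must check that this transported structure is Kato's. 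On $T$ this is contained in the proof of \cite[Proposition 3.4]{Kato_Duality_theories_for_p_primary_etale_coh_II}.
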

\begin{proof}
 Since this is a local question, we may assume that $X = \Spec R$ is affine and connected. 
   As $R$ is regular and $F$-finite it has a finite $p$-basis, say $r_1,\ldots, r_d$, see e.g. \cite[Lemma 2.1.3]{Bhatt-Blickle-Schwede-Tucker-FFiniteSchemesDualizing}. Kato shows in \cite[Lemma 4.1.2]{Kato_Duality_theories_for_the_p_primary_etale_cohomology_I} that $W_n\Omega^d_X$ admits the structure of a Cartier module (note that the de Rham--Witt Frobenius is surjective in this case).
   The map $S=\bF_p[t_1,\ldots, t_d]\to R$, $t_i\mapsto r_i$ is relatively perfect as follows from the definition of a $p$-basis,
hence it is also flat by a result of Gabber, see 
\cite[Proposition 5.2]{Kato_Duality_theories_for_the_p_primary_etale_cohomology_I}.
Let $f:X\to \Spec(S)=T$ be the induced morphism. By the same argument as in 
\cite[I, Proposition 1.14]{Illusie_Complexe_de_de_Rham_Witt_et_cohomologie_cristalline} using \autoref{basic_remark}\ref{basic_remarkb} we have $f^*W_n\Omega^d_{T}=W_n\Omega^d_X$. As $W_n\Omega^d_{T}$ is a finitely presented 
$W_n\cO_T$-module and $f\colon W_nX\to W_n T$ is flat  we get
\[\cR\HHom_{W_n\cO_X}(W_n\Omega^d_X, W_n\Omega^d_X)=f^*\cR\HHom_{W_n\cO_T}(W_n\Omega^d_T, W_n\Omega^d_T)= f^*W_n\cO_T= W_n\cO_X,\]
where the second equality follows from the fact that $W_n\Omega^d_T$ is a dualizing complex by Ekedahl, 
see \autoref{rmk:dualizing-dRW}. Thus $W_n\Omega^d_X$ is a dualizing complex as well, by \cite[V, Proposition 2.1]{Hartshorne_Residues_and_Duality}. Moreover, the identification  $W_n\Omega^d_{T}\cong \pi^!W_n(\bF_p)$, 
where $\pi:T\to \Spec (\bF_p)$ is the structure map, equips $W_n\Omega^d_T$ with the structure of a unit Cartier module, and hence 
also $W_n\Omega^d_X$ inherits the structure of a unit Cartier module, again by \autoref{basic_remark}\ref{basic_remarkb}. Moreover, 
the proof of \cite[Proposition 3.4]{Kato_Duality_theories_for_p_primary_etale_coh_II} shows that Kato's Cartier operator structure
from \cite[Lemma 4.1.2]{Kato_Duality_theories_for_the_p_primary_etale_cohomology_I} on
$W_n\Omega^d_T$ coincides with the Cartier operator structure stemming from the identification $W_n\Omega^d_{T}\cong \pi^!W_n(\bF_p)$.
This proves the first statement. The last statement follows from \autoref{comparison_unit_dc}.
\end{proof}

We can now prove \autoref{Kato_duality} in the regular case.

\begin{corollary}\label{Kato_duality_for_regular}
    Assume that $X$ is regular, and let $W_n\omega_X^{\bullet}$ be a unit dualizing complex. Then \autoref{Kato_duality} holds for $(X, W_n\omega_X^{\bullet})$.
\end{corollary}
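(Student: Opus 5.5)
The plan is to reduce to Kato's theorem for his specific dualizing complex. The statement is local on $X$: the evaluation map $\cF^\bullet \to \bD_K\bD_K\cF^\bullet$ is a morphism in $D(X_{\FRP}, W_n(\bF_q))$, and being an isomorphism can be checked after restricting to a Zariski (even étale) cover. Restriction to an open $U$ commutes with $\cR\HHom$, and it sends $D_0(X)$ into $D_0(U)$, because coherent modules restrict to coherent modules. Also $\Sol$ of a unit dualizing complex restricts to $\Sol$ of its restriction, which is again a unit dualizing complex. So we may assume $X=\Spec R$ is affine, regular and connected.

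In that situation, \autoref{lem:dual-regular-case} gives $W_n\omega^\bullet_X\cong W_n\Omega^d_X\otimes\cL[m]$, where $\cL$ is a unit Frobenius line bundle and $W_n\Omega^d_X$ carries Kato's unit Cartier structure. By \autoref{Kato_duality_doesnt_depend_on_unit_dc} it then suffices to treat $W_n\omega^\bullet_X = W_n\Omega^d_X$. This is legitimate because the corollary's proof only uses \autoref{sol_of_unit_Frobenius_module}, which shows that étale locally $\Sol(W_n\Omega^d_X\otimes\cL)\cong \Sol(W_n\Omega^d_X)$ compatibly, and the shift $[m]$ clearly does not affect biduality.

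For this choice, \autoref{rmk:dualizing-dRW} and Kato's \cite[Proposition (3.4)]{Kato_Duality_theories_for_p_primary_etale_coh_II} identify $\Sol(W_n\Omega^d_X)$ with $W_n\Omega^d_{X,\log,\FRP}$. It remains to show that $\bD_K\bD_K\cF^\bullet\simeq \cF^\bullet$ for $\cF^\bullet\in D_0$, which is Kato's duality theorem in the regular case \cite{Kato_Duality_theories_for_the_p_primary_etale_cohomology_I}.

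Since $D_0$ is the triangulated subcategory generated by coherent $W_n\cO_X$-modules, and both $\id$ and $\bD_K\bD_K$ are triangulated with the evaluation map compatible with triangles, the class of objects where evaluation is an isomorphism is a triangulated subcategory. It therefore suffices to check a single coherent $\cM_{\FRP}$. A further dévissage along $0\to V^{n-1}\to W_n\to W_{n-1}\to 0$ reduces to coherent $\cO_X$-modules, and on affine regular $X$ to quotients of free modules, hence to $\cO_X$ itself. For $\cO_X$, the required statement, $\cR\HHom(\cO_{X_\FRP}, W_n\Omega^d_{\log})$ and its bidual, is exactly what Kato computes.

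The main obstacle is matching Kato's precise hypotheses and normalization. His theorem is stated with his Cartier operator and his complex, and we must ensure that his Cartier structure agrees with the unit structure coming from $\pi^!$. That agreement is exactly the content of the last part of \autoref{lem:dual-regular-case}. We also must ensure that his class of sheaves contains our $D_0$, i.e. coherent modules on the FRP site. I would first check that Kato's statement is formulated for $\cO_X$-coherent sheaves pulled to the FRP site; if it is instead formulated only for $\cO_X$, the dévissage above handles the rest.
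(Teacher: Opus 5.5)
Your proposal is correct and is essentially the paper's proof. Like the paper, you localize to an affine connected regular $X$, use \autoref{lem:dual-regular-case} together with \autoref{Kato_duality_doesnt_depend_on_unit_dc} to replace $W_n\omega^\bullet_X$ by $W_n\Omega^d_X$ with Kato's Cartier structure, and then cite \cite[Theorem 4.3 (ii)]{Kato_Duality_theories_for_the_p_primary_etale_cohomology_I}, which the paper takes to be stated on all of $D_0(X_{\FRP}, W_n(\bF_q))$, so your extra d\'evissage down to $\cO_X$ is not needed. The one point you do not flag is that Kato only treats $r=1$, so for $q\neq p$ your identification of $\Sol(W_n\Omega^d_X)$ with $W_n\Omega^d_{X,\log,\FRP}$ via \autoref{rmk:dualizing-dRW} is not literal; the paper covers general $r$ by remarking in a footnote that Kato's proof carries over unchanged.
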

\begin{proof}
   Since this is a local question, we may assume that $X = \Spec R$ is affine and connected. By \autoref{lem:dual-regular-case}, $W_n\Omega^d_X$ is a unit dualizing complex with the Cartier module structure defined as  in
    \cite[Lemma 4.1.2]{Kato_Duality_theories_for_the_p_primary_etale_cohomology_I}
   and Kato showed in \cite[Theorem 4.3 (ii)]{Kato_Duality_theories_for_the_p_primary_etale_cohomology_I} that the duality functor associated to $\Sol(W_n\Omega_X^d)$ induces an equivalence on $D_0(X_{\FRP}, W_n(\bF_q))$\footnote{Kato shows this only for $r=1$, but the same proof works also for general $r$} (we remark that in this proof it is essential to work with $X_{\FRP}$ instead of 
   $X_{\et}$). The statement follows from  \autoref{Kato_duality_doesnt_depend_on_unit_dc}.
\end{proof}

Let us finish with dealing with the singular case now. The approach is similar to that of \cite[\S 4]{Kato_Duality_theories_for_p_primary_etale_coh_II}.

\begin{proof}[Proof of \autoref{Kato_duality}]
   Fix $\cF^{\bullet} \in D_0(X_{\FRP}, W_n(\bF_q))$. Our objective is to show that the evaluation map \[ \cF^{\bullet} \to \cR\HHom(\cR\HHom(\cF^{\bullet}, W_n\omega_{X, \log}^{\bullet}), W_n\omega_{X, \log}^{\bullet}) \] is an isomorphism. This is a local statement, so we may assume that $X$ is affine and connected. By \cite[Remark 13.6]{Gabber_notes_on_some_t_structures}, there exists a closed immersion $i \colon X \inj Y$ into an affine regular, Noetherian, $F$-finite and connected scheme $Y$. Fix a unit dualizing complex $W_n\omega_Y^{\bullet}$ on $Y$ (e.g. $W_n\Omega^{d}_Y$, where $d$ is the length of a $p$-basis of $\cO(Y)$,
   see the proof of \autoref{Kato_duality_for_regular}). By \autoref{Kato_duality_doesnt_depend_on_unit_dc} and \autoref{Sol_commutes_with_upper_shriek}, it is enough to show that \[  \cF^{\bullet} \to \cR\HHom(\cR\HHom(\cF^{\bullet}, i^!W_n\omega_{Y, \log}^{\bullet}), i^!W_n\omega_{Y, \log}^{\bullet}) \] is an isomorphism. By \autoref{pushforward_from_closed_immersion_fully_faithful}, we may show this after applying $i_*$. Since we have a commutative diagram 
   \[ \begin{tikzcd}
        &  & {i_*\cR\HHom(\cR\HHom(\cF^{\bullet}, i^!W_n\omega_{Y, \log}^{\bullet}), i^!W_n\omega_{Y, \log}^{\bullet})} \arrow[d, "\cong"] \\
        i_*\cF^{\bullet} \arrow[rru, "i_*(ev)"] \arrow[rrd, "ev"'] &  & {\cR\HHom(i_*\cR\HHom(\cF^{\bullet}, i^!W_n\omega_{Y, \log}^{\bullet}), W_n\omega_{Y, \log}^{\bullet})} \arrow[d, "\cong"]    \\
        &  & {\cR\HHom(\cR\HHom(i_*\cF^{\bullet}, W_n\omega_{Y, \log}^{\bullet}), W_n\omega_{Y, \log}^{\bullet}),}
    \end{tikzcd} \]
    with vertical maps isomorphisms by \autoref{Sol_of_Cartier_modules_behave_well_with_adjunction}, we are done by \autoref{Kato_duality_for_regular}.
\end{proof}

\subsection{The main result}

\begin{thm}[{First half of \autoref{intro_commutativity}}]\label{main_commutativity_thm}
    Let $X$ be a Noetherian and $F$-finite scheme with a unit dualizing complex $W_n\omega_X^{\bullet}$. Then the diagram 
    \[ \xymatrix{D^b(\Crys_{W_nX}^{F^r})^{op}\ar[r]^{\bD_G}\ar[d]^{\Sol} & D^b(\Crys_{W_nX}^{C^r})\ar[d]^{\Sol}\\
    D^b(X_{\FRP},W_n(\bF_q))^{op}\ar[r]^{\bD_{K}} & D^b(X_{\FRP}, W_n(\bF_q))}
    \] commutes.
\end{thm}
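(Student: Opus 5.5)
First I will construct a natural transformation
\[ \Sol(\bD_G(\cM^\bullet)) \longrightarrow \bD_K(\Sol(\cM^\bullet)) = \cR\HHom(\Sol(\cM^\bullet), W_n\omega^\bullet_{X,\log}) \]
for $\cM^\bullet \in D^b(\Coh^{F^r}_{W_nX})$. Then I will prove that it is an isomorphism by dévissage.

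\emph{Construction of the map.} Choose a bounded below complex of injective ind-coherent unit Cartier modules representing $W_n\omega_X^\bullet$. Then $\bD_G(\cM^\bullet)$ is computed by the Hom complex $\HHom(\cM^\bullet, W_n\omega^\bullet_X)$ with the Cartier structure of \autoref{construction_pairings}. Let $s$ be a local $\tau$-fixed section of $\cM_{\FRP}$ and $\varphi$ a local $\kappa$-fixed section of $\HHom(\cM,\omega)_{\FRP}$, so $\kappa_\omega\circ F^r_*\varphi\circ\tau_\cM=\varphi$. Then
\[ \kappa_\omega(\varphi(s)) = \kappa_\omega(F^r_*\varphi(\tau_\cM s)) = \varphi(s), \]
so $\varphi(s)$ is $\kappa$-fixed. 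This yields a pairing $\Sol(\cM)\otimes\Sol(\HHom(\cM,\omega))\to\Sol(\omega)$ and, by adjunction, the desired map. Here I use that the FRP extension commutes with $\HHom$ from a coherent sheaf, by flatness. To pass to the derived level I use \autoref{Sol_exact_and_commutes_with_pushforwards} (exactness of $\Sol$ on ind-coherent Cartier modules) and \autoref{Sol_Frobenius_modules_fully_faithful_and_exact} (exactness on Frobenius modules). Both sides kill nilpotents: the left because $\bD_G$ factors through crystals (\autoref{duality_Frobenius_and_Cartier}), the right because $\Sol$ does. The transformation is compatible with triangles, so the class of $\cM^\bullet$ on which it is an isomorphism is triangulated. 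Hence it suffices to treat single coherent Frobenius crystals.

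\emph{Dévissage.} I argue by Noetherian induction on the support, and I may work locally on $X$. Given a crystal $\cM$ with support $Z$, choose a dense open $U\subset Z$ that is regular and on which $\cM$ is, up to nilpotence, a unit module that is locally free over $W_n\cO_U$. This is possible by \autoref{Riemann_Hilbert_Frobenius_modules}, since the corresponding constructible sheaf is lisse on a dense open. Using the localization triangle $j_!j^{-1}\to\id\to i_*i^{-1}$ and induction, I reduce to two compatibilities.

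First, the transformation commutes with closed pushforward. The functor $\bD_G$ intertwines $i_*$ on Frobenius modules with $i_*$ on Cartier modules, since $\bD_G\circ i_*\cong i_*\circ \bD_G^{Z}$ with dualizing complex $i^!W_n\omega^\bullet_X$. On the FRP side, the adjunction of \autoref{adjoint_iff_} applies because $W_n\omega_{X,\log}^\bullet\in\cC_{\adj}$ (\autoref{Sol_of_Cartier_modules_behave_well_with_adjunction}), and \autoref{Sol_commutes_with_upper_shriek} gives $\Sol(i^!\omega)=i^!\Sol(\omega)$.

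Second, for the open part I use \autoref{adjunction_restriction_and_lower_shriek} together with the compatibility of $\Sol$ with $Rj_*$ (\autoref{Sol_exact_and_commutes_with_pushforwards}), since $\bD_G$ exchanges $j_!$-type and $Rj_*$-type extensions. This reduces everything to the case where $X$ is regular and $\cM$ is unit and locally free.

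\emph{Base case.} Working étale locally, \autoref{sol_of_unit_Frobenius_module}, applied to the filtration of the unit module by line-bundle pieces, reduces me to $\cM=W_n\cO_X$, or $\cO_X$ after filtering by $p$. In that case $\Sol(\cM)=W_n(\bF_q)$ and $\bD_G(\cM)=W_n\omega_X^\bullet$, so both sides equal $W_n\omega^\bullet_{X,\log}$ and the map is the identity.

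\emph{Main obstacle.} I expect the hard step to be the open-immersion compatibility. There I need $\Sol(Rj_*\bD_G(j^{-1}\cM))\simeq Rj_*\bD_K(j^{-1}\Sol\cM)$ and must identify $\bD_G(j_!\text{-extension})$ on crystals. The difficulty is that $j_!$ does not exist naively for Frobenius crystals, so I would phrase everything through the triangle $i_*i^!\to\id\to Rj_*j^{-1}$ on the Cartier side (\autoref{Sol_commutes_with_upper_shriek}) and its FRP analogue \autoref{derived_version_of_iflat}. I would then check that the two triangles match by naturality of the pairing.
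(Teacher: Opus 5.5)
Your proposal follows the paper's proof essentially step by step. The same fixed-point pairing (\autoref{natural_transformation_general}) defines $\Sol\circ\bD_G\to\bD_K\circ\Sol$. The same Noetherian induction through the closed- and open-immersion compatibilities (\autoref{compatibility_closed_immersion}, \autoref{compatibility_open_immersion}) then reduces to an étale-locally trivial unit module. Your worry about $j_!$ is handled in the paper by defining $j_!$ on Frobenius crystals via \autoref{Riemann_Hilbert_Frobenius_modules}; it is exact and compatible with $(-)_{\FRP}$ by \autoref{etale_functors_and_FRP_functors_agree}.

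There is one caveat for $n\ge 2$. A lisse $W_n(\bF_q)$-sheaf is only étale locally of the form $\bigoplus_i W_{m_i}(\bF_q)$, so $\cM|_U$ need not be locally free over $W_n\cO_U$. For a piece $W_m\cO_X$ with $m<n$ (e.g.\ $\cO_X$ after filtering by $p$), one has $\Sol=W_m(\bF_q)$ rather than $W_n(\bF_q)$. The required isomorphism $W_m\omega^\bullet_{X,\log}\simeq\cR\HHom_{W_n(\bF_q)}(W_m(\bF_q),W_n\omega^\bullet_{X,\log})$ is therefore not an identity and needs an argument. On your regular $U$ it is \autoref{lem:dual-lisse-on-regular}\ref{lem:dual-lisse-on-regular3}, which rests on \autoref{lem:logses}; the paper also calls this last step immediate.
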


\begin{corollary}[First half of {\autoref{main_thm}}]\label{Sol_fully_faithful}
    Let $X$ be a Noetherian and $F$-finite scheme. Then the functor $\Sol \colon \Crys_{W_nX}^{C^r} \to \Sh(X_{\FRP}, W_n(\bF_q))$ is fully faithful.
\end{corollary}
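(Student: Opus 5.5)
The plan is to deduce the corollary from \autoref{main_commutativity_thm} together with the known full faithfulness of the other three functors in the square, after reducing to a setting where a unit dualizing complex exists.

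\textbf{Step 1: reduce to the affine case.} Since $\Sol$ is faithful by \autoref{Sol_faithful_on_etale_and_FRP_site}, only fullness needs proof. For $\cM, \cN \in \Crys_{W_nX}^{C^r}$, morphisms of crystals satisfy Zariski descent, as do morphisms of FRP sheaves. Hence it suffices to treat a finite affine open cover together with the pairwise intersections, which are again affine if we first reduce to $X$ separated. To do this, cover $X$ by affines $U_i$. A morphism $\Sol(\cM)\to\Sol(\cN)$ then restricts to each $U_i$ and to each $U_i\cap U_j$. If we know full faithfulness on every quasi-compact open that is separated (every open subset of an affine scheme is separated), the local preimages are unique by faithfulness. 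They therefore agree on overlaps and glue.

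\textbf{Step 2: the affine case.} For $X$ affine, choose by \cite[Remark 13.6]{Gabber_notes_on_some_t_structures} a closed immersion $i\colon X\inj Y$ with $Y$ regular, affine and $F$-finite. By \autoref{lem:dual-regular-case}, $Y$ carries a unit dualizing complex. Its image $i^!W_n\omega_Y^\bullet$ is a unit dualizing complex on $X$: here $i^!$ preserves unit Cartier structures, and the underlying complex is dualizing. For open subsets $U$ of $X$ we can restrict this complex. So the hypotheses of \autoref{main_commutativity_thm} and \autoref{duality_Frobenius_and_Cartier} hold on all the separated opens needed in Step 1.

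\textbf{Step 3: conclude with a unit dualizing complex.} By \autoref{duality_Frobenius_and_Cartier}, every object of $D^b(\Crys^{C^r}_{W_nX})$ has the form $\bD_G(\cP)$ with $\cP\in D^b(\Crys^{F^r}_{W_nX})$. For $\cP,\cQ$ we form the chain
\[\Hom(\bD_G\cP,\bD_G\cQ)\cong\Hom(\cQ,\cP)\cong\Hom(\Sol\cQ,\Sol\cP)\to\Hom(\bD_K\Sol\cP,\bD_K\Sol\cQ)\cong\Hom(\Sol\bD_G\cP,\Sol\bD_G\cQ).\]
The first isomorphism holds because $\bD_G$ is an equivalence. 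The second is \autoref{Sol_Frobenius_modules_fully_faithful_and_exact}. The last uses \autoref{main_commutativity_thm}.

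The third map is a bijection by \autoref{Kato_duality}, once we know $\Sol\cP$ and $\Sol\cQ$ lie in $D_0$. Indeed, $\Sol$ of a coherent Frobenius module is the cone of $\tau-1$ on coherent modules, by \autoref{surjectivity of F - 1}, so it lies in $D_0$. Then $\bD_K$ is fully faithful on $D_0$ because $\bD_K\circ\bD_K\simeq\id$ there. One must still check that the composite of these bijections is the map induced by $\Sol$. This follows from naturality of the isomorphism in \autoref{main_commutativity_thm}.

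This gives full faithfulness of $\Sol$ on $D^b(\Crys^{C^r})$, and hence on the heart $\Crys^{C^r}_{W_nX}$, since $\Sol$ is exact. The step I expect to be most delicate is Step 1: handling the gluing without a global unit dualizing complex, and making sure the restricted complexes on the needed opens are honestly unit dualizing complexes. The main mathematical content, however, is carried by \autoref{main_commutativity_thm} and \autoref{Kato_duality}.
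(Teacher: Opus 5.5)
Your proposal is correct and follows the paper's proof: you localize, arrange a separated scheme with a unit dualizing complex, deduce full faithfulness on $D^b$ from \autoref{main_commutativity_thm}, \autoref{duality_Frobenius_and_Cartier}, \autoref{Sol_Frobenius_modules_fully_faithful_and_exact} and \autoref{Kato_duality}, and then pass to the hearts by exactness. The only variations are minor and both work: you glue morphisms using the global faithfulness of $\Sol$ rather than invoking effective Zariski descent for both categories, and you build the unit dualizing complex from a closed embedding into a regular scheme, \autoref{lem:dual-regular-case} and $i^!$, where the paper cites an existence result.
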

\begin{proof}
    Note that the constructions \[ (U \inc X) \mapsto \Crys_{W_nU}^{C^r}, \: \Sh(U_{\FRP}, W_n(\bF_q)) \] satisfy effective descent with respect to the Zariski topology. The reason this holds for $\Crys_{W_nU}^{C^r}$ is that it is equivalent to $\Coh_{W_nU}^{C^r, \unit}$ via the unitalization functor as in \cite[Corollary 3.4.9]{Baudin_Duality_between_perverse_sheaves_and_Cartier_crystals}, and it is immediate for the latter categories.

    Thanks to this descent result and the fact that $\Sol$ commutes with restriction to open, this is a local statement. We may therefore assume that $X$ is separated and admits a unit dualizing complex by \cite[Corollary 5.1.16]{Baudin_Duality_between_Witt_Cartier_crystals_and_perverse_sheaves} (see also \cite[Theorem 9.1]{Quasi_F_splittings_III}). In this case, it follows from \autoref{main_commutativity_thm}, \autoref{duality_Frobenius_and_Cartier}, \autoref{Sol_Frobenius_modules_fully_faithful_and_exact} and \autoref{Kato_duality} that $\Sol \colon D^b(\Crys_{W_nX}^{C^r}) \to D(X_{\FRP}, W_n(\bF_q))$ is fully faithful. We then deduce by exactness (\autoref{Sol_exact_and_commutes_with_pushforwards}) that $\Sol \colon \Crys_{W_nX}^{C^r} \to \Sh(X_{\FRP}, W_n(\bF_q))$ is also fully faithful.
\end{proof}

Let us now proceed to the proof of \autoref{main_commutativity_thm}.

\begin{lemma}\label{natural_transformation_general}
    Let $\cM \in \Coh_{W_nX}^{F^r}$ and $\cN \in \IndCoh_{W_nX}^{C^r}$. Then there is an inclusion \[ \Sol(\HHom(\cM, \cN)) \inc \HHom(\Sol(\cM), \Sol(\cN)). \]

    As a consequence, given $\cM^{\bullet} \in D^b(\Coh_{W_nX}^{F^r})$ and $\cN^{\bullet} \in D^b(\IndCoh_{W_nX}^{C^r})$, there is a natural transformation \[ \Sol(\cR\HHom(\cM^{\bullet}, \cN^{\bullet})) \to \cR\HHom(\Sol(\cM^{\bullet}), \Sol(\cN^{\bullet})). \]
\end{lemma}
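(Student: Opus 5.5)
The plan is to first construct the map at the level of sections and then derive it. Let $f\colon Y\to X$ be an FRP morphism. Flat base change gives $f^*\HHom(\cM,\cN)\cong\HHom(f^*\cM,f^*\cN)$, since $\cM$ is coherent and $f$ is flat. This isomorphism is compatible with the Cartier structures of \autoref{construction_pairings}, because the Frobenius and Cartier structures on the pullbacks are defined via the base change isomorphisms of \autoref{basic_remark}. So a section of $\Sol(\HHom(\cM,\cN))$ over $Y$ is a $W_n\cO_Y$-linear map $\varphi\colon f^*\cM\to f^*\cN$ with $\kappa_{\cN}\circ F^r_*\varphi\circ\tau_{\cM}=\varphi$.

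Given such a $\varphi$ and a section $m$ of $f^*\cM$ over some $Y'\to Y$ with $\tau_\cM(m)=m$, we get
\[\kappa_\cN(\varphi(m))=\kappa_\cN(F^r_*\varphi(\tau_\cM m))=\varphi(m).\]
So $\varphi$ restricts to a morphism $\Sol(\cM)|_Y\to\Sol(\cN)|_Y$. This is compatible with restriction along maps in $X_{\FRP}$, so it defines a morphism of sheaves
\[\Sol(\HHom(\cM,\cN))\to\HHom(\Sol(\cM),\Sol(\cN)).\]

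For injectivity, it suffices that $\varphi$ is determined by its restriction to $\Sol(\cM)$ locally. I expect this to be the main obstacle, since $\Sol(\cM)$ need not generate $\cM$ (for instance when $\cM$ is nilpotent). I would reduce as follows. Work étale locally on $Y$, which is allowed since everything is a sheaf on $Y_{\FRP}$. Use the factorization of $\varphi$ through the unitalization/perfection: the fixed-point condition $\varphi=\kappa_\cN\circ F^r_*\varphi\circ\tau_\cM$, iterated $e$ times, shows $\varphi=\kappa^e_\cN\circ F^{er}_*\varphi\circ\tau^e_\cM$. Hence $\varphi$ kills the nilpotent part of $\cM$ and depends only on the image of $\tau_\cM^e$, and for $e\gg0$ the adjoint $\tau^{e,*}$-image is a unit Frobenius module up to nilpotence. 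By \autoref{Riemann_Hilbert_Frobenius_modules}, a unit coherent Frobenius module is étale locally generated by its fixed points, i.e. $\Sol_{\et}(\cM)\otimes W_n\cO\to\cM^{\unit}$ is surjective, and by \autoref{Sol_on_Frobenius_modules_comes_from_etale_site} this persists after FRP base change. Hence, if $\varphi$ vanishes on fixed points, then $F^{er}_*\varphi\circ\tau^e_\cM=0$ after linearizing, and therefore $\varphi=0$.

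For the derived statement, I would replace $\cN^\bullet$ by a bounded-below complex of injectives in $\IndCoh^{C^r}_{W_nX}$, which are also injective $W_n\cO_X$-modules. Then $\HHom(\cM^\bullet,\cN^\bullet)$ computes $\cR\HHom$, and its terms are injective ind-coherent Cartier modules as well, or at least $\Sol$-acyclic since $\Sol$ is exact by \autoref{Sol_exact_and_commutes_with_pushforwards}. Apply the sheaf-level map termwise to get
\[\Sol(\HHom^\bullet(\cM^\bullet,\cN^\bullet))\to\HHom^\bullet(\Sol(\cM^\bullet),\Sol(\cN^\bullet)),\]
and compose with the natural map $\HHom^\bullet\to\cR\HHom$. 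Naturality and independence of the chosen resolution are routine.
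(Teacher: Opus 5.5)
Your construction of the morphism (a $\kappa$-fixed $\varphi$ sends $\tau$-fixed sections to $\kappa_\cN$-fixed ones) matches the paper's proof exactly. So does your derived statement, obtained from an injective resolution of $\cN^\bullet$ and exactness of $\Sol$. The paper stops there: it never argues injectivity, and only the morphism is used later.

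Your injectivity paragraph, however, has a genuine gap. It needs, for $e\gg0$, that the image of $\tau_\cM^{e,*}$ (or the perfection of $\cM$) be \'etale locally generated by $\tau$-fixed sections. This fails in general. Take $X=\Spec\bF_p[x]$, $n=r=1$, $\cM=\cO_X$ with $\tau(s)=xs^p$, and $j\colon\bG_m\inj\bA^1$.
\begin{itemize}
\item $\mathrm{Im}(\tau^{e,*})=x^{(p^e-1)/(p-1)}\cO_X$, so the images never stabilize.
\item Over \'etale covers of $\bG_m$ the fixed points are $\bF_p u$ with $u^{p-1}=x^{-1}$.
\item Near $0$, a fixed section lies in $\bigcap_e x^{(p^e-1)/(p-1)}\cO$, so it vanishes.
\end{itemize}
Hence $\Sol_{\et}(\cM)=j_!L$ with $L$ a rank one local system. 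Consequently $\cM$ is not nil-isomorphic to any coherent Frobenius module with bijective adjoint structure map, since Katz would force lisse solutions. (That $\tau^*$ is always a nil-isomorphism is true but gives no generation statement.) The fixed points generate only $j_!j^{-1}\cM_{\et}$, which contains no $\mathrm{Im}(\tau^{e,*})$ near $0$. So from $\varphi|_{\Sol(\cM)}=0$ you only get $\varphi|_{\bG_m}=0$, not $\varphi\circ\tau^{e,*}=0$.

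In this example injectivity still holds, but for a different reason. The section $\nu\coloneqq\varphi(1)$ is $x$-power torsion by the above, and it satisfies $\kappa_\cN(x\nu)=\nu$. Iterating gives $\nu=\kappa_\cN^e(x^{(p^e-1)/(p-1)}\nu)$, which is $0$ for $e\gg0$.

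So a correct argument must use the fixed-point identity $\varphi=\kappa_\cN^e\circ F^{er}_*\varphi\circ\tau_\cM^e$ on the locus where $\Sol_{\et}(\cM)$ is not lisse, together with the torsion of $\mathrm{Im}\,\varphi$ there. Global generation by fixed points is not enough. One possible route is to work at strict henselizations of generic points of $\Supp(\mathrm{Im}\,\varphi)$. There $\varphi$ factors through a finite-length quotient $M/\fm^N M$, the images of $\tau^{e,*}$ stabilize, and fixed points lift by \autoref{surjectivity of F - 1}.
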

\begin{proof}
    Recall from \autoref{construction_pairings} that the Cartier module structure on $\HHom(\cM, \cN)$ applied to a morphism $f \colon \cM \to \cN$ sends $m \in \cM$ to $\kappa_\cN(f(\tau_{\cM}(m)))$. We therefore see that if $f$ is fixed by $\kappa_{\HHom(\cM,\cN)}$ and $m \in \cM$ is fixed by $\tau_{\cM}$, then $f(m)$ is fixed by $\kappa_{\cN}$. 

    The derived statement follows from the non-derived one by resolving $\cN^{\bullet}$ by injective objects and using the exactness of $\Sol$, see \autoref{Sol_exact_and_commutes_with_pushforwards}.
\end{proof}

\begin{proposition}\label{existence_natural_transformation}
    There is a natural transformation $\Sol \circ \:\bD_G \to \bD_K \circ \Sol$.
\end{proposition}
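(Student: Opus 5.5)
The plan is to specialize \autoref{natural_transformation_general} to $\cN^{\bullet} = W_n\omega_X^{\bullet}$. Concretely, for $\cM^{\bullet} \in D^b(\Coh_{W_nX}^{F^r})$ we choose a bounded below complex of injective objects of $\IndCoh_{W_nX}^{C^r}$ representing the unit dualizing complex. Such a complex exists because $W_n\omega_X^{\bullet}$ has coherent cohomology and can be regarded in $D^+(\IndCoh_{W_nX}^{C^r})$ by forgetting the unit condition. These injectives are also injective as $W_n\cO_X$-modules by \cite[Proposition 4.2.4]{Baudin_Duality_between_Witt_Cartier_crystals_and_perverse_sheaves}. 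It follows that $\cR\HHom(\cM^{\bullet}, W_n\omega_X^{\bullet})$ computed in Cartier modules agrees with $\bD_G(\cM^{\bullet})$, equipped with the Cartier structure of \autoref{construction_pairings}. The derived statement of \autoref{natural_transformation_general} then produces a map
\[ \Sol(\bD_G(\cM^{\bullet})) = \Sol(\cR\HHom(\cM^{\bullet}, W_n\omega_X^{\bullet})) \longrightarrow \cR\HHom(\Sol(\cM^{\bullet}), \Sol(W_n\omega_X^{\bullet})) = \bD_K(\Sol(\cM^{\bullet})), \]
where the last equality is the definition $W_n\omega^{\bullet}_{X,\log} = \Sol(W_n\omega_X^{\bullet})$ (\autoref{def:Kato-dual}).

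To make this precise, I would first write down the underived pairing. For $\cM$ coherent Frobenius and $\cI$ an injective ind-coherent Cartier module, \autoref{natural_transformation_general} gives the inclusion $\Sol(\HHom(\cM,\cI)) \subset \HHom(\Sol(\cM), \Sol(\cI))$, which is natural in both variables. Passing to complexes, $\HHom^{\bullet}(\cM^{\bullet}, \cI^{\bullet})$ is a complex of Cartier modules. It is ind-coherent because $\cM$ is coherent and $\cI$ is a union of coherent pieces. Applying the exact functor $\Sol$ (\autoref{Sol_exact_and_commutes_with_pushforwards}) gives a map of complexes into $\HHom^{\bullet}(\Sol(\cM^{\bullet}), \Sol(\cI^{\bullet}))$. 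I would then compose with the canonical map $\HHom^{\bullet} \to \cR\HHom$ on the FRP site; no injectivity on the FRP side is needed for the existence of this map. Independence of the choice of resolution and functoriality in $\cM^{\bullet}$ follow by the usual homotopy argument.

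The step I expect to be the real obstacle is checking that this descends to crystals, i.e.\ to $D^b(\Crys_{W_nX}^{F^r})$. Both sides send nilpotent objects to zero. $\bD_G$ factors through crystals by \autoref{duality_Frobenius_and_Cartier}, and $\Sol$ kills locally nilpotent Cartier modules and nilpotent Frobenius modules. The functors involved are triangulated, and the transformation is natural on $D^b(\Coh^{F^r}_{W_nX})$. Since $D^b(\Crys)$ is the Verdier localization at complexes with nilpotent cohomology, and both composite functors invert maps whose cone has nilpotent cohomology, the transformation descends by the universal property of localization. The natural transformation on the localization is then induced uniquely. Finally I would check compatibility with shifts and triangles, so that it is a natural transformation of triangulated functors. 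This is needed later for the dévissage proving \autoref{main_commutativity_thm}.
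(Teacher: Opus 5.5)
Your proposal is correct and follows the paper's proof, which consists only of the one-line specialization of \autoref{natural_transformation_general} to $\cN^{\bullet} = W_n\omega_X^{\bullet}$. Your additional steps (representing the dualizing complex by injective ind-coherent Cartier modules, and descending the transformation to $D^b(\Crys^{F^r}_{W_nX})$ via the universal property of the Verdier localization) spell out details the paper leaves implicit; they do not change the argument.
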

\begin{proof}
    Apply \autoref{natural_transformation_general} to $\cN^{\bullet} = W_n\omega_X^{\bullet}$.
\end{proof}

Before we proceed to show that this natural transformation is an isomorphism, we prove some preliminary compatibilities.

\medskip

Let $j \colon U \inj X$ be an open immersion. Recall that there is the extension by zero functor $j_! \colon \Sh_c(U_{\et}, W_n(\bF_q)) \to \Sh_c(X_{\et}, W_n(\bF_q))$, which is exact. Through the Riemann--Hilbert correspondence for Frobenius crystals (\autoref{Riemann_Hilbert_Frobenius_modules}), we can then define a functor $j_! \colon \Crys_{W_nU}^{F^r} \to \Crys_{W_nX}^{F^r}$, which is again automatically exact. Note that the functor that extends étale sheaves to FRP sheaves commutes with $j_!$, see 
\autoref{etale_functors_and_FRP_functors_agree}.

\begin{lemma}\label{compatibility_open_immersion}
    Let $j \colon U \inj X$ be an open immersion, let $\cM^{\bullet} \in D^b(\Crys_{W_nU}^{F^r})$ and $\cN^{\bullet} \in D^b(\Crys_{W_nX}^{C^r})$. 
    Then the diagram
    \[ \begin{tikzcd}[column sep=tiny, font=\small]
    {\Sol(\cR\HHom(j_!\cM^{\bullet}, \cN^{\bullet}))} \arrow[r] \arrow[d, "\cong"'] &  {\cR\HHom(\Sol(j_!\cM^{\bullet}), \Sol(\cN^{\bullet})))} \arrow[r, "\cong"] & {\cR\HHom(j_!\Sol(\cM^{\bullet}), \Sol(\cN^{\bullet}))} \arrow[d, "\cong"] \\
    {\Sol(Rj_*\cR\HHom(\cM^{\bullet}, \cN^{\bullet}|_U))} \arrow[r, "\cong"]       & {Rj_*\Sol(\cR\HHom(\cM^{\bullet}, \cN^{\bullet}|_U))} \arrow[r]      & {Rj_*\cR\HHom(\Sol(\cM^{\bullet}), \Sol(\cN^{\bullet}|_U))}  
    \end{tikzcd} \] commutes.
\end{lemma}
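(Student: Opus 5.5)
We reduce to an identity of maps of sheaves by choosing convenient representatives and then checking the square section by section. We represent $\cN^{\bullet}$ by a bounded below complex of injective objects of $\IndCoh_{W_nX}^{C^r}$. These are injective $W_n\cO_X$-modules, and their restrictions to $U$ are again injective in $\IndCoh_{W_nU}^{C^r}$. We represent $\cM^{\bullet}$ by a bounded complex of coherent Frobenius modules, and $j_!\cM^{\bullet}$ by a bounded complex of coherent Frobenius modules representing the Riemann--Hilbert $j_!$.

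With these choices the left vertical isomorphism becomes the adjunction $\HHom(j_!\cM^{\bullet},\cN^{\bullet})\cong j_*\HHom(\cM^{\bullet},\cN^{\bullet}|_U)$. On the module level this is realized by restriction to $U$ via the canonical identification $(j_!\cM)|_U=\cM$. It is compatible with the Cartier structures of \autoref{construction_pairings}, because restriction commutes with $\tau$, $\kappa$ and $F^r_*$. On the Frobenius side, $\HHom(j_!\cM,\cI)\to j_*\HHom(\cM,\cI|_U)$ is an isomorphism for $\cI$ injective and unit up to nilpotence. This statement is really about crystals, and we use it in the form already implicit in the fact that the vertical arrow is labelled an isomorphism. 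The bottom left isomorphism is \autoref{Sol_exact_and_commutes_with_pushforwards}. The right vertical isomorphism is \autoref{adjunction_restriction_and_lower_shriek}, which is likewise induced by restriction along $U$ together with the identification $\Sol(j_!\cM)|_U=\Sol(\cM)$. The top right isomorphism is \autoref{etale_functors_and_FRP_functors_agree} combined with \autoref{Sol_on_Frobenius_modules_comes_from_etale_site}. The two unlabelled arrows are the natural maps of \autoref{natural_transformation_general}.

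With all of this on the level of actual complexes, commutativity reduces to a sectionwise check. Take a local section $f$ of $\HHom(j_!\cM,\cN)$ that is fixed by $\kappa$. The top route sends it to the map $s\mapsto f(s)$ on $\Sol(j_!\cM)$, and then restricts this map to $U$. The bottom route first restricts $f$ to $U$ and then applies the evaluation pairing. Both give the map $s\mapsto f|_U(s)$ on $\Sol(\cM)$ over FRP objects lying over $U$. On the sheaf level, therefore, we are only checking that restriction commutes with evaluation.

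The main obstacle is the derived bookkeeping needed for this sectionwise comparison to make sense. We have to make sure that $\Sol(\cI)$, for $\cI$ injective, computes $\cR\HHom(\Sol(j_!\cM),\Sol(\cI))$ well enough that the natural map of \autoref{natural_transformation_general} is the one induced on these representatives. We have to check the same for $\HHom(\Sol(\cM),\Sol(\cI)|_U)$ and $Rj_*$ of it. Our approach is not to claim acyclicity. Instead we use K-injective replacements $\Sol(\cI^{\bullet})\to\cJ^{\bullet}$ and note that all maps in the diagram are induced by the single morphism of complexes $\HHom(-,\Sol(\cI^{\bullet}))\to\HHom(-,\cJ^{\bullet})$. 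This morphism is natural in the first variable and commutes with restriction to $U$, so the diagram commutes after composing with it, and hence in the derived category.
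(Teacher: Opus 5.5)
Your proposal follows the paper's proof essentially step for step. You resolve $\cN^\bullet$ by injective ind-coherent Cartier modules $\cI^\bullet$, check termwise that both composites send a fixed section $f$ to $f|_U$ restricted to $\Sol(\cM^i)$, and transport this through a K-injective replacement $\Sol(\cI^\bullet)\to\cJ^\bullet$ by naturality. One point should be phrased more carefully: the restriction map $\HHom(j_!\cM,\cI)\to j_*\HHom(\cM,\cI|_U)$ is not an isomorphism of modules, only an isomorphism up to a locally nilpotent kernel. This is harmless, because $\Sol$ kills such kernels and the statement already grants the left vertical isomorphism.
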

\begin{proof}
    First, resolve $\cN^{\bullet}$ by a complex of injective ind-coherent objects $\cI^{\bullet}$. Fix also a quasi-resolution $\Sol(\cI^{\bullet}) \to \cJ^{\bullet}$. It follows from the explicit constructions for all $i, j \in \bZ$, the diagram 
    \[ \begin{tikzcd}[column sep = small]
        {\Sol(\HHom(j_!\cM^i, \cI^j))} \arrow[r] \arrow[d]  & {\HHom(\Sol(j_!\cM^i), \Sol(\cI^j))} \arrow[r, "\cong"]  & {\HHom(j_!\Sol(\cM^i), \Sol(\cI^j))} \arrow[d] \\
        {\Sol(j_*\HHom(\cM^i, \cI^j|_U))} \arrow[r, "="]    & {j_*\Sol(\HHom(\cM^i, \cI^j))} \arrow[r]                & {j_*\HHom(\Sol(\cM^i), \Sol(\cI^j|_U))}
    \end{tikzcd} \] 
    commutes, in fact both compositions send a map $f$ to $f|_U$ restricted to $\Sol(\cM^i) \inc \cM^i_{\FRP}$.
    By definition of the $\HHom$ complex, this gives that the diagram of complex of FRP sheaves
    \[  \begin{tikzcd}[column sep = small]
        {\Sol(\HHom(j_!\cM^{\bullet}, \cI^\bullet))} \arrow[r] \arrow[d]  & {\HHom(\Sol(j_!\cM^\bullet), \Sol(\cI^\bullet))} \arrow[r, "\cong"]  & {\HHom(j_!\Sol(\cM^\bullet), \Sol(\cI^\bullet))} \arrow[d] \\
        {\Sol(j_*\HHom(\cM^\bullet, \cI^\bullet|_U))} \arrow[r, "="]    & {j_*\Sol(\HHom(\cM^\bullet, \cI^\bullet|_U))} \arrow[r]                & {j_*\HHom(\Sol(\cM^\bullet), \Sol(\cI^\bullet|_U))}
    \end{tikzcd} \]  
    commutes. By naturality, the diagrams  \[ \begin{tikzcd}
        {\HHom(\Sol(j_!\cM^\bullet), \Sol(\cI^\bullet))} \arrow[rr] \arrow[d, "\cong"] &  & {\HHom(\Sol(j_!\cM^\bullet), \cJ^{\bullet})} \arrow[d, "\cong"] \\
        {\HHom(j_!\Sol(\cM^\bullet), \Sol(\cI^\bullet))} \arrow[rr]  &  & {\HHom(j_!\Sol(\cM^\bullet), \cJ^{\bullet})}
    \end{tikzcd} \] and
    \[ \begin{tikzcd}
        {\HHom(j_!\Sol(\cM^\bullet), \Sol(\cI^\bullet))} \arrow[rr] \arrow[d] &  & {\HHom(j_!\Sol(\cM^\bullet), \cJ^{\bullet})} \arrow[d] \\
        {j_*\HHom(\Sol(\cM^\bullet), \Sol(\cI^\bullet|_U))} \arrow[rr]                 &  & {j_*\HHom(\Sol(\cM^\bullet), \cJ^{\bullet})}
    \end{tikzcd} \] also commute, where the horizontal arrows are induced by the map $\Sol(\cI^{\bullet}) \to \cJ^{\bullet}$. 
    Altogether we obtain that the diagram \[ \begin{tikzcd}
        {\Sol(\HHom(j_!\cM^{\bullet}, \cI^\bullet))} \arrow[r] \arrow[d]  & {\HHom(\Sol(j_!\cM^\bullet), \cJ^{\bullet})} \arrow[r, "\cong"]  & {\HHom(j_!\Sol(\cM^\bullet), \cJ^{\bullet})} \arrow[d] \\
        {\Sol(j_*\HHom(\cM^\bullet, \cI^\bullet|_U))} \arrow[r, "="]    & {j_*\Sol(\HHom(\cM^\bullet, \cI^\bullet|_U))} \arrow[r]                & {j_*\HHom(\Sol(\cM^\bullet), \cJ^{\bullet}|_U)}
    \end{tikzcd} \] commutes as well, which is precisely what we had to show by construction and exactness of $j_!$ and both $\Sol$ functors, see \autoref{Sol_exact_and_commutes_with_pushforwards} and \autoref{Sol_Frobenius_modules_fully_faithful_and_exact} for the $\Sol$ functors.
\end{proof}

Here is the anologous statement for closed immersions: 

\begin{lemma}\label{compatibility_closed_immersion}
    Let $i \colon Z \inj X$ be a closed immersion, and let $\cM^{\bullet} \in D^b(\Crys_{W_nZ}^{F^r})$ and $\cN^{\bullet} \in D^b(\Crys_{W_nX}^{C^r})$. 
    Then the diagram 
    \[ \begin{tikzcd}[column sep = tiny, font=\small]
    {{\Sol(\cR\HHom(i_*\cM^{\bullet}, \cN^{\bullet}))}} \arrow[r] \arrow[dd, "\cong"'] & {\cR\HHom(\Sol(i_*\cM^{\bullet}), \Sol(\cN^{\bullet})))} \arrow[r, "="] & {\cR\HHom(i_*\Sol(\cM^{\bullet}), \Sol(\cN^{\bullet}))} \arrow[d, "\cong"] \\                                         &                          & {i_*\cR\HHom(\Sol(\cM^{\bullet}), i^!\Sol(\cN^{\bullet}))}                 \\
    {\Sol(i_*\cR\HHom(\cM^{\bullet}, i^!\cN^{\bullet}))} \arrow[r, "="]                     & {i_*\Sol(\cR\HHom(\cM^{\bullet}, i^!\cN^{\bullet}))} \arrow[r]          & {i_*\cR\HHom(\Sol(\cM^{\bullet}), \Sol(i^!\cN^{\bullet}))} \arrow[u, "\cong"']      
\end{tikzcd} \] commutes, where the isomorphism $\Sol(i^!\cN^{\bullet}) \cong i^!\Sol(\cM^{\bullet})$ is that of \autoref{Sol_commutes_with_upper_shriek} and the top vertical map on the right is an isomorphism by \autoref{exact_upper_shriek_triangle}.
\end{lemma}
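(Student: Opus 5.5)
We follow the proof of \autoref{compatibility_open_immersion}: reduce to an explicit check on complexes of sheaves, using the fact that every map in the diagram is built from adjunction units and counits, so a chase at the level of sections suffices. First resolve $\cN^{\bullet}$ by a bounded below complex $\cI^{\bullet}$ of injective objects of $\IndCoh_{W_nX}^{C^r}$. Then $i^!\cN^{\bullet}$ is computed by $i^{\flat}\cI^{\bullet}=i^{-1}\HHom(W_n\cO_Z,\cI^{\bullet})$, which is again a complex of injective ind-coherent Cartier modules on $Z$, since $i^{\flat}$ is right adjoint to the exact functor $i_*$. Next choose a K-injective resolution $\Sol(\cI^{\bullet})\to\cJ^{\bullet}$ on $X_{\FRP}$, and a K-injective resolution $\Sol(i^{\flat}\cI^{\bullet})\to \cK^{\bullet}$ on $Z_{\FRP}$. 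By \autoref{Sol_of_Cartier_modules_behave_well_with_adjunction} and \autoref{exact_upper_shriek_triangle}, $i^!\Sol(\cN^{\bullet})$ is computed by $i^{-1}\Gamma_Z(\cJ^{\bullet})$, and $i_*i^{-1}\Gamma_Z(\cJ^\bullet)=\Gamma_Z(\cJ^\bullet)$.

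We then unwind the isomorphism $\Sol(i^!\cN^\bullet)\cong i^!\Sol(\cN^\bullet)$ of \autoref{Sol_commutes_with_upper_shriek} at the level of these representatives. After applying $i_*$, this map is $\Sol$ of the counit, i.e. the natural inclusion
\[
i_*\Sol(i^{\flat}\cI^{\bullet})=\Sol(\HHom(W_n\cO_Z,\cI^{\bullet}))\hookrightarrow \Gamma_Z(\Sol(\cI^{\bullet}))\to\Gamma_Z(\cJ^{\bullet}),
\]
given by evaluation at $1$. Indeed, an element killed by the ideal of $Z$ is supported on $Z$. With this description, the two paths of the diagram are checked termwise on a pair of components $\cM^a$, $\cI^b$. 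Starting from a $\kappa$-fixed local section $f\colon i_*\cM^a\to\cI^b$, the top path restricts $f$ to $i_*\Sol(\cM^a)$, lands in $\Gamma_Z$, and then applies the adjunction isomorphism $\adj$ of \autoref{constr:upper-shriek-FRP}. The bottom path first factors $f$ through $\HHom(W_n\cO_Z,\cI^b)$ (as $\cM^a$ is killed by the ideal), restricts to $\Sol(\cM^a)$, and then includes via evaluation at $1$. Both paths send $f$ to the morphism $\Sol(\cM^a)\to i^{-1}\Gamma_Z\Sol(\cI^b)$ obtained by restricting $f$, so the square of complexes commutes. As in \autoref{compatibility_open_immersion}, we then post-compose with the resolution maps into $\cJ^\bullet$ and $\cK^\bullet$ and use naturality.

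The main obstacle is bookkeeping rather than mathematics. The isomorphism of \autoref{Sol_commutes_with_upper_shriek} is constructed abstractly, by adjunction together with a comparison of exact triangles, and not by an explicit formula. We must therefore verify that it agrees with the evaluation-at-$1$ description above. The approach is to observe that the adjoint of both maps is $\Sol$ applied to the counit $i_*i^{\flat}\cI^\bullet\to\cI^\bullet$, and then to invoke the full faithfulness of $i_*$ (\autoref{pushforward_from_closed_immersion_fully_faithful}). Similarly, the map $\adj$ passes through $R\Gamma_Z$ via \autoref{derived_version_of_iflat}, and we must check that on the K-injective representative $\cJ^\bullet$ it is literally the inclusion $\HHom(i_*\Sol\cM^\bullet,\cJ^\bullet)=\HHom(i_*\Sol\cM^\bullet,\Gamma_Z\cJ^\bullet)$. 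Once these identifications are in place, the termwise computation finishes the proof, since all the functors involved are exact by \autoref{Sol_exact_and_commutes_with_pushforwards}, \autoref{Sol_Frobenius_modules_fully_faithful_and_exact} and \autoref{pushforward_from_closed_immersion_fully_faithful}.
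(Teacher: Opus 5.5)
Your proposal is correct and takes essentially the paper's approach. The paper's proof just says to run the termwise argument of \autoref{compatibility_open_immersion} after resolving $\cN^{\bullet}$ by injective ind-coherent Cartier modules, using, as you do, that $i^{\flat}$ preserves such injectives. Your extra bookkeeping fills in details the paper leaves implicit: you describe the isomorphism of \autoref{Sol_commutes_with_upper_shriek} on representatives as evaluation at $1$ via its adjoint. One small correction: $\Gamma_Z(\cJ^{\bullet})\to i_*i^{-1}\Gamma_Z(\cJ^{\bullet})$ is the quasi-isomorphism $\theta$, not a literal equality as you wrote.
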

\begin{proof}
    The proof follows from the same technique as in \autoref{compatibility_open_immersion}, where we additionally use that if $\cI$ is an injective ind-coherent Cartier module on $W_nX$, 
    then so is $i^\flat\cI$ on $W_nZ$.
\end{proof}

We are now ready to prove our main result:

\begin{proof}[Proof of \autoref{main_commutativity_thm}]
We have to show that the natural transformation $\Sol \circ \: \bD_G \to \bD_K \circ \Sol$ from 
    \autoref{existence_natural_transformation} is an isomorphism. The proof will proceed by Noetherian induction, i.e. we may assume that the result holds for all proper closed subschemes $Z \inc X$. By \autoref{compatibility_closed_immersion}, we know in particular that the result holds for all $\cM^{\bullet} \in D^b(\Crys_{W_nX}^{F^r})$ which are in the image of 
    $i_*\colon D^b(\Crys_{W_nZ}^{F^r})\to D^b(\Crys_{W_nX}^{F^r})$, for some strict closed subscheme $i\colon Z\inj X$.
    
    Throughout, we will use the following common trick: if there is an exact triangle of the form 
    \[ \begin{tikzcd}
        \cM_1^{\bullet} \arrow[rr] &  & \cM_2^{\bullet} \arrow[rr] &  & \cM_3^{\bullet} \arrow[rr, "+1"] &  & {}
    \end{tikzcd}  \] such that the natural transformation from \autoref{existence_natural_transformation} is an isomorphism for $\cM_1^{\bullet}$ and $\cM_3^{\bullet}$, then it also holds for $\cM_2^{\bullet}$. An immediate consequence is that it is enough to show the result for $\cM \in \Crys_X^{F^r}$.  Since $\Sol_{\et}(\cM)$ is a constructible sheaf, there exists a non-empty open $U \inc X$ such that $\Sol_{\et}(\cM)|_U$ is an $\bF_q$-local system. 
    
    Consider the exact sequence 
    \[ \begin{tikzcd}
        0 \arrow[rr] &  & j_!(\cM|_U) \arrow[rr] &  & \cM \arrow[rr] &  & \cM' \arrow[rr] &  & 0.
    \end{tikzcd}\] 
    By definition of $j_!(\cM_{|_{U}})$ (see above \autoref{compatibility_open_immersion})  
    and as $\Sol_{\et}$ on Frobenius crystals is exact and preserves pullback and  proper pushforward, e.g. 
    \cite[Theorem 2.2.14]{Baudin_Duality_between_perverse_sheaves_and_Cartier_crystals}, we have  
    \[\Sol_{\et}(\cM')= i_*\Sol_{\et}(\cM)|_{Z}=\Sol_{\et}(i_*i^*\cM).\]
    Thus by the Riemann--Hilbert correspondence (\autoref{Riemann_Hilbert_Frobenius_modules}), there is a Frobenius crystal $\cM_1$ on 
    $Z$ such that $\cM'=i_*\cM_1$. Thus the result is known for $\cM'$ by the Noetherian induction hypothesis and our discussion above. Using \autoref{compatibility_open_immersion}, we deduce that it is enough to show the result for $\cM|_U$. Since $\Sol_{\et}(\cM)|_U$ is étale locally isomorphic $\bF_q^{\oplus n}$, we know by the Riemann--Hilbert correspondence for Frobenius crystals (which in this case is due to \cite{Katz_P_adic_properties_of_modular_schemes_and_modular_forms}) that $\cM$ is étale locally isomorphic to a direct sum of the Frobenius module $\cO$ (with its canonical Frobenius module structure). Since the statement is étale local, we deduce that it is enough to show the result for this very Frobenius module. This is now immediate.
\end{proof}

\subsection{The essential image of the solution functor}\label{sec:essential-image}

In this section we assume that $X$ is a  Noetherian $F$-finite $\bF_q$-scheme.
Recall that  the Krull dimension of $X$ is finite by \cite[\S 42, Lemma 6]{Matsumura_Commutative_Algebra}.
We assume $W_nX$ has a unit dualizing complex 
(e.g. $X$ is of finite type over a perfect field, see \autoref{basic_example_unit_dc})  
and we  denote by $W_n\omega^\bullet_X$ a fixed choice. 

For $m\le n$ we set 
\[W_m\omega^\bullet_X:= i_m^!W_n\omega^\bullet_X,\]
and $W_m\omega^{\bullet}_{X,\log}\coloneqq \Sol(W_m\omega^{\bullet}_X)\in D^b(X_{\FRP}, W_m(\bF_q))$, where $i_m\colon W_mX\inj W_nX$ is the closed immersion. 
If $h\colon Z\inj X$ is a locally closed immersion we set
\begin{align}\label{eq:W_nomegalogZ}
  W_m\omega^\bullet_{Z,\log}:= h^!W_m\omega^\bullet_{X,\log}:= (\bar{h}^!W_m\omega^{\bullet}_{X,\log})|{_Z} 
  \in D^b(Z_{\FRP}, W_n(\bF_q)),   \end{align}
where $\bar{h}: \bar{Z}\inj X$ is the closure of $h$\footnote{The scheme structure of $\bar{Z}$ does not play a role, see \autoref{lem:red}.} and $\bar{h}^!$ is defined in \autoref{defn:FRP-upper-shriek}. 
Note that by \autoref{Sol_commutes_with_upper_shriek} we have $W_m\omega^{\bullet}_{Z,\log}= \Sol(h^!W_m\omega^{\bullet}_X)$
and $h^!W_m\omega^{\bullet}_X$ is a unit dualizing complex over $W_mZ$.
We denote by 
\[\bD_{K,Z}=\cR\HHom_{\FRP}(-, W_n\omega^{\bullet}_{Z,\log}): D(Z_{\FRP}, W_n(\bF_q))^{\rm op}\to D(Z_{\FRP}, W_n(\bF_q))\]
the induced functor, which is dualizing when restricted to the triangulated subcategory $D_0(X, W_n(\bF_q))$ from \autoref{def:D0},
see \autoref{Kato_duality}. We write $\bD_K$ instead of $\bD_{K,X}$.

\begin{lemma}\label{lem:red}
Let $Z$ be  a closed subscheme of $X$ and $\iota\colon Z_{\red}\inj Z$ be the closed embedding of its reduced subscheme.
Then the inverse image functor
\[\begin{tikzcd}
\iota^{-1}\colon D(Z_{\FRP}, W_n(\bF_q))\arrow[r, "\simeq"] & D(Z_{\red, \FRP}, W_n(\bF_q))
\end{tikzcd}
\]
is an equivalence with inverse the pushforward $\iota_*$. Moreover, we have
\[\iota^{-1}W_n\omega^\bullet_{Z,\log}=W_n\omega^\bullet_{Z_{\red},\log}.\]    
\end{lemma}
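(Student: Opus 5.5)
The plan is to reduce the first claim to \autoref{pushforward_from_closed_immersion_fully_faithful}, which already gives that $\iota_*$ is exact and fully faithful with $\iota^{-1}\iota_*\cong \id$ on sheaves and on derived categories. It then remains to show that the unit $\id\to \iota_*\iota^{-1}$ is an isomorphism on $D(Z_{\FRP},W_n(\bF_q))$. By exactness of both functors it suffices to treat a single sheaf $\cF$ and an affine object $Y\to Z$ of $Z_{\FRP}$.

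The key geometric input is that the FRP sites of $Z$ and $Z_{\red}$ are equivalent via $Y\mapsto Y\times_Z Z_{\red}$. This is the nilpotent-thickening statement \cite[Lemma 1.3]{Kato_Duality_theories_for_p_primary_etale_coh_II} already used in the proof of \autoref{Kato_key_lemma_closed_immersions}. Indeed, $Z_{\red}\inj Z$ is cut out by a nilpotent ideal $J$ with $J^{m}=0$, so $Z$ is a thickening of $Z_{\red}$, and the restriction functor is an equivalence of sites. Under this equivalence the functor $\iota^{-1}$ is just transport of sheaves, and $\iota_*$ is transport in the other direction. Concretely, for affine $Y$ the canonical lift $(Y_{Z_{\red}})^{\can}_Z$ from \autoref{Kato_key_lemma_closed_immersions} is $Y$ itself, since its completion along a nilpotent ideal is trivial. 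Hence $\Gamma(Y,\iota_*\iota^{-1}\cF)=\Gamma(Y,\cF)$, and the unit is an isomorphism. The étale topologies match because étale morphisms are insensitive to nilpotent thickenings.

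For the second claim, write $\bar h\colon Z\inj X$ and $\bar h'=\bar h\circ\iota\colon Z_{\red}\inj X$. Then
\[\iota^{-1}W_n\omega^\bullet_{Z,\log}=\iota^{-1}\bar h^! W_n\omega^\bullet_{X,\log}, \qquad W_n\omega^\bullet_{Z_{\red},\log}=\bar h'^!W_n\omega^\bullet_{X,\log}=\iota^!\bar h^!W_n\omega^\bullet_{X,\log}.\]
I therefore need two facts:
\begin{enumerate}
\item transitivity $(\bar h\iota)^!=\iota^!\bar h^!$ for the FRP $i^!$ of \autoref{defn:FRP-upper-shriek};
\item $\iota^!=\iota^{-1}$ on $D^+(Z_{\FRP})$.
\end{enumerate}

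Fact (2) is straightforward. The open complement of $Z_{\red}$ in $Z$ is empty, so $\Gamma_{Z_{\red}}=\id$ and therefore $\iota^{\flat}=\iota^{-1}$. Since $\iota^{-1}$ is exact, the derived functors agree as well.

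Fact (1) is the main obstacle, because the definition of $i^!$ is not a priori adjoint. The cleanest route is to avoid general transitivity altogether. Both $\bar h$ and $\bar h'$ cut out the same closed subset, so $\Gamma_Z=\Gamma_{Z_{\red}}$ on $X_{\FRP}$, and hence $R\Gamma_Z=R\Gamma_{Z_{\red}}$. By definition $\bar h'^!=R(\bar h'^{-1}\Gamma_Z)=R(\iota^{-1}\bar h^{-1}\Gamma_Z)$. Since $\iota^{-1}$ is exact, this equals $\iota^{-1}R(\bar h^{-1}\Gamma_Z)=\iota^{-1}\bar h^!$. This yields the displayed identity. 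A cross-check is available: by \autoref{Sol_commutes_with_upper_shriek}, both sides are $\Sol$ of the unit dualizing complexes $\bar h^!W_n\omega_X^\bullet$ and $\bar h'^!W_n\omega_X^\bullet$.
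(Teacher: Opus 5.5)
Your proposal is correct. For the equivalence part you use the same input as the paper: the nilpotent thickening $Z_{\red}\inj Z$ induces an equivalence of FRP sites. You make explicit both halves of this, namely that objects correspond by Kato's Lemma 1.3 and that coverings match by topological invariance of the étale site. The paper's one-line argument only states the étale half and leaves the lifting of objects implicit. Your ``concrete'' check via \autoref{Kato_key_lemma_closed_immersions} would need $Z$ affine, but the site-equivalence argument alone suffices.

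For the identity $\iota^{-1}W_n\omega^\bullet_{Z,\log}=W_n\omega^\bullet_{Z_{\red},\log}$ your route genuinely differs from the paper's. The paper proceeds in three steps:
\begin{enumerate}
\item It writes $W_n\omega^\bullet_{Z_{\red},\log}=\iota^!W_n\omega^\bullet_{Z,\log}$, which tacitly uses the transitivity $(\bar h\iota)^!=\iota^!\bar h^!$ that you isolate as Fact (1).
\item It uses $W_n\omega^\bullet_{Z,\log}\in\cC_{Z,\adj}$ (\autoref{exact_upper_shriek_triangle}) to identify $\iota_*\iota^!W_n\omega^\bullet_{Z,\log}$ with $R\Gamma_{Z_{\red}}(W_n\omega^\bullet_{Z,\log})=W_n\omega^\bullet_{Z,\log}$.
\item It concludes with $\iota^{-1}\iota_*=\id$.
\end{enumerate}
You instead observe that $\bar h$ and $\bar h\circ\iota$ have the same support, so $\Gamma_Z=\Gamma_{Z_{\red}}$ on $X_{\FRP}$. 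You then pull the exact functor $\iota^{-1}$ out of $R(\iota^{-1}\bar h^{-1}\Gamma_Z)$.

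Your argument is more elementary and self-contained. It needs neither the first half of the lemma nor membership in $\cC_{\adj}$, and it actually proves the transitivity step the paper uses without comment. The paper's version has the mild advantage of producing the statement directly in the form $\iota_*W_n\omega^\bullet_{Z_{\red},\log}\cong W_n\omega^\bullet_{Z,\log}$, which is the form adapted to the equivalence $\iota_*$ from the first part.

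Your closing ``cross-check'' via \autoref{Sol_commutes_with_upper_shriek} is not part of the logic and can be dropped.
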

\begin{proof}
The first statement follows from the fact that for any FRP scheme $Y\to Z$ the category of \'etale $Y$-schemes is equivalent to the category of \'etale $Y_{\red}$-schemes. The second statement follows from
\[\iota_*W_n\omega^{\bullet}_{Z_{\red},\log}=\iota_*\iota^!W_n\omega^{\bullet}_{Z,\log}= 
R\Gamma_{Z_{\red}}(W_n\omega^{\bullet}_{Z,\log})= W_n\omega^{\bullet}_{Z,\log},\]
where the first equality holds by \eqref{eq:W_nomegalogZ}, the second equality follows from the definition of $i^!$ and 
\autoref{exact_upper_shriek_triangle}, and the last equality is immediate.
\end{proof}

\begin{lemma}\label{lem:reg-dualizing-log}
Assume $X$ is Cohen-Macaulay (CM). Then $W_n\omega^\bullet_{X,\log}$
is a single sheaf on $X_{\FRP}$ sitting in some degree, which we denote by $W_n\omega_{X,\log}$.
Moreover, if $h\colon Z\inj X$ is a locally closed immersion with $Z$ irreducible and CM of codimension $c$,
then $h^!W_n\omega_{X,\log}[c]= W_n\omega_{Z,\log}$.
\end{lemma}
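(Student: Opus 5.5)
The strategy is to reduce everything to coherent duality on $W_nX$ and then apply $\Sol$. By \autoref{Sol_commutes_with_upper_shriek} and the definitions, $W_n\omega^\bullet_{X,\log}=\Sol(W_n\omega^\bullet_X)$, and $\Sol$ is exact on ind-coherent Cartier modules by \autoref{Sol_exact_and_commutes_with_pushforwards}. So it suffices to prove that $W_n\omega^\bullet_X$ has a single nonzero cohomology sheaf, viewed in $D^b(\Crys^{C^r}_{W_nX})$ or even in $D^b(\QCoh_{W_nX})$, since cohomology of unit Cartier complexes is computed on the underlying modules. Then $\Sol$ of that sheaf, placed in the same degree, is $W_n\omega^\bullet_{X,\log}$. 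The statement is local, and by \autoref{lem:red} together with connectedness we may work on affine connected pieces. Kato duality is not needed.

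\textbf{Step 1 ($n=1$).} For $n=1$, a dualizing complex on a CM scheme has exactly one nonzero cohomology sheaf on each connected component. This is standard: locally it is a shift of the canonical module, by \stacksproj{0AWE}. I will arrange the shift to be constant, for instance by using the normalization from the dimension function of $W_n\omega^\bullet_X$, and otherwise argue componentwise.

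\textbf{Step 2 (general $n$).} I would argue by induction on $n$ using the exact sequence
\[
0\to F^{n-1}_*\cO_X \xrightarrow{V^{n-1}} W_n\cO_X \to W_{n-1}\cO_X\to 0 .
\]
Apply $\cR\HHom_{W_n\cO_X}(-,W_n\omega^\bullet_X)$. Duality for the finite morphisms $W_{n-1}X\inj W_nX$ and $X\to W_nX$ (the latter being $F^{n-1}$ composed with $i_1$) identifies the outer terms with $W_{n-1}\omega^\bullet_X$ and with $F^{n-1}_*$ of a dualizing complex on $X$. This gives a triangle
\[
W_{n-1}\omega^\bullet_X\to W_n\omega^\bullet_X\to F^{n-1}_*\omega^\bullet_X\xrightarrow{+1}.
\]
The outer terms are concentrated in the same single degree by induction and Step 1. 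Here I must check that the degrees match, which follows because all these complexes are normalized with respect to the same dimension function coming from $W_n\omega^\bullet_X$. Hence the middle term is concentrated in that degree as well.

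\textbf{Step 3 (the closed immersion case).} Let $h\colon Z\inj X$ be locally closed with $Z$ irreducible and CM of codimension $c$. After restricting to an open set, take $h$ closed. By \eqref{eq:W_nomegalogZ} and \autoref{Sol_commutes_with_upper_shriek}, $h^!W_n\omega_{X,\log}=\Sol(h^!W_n\omega^\bullet_X)$ up to the shift. Moreover, $h^!W_n\omega^\bullet_X$ is a dualizing complex on $W_nZ$, so by Steps 1 and 2 applied to $Z$ it is concentrated in a single degree. It remains to identify that degree as the degree of $W_n\omega^\bullet_X$ shifted by $c$. On $X$ the nonzero degree is $-\dim\cO_{X,x}$ in the normalized dimension function, and on $Z$ it is $-\dim\cO_{Z,z}$. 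At the generic point $\eta$ of $Z$, $\dim\cO_{X,\eta}=c$, and $h^!$ preserves the normalized dimension function. Hence the degrees differ by exactly $c$.

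\textbf{Main obstacle.} The delicate part is the bookkeeping of degrees: showing that $W_n\omega^\bullet_X$ has the same concentration degree at all points of a connected CM scheme, and that this is compatible with $h^!$. Both reduce to the fact that CM local rings with a normalized dualizing complex have cohomology only in degree $-\dim$, and that $X$, being CM, is equidimensional locally, or at least catenary and connected componentwise. Because an arbitrary unit dualizing complex may differ from the normalized one by a line bundle shift on components (\autoref{comparison_unit_dc}), I will fix the shift componentwise. That is why the statement only says "some degree".
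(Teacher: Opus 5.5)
Your proposal is correct. For the second claim you argue as the paper does: $h^!W_n\omega^\bullet_X$ is a dualizing complex on $W_nZ$ whose dimension function is the restriction of that of $W_n\omega^\bullet_X$, and comparing at the generic point of $Z$ gives the shift by $c$. The paper cites \cite[(3.1.26)]{Conrad_Grothendieck_duality} for this step.

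For the first claim your route is different, though it uses the same dévissage.

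\textbf{The paper's route.} It takes the sequence $0\to\cO_X\to W_n\cO_X\to W_{n-1}\cO_X\to 0$ only as a sequence of abelian sheaves. It combines it with the local-cohomology characterization of depth to show that $W_nX$ is itself Cohen--Macaulay. The single-degree statement then follows from the standard fact about dualizing complexes on CM schemes, so no degree bookkeeping is needed inside the induction.

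\textbf{Your route.} You dualize the sequence as $W_n\cO_X$-modules and run the induction directly on dualizing complexes. This requires identifying the module structure on the $V^{n-1}$-term as coming from $i_1\circ F^{n-1}$, which you do. It is the Grothendieck-dual form of the paper's argument and in effect reproves that $W_nX$ is CM.

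\textbf{What your induction needs.} The hypothesis must record the precise concentration degree at each point, not merely ``a single degree''. That degree is determined by $\dim\cO_{X,x}$ and the common dimension function. You also need that $f^!$ for finite $f$, including the Frobenius, preserves dimension functions. Your appeal to the shared dimension function supplies exactly this.

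\textbf{Trade-off.} Your version stays entirely within the coherent-duality formalism that part two needs anyway. The paper's version is shorter and yields the intrinsic statement that $W_nX$ is Cohen--Macaulay.
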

\begin{proof}
If $X$ is CM, then so is $W_nX$, as follows from the exact sequence of abelian sheaves
\[0\to \cO_X\to W_n\cO_X\to W_{n-1}\cO_X\to 0\]
and the characterization of CM by local cohomology \cite[IV, Proposition 2.6]{Hartshorne_Residues_and_Duality}.
Thus $W_n\omega^\bullet_X$ is a sheaf sitting in some degree and by \autoref{surjectivity of C - 1 easier},
so is $W_n\omega^\bullet_{X,\log}$. For the second statement we only have to show the vanishing 
$\cH^i(h^!W_n\omega_{X,\log})$, for all $i\neq c$.  By \autoref{surjectivity of C - 1 easier}
it suffices to show $\cH^i(h^!W_n\omega_X)=0$, for all $i\neq c$. 
But as $W_n\omega_X$ is a dualizing complex with associated dimension function $X\ni x\mapsto d(x)=\codim(\overline{\{x\}}, X)$ 
the complex $h^!W_n\omega_X$ is a dualizing complex with associated dimension function $Z\ni z\mapsto \codim(\overline{\{z\}}, Z)+c$,
by \cite[(3.1.26)]{Conrad_Grothendieck_duality}, which yields the statement.
\end{proof}

\begin{lemma}\label{lem:logses}
Assume $X$ is regular. 
Then there is a short exact sequence on $X_{\FRP}$ for all $m\le n$
\begin{equation}\label{lem:logses1}
        \begin{tikzcd}
        0\arrow[r] & W_{n-m}\omega_{X,\log}\arrow[r, "\pi^m"] & W_{n}\omega_{X,\log}\arrow[r, "\phi^{n-m}"] & W_m\omega_{X,\log}\arrow[r] & 0,
    \end{tikzcd}
    \end{equation}
where $\pi^m$ is the map "lift under $\phi^m$ and multiply by $p^m$".
Moreover, for $r=1=m$ and $W_n\omega_{X,\log}=W_n\Omega^{\dim X}_{X,\log}$ this coincides with the sequence from \cite[(4.1.8)]{Kato_Duality_theories_for_the_p_primary_etale_cohomology_I}.
\end{lemma}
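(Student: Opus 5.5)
We first build the analogous sequence on the level of unit dualizing complexes and then pass to fixed points using the exactness of $\Sol$. Since $X$ is regular, hence CM, \autoref{lem:reg-dualizing-log} shows that $W_m\omega^\bullet_X=i_m^!W_n\omega^\bullet_X$ is a single unit Cartier module $W_m\omega_X$ for every $m\le n$, concentrated in one degree independent of $m$. For the closed immersion $i_{m}\colon W_mX\inj W_nX$ we use the exact sequence of $W_n\cO_X$-modules
\[0\to F^m_*W_{n-m}\cO_X\xrightarrow{V^m} W_n\cO_X\xrightarrow{R^{n-m}} W_m\cO_X\to 0.\]
We apply $\cR\HHom_{W_n\cO_X}(-,W_n\omega_X)$ to it. By Grothendieck duality for the finite morphisms $F^m$ and $i_m$, the three terms are identified with $W_m\omega_X$, $W_n\omega_X$ and $F^m_*W_{n-m}\omega_X$. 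Here we use that the unit structure gives $F^{m,\flat}W_{n-m}\omega_X\cong W_{n-m}\omega_X$. All three are sheaves in the same degree, so we obtain a short exact sequence of quasi-coherent modules
\[0\to W_m\omega_X\xrightarrow{\underline p^{\,n-m}} W_n\omega_X\xrightarrow{} F^m_*W_{n-m}\omega_X\to 0.\]
Its maps are compatible with the Cartier structures, because the maps in the Witt sequence commute with Frobenius and the pairings of \autoref{construction_pairings} are functorial. The two maps are the duals of restriction and Verschiebung.

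Next we apply $\Sol$. It is exact on ind-coherent Cartier modules by \autoref{Sol_exact_and_commutes_with_pushforwards}. For the Cartier module $F^m_*\cN$, whose structure is $F^m_*\kappa_\cN$, the fixed points agree with those of $\cN$, so $\Sol(F^m_*W_{n-m}\omega_X)=W_{n-m}\omega_{X,\log}$. After relabelling $m\leftrightarrow n-m$ this gives the short exact sequence
\[0\to W_{n-m}\omega_{X,\log}\to W_n\omega_{X,\log}\to W_m\omega_{X,\log}\to 0\]
on $X_{\FRP}$. Surjectivity on the right uses \autoref{surjectivity of C - 1 easier}, through the exactness of $\Sol$.

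It remains to identify the maps with $\pi^m$ and $\phi^{n-m}$. The dual of $V^m$ is a restriction-type map $\phi^{n-m}\colon W_n\omega\to W_m\omega$. The dual of $R$ composed with it is multiplication by $p$, since $V^m\circ F^m=p^m$ and $R^{n-m}$ is the restriction. This gives exactly "lift under $\phi^m$ and multiply by $p^m$": the composite $W_n\to W_{n-m}\to W_n$ equals $p^m$, and $W_{n-m}\to W_n$ is injective. Checking this carefully is the main obstacle, together with the comparison with Kato's sequence. For the latter we reduce, since everything is étale-local and compatible with flat relatively perfect base change (\autoref{basic_remark}), to the $p$-basis chart $T=\Spec\bF_p[t_1,\ldots,t_d]$ from \autoref{lem:dual-regular-case}. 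There we use Ekedahl's description of the duals of $R,V$ on $W_n\Omega^d$ as $\underline p$ and $F$, noting that $C$ inverts $F$ on top forms. Then Kato's maps in \cite[(4.1.8)]{Kato_Duality_theories_for_the_p_primary_etale_cohomology_I} are the restriction and $\underline p$ on log forms, and they match ours after passing to fixed points of $C$, possibly up to a unit, which we would normalize by the choice of identification $W_m\omega=i_m^!W_n\omega$.
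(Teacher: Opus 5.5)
Your first half matches the paper's first step exactly: dualize the Witt-vector sequence with $\bD_G$, use that all three terms are sheaves in one degree, then apply the exact functor $\Sol$. This gives an exact sequence with maps $\Sol(\underline{p}^m)$ and $\phi^{n-m}=\Sol(\bD_G(V^{n-m}))$. In the local model $W_n\Omega^d_X$, the map $\bD_G(V^{n-m})$ is the de Rham--Witt Frobenius $F^{n-m}$ (Ekedahl), not a restriction, although the two agree on log forms when $r=1$.

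The gap is the identification $\pi^m=\Sol(\underline{p}^m)$. Dualizing $p^m=V^m\circ F^m\circ R^m$ gives $p^m=\underline{p}^m\circ\bD_G(F^m)\circ\bD_G(V^m)$, and you drop the dual of the middle factor. That dual $\bD_G(F^m)$ is the $m$-fold Cartier operator $C^m$; in $W_n\Omega^d_X$ this is the identity $C^mF^m=R^m$. It acts as the identity on $W_{n-m}\omega_{X,\log}=\ker(C^r-1)$ only if $r\mid m$.

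For $r\nmid m$ and $0<m<n$, your identity $\Sol(\underline{p}^m)\circ\phi^m=p^m$ is false. The map $\phi^m$ is semilinear for the Frobenius of $W_n(\bF_q)$, whereas $\underline{p}^m$ and $p^m$ are $W_n(\bF_q)$-linear. So the identity would give $p^m([\lambda]-[\lambda^{p^m}])=0$ on $W_n\omega_{X,\log}$ for every $\lambda\in\bF_q$. Choosing $\lambda^{p^m}\neq\lambda$ makes $[\lambda]-[\lambda^{p^m}]$ a unit, so $p^mW_n\omega_{X,\log}=0$, which is absurd. Concretely, take $r=2$, $n=2$, $m=1$, $X=\Spec\bF_{p^2}[t^{\pm1}]$, $\lambda\in\bF_{p^2}\setminus\bF_p$ and $x=[\lambda]\,d\log[t]\in W_2\omega_{X,\log}$. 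Then $\underline{p}(F(x))-px=\underline{p}((\lambda^p-\lambda)\,d\log t)\neq 0$.

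Consequently your argument shows neither that ``lift under $\phi^m$ and multiply by $p^m$'' is well defined, nor that it is injective with image $\ker\phi^{n-m}$. In fact $\pi^m\neq\Sol(\underline{p}^m)$ in general. The same issue is hidden in your claim that the unit structure yields $F^{m,\flat}W_{n-m}\omega_X\cong W_{n-m}\omega_X$: an $r$-unit structure gives this only when $r\mid m$.

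What is missing is a $1$-Cartier operator. \'Etale locally, by \autoref{lem:dual-regular-case} and \autoref{sol_of_unit_Frobenius_module}, you may assume $W_n\omega_X=W_n\Omega^d_X$ with $\kappa=C^r$ for Kato's operator $C$. Then $C^m$ restricts to an automorphism of $W_{n-m}\omega_{X,\log}$, since its $r$-th power is $(C^r)^m=\mathrm{id}$ there. Combine $p^m=\underline{p}^m\circ C^m\circ F^m$ with the surjectivity of $\phi^m$ from your first half. This yields $\pi^m=\Sol(\underline{p}^m)\circ C^m$, which gives well-definedness, injectivity and $\operatorname{im}\pi^m=\ker\phi^{n-m}$ at once.

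With this repair your route is a more formal alternative to the paper's. The paper instead proves these three properties of $\pi^m$ by a direct de Rham--Witt computation: Illusie's description of $\ker F^m$ and of the $V$/$dV$-filtration, plus nilpotence of $C$ on $dV$-terms. For $r=1$ your argument is correct as written. Your ``up to a unit'' hedge in the comparison with Kato is unnecessary: for $r=1$ one has $F=R$ on log forms, so with Ekedahl's identifications the sequence is literally Kato's.
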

\begin{proof}
Throughout the proof, $F\colon W_{n+1}\Omega_X^*\to W_n\Omega_X^*$ denotes the Frobenius stemming from the 
structure of the de Rham-Witt complex. In particular, for this proof only, in degree zero $F$ becomes a map $W_n X\to W_{n+1}X$.
Consider the short exact sequence of sheaves of $r$-Frobenius modules over $W_nX$
\[\begin{tikzcd}
0\arrow[r] & F^{n-m}_*W_m\cO_X\arrow[r, "V^{n-m}"]  & W_n\cO_X\arrow[r, "R^{m}"] & i_{n-m*}W_{n-m}\cO_X\arrow[r] & 0,
\end{tikzcd}
\]
where $i_{n-m} \colon W_{n-m}X\inj W_nX$ 
is the canonical closed immersion. Applying $\bD_G$ and using that $W_n\omega_X^\bullet$ is a single sheaf (see the proof of 
\autoref{lem:reg-dualizing-log}), we obtain a short exact sequence of unit $r$-Cartier modules
\begin{equation}\label{lem:logses2}
\begin{tikzcd}
    0\arrow[r] & i_{n-m*}W_{n-m}\omega_X\arrow[r, "\underline{p}^m"] & W_n\omega_X\arrow[r, "F^{n-m}"] & 
    F^{n-m}_*W_m\omega_{X}\arrow[r] & 0,
\end{tikzcd}
\end{equation}
where $\underline{p}^m:=\bD_G(R^m)$ and $F^{n-m}:=\bD_G(V^{n-m})$. 
As  $\Sol$ is exact by \autoref{surjectivity of C - 1 easier} and commutes with  pushforward by \autoref{Sol_exact_and_commutes_with_pushforwards}, applying $\Sol$ yields an exact sequence as in \eqref{lem:logses1}
with $\varphi^{n-m}:=\Sol(F^{n-m})$  and $\Sol(\underline{p}^m)$ instead of $\pi^m$.
It remains to show that the map $\pi^m=$ "lift under $\varphi^{m}$ and multiply by $p^m$" is well-defined and maps isomorphically onto
the kernel of $\varphi^{n-m}$.

The statement is \'etale local, so by \autoref{lem:dual-regular-case} we may assume that $W_n\omega_{X}=W_n\Omega^d_X$,
where we use \autoref{sol_of_unit_Frobenius_module} to trivialize \'etale locally a possible unit Frobenius module. 
Furthermore,  as in the proof of \autoref{lem:dual-regular-case} we can reduce to   $X=\Spec \bF_q[t_1,\ldots, t_d]$.
The Cartier module structure of $W_n\Omega^d_X$ 
is in this case induced by Kato's Cartier operator \cite[Lemma 4.1.2]{Kato_Duality_theories_for_the_p_primary_etale_cohomology_I},
which  is uniquely determined  by the equality
\begin{equation}\label{lem:logses3}
C(F(\alpha))= R(\alpha),\quad \alpha\in W_{n+1}\Omega^d_X,    
\end{equation}
as $F\colon W_{n+1}\Omega^d_X\to W_n\Omega^d_X$ is surjective. 

If $r=1=m$, then \autoref{lem:logses1} together with the explicit description of $\pi^m$ is stated in 
\cite[Lemma 4.1.5]{Kato_Duality_theories_for_the_p_primary_etale_cohomology_I}. We give the argument for general $r$, $m$.
\footnote{In the case $r=1$ we have $F=R$ on $W_n\omega_{X,\log}$, for general $r$ this is not clear to us.} Note that in this case the map $F^{n-m}=\bD_G(V^{n-m})$
is induced by the $(n-m)$-fold iterated de Rham--Witt Frobenius as follows from \cite[III, Proposition 2.4]{Ekedahl_Duality_Hodge_Witt} (or \cite[Theorem 1.10.1]{Rulling_Chatzimatiaou_Hodge_Witt_cohomology_and_Witt_rational_singularities}).
As the de Rham--Witt Frobenius commutes with the Cartier operator 
we obtain an induced map $\varphi^{n-m}=\Sol(F^{n-m})\colon W_n\omega_{X,\log}\to W_m\omega_{X,\log}$,
which is surjective, as follows from the surjectivity of $F^{n-m}$ on the top forms and the surjectivity of $C^r-1$, 
see \autoref{surjectivity of C - 1 easier}. 

We show first that there is a well-defined  and injective map 
\begin{equation}\label{lem:logses4}
\pi^m=\text{"lift under $\phi^m$ and multiply by $p^m$"}\colon W_{n-m}\omega_{X,\log}\to W_n\omega_{X,\log}.
\end{equation}
To this end note that all the structure results for $W_n\Omega^\bullet_X$ from 
\cite[I, Section 3]{Illusie_Complexe_de_de_Rham_Witt_et_cohomologie_cristalline} can be extended to $W_n\Omega^\bullet_{X,\FRP}$
using \autoref{basic_remark}, see \cite[4.1]{Kato_Duality_theories_for_the_p_primary_etale_cohomology_I}. 
Thus if $\beta\in W_n\omega_{X,\log}\subset W_n\Omega^d_{X,\FRP}$ is a local section with $F^{m}(\beta)=0$,
then $\beta= V^{n-m}(\gamma)$, for some $\gamma\in W_{m}\Omega^d$, by \cite[I, (3.21.1.2)]{Illusie_Complexe_de_de_Rham_Witt_et_cohomologie_cristalline}, and hence $p^m \beta=0$.
This show that \autoref{lem:logses4} is well-defined.
For injectivity, let $\alpha_0=F^m(\alpha)\in W_{n-m}\Omega^{d}_{X,\FRP}$ 
with  $\alpha\in W_n\Omega^d_{X,\FRP}$ satisfying $C^r(\alpha)=\alpha$ and $p^m\alpha=0$, i.e., $\pi^m(\alpha_0)=0$.
By \cite[I, Proposition 3.4]{Illusie_Complexe_de_de_Rham_Witt_et_cohomologie_cristalline} we find $\beta\in W_{m}\Omega^d_{X,\FRP}$
and $\gamma\in W_n\Omega^{d-1}_{X,\FRP}$ such that $\alpha=V^{n-m}(\beta)+dV^{n-m}(\gamma)$. 
As 
\[CdV^{n-m}(\gamma)=CFdV^{n-m+1}(\gamma)=dV^{n-m+1}(R(\gamma)),\]
we see that $C^{rN}(dV^{n-m}(\gamma))=0$, for $N$ big enough. Hence we may assume $\alpha=V^{n-m}(\beta)$, in which case
we have $\alpha_0=F^m(\alpha)=0$. Thus $\pi^m$ is injective.

It remains to show that $\pi^m$ surjects onto the kernel of $\varphi^{n-m}$.
To this end let $\alpha\in W_n\Omega^d_{X,\FRP}$ be a local section such that $F^{n-m}(\alpha)=0$ and $C^r(\alpha)=\alpha$.
By \cite[I, (3.21.1.2)]{Illusie_Complexe_de_de_Rham_Witt_et_cohomologie_cristalline}
there exist $\beta\in W_{n-m}\Omega^d_{X,\FRP}$ such that
$\alpha=V^m(\beta)$. Choose $N$ such that $Nr\ge m$. 
Then we get $V^m(\beta)= \alpha= C^{rN}(\alpha)=V^m(C^{rN}(\beta))$ and applying $\underline{p}^{Nr}$ yields
\[V^{m+Nr}(F^{Nr}(\tilde{\beta}))=V^{m+Nr}(\beta),\]
where $\tilde{\beta}\in W_{n-m+Nr}\Omega^d_{X,\FRP}$ is a lift of $\beta$. 
Hence by \cite[I, (3.21.1.4)]{Illusie_Complexe_de_de_Rham_Witt_et_cohomologie_cristalline}
there exists  $\epsilon\in W_{m+Nr}\Omega^{d-1}_{X,\FRP}$ such that
\[F^{Nr}(\tilde{\beta})-\beta= F^{m+Nr}dV^{n-m}(\epsilon).\]
Thus 
\[\alpha=V^m(\beta)= p^m(F^{Nr-m}(\tilde{\beta})- F^{Nr}dV^{n-m}(\epsilon))=p^m\epsilon_1,\]
for some $\epsilon_1\in W_n\Omega^d_{X,\FRP}$.
By the surjectivity of $\varphi^m$ we find $\epsilon_2\in W_n\omega_{X,\log}$ with $\varphi^m(\epsilon_2)=F^m(\epsilon_1)$.
By \cite[I,(3.21.1.2)]{Illusie_Complexe_de_de_Rham_Witt_et_cohomologie_cristalline} we find an $x\in W_m\Omega^d_{X,\FRP}$, 
such that $\epsilon_1= \epsilon_2+ V^{n-m}(x)$. Hence
\[\alpha= p^m \epsilon_1= p^m \epsilon_2 =\pi^m(\varphi^m(\epsilon_2)),\]
 which yields the exactness in the middle of \eqref{lem:logses1}.
\end{proof}

\begin{definition}\label{defn:dual-lisse}
Assume $X$ is regular.  A {\em dual-lisse sheaf} of $W_n(\bF_q)$-modules on $X_{\FRP}$ 
is a sheaf  isomorphic to 
\[ E(W_n\omega_{X,\log}):=E_{\FRP}\otimes_{W_n(\bF_q)} W_n\omega_{X,\log},\]
where $E$ is a lisse sheaf of $W_n(\bF_q)$-modules on $X_{\et}$ (i.e. locally constant) 
and $E_{\FRP}$ is its extension to $X_{\FRP}$,  see \autoref{adjunction_FRP_and_etale}.
\end{definition}

\begin{lemma}\label{lem:dual-lisse-on-regular}
Assume $X$ is regular. Let $E$ be a lisse sheaf of $W_n(\bF_q)$-modules on $X_{\et}$.
\begin{enumerate}[label=(\arabic*)]
    \item \label{lem:dual-lisse-on-regular2}
    $E_{\FRP}\otimes^L_{W_n(\bF_q)} W_n\omega_{X,\log}= E(W_n\omega_{X,\log})$;
    \item \label{lem:dual-lisse-on-regular3}
    $\cR\HHom(E_{\FRP}, W_n\omega_{X,\log})= E^{\vee}(W_n\omega_{X,\log})$,
    where $E^\vee=\HHom_{X_{\et}}(E, W_n(\bF_q))$;
    \item \label{lem:dual-lisse-on-regular4}
    $h^!(E(W_n\omega_{X,\log}))[c]= E|{_Z}(W_n\omega_{Z,\log})$,
    where $Z$ is regular and connected and $h\colon Z\inj X$ is a locally closed immersion of codimension $c$.
\end{enumerate}
In particular, all the complexes appearing in \ref{lem:dual-lisse-on-regular2}--\ref{lem:dual-lisse-on-regular4} are 
isomorphic to a dual-lisse sheaf.
\end{lemma}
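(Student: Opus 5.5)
All three statements are étale local on $X$, and the comparison maps are canonical, so I will first construct global maps and then check that they are isomorphisms after passing to an étale cover trivializing $E$. Since $E$ is lisse, we may take $E$ constant with value a finite $W_n(\bF_q)$-module $N$. For (3) we also need $E|_Z$ trivial on the cover, which is automatic because $h^{-1}$ commutes with $(-)_{\FRP}$ (\autoref{etale_functors_and_FRP_functors_agree}). The canonical maps are:
\begin{itemize}
\item for (1), the map $E_{\FRP}\otimes^L W_n\omega_{X,\log}\to E(W_n\omega_{X,\log})$ to $\cH^0$;
\item for (2), the map $E^\vee_{\FRP}\otimes W_n\omega_{X,\log}\to \cR\HHom(E_{\FRP},W_n\omega_{X,\log})$ coming from evaluation;
\item for (3), the map $E|_Z\otimes h^!(-)\to h^!(E\otimes -)$ coming from the projection formula. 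We get it by adjunction after reducing to $\bar h$ a closed immersion and using that $E(W_n\omega_{X,\log})\in\cC_{\adj}$ locally, being locally a finite sum of the sheaves discussed below.
\end{itemize}

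The key input is the structure of $N$ and of $W_n\omega_{X,\log}$ as $W_n(\bF_q)$-modules. Any finite $N$ is a successive extension, and in fact a direct sum, of modules $W_m(\bF_q)$ with $m\le n$, since $W_n(\bF_q)$ is a principal ideal ring. So it suffices to treat $N=W_m(\bF_q)=W_n(\bF_q)/p^m$.

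For (1), we must show that $\mathrm{Tor}_i^{W_n(\bF_q)}(W_m(\bF_q), W_n\omega_{X,\log})=0$ for $i>0$ and that $W_m(\bF_q)\otimes W_n\omega_{X,\log}$ is dual-lisse. The periodic resolution
\[\cdots\to W_n\xrightarrow{p^{n-m}}W_n\xrightarrow{p^m}W_n\to W_m\to 0\]
reduces this to the statement that $\ker(p^m)=\operatorname{im}(p^{n-m})$ on $W_n\omega_{X,\log}$, and symmetrically with $m$ and $n-m$ exchanged. Here \autoref{lem:logses} does the work. Multiplication by $p^m$ on $W_n\omega_{X,\log}$ factors as $\pi^m\circ\phi^{n-m}$ up to the identification $W_m\omega\cong$ (quotient). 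Since $\phi^{n-m}$ is surjective and $\pi^m$ is injective, we get $\ker(p^m)=\ker(\phi^{n-m})=\operatorname{im}(\pi^{n-m})=\operatorname{im}(p^{n-m})$. The same description gives $W_n\omega_{X,\log}/p^m\cong W_m\omega_{X,\log}$, which we want to identify with the dual-lisse sheaf $W_m(\bF_q)(W_n\omega_{X,\log})$. This flatness-type statement, that $W_n\omega_{X,\log}$ behaves like a free $W_n(\bF_q)$-module, is the main step. The delicate point is checking that $p^m=\pi^m\circ\phi^{n-m}$ under the definition of $\pi^m$ ("lift under $\phi^m$, multiply by $p^m$"), which should follow directly from that definition.

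For (2), additivity reduces to $N=W_m(\bF_q)$. Applying $\cR\HHom(-,W_n\omega_{X,\log})$ to the periodic free resolution gives a complex $W_n\omega\xrightarrow{p^m}W_n\omega\xrightarrow{p^{n-m}}\cdots$. By the kernel–image equalities above, this complex is exact in positive degrees with $\cH^0=\ker(p^m)=\operatorname{im}(p^{n-m})\cong W_n\omega/p^m$. This agrees with $E^\vee(W_n\omega_{X,\log})$ because $E^\vee\cong W_m(\bF_q)$. It remains to check that the identification is the evaluation map, which is routine. Note that $\cR\HHom$ from a constant sheaf is computed by the resolution, since $\HHom(W_n(\bF_q),-)=\id$.

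For (3), we reduce to $N=W_m(\bF_q)$. By (1), $E(W_n\omega_{X,\log})\cong W_m\omega_{X,\log}=\Sol(i_m^!W_n\omega_X)$, which equals $\Sol$ of a dualizing complex on the Cohen–Macaulay scheme $W_mX$. Then \autoref{lem:reg-dualizing-log}, applied with $n$ replaced by $m$ and using $h^!i_m^!=i_m^!h^!$ via \autoref{Sol_commutes_with_upper_shriek}, gives $h^!W_m\omega_{X,\log}[c]=W_m\omega_{Z,\log}$. Applying (1) on $Z$ identifies the right-hand side with $E|_Z(W_n\omega_{Z,\log})$. The final claim, that all the complexes are isomorphic to dual-lisse sheaves, can then be read off from the statements themselves, since $E^\vee$ and $E|_Z$ are again lisse.
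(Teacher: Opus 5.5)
Your proposal follows the paper's proof essentially step by step. You reduce étale-locally to $E=W_m(\bF_q)$, use the periodic free resolution together with \autoref{lem:logses} (including the explicit description of $\pi^m$) for (1) and (2), and use \autoref{lem:reg-dualizing-log} combined with (1) on $Z$ for (3).

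There is one index slip in (1) to fix. By definition $p^m=\pi^m\circ\varphi^m$ with $\varphi^m\colon W_n\omega_{X,\log}\to W_{n-m}\omega_{X,\log}$; your $\pi^m\circ\varphi^{n-m}$ does not compose. The chain should therefore read $\ker(p^m)=\ker(\varphi^m)=\operatorname{im}(\pi^{n-m})=\operatorname{im}(\pi^{n-m}\circ\varphi^{n-m})=\operatorname{im}(p^{n-m})$, using injectivity of $\pi^m$, the sequence of \autoref{lem:logses} with $m$ and $n-m$ swapped, and surjectivity of $\varphi^{n-m}$.
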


\begin{proof}
Let us prove \ref{lem:dual-lisse-on-regular2}. We have to show the vanishing ${\rm Tor}_i^{W_n(\bF_q)}(E_{\FRP}, W_n\omega_{X,\log})=0$, for $i\ge 0$.
This statement is \'etale local on $X$  hence we may assume, that $E=W_m(\bF_q)$, for some $m\le n$.
\autoref{lem:logses} yields a resolution 
\[\ldots \to W_n\omega_{X,\log}\xrightarrow{p^{n-m}} W_n\omega_{X,\log}\xrightarrow{p^m} 
W_n\omega_{X,\log }\xrightarrow{\varphi^{n-m}} W_m\omega_{X,\log}\to 0.\]
Here the explicit description of $\pi^m$ in \autoref{lem:logses} is important.
Since the free resolution
\begin{equation}\label{lem:dual-lisse-on-regular20}
    \ldots\to W_n(\bF_q)\xrightarrow{p^{n-m}}W_n(\bF_q)\xrightarrow{p^m} W_n(\bF_q)\to W_m(\bF_q)\to 0
\end{equation}
induces  a free resolution of the constant $W_n(\bF_q)_{\FRP}$-module $W_m(\bF_q)_{\FRP}$ on $X_{\FRP}$, we obtain 
\begin{equation}\label{lem:dual-lisse-on-regular21}
    W_m(\bF_q)_{\FRP}\otimes^L_{W_n(\bF_q)} W_n\omega_{X,\log} =W_m\omega_{X,\log},
\end{equation}
which in particular yields the desired vanishing.

We now prove \ref{lem:dual-lisse-on-regular3}. Note that 
\[\HHom_{X_{\FRP}}(E_{\FRP}, W_n(\bF_q)_{\FRP})|_{X_{\et}}= \HHom_{X_{\et}}(E, W_n(\bF_q))=E^{\vee}\] 
hence by adjuntion there is a  natural map
\[(E^{\vee})_{\FRP}\to \HHom_{X_{\FRP}}(E_{\FRP}, W_n(\bF_q)_{\FRP})\]
inducing a natural map
\begin{equation}\label{lem:dual-lisse-on-regular30}
E^{\vee}(W_n\omega_{X,\log})= E^{\vee}_{\FRP}\otimes_{W_n(\bF_q)}W_n\omega_{X,\log}\to R\HHom(E_{\FRP}, W_n\omega_{X,\log}).    
\end{equation}
Showing that this map is an isomorphism is an \'etale local question, and hence we may assume that $E=W_m(\bF_q)$ for some $m\le n$.
In this case $E^\vee=W_m(\bF_q)$ and the left hand side of \eqref{lem:dual-lisse-on-regular30} becomes 
$W_m\omega_{X,\log}$ by \eqref{lem:dual-lisse-on-regular21}. 
Using the free resolution \eqref{lem:dual-lisse-on-regular20} we see that the right hand  side
is isomorphic to the complex
\[0\to W_n\omega_{X,\log}\xrightarrow{p^m} W_n\omega_{X,\log}\xrightarrow{p^{n-m}} W_n\omega_{X,\log}\xrightarrow{p^m}\ldots,\]
which by \autoref{lem:logses} is quasi-isomorphic to $W_m\omega_{X,\log}$ via the augmentation map 
$\pi^{n-m}: W_m\omega_{X,\log}\to W_n\omega_{X,\log}$. 

Finally, we prove \ref{lem:dual-lisse-on-regular4}. By \autoref{lem:reg-dualizing-log} and \ref{lem:dual-lisse-on-regular2} we have a natural map
\begin{multline*}
    E|_{Z}(W_n\omega_{Z,\log})= (E|_{Z})_{\FRP}\otimes^L W_n\omega_{Z,\log}\cong (E|_{Z})_{\FRP}\otimes^L h^!W_n\omega_{X,\log}[c]
\\ \cong h^{-1}(E_{\FRP}\otimes^L R\Gamma_Z(W_n\omega_{X,\log})[c])\to h^!E(W_n\omega_{X,\log})[c]. 
\end{multline*}
To show that is an isomorphism is an \'etale local question and hence we may assume $E=W_m(\bF_q)$, for some $m\le n$.
By \eqref{lem:dual-lisse-on-regular21},  the left hand side is isomorphic to  $W_m\omega_{Z,\log}$,  
and the right hand side is isomorphic to $h^!W_m\omega_{X,\log}[c]$.
Hence the statement holds by \autoref{lem:reg-dualizing-log}.
\end{proof}

\begin{lemma}\label{lem:dual-lisse-in-D0}
Assume $X$ is regular. Let $\cE$ be a dual-lisse  sheaf on $X$.
Then there is a coherent Cartier module $\cN$ such that $\cE=\Sol(\cN)$.
In particular $\cE\in D_0(X, W_n(\bF_q))$.    
\end{lemma}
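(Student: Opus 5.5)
The plan is to realize $\cE$ as the solutions of a coherent Cartier module obtained by Grothendieck-dualizing the Frobenius module attached to the dual local system. Write $\cE = E_{\FRP}\otimes_{W_n(\bF_q)} W_n\omega_{X,\log}$ with $E$ lisse on $X_{\et}$. Put $E' := E^\vee$, which is again lisse, hence constructible.

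By the Riemann--Hilbert correspondence (\autoref{Riemann_Hilbert_Frobenius_modules}) there is a coherent Frobenius module $\cM$ with $\Sol_{\et}(\cM)\cong E'$. Then \autoref{Sol_on_Frobenius_modules_comes_from_etale_site} gives $\Sol(\cM)=E'_{\FRP}$.

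Next I apply \autoref{main_commutativity_thm}, which we may use locally. The question is local on $X$, since both sides are sheaves and $D_0$ membership can be checked locally once we have a global candidate. For a global $\cN$, I would choose $\cM$ globally and use the global unit dualizing complex fixed at the start of \autoref{sec:essential-image}. The theorem gives
\[\Sol(\bD_G(\cM))\cong \bD_K(\Sol(\cM))=\cR\HHom(E'_{\FRP}, W_n\omega^\bullet_{X,\log}).\]
Since $X$ is regular, $W_n\omega^\bullet_{X,\log}=W_n\omega_{X,\log}[s]$ for some shift $s$, by \autoref{lem:reg-dualizing-log}. On a disconnected $X$ the shift may vary by component, and I would work componentwise. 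By \autoref{lem:dual-lisse-on-regular}\ref{lem:dual-lisse-on-regular3}, the right-hand side is $(E'^\vee)(W_n\omega_{X,\log})[s]$. Because $E$ is a locally free-ish lisse $W_n(\bF_q)$-sheaf, étale locally a sum of $W_m(\bF_q)$'s, the natural map $E\to E^{\vee\vee}$ is an isomorphism, so this equals $\cE[s]$.

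It remains to see that $\bD_G(\cM)$ is concentrated in a single degree, so that it is a genuine coherent Cartier module $\cN$ up to shift. I would argue as follows. Its $\Sol$ is concentrated in one degree, $\Sol$ is exact on crystals (\autoref{Sol_exact_and_commutes_with_pushforwards}), and $\Sol$ is faithful on crystals (\autoref{Sol_faithful_on_etale_and_FRP_site}). Together these show that the cohomology crystals of $\bD_G(\cM)$ outside that degree are nilpotent, hence zero as crystals. Picking a coherent Cartier module $\cN$ representing the remaining cohomology crystal then gives $\Sol(\cN)\cong\cE$, as desired.

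The step I expect to be the main obstacle is the identification $E^{\vee\vee}\cong E$ and the degree bookkeeping. Most delicate is ensuring the isomorphism is one of sheaves, not merely étale locally, which requires checking naturality of the map in \autoref{lem:dual-lisse-on-regular}. The final claim that $\cE\in D_0(X,W_n(\bF_q))$ then follows from the remark after \autoref{def:D0}: $\Sol(\cN)$ fits in the triangle $\Sol(\cN)\to\cN_{\FRP}\xrightarrow{\kappa-1}\cN_{\FRP}$, by \autoref{surjectivity of C - 1 easier}, with $\cN$ coherent over $W_n\cO_X$.
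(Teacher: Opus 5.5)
Your proposal is correct and follows the paper's proof. Both arguments pass to $E^\vee$, realize $(E^\vee)_{\FRP}=\Sol(\cM)$ via Riemann--Hilbert, identify $\cE$ (up to the shift $s$) with $\bD_K(\Sol(\cM))\cong\Sol(\bD_G(\cM))$ using \autoref{lem:dual-lisse-on-regular}\ref{lem:dual-lisse-on-regular3} and \autoref{main_commutativity_thm}, and take the relevant cohomology sheaf $\cN=\HHom(\cM,W_n\omega_X)$. Your faithfulness step showing the other cohomology crystals vanish is harmless but unnecessary, since exactness of $\Sol$ alone already gives $\Sol(\cH^{-s}(\bD_G(\cM)))\cong\cE$.
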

\begin{proof}
By definition $\cE=E(W_n\omega_{X,\log})$ for some lisse sheaf $E$ on $X_{\et}$.
By the Riemann--Hilbert correspondence there is a coherent Frobenius module $\cM$ on $W_nX$ such that
$(E^\vee)_{\FRP}= \Sol(\cM)$.
Thus
\[\cE=\bD_K((E^{\vee})_{\FRP})= \bD_K(\Sol(\cM))= \Sol(\bD_G(\cM)),\]
where the first equality holds by \autoref{lem:dual-lisse-on-regular}\ref{lem:dual-lisse-on-regular3} and the last equality
by \autoref{main_commutativity_thm}. Taking cohomology and using that $\Sol$ is exact gives
\[\cE=\Sol \cH^0(\bD_G(\cM))=\Sol(\HHom_{W_n\cO_X}(\cM, W_n\omega_X)),\]
which yields the statement.
\end{proof}

\begin{corollary}\label{cor:dual-lisse-open-immersion}
Assume $X$ is regular. Let $f\colon X\to Y$ be a separated morphism with $Y$ a Noetherian and $F$-finite $\bF_q$-scheme.
Let $\cE^\bullet\in D^b(X_{\FRP}, W_n(\bF_q))$ be a bounded complex with dual-lisse cohomology sheaves. 
Then $Rf_*\cE^\bullet\in D_0(Y, W_n(\bF_q))$.
\end{corollary}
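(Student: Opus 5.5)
The plan is to reduce to a single dual-lisse sheaf, realise it as the solutions of a coherent Cartier module, and then push forward on the Cartier module side, where \autoref{Sol_exact_and_commutes_with_pushforwards} lets us compare with $Rf_*$ on $X_{\FRP}$.

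\emph{Reduction to one sheaf.} Since $\cE^\bullet$ is bounded, the truncation triangles express it as an iterated extension of shifts $\cH^i(\cE^\bullet)[-i]$, each of which is dual-lisse. Both $Rf_*$ and membership in the triangulated subcategory $D_0(Y, W_n(\bF_q))$ are compatible with exact triangles. It therefore suffices to treat $\cE^\bullet=\cE$, a single dual-lisse sheaf.

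\emph{Realising $\cE$ as solutions.} By \autoref{lem:dual-lisse-in-D0} there is a coherent Cartier module $\cN$ on $W_nX$ with $\cE=\Sol(\cN)$. I will assume $f$ is of finite type; this is implicit in the setting, and it is needed to apply \autoref{Sol_exact_and_commutes_with_pushforwards}. That corollary then gives
\[ Rf_*\cE = Rf_*\Sol(\cN) \cong \Sol(Rf_*\cN), \]
where $Rf_*\cN$ is computed in $D^+(\IndCoh^{C^r}_{W_nY})$. Because $Y$ is Noetherian of finite Krull dimension, $Rf_*\cN$ is bounded. Since $\Sol$ is exact, $\cH^i(\Sol(Rf_*\cN))=\Sol(R^if_*\cN)$. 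Using the truncation triangles once more, it is enough to show that each $\Sol(R^if_*\cN)$ lies in $D_0(Y, W_n(\bF_q))$.

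\emph{The key step: coherence up to nilpotence.} Here I would use the finiteness theorem for Cartier crystals of Blickle--Böckle: for a morphism of finite type, $R^if_*$ of a coherent Cartier module is an ind-coherent Cartier module which is nil-isomorphic to a coherent one. The Witt vector version over $W_nY$ is in \cite{Baudin_Duality_between_Witt_Cartier_crystals_and_perverse_sheaves}; alternatively, one can deduce it by dévissage along the $V$-filtration from the case $n=1$. If a direct reference is unavailable, I would instead factor $f=\bar f\circ j$ by a Nagata compactification. For the proper part $\bar f$, coherence is clear. For the open immersion $j$, the statement that $R^ij_*$ preserves Cartier crystals is the standard non-trivial input.

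\emph{Conclusion.} Given this finiteness, choose a coherent Cartier submodule $\cN_i\subseteq R^if_*\cN$ such that $R^if_*\cN/\cN_i$ is locally nilpotent. Since $\Sol$ is exact and kills locally nilpotent objects, we get $\Sol(R^if_*\cN)\cong\Sol(\cN_i)$. This sheaf is in $D_0(Y, W_n(\bF_q))$, by the remark following \autoref{def:D0}, which notes that $\Sol(\cM)\in D_0$ for every coherent Cartier module $\cM$.

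I expect the main obstacle to be the finiteness step. It is the only point where non-properness of $f$ enters, and it requires citing or reproving the preservation of Cartier crystals under $R^if_*$ for open immersions over $W_n$.
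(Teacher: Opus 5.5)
Your proposal is correct and follows essentially the paper's argument. You reduce to a single dual-lisse sheaf, write it as $\Sol(\cN)$ via \autoref{lem:dual-lisse-in-D0}, commute $\Sol$ with the derived pushforward, and use that the higher direct images of a coherent Cartier module are nil-isomorphic to coherent ones, which $\Sol$ cannot distinguish. The only difference is the order of steps, which is precisely your fallback option: the paper applies Nagata compactification first and disposes of the proper part directly (proper pushforward preserves $D_0$ because it preserves coherent modules), so it needs the Cartier finiteness input (Lemma 4.4.7(b) of Baudin's Witt--Cartier paper) only for the open immersion.
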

\begin{proof}
By Nagata compactification we may factor $f$ as an open immersion $j:X\inj \overline{X}$ and a proper map $\bar{f}:\overline{X}\to Y$.
As pushforward along a proper map clearly preserves $D_0$ we are reduced to the case of an open immersion $j:X\inj Y$ with
$X$ regular.
By standard d\'evissage we can assume that $\cE^\bullet=\cE$ is a  dual-lisse sheaf. 
By \autoref{lem:dual-lisse-in-D0} we find a coherent Cartier module $\cN$ on $W_nX$ such that $\cE=\Sol(\cN)$.
Hence $Rj_*\cE=\Sol(Rj_*\cN)$, by \autoref{Sol_exact_and_commutes_with_pushforwards}.
Since this is a bounded complex and $\Sol$ is exact it suffices to show that $\Sol(R^ij_*\cN)\in D_0(Y, W_n(\bF_q))$
for all $i$. But this follows from the fact that the Cartier module $R^ij_*\cN$ is locally nil-isomorphic (i.e. isomorphic up to 
locally nilpotent kernel and cokernel) to a coherent Cartier module, see \cite[Lemma 4.4.7 (b)]{Baudin_Duality_between_Witt_Cartier_crystals_and_perverse_sheaves},
as $\Sol$ sends locally nil-isomorphic $W_n(\cO_Y)$-modules to isomorphic $W_n(\bF_q)$-modules.
\end{proof}

We recall that as $X$ is Noetherian and $F$-finite it is excellent by a result of Kunz, 
see \cite[Theorem 108]{Matsumura_Commutative_Algebra}, and hence any reduced closed subscheme of $X$ has a dense 
open regular subscheme. 

\begin{definition}\label{defn:dual-constructible}
We define a {\em regular stratification} of $X$ to be a finite stratification into regular locally closed subschemes of its
reduced subscheme $X_{\red}=\coprod_{\nu=1}^e X_{\nu}$,
where $e=\dim X$ and the closure $\overline{X}_{\nu}\subset X_{\red}$ of $X_{\nu}$  has dimension $\dim \overline{X}_{\nu}=\nu$.
We denote by 
\[j^{\nu}:X^{\nu}:= X_{\red}\setminus \overline{X}_{\nu-1}\inj X_{\red}\quad \text{and}\quad 
i_{\nu}:X_\nu=\overline{X}_{\nu}\cap X^{\nu}\inj X^{\nu}\] 
the open - and closed immersion, respectively, and  the immersion of $X_{\nu}$ into $X$ by
\[h_{\nu}=\iota\circ j^{\nu}\circ i_{\nu}: X_{\nu}\inj X,\] 
where $\iota: X_{\red}\inj X$ is the closed immersion.

For a complex $\cA^\bullet\in D(X_{\FRP}, W_n(\bF_q))$ we set 
\begin{equation}\label{defn:dual-constructible1}
R\Gamma_{X_{\nu}}(\cA^\bullet):= R\Gamma_{X_{\nu}}(\cC|_{X^{\nu}})=R\Gamma_{\overline{X}_{\nu}}(\cA)|_{X^\nu}\in 
D(X^{\nu}_{\FRP}, W_n(\bF_q)).    
\end{equation}
Notice that $X_e=X^{e}$ is the open dense stratum of $X_{\red}$ and $R\Gamma_{X_e}(\cA^\bullet)=\cA^{\bullet}|_{X_e}$,
also note that $X^0=X_{\red}$ and hence $R\Gamma_{X_0}(\cA^\bullet)$ is a complex on $X_{\FRP}$, 
where we use the identification of \autoref{lem:red}.

Moreover we set (cf. \autoref{constr:upper-shriek-FRP})
\begin{equation}\label{defn:dual-constructible2}
h_\nu^!\cA^\bullet= i_{\nu}^!(\cA^\bullet|_{X^{\nu}})=i_{\nu}^{-1}R\Gamma_{X_{\nu}}(\cA^\bullet|_{X^{\nu}})
= h_{\nu}^{-1}R\Gamma_{\overline{X_{\nu}}}(\cA^\bullet).
\end{equation}
\end{definition}

Recall the category $\cC_{X,\adj}$ introduced in  \autoref{defn:C_adj}. 
It is a full triangulated subcategory of $D^+(X_{\FRP}, W_n(\bF_q))$ which in particular contains any complex of the form
$\cM^{\bullet}_{\FRP}$ with $\cM^{\bullet}\in D^+(\QCoh_{W_nX})$. We set 
\[\cC^b_{\adj}=\cC_{\adj}\cap D^b(X_{\FRP}, W_n(\bF_q)).\]

\begin{definition}\label{def_dual_constr_cpx}
A {\em dual-constructible complex} of $W_n(\bF_q)$-modules on $X$ is a bounded complex $\cF^\bullet\in \cC^b_{X,\adj}$
for which there exists a regular stratification $X_{\red}=\coprod_{\nu=0}^e X_\nu$, with $e=\dim X$ 
and  immersions $h_{\nu}:X_{\nu}\inj X$, 
such that $h_{\nu}^!(\cF^\bullet)$ has dual-lisse cohomology sheaves on $X_{\nu,\FRP}$, for all $\nu$. 
We call any such stratification a {\em regular $\cF^\bullet$-stratification}.  
\end{definition}

\begin{lemma}\label{lem:dcc-in-D0}
Let $\cF^{\bullet}$ be a dual-constructible complex.  We have
\begin{enumerate}[label=(\arabic*)]
    \item\label{lem:dcc-in-D01} $h^!\cF^{\bullet}$ is dual-constructible for any immersion $h:Z\inj X$;
    \item\label{lem:dcc-in-D02} $\cF^\bullet\in D_0(X, W_n(\bF_q))$.   
\end{enumerate}
\end{lemma}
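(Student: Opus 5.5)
For \ref{lem:dcc-in-D01}, I would first check that $h^!\cF^\bullet$ is again in $\cC^b_{Z,\adj}$. Write $h = \bar h|_Z$ with $\bar h\colon \bar Z\inj X$ the closure. Then \autoref{exact_upper_shriek_triangle}\ref{exact_upper_shriek_triangle2} and \ref{exact_upper_shriek_triangle3} give membership in $\cC_{\adj}$. Boundedness holds because $\bar h^!$ has finite cohomological amplitude on $\cC_{\adj}$, which follows from the triangle of \autoref{derived_version_of_iflat} together with finiteness of the cohomological dimension of $Rj_*$ on the relevant objects; I will instead deduce boundedness at the end from the dual-lisse description.

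Next I need a regular $h^!\cF^\bullet$-stratification of $Z$. Choose a regular $\cF^\bullet$-stratification $\{X_\nu\}$ of $X$. Using excellence, refine the locally closed pieces $Z\cap X_\nu$ into regular strata $Z_{\mu}$ such that each $Z_\mu$ is contained in a single $X_\nu$, is connected, and has constant codimension $c$ in it. By transitivity of $(-)^!$ for composed immersions, which follows from \autoref{exact_upper_shriek_triangle} and \autoref{pushforward_from_closed_immersion_fully_faithful} via the identity $R\Gamma_{Z_1}R\Gamma_Z = R\Gamma_{Z_1}$, we get $h_\mu^! h^!\cF^\bullet = g^! h_\nu^!\cF^\bullet$, where $g\colon Z_\mu\inj X_\nu$. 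Then \autoref{lem:dual-lisse-on-regular}\ref{lem:dual-lisse-on-regular4}, applied cohomology sheaf by cohomology sheaf via dévissage of the bounded complex, shows that $g^!$ of a complex with dual-lisse cohomology has dual-lisse cohomology, shifted by $c$. This also yields boundedness.

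For \ref{lem:dcc-in-D02}, I would argue by induction on the number of strata, equivalently on $\dim X$, peeling off the closed stratum of smallest dimension. Let $Z=\overline{X}_0$ be the closed lowest stratum and $U$ its complement. \autoref{derived_version_of_iflat} together with $\cF^\bullet\in\cC_{\adj}$ gives a triangle
\[ i_*i^!\cF^\bullet\to \cF^\bullet\to Rj_*(\cF^\bullet|_U)\xrightarrow{+1}. \]
For the first term, $i^!\cF^\bullet$ is a bounded complex on the regular scheme $Z$ with dual-lisse cohomology, so it lies in $D_0$ by \autoref{lem:dual-lisse-in-D0}, and $i_*$ preserves $D_0$ because it is proper, by \autoref{pushforwards_of_qcoh_commute_with_FRP}. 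For the third term, $\cF^\bullet|_U$ is dual-constructible with one stratum fewer by part \ref{lem:dcc-in-D01}, but the induction must output membership of $Rj_*$ of it in $D_0(X)$, not merely in $D_0(U)$. I therefore strengthen the induction statement to: for every separated $f\colon X\to Y$ and every dual-constructible $\cF^\bullet$ on $X$, one has $Rf_*\cF^\bullet\in D_0(Y)$. The base case, a single regular stratum, is \autoref{cor:dual-lisse-open-immersion}. In the inductive step, apply $Rf_*$ to the triangle. The first term becomes $R(f\circ i)_*$ of a dual-lisse complex on the regular scheme $Z$, which lies in $D_0$ by \autoref{cor:dual-lisse-open-immersion}. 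The third term is $R(f\circ j)_*$ of a dual-constructible complex on $U$, which lies in $D_0$ by induction. Taking $f=\id$ then gives \ref{lem:dcc-in-D02}, and passage to $X_{\red}$ is handled by \autoref{lem:red}.

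I expect the main obstacle to be the transitivity of $h^!$ and the bookkeeping of the locally closed strata, including the shifts and the requirement that strata be connected of constant codimension. Compared with these, the $D_0$ part is a formal dévissage.
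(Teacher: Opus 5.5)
Your proposal is correct and is essentially the paper's argument: part (1) is proved exactly as you describe (refine a stratification of $Z$ against the $X_\nu$, use transitivity of $(-)^!$, and apply \autoref{lem:dual-lisse-on-regular}\ref{lem:dual-lisse-on-regular4} stratum by stratum), and part (2) is the same d\'evissage along the stratification, using the triangle of \autoref{derived_version_of_iflat} for $\cF^\bullet\in\cC_{X,\adj}$ together with \autoref{cor:dual-lisse-open-immersion}. The only difference is organizational: the paper filters $\cF^\bullet$ by $R\Gamma_{\overline{X}_\nu}(\cF^\bullet)$, whose graded pieces are $Rh_{\nu*}h_\nu^!\cF^\bullet$, and applies \autoref{cor:dual-lisse-open-immersion} directly to each immersion $h_\nu$, so your strengthening to arbitrary separated pushforwards is not needed (also, your induction should be on the number of strata rather than on $\dim X$, since removing the lowest stratum need not lower the dimension).
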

\begin{proof}
We may assume that $X$ is reduced. Indeed, for \ref{lem:dcc-in-D01} it is equivalent to the statement for
$h_{\red}\colon Z_{\red}\inj X_{\red}$, and for \ref{lem:dcc-in-D02} if we show that $\iota^{-1}\cF^\bullet\in D_0(X_{\red},W_n(\bF_q))$,
where $\iota\colon X_{\red}\inj X$, then also $\cF=\iota_*\iota^{-1}\cF\in D_0(X, W_n(\bF_q))$. 

Let $X=\coprod_{\nu=0}^e X_{\nu}$, with $h_\nu\colon X_{\nu}\inj X$, be a regular $\cF^\bullet$-stratification. Let us prove \ref{lem:dcc-in-D01}. Let $Z=\coprod Z_{\mu}$ be a regular stratification such that any immersion $h\circ k_{\mu}\colon Z_{\mu}\inj Z\inj X$ 
factors as $h_{\nu(\mu)}\circ k_{\nu(\mu)}\colon Z_{\mu}\inj X_{\nu(\mu)}\inj X$, for some $\nu(\mu)\in\{0,\ldots, e\}$.
Then
\[k^!_{\mu} h^! \cF^{\bullet}= k_{\nu(\mu)}^!h_{\nu(\mu)}^!\cF^{\bullet}.\]
Thus there is a spectral sequence
\[E^{i,j}_2= \cH^i(k_{\nu(\mu)}^! \cH^j(h_{\nu(\mu)}^!\cF^{\bullet})) \Longrightarrow \cH^{i+j}(k^!_{\nu} h^! \cF^{\bullet}).\]
Note that $\cH^j(h_{\nu(\mu)}^!\cF^{\bullet})$ is dual-lisse by assumption. 
Thus if $c=\codim(Z_{\mu}, X_{\nu(\mu)})$,  then $E^{c,j}_2$ is dual-lisse and $E^{i,j}_2=0$ for all $i,j$ with $i\neq c$,
by \autoref{lem:dual-lisse-on-regular}\ref{lem:dual-lisse-on-regular4}. 
Hence $k^!_{\mu} h^! \cF^{\bullet}$ has dual-lisse cohomology. 
Moreover, $h^! \cF^{\bullet}\in \cC^b_{X,\adj}$, by \autoref{exact_upper_shriek_triangle}. 
Thus $h^!\cF^{\bullet}$ is dual-constructible.

Let us now prove \ref{lem:dcc-in-D02}. We use the notation from \autoref{defn:dual-constructible}.
As $\cF^{\bullet}\in \cC^b_{X,\adj}$ we have
\begin{equation}\label{lem:dcc-in-D03}
Rh_{\nu*} h_{\nu}^!\cF^{\bullet}= Rj^{\nu}_* i_{\nu*} i_{\nu}^{-1}R\Gamma_{X_{\nu}}(\cF^{\bullet}|_{X^{\nu}})
=Rj^{\nu}_*R\Gamma_{X_{\nu}}(\cF^{\bullet}|_{X^{\nu}}).
\end{equation}
As $h_{\nu}^!\cF^\bullet$ is dual-lisse, \autoref{cor:dual-lisse-open-immersion} yields that
\[Rj^{\nu}_*R\Gamma_{X_{\nu}}(\cF^{\bullet}|_{X^{\nu}})\in D_0(X, W_n(\bF_q)).\]
 Consider the exact exact triangle on $X_{\FRP}$
\begin{equation}\label{lem:dcc-in-D04}
\begin{tikzcd}
R\Gamma_{\overline{X}_{\nu-1}}(\cF^{\bullet})\arrow[r] & R\Gamma_{\overline{X}_{\nu}}(\cF^\bullet)\arrow[r] &
Rj^{\nu}_* R\Gamma_{X_{\nu}}(\cF^\bullet|_{X_{\nu}})\arrow[r, "+1"] &.    
\end{tikzcd}
\end{equation}
As  $X^0=X$ and $R\Gamma_{\overline{X}_0}(\cF^{\bullet})=R\Gamma_{X_0}(\cF^{\bullet})$
the statement holds by induction over $\nu$.
\end{proof}

\begin{proposition}\label{prop:D-h-Gamma}
Let $h:Z\inj X$ be a locally closed immersion and $\cC^{\bullet}\in D_0(X, W_n(\bF_q))$. 
Note that $h^!\cC^\bullet=h^{-1}R\Gamma_{\overline{Z}}(\cC^{\bullet})$, cf.
\autoref{defn:FRP-upper-shriek}, where $\overline{Z}\subset X$ denotes the closure of $Z$.
We have 
 \begin{enumerate}[label=(\arabic*)]
     \item\label{prop:D-h-Gamma1} 
     $h^!(\bD_{K,X}(\cC^{\bullet}))=\bD_{K,Z}(h^{-1}\cC^{\bullet});$
     \item\label{prop:D-h-Gamma2} if $h^{-1}\bD_{K,X}(\cC^{\bullet})\in D_0(X, W_n(\bF_q))$, then 
     $h^{-1}\bD_{K,X}(\cC^{\bullet})=\bD_{K,Z}(h^!\cC^{\bullet})$.
 \end{enumerate}
\end{proposition}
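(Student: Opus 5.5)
Both statements reduce to two basic cases: $h$ a closed immersion $i\colon Z\inj X$, and $h$ an open immersion $j\colon U\inj X$. We write $h=j\circ i$ with $i\colon Z\inj U:=X\setminus(\overline Z\setminus Z)$ closed and $j\colon U\inj X$ open. By definition $h^!=i^!\circ j^{-1}$ and $h^{-1}=i^{-1}\circ j^{-1}$. Moreover $W_n\omega^\bullet_{Z,\log}=i^!(W_n\omega^\bullet_{X,\log}|_U)$ by \eqref{eq:W_nomegalogZ}. It therefore suffices to treat the two cases separately and compose, using that $D_0$ is stable under restriction to opens.

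\emph{Open case.} For $j$ open, (1) and (2) both say $j^{-1}\cR\HHom(\cC^\bullet,W_n\omega^\bullet_{X,\log})=\cR\HHom(j^{-1}\cC^\bullet,W_n\omega^\bullet_{U,\log})$. This is the standard compatibility of $\cR\HHom$ with restriction to an open of the site: $j^{-1}$ has the exact left adjoint $j_!$, so it preserves K-injectives, as in the proof of \autoref{adjunction_restriction_and_lower_shriek}.

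\emph{Closed case, (1).} By \autoref{Sol_of_Cartier_modules_behave_well_with_adjunction}, $W_n\omega^\bullet_{X,\log}\in\cC_{X,\adj}$. Hence \autoref{adjoint_iff_} gives
\[i_*\cR\HHom(i^{-1}\cC^\bullet,i^!W_n\omega^\bullet_{X,\log})\cong\cR\HHom(i_*i^{-1}\cC^\bullet,W_n\omega^\bullet_{X,\log}).\]
Now apply $\cR\HHom(-,W_n\omega^\bullet_{X,\log})$ to the triangle $j_!j^{-1}\cC^\bullet\to\cC^\bullet\to i_*i^{-1}\cC^\bullet$, which comes from the exact sequence of \autoref{etale_functors_and_FRP_functors_agree} tensored with $\cC^\bullet$. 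Using \autoref{adjunction_restriction_and_lower_shriek} for the $j_!$ term, we obtain the triangle
\[\cR\HHom(i_*i^{-1}\cC^\bullet,W_n\omega^\bullet_{X,\log})\to\bD_K\cC^\bullet\to Rj_*j^{-1}\bD_K\cC^\bullet.\]
Comparing with \autoref{derived_version_of_iflat} identifies $R\Gamma_Z\bD_K\cC^\bullet$ with $i_*\bD_{K,Z}(i^{-1}\cC^\bullet)$. Applying $i^{-1}$ and \autoref{pushforward_from_closed_immersion_fully_faithful} yields (1). In this step one has to check that the triangle maps are the natural ones; I regard this as routine bookkeeping.

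\emph{Closed case, (2).} This is the main obstacle: it needs biduality, which is why the hypothesis appears. First, $h^!\cC^\bullet$ should lie in $D_0(Z)$. The category $D_0$ is stable under $R\Gamma_Z$, because $i^!$ applied to quasi-coherent objects gives quasi-coherent objects by \autoref{qcoh_is_good_for_right_adjoint_non_derived_version} and \autoref{pushforward_from_closed_immersion_fully_faithful}. Set $\cD^\bullet=\bD_K\cC^\bullet$. By \autoref{Kato_duality}, $\cD^\bullet\in D_0$ and $\bD_K\cD^\bullet=\cC^\bullet$; the claim $\cD^\bullet\in D_0$ should follow by dévissage, since $\bD_K$ of a coherent module $\cM_{\FRP}$ is $\Sol$ of a Cartier-module complex, computed via the evaluation used in \autoref{Kato_duality}. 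Applying (1) to $\cD^\bullet$ gives $h^!\cC^\bullet=h^!\bD_K\cD^\bullet=\bD_{K,Z}(h^{-1}\cD^\bullet)$. The hypothesis $h^{-1}\cD^\bullet\in D_0$ allows us to apply \autoref{Kato_duality} on $Z$, which gives $\bD_{K,Z}(h^!\cC^\bullet)=\bD_{K,Z}\bD_{K,Z}(h^{-1}\cD^\bullet)=h^{-1}\cD^\bullet$. This is exactly (2).
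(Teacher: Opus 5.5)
Your reduction to the open and closed cases, the open case, and your biduality argument for (2) are fine; the last is exactly the paper's argument. The gap is in the closed case of (1): the triangle $j_!j^{-1}\cC^\bullet\to\cC^\bullet\to i_*i^{-1}\cC^\bullet\xrightarrow{+1}$ does not exist on $X_{\FRP}$.

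Tensoring the sequence of \autoref{etale_functors_and_FRP_functors_agree} with $\cC^\bullet$ gives $i_*\Lambda_Z\otimes^L\cC^\bullet$ as third term. Identifying that with $i_*i^{-1}\cC^\bullet$ is a projection formula, which holds on the \'etale site but fails on the FRP site. Indeed, take $X=\Spec R$, $Z=V(I)$, $n=1$ and $Y=\Spec B$ in $X_{\FRP}$. By \autoref{Kato_key_lemma_closed_immersions} and the proof of \autoref{pushforward_from_closed_immersion_fully_faithful}, the unit $(\cO_X)_{\FRP}\to i_*i^{-1}(\cO_X)_{\FRP}$ evaluated on $Y$ is the $I$-adic completion map $B\to\widehat{B}$. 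Now let $X=\bA^1_k=\Spec k[t]$ and $Z=\{0\}$. A power series in $k[[t]]$ transcendental over $k(t)$ does not come from any \'etale neighbourhood of $0$, since sections there are algebraic over $k(t)$. So this unit is not an epimorphism, whereas your triangle for $\cC^\bullet=(\cO_X)_{\FRP}\in D_0$ would force it to be one.

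This is not bookkeeping. The cone of $i_*\Lambda_Z\otimes^L\cC^\bullet\to i_*i^{-1}\cC^\bullet$ is killed by both $j^{-1}$ and $i^{-1}$ but is nonzero. So the usual open--closed gluing is not available on the FRP site, and your identification of $R\Gamma_Z\bD_K\cC^\bullet$ with $\cR\HHom(i_*i^{-1}\cC^\bullet,W_n\omega^\bullet_{X,\log})$ is left unproved.

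The fix is to put $R\Gamma_Z$ on the dualizing complex instead of on $\cC^\bullet$, as the paper does. Write $\omega=W_n\omega^\bullet_{X,\log}$.
\begin{enumerate}
\item Apply $\cR\HHom(\cC^\bullet,-)$ to the triangle of \autoref{derived_version_of_iflat} for $\omega$, and use $\cR\HHom(\cC^\bullet,Rj_*j^{-1}\omega)\cong Rj_*\cR\HHom(j^{-1}\cC^\bullet,j^{-1}\omega)$. This gives $R\Gamma_Z\bD_K\cC^\bullet\cong\cR\HHom(\cC^\bullet,R\Gamma_Z\omega)$.
\item Since $\omega\in\cC_{X,\adj}$, you have $R\Gamma_Z\omega\cong i_*i^!\omega$.
\item The $(i^{-1},i_*)$ adjunction gives $\cR\HHom(\cC^\bullet,i_*i^!\omega)\cong i_*\cR\HHom(i^{-1}\cC^\bullet,i^!\omega)$, and $i^{-1}i_*=\id$ finishes.
\end{enumerate}
Equivalently, keep the correct third term $i_*\Lambda_Z\otimes^L\cC^\bullet$ and use tensor--hom adjunction, together with \autoref{adjoint_iff_} only for $\cG^\bullet=\Lambda_Z$.

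This argument proves (1) for every $\cC^\bullet\in D^+(X_{\FRP},W_n(\bF_q))$, so in (2) you may apply it to $\bD_K\cC^\bullet$ directly. Your claims that $h^!\cC^\bullet\in D_0(Z)$ and $\bD_K\cC^\bullet\in D_0$ are then unnecessary. Your justification of the first does not work anyway: local cohomology of a coherent module is only quasi-coherent, while $D_0$ is generated by coherent modules.
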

\begin{proof}
Let us prove \ref{prop:D-h-Gamma1} Let $i\colon \overline{Z}\inj X$ be the closed immersion and  $j\colon Z\inj \overline{Z}$ the  open immersion, so that 
$h=i\circ j$. We compute
\begin{align*}
h^!\bD_{K,X}(\cC^{\bullet}) & = h^{-1} R\Gamma_{\overline{Z}}\cR\HHom(\cC^{\bullet}, W_n\omega^\bullet_{X,\log}) & & 
\text{by defn of } \bD_{K,X} \text{ and } h^!\\
             &= h^{-1} \cR\HHom(\cC^{\bullet}, R\Gamma_{\overline{Z}} W_n\omega^\bullet_{X,\log}) & & 
\text{follows from \ref{derived_version_of_iflat}}\\
               &=h^{-1}\cR\HHom(\cC^{\bullet}, i_*i^{-1}R\Gamma_{\overline{Z}}(W_n\omega^\bullet_{X,\log})) & & 
               \text{by \ref{exact_upper_shriek_triangle}}\\
               &=h^{-1}i_*\cR\HHom(i^{-1}\cC^{\bullet}, i^!W_n\omega^\bullet_{X,\log})    & & 
       \text{by the adjunction $(i^{-1}, i_*)$}\\
              &= j^{-1}i^{-1}i_*\bD_{K,\overline{Z}}(i^{-1}\cC^{\bullet}) & & 
                           \text{by \eqref{eq:W_nomegalogZ}}\\
             &= j^{-1}\bD_{K,\overline{Z}}(i^{-1}\cC^{\bullet}) & &
               \text{by \ref{pushforward_from_closed_immersion_fully_faithful}}\\
            &= \bD_{K,Z}(h^{-1}\cC^{\bullet}). & &
\end{align*}
For \ref{prop:D-h-Gamma2} we observe 
\[\bD_{K,Z}(h^!\cC^{\bullet})= \bD_{K,Z}(h^!\bD_{K,X}(\bD_{K,X}(\cC^{\bullet})))=
\bD_{K,Z}(\bD_{K,Z}((h^{-1}\bD_{K,X}(\cC^{\bullet})))= h^{-1}\bD_{K,X}(\cC^{\bullet}),\]
where we use Kato duality for the first equality (using that $\cC^{\bullet}\in D_0(X, W_n(\bF_q))$),
\ref{prop:D-h-Gamma1} for the second equality, and again Kato duality for the third equality
(using that by assumption $h^{-1}\bD_{K,X}(\cC^{\bullet})\in D_0(X, W_n(\bF_q))$).
\end{proof}

\begin{definition}\label{def:Ddc}
Let  $D^b_c(X_{\et}, W_n(\bF_q))\subset D^b(X_{\et}, W_n(\bF_q))$ be the full triangulated subcategory 
formed by bounded complexes with constructible cohomology.
We define the category $D^b_c(X_{\FRP}, W_n(\bF_q))$ to be its essential image under the exact and fully faithful functor 
\[(-)_{\FRP}\colon D^b(X_{\et}, W_n(\bF_q))\to D_0(X, W_n(\bF_q)),\] 
see \autoref{adjunction_FRP_and_etale}\ref{FRP_embeds_in_etale} and \autoref{def:D0}.
Here we use \autoref{Sol_on_Frobenius_modules_comes_from_etale_site} and the Riemann--Hilbert correspondence
for Frobenius crystals (\autoref{Riemann_Hilbert_Frobenius_modules}) to see that the essential image of the above functor is indeed contained in $D_0(X, W_n(\bF_q))$.

We define $D^{b}_{dc}(X_{\FRP}, W_n(\bF_q))$ to be the full subcategory of $D_0(X, W_n(\bF_q))$
consisting of dual-constructible complexes on $X$.
\end{definition}

\begin{proposition}\label{prop:Ddc-is-c}
Let $\cF^\bullet\in D^b_{dc}(X_{\FRP}, W_n(\bF_q))$ be a dual-constructible complex on $X$.
Then $\bD_{K,X}(\cF^{\bullet})\in D^b_c(X_{\FRP}, W_n(\bF_q))$.
\end{proposition}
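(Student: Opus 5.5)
We argue by dévissage along a regular $\cF^\bullet$-stratification, inducting on the strata, in parallel with the proof of \autoref{lem:dcc-in-D0}\ref{lem:dcc-in-D02}. By \autoref{lem:red} and the definition of $W_n\omega^\bullet_{X_{\red},\log}$ we may assume $X$ is reduced. Fix a regular $\cF^\bullet$-stratification $X=\coprod_{\nu=0}^e X_\nu$ with the notation of \autoref{defn:dual-constructible}. By \autoref{lem:dcc-in-D0}, $\cF^\bullet\in D_0(X,W_n(\bF_q))$, and so are all the terms of the triangles \eqref{lem:dcc-in-D04}. Since $\bD_K$ is exact and $D^b_c(X_{\FRP},W_n(\bF_q))$ is triangulated (it is the essential image of a fully faithful exact functor), it suffices to show by induction on $\nu$ that $\bD_K(R\Gamma_{\overline X_\nu}(\cF^\bullet))$ is constructible. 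By the triangles, this reduces to showing that $\bD_K(Rj^\nu_*R\Gamma_{X_\nu}(\cF^\bullet|_{X^\nu}))\in D^b_c$ for each $\nu$. By \eqref{lem:dcc-in-D03}, this object is $\bD_K(Rh_{\nu*}h_\nu^!\cF^\bullet)$.

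Next we compute the dual of a pushforward. Write $h_\nu=j^\nu\circ i_\nu$ and set $\cE^\bullet:=h_\nu^!\cF^\bullet$, which has dual-lisse cohomology sheaves on the regular scheme $X_\nu$. We aim to establish
\[\bD_{K,X}(Rh_{\nu*}\cE^\bullet)\cong h_{\nu!}\bD_{K,X_\nu}(\cE^\bullet),\]
where $h_{\nu!}=i_{\nu*}\circ j^\nu_!$ in the appropriate order: first extension by zero along the open immersion $X_\nu\inj\overline X_\nu$, then pushforward along the closed immersion $\overline X_\nu\inj X$. The closed part follows from \autoref{constr:upper-shriek-FRP} and \autoref{adjoint_iff_}, using $W_n\omega^\bullet_{X,\log}\in\cC_{\adj}$ (\autoref{Sol_of_Cartier_modules_behave_well_with_adjunction}) together with \eqref{eq:W_nomegalogZ}. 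The open part is the step I expect to be the main obstacle: $Rj_*$ is not directly adjoint to anything convenient. I would attack it by biduality. Since $\bD_K$ is an involution on $D_0$ by \autoref{Kato_duality}, it suffices to prove $\bD_K(j_!\cG^\bullet)\cong Rj_*\bD_K(\cG^\bullet)$ for $\cG^\bullet:=\bD_{K,X_\nu}(\cE^\bullet)$, which is immediate from \autoref{adjunction_restriction_and_lower_shriek}, and then to apply $\bD_K$ to both sides. This requires knowing that $j_!\cG^\bullet$ and $Rj_*\cE^\bullet$ lie in $D_0$. For the latter this is \autoref{cor:dual-lisse-open-immersion}. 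For the former, note that $\cG^\bullet$ has lisse cohomology $(E^\vee)_{\FRP}$ by \autoref{lem:dual-lisse-on-regular}\ref{lem:dual-lisse-on-regular3}. Constructible étale sheaves extended to the FRP site lie in $D_0$, and $j_!$ commutes with $(-)_{\FRP}$ by \autoref{etale_functors_and_FRP_functors_agree}, so $j_!\cG^\bullet$ lies in $D_0$ as well.

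Finally, $\bD_{K,X_\nu}(\cE^\bullet)$ has cohomology sheaves of the form $(E^\vee)_{\FRP}$ with $E^\vee$ lisse. To see this, apply a dévissage on the cohomology of $\cE^\bullet$ using \autoref{lem:dual-lisse-on-regular}\ref{lem:dual-lisse-on-regular3}, together with the facts that the essential image of $(-)_{\FRP}$ on $D^b$ is triangulated and that lisse sheaves are constructible. Hence $\bD_{K,X_\nu}(\cE^\bullet)\in D^b_c(X_{\nu,\FRP})$. Then $h_{\nu!}$ of a constructible complex is constructible, because $j_!$ and $i_*$ commute with $(-)_{\FRP}$ by \autoref{etale_functors_and_FRP_functors_agree} and preserve étale constructibility. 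This completes the induction.
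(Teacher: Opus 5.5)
Your proof is correct. Its first half coincides with the paper's: the reduction to $X$ reduced, the triangles \eqref{lem:dcc-in-D04}, and the identification \eqref{lem:dcc-in-D03}.

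The two arguments diverge in how you treat $\bD_{K,X}(Rh_{\nu*}h_\nu^!\cF^\bullet)$.

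\textbf{The paper's route.} It dévisses further, down to single sheaves $R^ij^{\nu}_*\cE$ with $\cE$ dual-lisse. It writes $\cE=\Sol(\cN)$ by \autoref{lem:dual-lisse-in-D0}. It then uses the finiteness of higher direct images of Cartier crystals along open immersions, exactly as in \autoref{cor:dual-lisse-open-immersion}, to get $R^ij^{\nu}_*\cE\cong\Sol(\overline{\cN})$ with $\overline{\cN}$ coherent. It concludes with \autoref{main_commutativity_thm} and the Riemann--Hilbert correspondence for Frobenius crystals.

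\textbf{Your route.} You stay entirely on the sheaf side. You prove the exchange formula $\bD_{K,X}(Rh_{\nu*}\cE^\bullet)\cong h_{\nu!}\bD_{K,X_\nu}(\cE^\bullet)$ from three inputs: the $\cC_{\adj}$-adjunction for the closed immersion, \autoref{adjunction_restriction_and_lower_shriek} for the open one, and Kato biduality on $X_\nu$ and $\overline{X}_\nu$. You then use that duals of dual-lisse sheaves are lisse.

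\textbf{Comparison.} Your route avoids the Cartier-crystal finiteness input and any passage through $\bD_G$ at this step. The commutativity theorem enters only through \autoref{lem:dual-lisse-in-D0}, to know that dual-lisse complexes lie in $D_0$. It also gives sharper information: $\bD_K$ of each stratum piece is $h_{\nu!}$ of a complex with lisse cohomology, in the spirit of \autoref{prop:D-h-Gamma}. The paper's route is shorter given the machinery already in place, and it reuses the argument of \autoref{cor:dual-lisse-open-immersion} verbatim.

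\textbf{Two points to tighten.}
\begin{enumerate}
\item \autoref{lem:dual-lisse-on-regular}\ref{lem:dual-lisse-on-regular3} computes the dual of a lisse sheaf, not of a dual-lisse one. To get $\bD_{K,X_\nu}(E(W_n\omega_{X_\nu,\log}))\cong (E^\vee)_{\FRP}$ you must combine it with Kato biduality, using $E^{\vee\vee}=E$ and $(E^\vee)_{\FRP}\in D_0$.
\item In the biduality step, what you actually need is $\cE^\bullet\in D_0(X_\nu,W_n(\bF_q))$, which comes from \autoref{lem:dual-lisse-in-D0}; this lets you identify $\bD_{K,X_\nu}\bD_{K,X_\nu}\cE^\bullet$ with $\cE^\bullet$. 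You also need $j_!\cG^\bullet\in D_0$. The membership $Rj_*\cE^\bullet\in D_0$ that you cite is never used.
\end{enumerate}
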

\begin{proof}
Let $\iota: X_{\red}\inj X$ be the closed embedding. Note that $i^{-1}\cF^\bullet=i^!\cF^{\bullet}$ is dual-constructible.
If the complex  $\bD_{K, X_{\red}}(i^{-1}\cF^{\bullet})$ has constructible cohomology, then so does the complex
\[\iota_*\bD_{K, X_{\red}}(i^{-1}\cF^{\bullet})=\bD_{K,X}(\iota_*\iota^{-1}\cF^{\bullet})=\bD_{K,X}(\cF^{\bullet}).\]
Thus we may assume $X$ to be reduced.

Let $X=\coprod_{\nu=0}^e X_{\nu}$, with $h_{\nu}:X_{\nu}\to X$, be a regular $\cF^{\bullet}$-stratification.
We use the notation from \autoref{defn:dual-constructible}. As $D^b_c(X_{\FRP}, W_n(\bF_q))$ is triangulated
it suffices by the triangle \eqref{lem:dcc-in-D04} and the identification \eqref{lem:dcc-in-D03} to show that 
the bounded complex $\bD_{K,X}(Rj^{\nu}_*h_{\nu}^!\cF^{\bullet})$ has constructible cohomology for all $\nu$.
By considering the triangle
\[
\begin{tikzcd}
\tau_{\leq i-1}h_{\nu}^!\cF^{\bullet}\arrow[r] & \tau_{\leq i} h_{\nu}^!\cF^{\bullet}\arrow[r] & \cH^i(\cF^{\bullet})[-i]\arrow[r] &   , 
\end{tikzcd}
\]
we are reduced to show that $\bD_{K,X}(Rj^{\nu}_*\cE)$
has constructible cohomology for all $\nu$, with $\cE=\cH^i(h_{\nu}^!\cF^{\bullet})$  dual-lisse on $X_{\nu}$.
A further such argument reduces  us to show that
$\bD_{K,X}(R^ij^{\nu}_*\cE)$
has constructible cohomology for all $i$, $\nu$, and all dual-lisse sheaves $\cE$ on $X_{\nu}$.
By \autoref{lem:dual-lisse-in-D0} there is coherent Cartier module $\cN$ on $X_{\nu}$ such that $\cE=\Sol(\cN)$ 
and hence the argument at the end of the proof of \autoref{cor:dual-lisse-open-immersion} (relying on
\cite[Lemma 4.4.7(b)]{Baudin_Duality_between_Witt_Cartier_crystals_and_perverse_sheaves})
shows that there is a coherent Cartier module $\overline{\cN}$ on $X$ such that
$R^i j^{\nu}_*\cE= \Sol(\overline{\cN})$. 
Thus by \autoref{main_commutativity_thm}
\[\bD_{K,X}(R^ij^{\nu}_*\cE)= \bD_{K,X}(\Sol(\overline{\cN}))= \Sol(\bD_{G,X}(\overline{\cN}))\]
equals the solutions of a complex with  $r$-Frobenius crystals as cohomology groups,
hence it has constructible cohomology groups by the classical Riemann--Hilbert correspondence for Frobenius modules, see \autoref{Riemann_Hilbert_Frobenius_modules}.
\end{proof}

\begin{proposition}\label{prop:Dc-is-dc}
Let $\cG^\bullet\in D^b_{c}(X_{\FRP}, W_n(\bF_q))$ be a complex with constructible cohomology on $X$.
Then $\bD_K(\cG^{\bullet})\in D^b_{dc}(X_{\FRP}, W_n(\bF_q))$.
\end{proposition}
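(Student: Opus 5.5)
We will check the three requirements in \autoref{def_dual_constr_cpx} for $\bD_K(\cG^\bullet)$ in turn: it is bounded and lies in $D_0$, it lies in $\cC^b_{X,\adj}$, and it admits a regular stratification whose strata have dual-lisse $h_\nu^!$.

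\emph{Boundedness, $D_0$ and $\cC_{\adj}$.} Write $\cG^\bullet=G^\bullet_{\FRP}$ with $G^\bullet\in D^b_c(X_{\et},W_n(\bF_q))$. By the Riemann--Hilbert correspondence (\autoref{Riemann_Hilbert_Frobenius_modules}) and \autoref{Sol_on_Frobenius_modules_comes_from_etale_site}, $\cG^\bullet\cong\Sol(\cM^\bullet)$ for some $\cM^\bullet\in D^b(\Crys^{F^r}_{W_nX})$. Then \autoref{main_commutativity_thm} gives $\bD_K(\cG^\bullet)\cong\Sol(\bD_G(\cM^\bullet))$, with $\bD_G(\cM^\bullet)\in D^b(\Crys^{C^r}_{W_nX})$. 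Representing it by a bounded complex of coherent Cartier modules, we get:
\begin{itemize}
\item $\bD_K(\cG^\bullet)$ is bounded and lies in $D_0$, since $\Sol$ is exact;
\item $\bD_K(\cG^\bullet)\in\cC^b_{X,\adj}$, by \autoref{Sol_of_Cartier_modules_behave_well_with_adjunction}.
\end{itemize}

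\emph{Choice of stratification.} As in \autoref{prop:Ddc-is-c}, we first reduce to $X$ reduced via \autoref{lem:red}. We then choose a regular stratification $X=\coprod X_\nu$ such that $h_\nu^{-1}G^\bullet$ has lisse cohomology sheaves on each $X_\nu$. This is possible by constructibility together with excellence: inductively take a dense open regular locus of the closed complement on which all the finitely many cohomology sheaves are lisse.

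\emph{Computing $h_\nu^!$ on the strata.} By \autoref{prop:D-h-Gamma}\ref{prop:D-h-Gamma1},
\[h_\nu^!\bD_K(\cG^\bullet)=\bD_{K,X_\nu}(h_\nu^{-1}\cG^\bullet),\]
and $h_\nu^{-1}\cG^\bullet=(h_\nu^{-1}G^\bullet)_{\FRP}$ by \autoref{etale_functors_and_FRP_functors_agree}. So it remains to show: on a regular connected $X_\nu$, and for a bounded complex $L^\bullet$ with lisse cohomology, $\bD_{K}(L^\bullet_{\FRP})$ has dual-lisse cohomology sheaves.

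\begin{itemize}
\item By dévissage along the truncation triangles of $L^\bullet$ we reduce to a single lisse sheaf $E$.
\item By \autoref{lem:dual-lisse-on-regular}\ref{lem:dual-lisse-on-regular3}, $\bD_K(E_{\FRP})=E^\vee(W_n\omega_{X_\nu,\log})[\delta]$, where $\delta$ is the shift placing $W_n\omega^\bullet_{X_\nu,\log}$ in a single degree (\autoref{lem:reg-dualizing-log}).
\item The dévissage does not directly yield the conclusion, because an extension of dual-lisse sheaves need not obviously be dual-lisse. To handle this, use \autoref{lem:dual-lisse-on-regular}\ref{lem:dual-lisse-on-regular3} and \ref{lem:dual-lisse-on-regular2} to rewrite
\[\bD_K(L^\bullet_{\FRP})\cong (L^{\bullet})^{\vee}_{\FRP}\otimes^L_{W_n(\bF_q)}W_n\omega_{X_\nu,\log}[\delta],\]
and check this naturally in $L^\bullet$ by dévissage. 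Here $(L^\bullet)^\vee$ denotes the derived étale dual.
\item Since $W_n\omega_{X_\nu,\log}$ is étale locally Tor-independent over $W_n(\bF_q)$ (the resolutions in the proof of \autoref{lem:dual-lisse-on-regular}\ref{lem:dual-lisse-on-regular2}), each cohomology sheaf $\cH^i$ of the tensor product equals $\cH^i((L^\bullet)^\vee)_{\FRP}\otimes W_n\omega_{X_\nu,\log}$.
\item The cohomology of $(L^\bullet)^\vee$ is lisse, so these sheaves are dual-lisse.
\end{itemize}

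The main obstacle is this last step: showing that the cohomology sheaves of the tensor product with $W_n\omega_{X_\nu,\log}$ are computed termwise. I would first try to establish flatness of $W_n\omega_{X_\nu,\log}$ over $W_n(\bF_q)$ étale locally, relative to lisse sheaves. Étale locally a lisse $W_n(\bF_q)$-sheaf is constant with finite value $A$, so this amounts to $A\otimes^L W_n\omega_{X_\nu,\log}$ being concentrated in degree $0$. That follows from the finite-module case of \autoref{lem:dual-lisse-on-regular}\ref{lem:dual-lisse-on-regular2} applied to cyclic summands.
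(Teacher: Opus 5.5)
Your proof is correct and has the same skeleton as the paper's. You stratify so that the restrictions are lisse, use \autoref{prop:D-h-Gamma}\ref{prop:D-h-Gamma1} to get $h_\nu^!\bD_K(\cG^\bullet)=\bD_{K,X_\nu}(h_\nu^{-1}\cG^\bullet)$, and conclude with \autoref{lem:dual-lisse-on-regular}.

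The difference is in how complexes are handled. The paper first reduces ``by d\'evissage'' to a single constructible sheaf $\cG_{\FRP}$. That step tacitly uses that dual-constructible complexes are stable under extensions, which the paper does not spell out. Proving it requires exactly the point you flag. One needs common refinements of stratifications, and also that extensions, kernels and cokernels of maps between dual-lisse sheaves are again dual-lisse. This can be obtained from full faithfulness of $\bD_K$ on $D_0$ together with the Tor-vanishing of \autoref{lem:dual-lisse-on-regular}\ref{lem:dual-lisse-on-regular2}.

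Your route avoids this. You choose one stratification adapted to all cohomology sheaves of $G^\bullet$. You then prove $\bD_K(L^\bullet_{\FRP})\cong (L^\bullet)^\vee_{\FRP}\otimes^L W_n\omega_{X_\nu,\log}[\delta]$ by d\'evissage of a natural transformation. Being an isomorphism, unlike being dual-constructible, is obviously closed under triangles. So your argument is more self-contained.

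One point is worth making explicit: $(L^\bullet)^\vee$ has lisse cohomology because $W_n(\bF_q)$ is self-injective. Hence $\mathcal{E}xt^i(E,W_n(\bF_q))=0$ for $i>0$ and $E$ lisse, so the \'etale dual is exact on lisse sheaves.

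You also verify membership in $\cC^b_{X,\adj}$, which the paper leaves implicit. This also follows formally from $W_n\omega^\bullet_{X,\log}\in\cC_{X,\adj}$, by the same computation as in the proof of \autoref{prop:D-h-Gamma}.
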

\begin{proof}
We may assume $X$ reduced and by d\'evissage that $\cG^{\bullet}=\cG_{\FRP}$ with $\cG$ a constructible sheaf on $X_{\et}$.
As $X$ is excellent we find 
a regular stratification $X=\coprod_{\nu=0}^e X_{\nu}$, with $h_{\nu}: X_{\nu}\inj X$, such that 
$E=h^{-1}_{\nu}\cG$ is a lisse sheaf on $X_{\nu,\et}$. 
Thus the statement follows from \autoref{prop:D-h-Gamma}\ref{prop:D-h-Gamma1}, together with the fact that 
$\cG_{\FRP}\in D_0(X, W_n(\bF_q))$ by the classical Riemann--Hilbert correspondence, and 
\autoref{lem:dual-lisse-on-regular}\ref{lem:dual-lisse-on-regular3}.
\end{proof}

\begin{thm}[{Second half of \autoref{intro_commutativity}}]\label{thm:constr-dual}
Let $X$ be a Noetherian and $F$-finite $\bF_q$-scheme. Then Kato's dualizing functor induces an equivalence of categories
\[\begin{tikzcd}
\bD_{K,X} \colon D^b_c(X_{\et}, W_n(\bF_q))^{\rm op}\cong D^b_c(X_{\FRP}, W_n(\bF_q))^{\rm op}\arrow[r, "\simeq"] &
D^b_{dc}(X_{\FRP}, W_n(\bF_q)).    
\end{tikzcd}
\]
Moreover, for any locally closed immersion $h:Z\inj X$ we have 
\begin{enumerate}[label=(\arabic*)]
    \item\label{thm:constr-dual1} $h^{-1}\bD_{K,X}(\cF^{\bullet})= \bD_{K,Z}(h^!\cF^{\bullet})$, 
    for all $\cF^{\bullet}\in D^b_{dc}(X_{\FRP}, W_n(\bF_q))$;
    \item\label{thm:constr-dual2} $h^!\bD_{K,X}(\cG^{\bullet})= \bD_{K,Z}(h^{-1}\cG^{\bullet})$, for all
    $\cG^{\bullet}\in D^b_c(X_{\FRP}, W_n(\bF_q))$.
\end{enumerate}
\end{thm}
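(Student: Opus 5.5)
The plan is to combine Kato duality (\autoref{Kato_duality}) with \autoref{prop:Ddc-is-c} and \autoref{prop:Dc-is-dc}. Both $D^b_c(X_{\FRP}, W_n(\bF_q))$ and $D^b_{dc}(X_{\FRP}, W_n(\bF_q))$ are full subcategories of $D_0(X, W_n(\bF_q))$. For the first, this holds by \autoref{def:Ddc}; for the second, by \autoref{lem:dcc-in-D0}\ref{lem:dcc-in-D02}. On $D_0$ the evaluation map $\id\to\bD_K\circ\bD_K$ is an isomorphism. By \autoref{prop:Dc-is-dc}, $\bD_K$ sends $D^b_c$ into $D^b_{dc}$, and by \autoref{prop:Ddc-is-c} it sends $D^b_{dc}$ into $D^b_c$. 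The two restrictions are therefore mutually quasi-inverse, since each composite is isomorphic to the identity through the evaluation isomorphism. This gives the equivalence $D^b_c(X_{\FRP})^{op}\simeq D^b_{dc}(X_{\FRP})$. Precomposing with the equivalence $D^b_c(X_{\et})\simeq D^b_c(X_{\FRP})$ from \autoref{def:Ddc}, which uses the full faithfulness of $(-)_{\FRP}$ in \autoref{adjunction_FRP_and_etale}\ref{FRP_embeds_in_etale}, gives the stated equivalence.

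For \ref{thm:constr-dual2}, take $\cG^\bullet\in D^b_c(X_{\FRP})$. It lies in $D_0$, so \autoref{prop:D-h-Gamma}\ref{prop:D-h-Gamma1} applies directly and gives $h^!\bD_{K,X}(\cG^\bullet)=\bD_{K,Z}(h^{-1}\cG^\bullet)$.

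For \ref{thm:constr-dual1}, take $\cF^\bullet$ dual-constructible. To use \autoref{prop:D-h-Gamma}\ref{prop:D-h-Gamma2}, I need $h^{-1}\bD_{K,X}(\cF^\bullet)\in D_0(Z, W_n(\bF_q))$. By \autoref{prop:Ddc-is-c}, $\bD_{K,X}(\cF^\bullet)$ is isomorphic to $\cG_{\FRP}$ for some $\cG\in D^b_c(X_{\et})$. By \autoref{etale_functors_and_FRP_functors_agree}, $h^{-1}(\cG_{\FRP})=(h^{-1}\cG)_{\FRP}$, and $h^{-1}\cG$ is constructible on $Z_{\et}$, so it lies in $D^b_c(Z_{\FRP})\subset D_0(Z)$. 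The hypothesis of \autoref{prop:D-h-Gamma}\ref{prop:D-h-Gamma2} then holds, and that result gives the claim.

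Alternatively, one can dualize \ref{thm:constr-dual2}. By \autoref{lem:dcc-in-D0}\ref{lem:dcc-in-D01}, $h^!\cF^\bullet$ is dual-constructible on $Z$, hence lies in $D_0(Z)$. Apply \ref{thm:constr-dual2} to $\cG^\bullet=\bD_{K,X}\cF^\bullet$ and use Kato duality on $X$ and on $Z$: $\bD_{K,Z}(h^!\cF^\bullet)=\bD_{K,Z}h^!\bD_{K,X}\cG^\bullet=\bD_{K,Z}\bD_{K,Z}(h^{-1}\cG^\bullet)=h^{-1}\cG^\bullet$.

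The main point needing care is bookkeeping, not a new idea. Kato duality must be applied on $Z$ with respect to $W_n\omega^\bullet_{Z,\log}=h^!W_n\omega^\bullet_{X,\log}$, which comes from the unit dualizing complex $h^!W_n\omega^\bullet_X$ as explained after \eqref{eq:W_nomegalogZ}, so \autoref{Kato_duality} is available on $Z$. One must also check that $D^b_c$ and $D^b_{dc}$ really land inside $D_0$ on the relevant scheme. Both checks are covered by the cited lemmas, so I expect the proof to be short once these are in place.
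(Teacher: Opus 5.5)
Your proposal is correct and follows essentially the paper's own proof. The equivalence comes from Kato duality (\autoref{Kato_duality}) combined with \autoref{prop:Ddc-is-c} and \autoref{prop:Dc-is-dc}, and (1), (2) are the two parts of \autoref{prop:D-h-Gamma}; your check that $h^{-1}\bD_{K,X}(\cF^\bullet)\cong(h^{-1}\cG)_{\FRP}$ lies in $D_0(Z, W_n(\bF_q))$ is exactly the hypothesis of \autoref{prop:D-h-Gamma}\ref{prop:D-h-Gamma2} that the paper leaves implicit when it calls (1) a special case.
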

\begin{proof}
The first statement follows from Kato's duality theorem \autoref{Kato_duality} together with
\autoref{prop:Ddc-is-c} and \autoref{prop:Dc-is-dc}.
The second part of the statement is a special case of \autoref{prop:D-h-Gamma}.
\end{proof}

\begin{corollary}[Second half of {\autoref{main_thm}}]\label{cor:Sol-CartierCrys}
The solution functor induces an equivalence of categories
\[\begin{tikzcd}
\Sol\colon D^b(\Crys^{C^r}_{W_nX})\arrow[r,"\simeq"] & D^b_{dc}(X_{\FRP}, W_n(\bF_q)).    
\end{tikzcd}
\]
\end{corollary}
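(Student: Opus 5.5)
We reduce to the situation where a unit dualizing complex is available and then compose the known equivalences. We already know from \autoref{Sol_fully_faithful} (more precisely, from its proof) that $\Sol\colon D^b(\Crys^{C^r}_{W_nX})\to D(X_{\FRP},W_n(\bF_q))$ is fully faithful whenever $W_nX$ has a unit dualizing complex. So it remains to identify the essential image with $D^b_{dc}(X_{\FRP},W_n(\bF_q))$. By the standing assumption of this subsection, $W_nX$ carries the unit dualizing complex $W_n\omega^\bullet_X$.

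Consider the chain of functors
\[ D^b(\Crys^{C^r}_{W_nX})\xrightarrow{\bD_G} D^b(\Crys^{F^r}_{W_nX})^{\rm op}\xrightarrow{\Sol} D^b_c(X_{\FRP},W_n(\bF_q))^{\rm op}\xrightarrow{\bD_{K,X}} D^b_{dc}(X_{\FRP},W_n(\bF_q)). \]
Each arrow is an equivalence:
\begin{itemize}
\item the first by \autoref{duality_Frobenius_and_Cartier}, using that $\bD_G\circ\bD_G\cong\id$;
\item the third by \autoref{thm:constr-dual};
\item the middle one needs a short argument. Full faithfulness is \autoref{Sol_Frobenius_modules_fully_faithful_and_exact}. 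For the image, $\Sol$ on Frobenius crystals factors through $(-)_{\FRP}$ by \autoref{Sol_on_Frobenius_modules_comes_from_etale_site}. By the derived Riemann--Hilbert correspondence (Bhatt--Lurie, or \autoref{Riemann_Hilbert_Frobenius_modules} together with the standard fact that $D^b(\Crys^{F^r})\simeq D^b_c(X_{\et})$), the étale solution functor hits exactly $D^b_c(X_{\et},W_n(\bF_q))$. By \autoref{def:Ddc}, this category is equivalent to $D^b_c(X_{\FRP},W_n(\bF_q))$.
\end{itemize}

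By \autoref{main_commutativity_thm}, the composite $\bD_{K,X}\circ\Sol\circ\bD_G$ is isomorphic to $\Sol\circ\bD_G\circ\bD_G\cong\Sol$. Applying \autoref{main_commutativity_thm} to $\bD_G(\cN^\bullet)$ gives $\Sol(\bD_G\bD_G\cN^\bullet)\cong\bD_K\Sol(\bD_G\cN^\bullet)$. Hence $\Sol$ on Cartier crystals is an equivalence onto $D^b_{dc}$. As a consistency check, the image lands in $D^b_{dc}$ by \autoref{prop:Dc-is-dc}.

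The main obstacle is the middle step on the derived level: essential surjectivity of $D^b(\Crys^{F^r})\to D^b_c(X_{\et})$. I would handle it by the standard criterion for exact fully faithful functors between derived categories. The image is a triangulated subcategory containing all constructible sheaves (by \autoref{Riemann_Hilbert_Frobenius_modules}), and $D^b_c$ is generated by these under shifts and cones, so the image is everything. A secondary issue is global existence of the unit dualizing complex, which the paper's standing assumption provides. If the corollary is instead meant without that assumption, one would have to glue Zariski locally: $\Crys^{C^r}$ satisfies descent as in the proof of \autoref{Sol_fully_faithful}, dual-constructibility is Zariski local, and full faithfulness lets local preimages glue, at least on the abelian level. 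At the derived level this gluing is delicate, and I would rely on the standing hypothesis instead.
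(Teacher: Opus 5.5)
Your proposal is correct and follows the paper's proof. The paper also writes $\Sol$ on Cartier crystals as $\bD_K\circ\Sol\circ\bD_G$ via \autoref{main_commutativity_thm}, and concludes from the derived Riemann--Hilbert correspondence for Frobenius crystals together with the equivalence $\bD_{K,X}\colon D^b_c(X_{\FRP},W_n(\bF_q))^{\rm op}\simeq D^b_{dc}(X_{\FRP},W_n(\bF_q))$ of \autoref{thm:constr-dual}; the only difference is that the paper cites the derived Riemann--Hilbert correspondence from Baudin, where your generation argument for essential surjectivity is a valid substitute.
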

\begin{proof}
    This follows from the classical Riemann--Hilbert correspondence in the version of 
    \cite[Theorem 2.2.14 and Proposition 2.2.15]{Baudin_Duality_between_Witt_Cartier_crystals_and_perverse_sheaves}, together
    with \autoref{main_commutativity_thm}, and \autoref{thm:constr-dual}. 
\end{proof}

\begin{corollary}\label{cor:pf-dc}
Let $f\colon Y\to X$ be separated morphism of finite type between Noetherian $F$-finite $\bF_q$-schemes. 
Then the derived pushforward induces a well-defined functor
\[Rf_*\colon D^b_{dc}(Y_{\FRP}, W_n(\bF_q))\to D^b_{dc}(X_{\FRP}, W_n(\bF_q)).\]
\end{corollary}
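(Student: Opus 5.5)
We reduce to the Cartier side via \autoref{cor:Sol-CartierCrys} and push forward there. Let $\cF^\bullet\in D^b_{dc}(Y_{\FRP}, W_n(\bF_q))$. The plan has three steps.

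\textbf{Step 1: localize and set up dualizing complexes.} Membership in $D^b_{dc}$ is Zariski local on $X$: the defining conditions involve $\cC_{\adj}$, which is local by \autoref{exact_upper_shriek_triangle}, and $h^!$ of a stratification. Moreover $Rf_*$ is compatible with restriction to opens of $X$. So we may assume $X$ is affine. Then $X$ is separated and, by \cite[Corollary 5.1.16]{Baudin_Duality_between_Witt_Cartier_crystals_and_perverse_sheaves}, carries a unit dualizing complex. Since $f$ is separated of finite type, $Y$ is separated, and $f^!W_n\omega_X^\bullet$ is a unit dualizing complex on $Y$. Hence \autoref{cor:Sol-CartierCrys} applies on both $X$ and $Y$. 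The one point I am unsure of is that $\cR\HHom$ on $Y$ need not be covered affine-by-affine in this argument, but on $Y$ we only use the global statement \autoref{cor:Sol-CartierCrys}, so no further localization on $Y$ is needed.

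\textbf{Step 2: write $\cF^\bullet$ as solutions and push forward.} By \autoref{cor:Sol-CartierCrys} on $Y$, we have $\cF^\bullet\cong\Sol(\cN^\bullet)$ for some $\cN^\bullet\in D^b(\Crys^{C^r}_{W_nY})$. View $\cN^\bullet$ in $D^b(\IndCrys^{C^r}_{W_nY})$ and take $Rf_*$ there. By \autoref{Sol_exact_and_commutes_with_pushforwards}, which descends to crystals because $\Sol$ kills locally nilpotent objects and injective ind-coherent objects compute the pushforward, we get
\[Rf_*\cF^\bullet\cong Rf_*\Sol(\cN^\bullet)\cong\Sol(Rf_*\cN^\bullet).\]

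\textbf{Step 3: show $Rf_*\cN^\bullet$ is coherent up to nilpotence.} This is the main obstacle. Each cohomology sheaf $R^if_*$ of a coherent Cartier module is nil-isomorphic to a coherent Cartier module, by \cite[Lemma 4.4.7 (b)]{Baudin_Duality_between_Witt_Cartier_crystals_and_perverse_sheaves}, which holds for arbitrary finite type separated morphisms (not only proper ones). The argument is the same as in \autoref{cor:dual-lisse-open-immersion}: factor $f$ by Nagata as $j$ followed by a proper map, use that proper pushforward preserves coherence, and use the open-immersion case from the cited lemma. A dévissage on the bounded complex $\cN^\bullet$ then shows $Rf_*\cN^\bullet\in D^b(\IndCrys^{C^r}_{W_nX})$ has cohomology in $\Crys^{C^r}_{W_nX}$. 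By the equivalence $D^b(\Crys)\simeq D^b_{\Crys}(\IndCrys)$ of \cite{Baudin_Duality_between_Witt_Cartier_crystals_and_perverse_sheaves}, this complex comes from $D^b(\Crys^{C^r}_{W_nX})$.

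Applying \autoref{cor:Sol-CartierCrys} on $X$ then gives $\Sol(Rf_*\cN^\bullet)\in D^b_{dc}(X_{\FRP}, W_n(\bF_q))$. This proves that $Rf_*$ restricts to a functor $D^b_{dc}(Y_{\FRP}, W_n(\bF_q))\to D^b_{dc}(X_{\FRP}, W_n(\bF_q))$.
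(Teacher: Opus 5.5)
Your proposal is correct and follows the paper's proof. You write $\cF^\bullet\cong\Sol(\cN^\bullet)$ via \autoref{cor:Sol-CartierCrys} for the unit dualizing complex $f^!W_n\omega^\bullet_X$, use that $Rf_*$ preserves $D^b(\Crys^{C^r})$ and commutes with $\Sol$ by \autoref{Sol_exact_and_commutes_with_pushforwards}, and conclude with \autoref{cor:Sol-CartierCrys} on $X$. The only differences are that the paper simply cites \cite[Proposition 4.4.4]{Baudin_Duality_between_Witt_Cartier_crystals_and_perverse_sheaves} where you re-derive it from Lemma 4.4.7(b) and Nagata, and that your Step 1 localization is unnecessary, since the section already assumes $W_nX$ has a unit dualizing complex (your one-line claim that dual-constructibility is Zariski local would in any case need more than locality of $\cC_{\adj}$, e.g.\ an argument through $\bD_K$ and \autoref{thm:constr-dual}).
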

\begin{proof}
Note that $Y$ has a unit dualizing complex, namely $f^!W_n\omega^\bullet_X$, so $D^b_{dc}(Y_{\FRP}, W_n(\bF_q))$ is well-defined.
Now the statement follows from the fact that $Rf_*$ induces a well-defined functor 
\[Rf_*\colon D^b(\Crys^{C^r}_{W_nY})\to D^b(\Crys^{C^r}_{W_nX}),\]
by \cite[Proposition 4.4.4]{Baudin_Duality_between_Witt_Cartier_crystals_and_perverse_sheaves},
and commutes with $\Sol$ by  \autoref{Sol_exact_and_commutes_with_pushforwards}.
\end{proof}

\begin{corollary}
Consider the following cartesian diagram
\[
\begin{tikzcd}
Y_Z\arrow[r, "k"]\arrow[d, "g"] & Y\arrow[d, "f"]\\
Z\arrow[r, "h"] & X,
\end{tikzcd}
\]
in which $f$ is proper and $h\colon Z\inj X$ an immersion.
Let $\cF^\bullet$ be a dual-constructible complex on $X$. Then there is a natural isomorphism
\[Rg_* k^!\cF^{\bullet} \cong h^!Rf_*\cF^{\bullet}.\]
\end{corollary}
\begin{proof}
    Thanks to \autoref{exact_upper_shriek_triangle}\ref{exact_upper_shriek_triangle1}, we have by adjunction a natural transformation \[ Rg_*k^! \to h^!Rf_* \] on $D_0(Z, W_n(\bF_q))$. Let us show that it is an isomorphism on dual-constructible complexes. This is local on $X$ so we may assume that all schemes involved are separated. Write $\cF^{\bullet} \cong \Sol(\cM^{\bullet})$ for $\cM^{\bullet} \in D^b(\Crys^{C^r}_{W_nX})$, see \autoref{cor:Sol-CartierCrys}. By \autoref{Sol_commutes_with_upper_shriek} and \autoref{Sol_exact_and_commutes_with_pushforwards}, it is enough to show the analogous statement on Cartier crystals on $W_nX$. This follows from the same proof as that of \cite[Corollary 5.3.5]{Baudin_Duality_between_perverse_sheaves_and_Cartier_crystals}, using the results of \cite{Baudin_Duality_between_Witt_Cartier_crystals_and_perverse_sheaves}. \qedhere
    
    %In this case, we have a natural isomorphism \[ Rf_* \circ \bD_G \cong \bD_G \circ Rf_* \colon \] have 
    %\[\begin{tikzcd}
     % Rf_*\bD_{K,Y}= \bD_{K,X} Rf_*\colon   D^b_{\rm c}(X_{\et}, W_n(\bF_q))\arrow[r] & D^b_{\rm dc}(X_{\FRP}, W_n(\bF_q)),
    %\end{tikzcd}\]
    %as follows from \JB{I rewrite this part} \autoref{main_commutativity_thm}, the fact that $\bD_{G,X} Rf_*=Rf_* \bD_{G,Y}$
    %on $D^+_{\coh}(\QCoh_{W_nY})$ by Grothendieck duality, and the fact that $\Sol$ commutes with $Rf_*$
   % on $D^+(\IndCoh^{C^r}_{W_nY})$ by \autoref{Sol_exact_and_commutes_with_pushforwards}.\footnote{For schemes locally 
    %of finite type over a perfect field, this also follows from \cite[Theorem (0.2)]{Kato_Duality_theories_for_p_primary_etale_coh_II}. } Thus the statement follows from \autoref{thm:constr-dual}
    %and proper base change for \'etale torsion sheaves.    
\end{proof}

\begin{corollary}\label{cor:pf-log}
Let $f\colon Y\to X$ be a separated morphism between Noetherian $F$-finite $\bF_q$-schemes. 
Let $\cF^\bullet$ be any dual-constructible complex on $Y$.
Then there exists a regular dense open subscheme $U\subset X$ and a lisse sheaf of $W_n(\bF_q)$-modules $E$ on $U$ 
such that 
\[(R^if_* \cF^{\bullet})|_{U_{\et}}= E\otimes_{W_n(\bF_q)} W_n\Omega^e_{U_{\et},\log},\]
where $e={\rm rk}(\Omega^1_{U/\bF_p})$. 
\end{corollary}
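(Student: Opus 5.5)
We need to prove Corollary cor:pf-log. The plan is to push $\cF^\bullet$ forward, identify the result as a dual-constructible complex on $X$, and then read off its cohomology on the generic stratum.

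First, pushforward. By Nagata compactification, or since the statement is generic on $X$, we reduce to $f$ of finite type. Then \autoref{cor:pf-dc} shows that $Rf_*\cF^\bullet$ is a dual-constructible complex on $X$. If $f$ is not assumed of finite type, we note that $Y$ is Noetherian $F$-finite with its own unit dualizing complex. We would then argue through \autoref{cor:Sol-CartierCrys}: write $\cF^\bullet=\Sol(\cM^\bullet)$ with $\cM^\bullet\in D^b(\Crys^{C^r}_{W_nY})$. Then $Rf_*\cF^\bullet=\Sol(Rf_*\cM^\bullet)$ by \autoref{Sol_exact_and_commutes_with_pushforwards}. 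Finiteness of $R^if_*$ on Cartier crystals, as in the proof of \autoref{cor:dual-lisse-open-immersion} via \cite[Lemma 4.4.7]{Baudin_Duality_between_Witt_Cartier_crystals_and_perverse_sheaves}, shows that each $R^if_*\cF^\bullet=\Sol(R^if_*\cM^\bullet)$ is the solution sheaf of a Cartier crystal. Hence it is dual-constructible.

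Second, restriction to the generic stratum. Replace $X$ by $X_{\red}$ using \autoref{lem:red}, and work one connected/irreducible component at a time after shrinking. Choose a regular $\cG$-stratification for $\cG:=R^if_*\cF^\bullet$, and let $U\subset X$ be the open dense regular stratum. For an open immersion $h$ we have $h^!=h^{-1}$. Hence $\cG|_U$ is dual-lisse, i.e. $\cG|_U\cong E_{\FRP}\otimes W_n\omega_{U,\log}$ for some lisse $E$. Since the stratification is finite, we may take $U$ to be a finite disjoint union of regular connected opens and handle each piece separately.

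Third, identify $W_n\omega_{U,\log}$ and restrict to the étale site. On a regular connected affine piece, \autoref{lem:dual-regular-case} gives $W_n\omega_U\cong W_n\Omega^e_U\otimes\cL[m]$, with $\cL$ a unit Frobenius line bundle. By \autoref{sol_of_unit_Frobenius_module}, $\cL$ is étale locally trivial. Combined with \autoref{rmk:dualizing-dRW}, i.e. Kato's identification of $\Sol(W_n\Omega^e)$ with $W_n\Omega^e_{\log,\FRP}$, this gives $W_n\omega_{U,\log}\cong \Sol(\cL)_{\FRP}\otimes W_n\Omega^e_{U,\log,\FRP}$ up to shift, where $\Sol(\cL)$ is a rank one lisse sheaf. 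We absorb $\Sol(\cL)$ into $E$, and absorb the shift by tracking cohomological degree; since the sheaf is concentrated in one degree, this changes only the indexing. Finally we apply $\rho_*$, noting that $(E_{\FRP}\otimes\cA)|_{U_{\et}}=E\otimes\cA|_{U_{\et}}$ because $E$ is lisse and the claim is étale-locally a constant sum.

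The main obstacle is gluing the local trivializations of $\cL$ and the de Rham–Witt identification globally over $U$. The point is that the twist $\Sol(\cL)$ is itself a global lisse sheaf, so it can be absorbed into $E$. One must also check that Kato's identification for $r\neq 1$ still yields the logarithmic forms, which we expect from the proof of \autoref{lem:dual-regular-case}.
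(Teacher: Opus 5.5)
When $f$ is of finite type, your argument is the paper's. \autoref{cor:pf-dc} makes $Rf_*\cF^\bullet$ dual-constructible, and its cohomology sheaves are dual-lisse on a dense open regular $U$. \autoref{lem:dual-regular-case} together with \autoref{sol_of_unit_Frobenius_module} identifies $W_n\omega_{U,\log}$ with $W_n\Omega^e_{U,\log,\FRP}$, up to a shift and a rank one lisse twist that you absorb into $E$. You then restrict to $U_{\et}$.

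The genuine problem is your first step. Neither Nagata compactification (whose input is already a separated morphism \emph{of finite type}) nor shrinking $X$ reduces an arbitrary separated $f$ to one of finite type. Your fallback claim that each $R^if_*\cM^\bullet$ is a Cartier crystal is also unjustified, since \cite[Proposition 4.4.4, Lemma 4.4.7]{Baudin_Duality_between_Witt_Cartier_crystals_and_perverse_sheaves} are finite-type statements. In fact the assertion fails without finite type. Take $r=n=1$, $Y=\Spec \bF_p(t)$, $X=\Spec \bF_p$ and $\cF=\Sol(\Omega^1_{\bF_p(t)})$. This $\cF$ is dual-lisse, hence dual-constructible, on $Y$ by \autoref{lem:dual-regular-case} and \autoref{Sol_of_Cartier_modules_behave_well_with_adjunction}. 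Then
\[\Gamma\bigl(X, (f_*\cF)|_{X_{\et}}\bigr)=\ker\bigl(C-1\colon \Omega^1_{\bF_p(t)}\to \Omega^1_{\bF_p(t)}\bigr)\cong \bF_p(t)^\times/(\bF_p(t)^\times)^p\]
is infinite. On the other hand, the only dense open of $X$ is $X$ itself, $e=0$, and $E\otimes W_1\Omega^0_{X,\log}=E$ has finite global sections for $E$ lisse (compare \autoref{RH_not_okay_with_etake_site}). So you must assume $f$ of finite type, as the paper tacitly does by invoking \autoref{cor:pf-dc}. Drop the non-finite-type branch rather than trying to prove it.

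Once that branch is deleted, note that dual-constructibility of the individual sheaves $R^if_*\cF^\bullet$ was only derived inside it. You can recover it from \autoref{cor:Sol-CartierCrys} and exactness of $\Sol$. Alternatively, take a regular stratification of the complex $Rf_*\cF^\bullet$ itself and restrict to its generic strata, which also gives a single $U$ for all $i$.

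The point you leave open for $r>1$ is harmless. $C$ acts on $\ker(C^r-1)\subset W_n\Omega^e_{U,\FRP}$ as a $\sigma^{-1}$-semilinear automorphism of order dividing $r$. Galois descent along $\bZ/p^n\bZ\to W_n(\bF_q)$ then gives $\ker(C^r-1)=W_n(\bF_q)\otimes_{\bZ/p^n\bZ}\ker(C-1)$, i.e. the scalar extension of the logarithmic forms.
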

\begin{proof}
On $X_{\FRP}$ this follows from \autoref{cor:pf-dc} and \autoref{lem:dual-regular-case}.
Then restricting to $X_{\et}$ gives the result as  follows from \autoref{Sol_on_Frobenius_modules_comes_from_etale_site},
\autoref{cor:Sol-CartierCrys}, and the fact that for a coherent sheaf $\cM$ on $X$ we have 
$Rf_* (\cM_{\FRP})|_{X_{\et}}= Rf_*(\cM_{\et})$.
\end{proof}

\begin{corollary}[{\autoref{intro_cor:sep-base}}]\label{cor:sep-base}
Let $K$ be a separably closed field which has a $p$-basis of length $e$.
Let $X$ be a $d$-dimensional regular connected separated $K$-scheme.
Then for all $i$ 
\[H^i(X_{\et}, W_n\Omega^{d+e}_{X_{\et},\log})= M_i\otimes_{\bZ/p^n\bZ} W_n\Omega^e_{K,\log},\]
for some finite $\bZ/p^n\bZ$-module $M_i$.    
\end{corollary}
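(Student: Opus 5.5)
We apply \autoref{cor:pf-log} to the structure morphism $f\colon X\to \Spec K$ with $\cF^\bullet = W_n\omega^\bullet_{X,\log}$ on $X$, taking $r=1$ and $q=p$. First we fix the unit dualizing complexes. $K$ has a finite $p$-basis, so $K$ is $F$-finite. By \autoref{lem:dual-regular-case}, $W_n\Omega^e_K$ is a unit dualizing complex on $\Spec K$; the rank of $\Omega^1_{K/\bF_p}$ is $e$. Since $X$ is separated of finite type over $K$, the complex $f^!W_n\Omega^e_K$ is a unit dualizing complex on $W_nX$, and $D^b_{dc}$ is defined on both sides. Because $X$ is regular and connected, $X$ has a $p$-basis of length $d+e$ locally: the $p$-basis of $K$ together with local coordinates, using that $X$ is regular of finite type over $K$. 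By \autoref{lem:dual-regular-case} and \autoref{comparison_unit_dc}, we get $f^!W_n\Omega^e_K\cong W_n\Omega^{d+e}_X\otimes\cL[m]$ for a unit Frobenius line bundle $\cL$. If we instead choose $W_n\omega^\bullet_X := W_n\Omega^{d+e}_X[d]$ as our dualizing complex on $X$, then $\cF^\bullet\coloneqq \Sol(W_n\Omega^{d+e}_X)=W_n\Omega^{d+e}_{X,\log,\FRP}$ by \autoref{rmk:dualizing-dRW}; this holds over the regular base, using Kato's description. This sheaf is dual-lisse, hence dual-constructible for the trivial stratification, up to the stratification bookkeeping. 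The point to check is that this object is dual-constructible with respect to the dualizing complex pulled back from $K$. The two choices differ by $\Sol(\cL)$, which is étale locally trivial by \autoref{sol_of_unit_Frobenius_module}, together with a shift. So $W_n\Omega^{d+e}_{X,\log}$ is of the form $E'(\,\cdot\,)$ for a lisse $E'$, and dual-constructibility is unaffected.

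Next we apply \autoref{cor:pf-log}, or more directly \autoref{cor:pf-dc}. The complex $Rf_*\cF^\bullet$ is dual-constructible on $\Spec K$. $\Spec K$ is a point and is already regular, so the dense open $U$ is all of $\Spec K$, and $R^if_*\cF^\bullet|_{\et} = E_i\otimes W_n\Omega^e_{K,\log}$ for a lisse $E_i$. Strictly, \autoref{cor:pf-log} only gives this on the cohomology sheaves of $h^!$ for a single stratum. Here, however, the only stratum is the point, so $h^!=\id$. On the étale site of $K$ the cohomology sheaves are then dual-lisse. Since $K$ is separably closed, $E_i$ is the constant sheaf associated with a finite $\bZ/p^n$-module $M_i$. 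Finiteness holds because lisse sheaves are constructible, so their stalks are finitely generated over the finite ring $W_n(\bF_p)$.

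Finally we pass to global sections. Since $K$ is separably closed, taking étale sections on $\Spec K$ is exact, so $H^i(X_{\et},W_n\Omega^{d+e}_{X_{\et},\log}) = \Gamma(\Spec K_{\et}, R^if_{*}(\cF^\bullet|_{X_{\et}}))$. We also use $Rf_*(\cF^\bullet)|_{\et} = Rf_{\et *}(\cF^\bullet|_{X_{\et}})$. This holds by the last step of the proof of \autoref{cor:pf-log}: the complex is built from the $\Sol$ of coherent modules, and for these the restriction of $Rf_*$ to the étale site commutes with étale pushforward by \autoref{pushforwards_of_qcoh_commute_with_FRP} and the restriction statements. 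Taking sections of $M_i\otimes W_n\Omega^e_{K,\log}$ gives $M_i\otimes W_n\Omega^e_{K,\log}$, since tensoring with the finite constant module commutes with global sections.

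The main obstacle is the first step. We must identify $W_n\Omega^{d+e}_{X,\log}$, placed in the right degree, with $\Sol$ of a unit dualizing complex compatible with $f^!$ of the one on $K$. This requires knowing that $X$ has a global-on-connected-components $p$-basis count $d+e$, so that the rank of $\Omega^1_{X/\bF_p}$ is $d+e$. We prove this by Kunz/Cartier-type arguments: $\Omega^1_{X/K}$ has rank $d$ generically for regular $X$ over separably closed $K$, after checking that $X$ is generically smooth, or else by working directly with ranks of $\Omega^1_{X/\bF_p}$ via the exact sequence $\Omega^1_K\otimes\cO_X\to\Omega^1_X\to\Omega^1_{X/K}\to 0$. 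We then need to track the degree shifts so that $H^i$ matches $R^if_*$ without an offset.
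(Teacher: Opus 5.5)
Your argument is the paper's proof. The paper's proof is the single line ``apply \autoref{cor:pf-log} with $r=1$ and $\cF^\bullet=W_n\Omega^{d+e}_{X,\log}$'', and the bookkeeping you add is exactly what that line leaves implicit:
\begin{itemize}
\item comparing $W_n\Omega^{d+e}_X$ with $f^!W_n\Omega^e_K$ via the lisse twist $\Sol(\cL)$;
\item $\Spec K$ being the only stratum;
\item lisse sheaves on a separably closed point being constant;
\item exactness of $\Gamma(\Spec K_{\et},-)$.
\end{itemize}
The compatibility $Rf_*(\cF^\bullet)|_{\et}=Rf_{\et*}(\cF^\bullet|_{X_{\et}})$ does not need coherent modules. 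The restriction $\rho_*$ is exact with an exact left adjoint, so it preserves injectives, and it commutes with $f_*$.

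However, the step you single out as the main obstacle, namely that $\Omega^1_{X/\bF_p}$ has rank $d+e$, is argued incorrectly.
\begin{itemize}
\item A regular scheme over a separably closed imperfect field need not be generically smooth. For $t\in K\setminus K^p$, the scheme $X=\Spec K(t^{1/p})$ is regular, connected and of dimension $d=0$, yet $\Omega^1_{X/K}$ has rank $1$.
\item In the same example, the $p$-basis of $K$ is not part of a $p$-basis of $X$, because $t$ becomes a $p$-th power.
\item The sequence $\Omega^1_K\otimes\cO_X\to\Omega^1_X\to\Omega^1_{X/K}\to 0$ alone does not determine the rank, since its first map need not be injective.
\end{itemize}
The correct argument is as follows. $X$ is integral, and $L=K(X)$ is finitely generated over $K$ of transcendence degree $d$. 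Take a transcendence basis $x_1,\dots,x_d$. Frobenius maps $L\supset K(x)$ isomorphically onto $L^p\supset K(x)^p$, so
$[L:L^p]=[K(x):K(x)^p]=p^d[K:K^p]=p^{d+e}$.
Since $X$ is regular and $F$-finite, $\Omega^1_{X/\bF_p}$ is locally free, so on the connected $X$ its rank is $d+e$ everywhere. With this fix your proof goes through.
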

\begin{proof}
Apply  \autoref{cor:pf-log} in the case $r=1$ and $\cF^{\bullet}=W_n\Omega^{d+e}_{X_{\et},\log}$.
\end{proof}

\begin{corollary}\label{cor:pf-Geisser-cyc}
Let $f\colon X\to Y$  be a separated morphism between schemes of finite type over a perfect field $k$ of dimension $d$ and $e$, respectively.
Let $\bZ^c_X(0)/p^n$ be Geisser's \'etale cycle complex, see \cite[2.]{Geisser_Duality_via_cycle_complexes}
(see also \autoref{rmk:dualizing-dRW}). 
Then there exists a smooth open dense  subscheme $U\subset Y$ and a lisse sheaf of $\bZ/p^n\bZ$-modules $E$ on $U$ 
such that 
\[(R^if_* \bZ^c_X(0)/p^n)|_{U_{\et}} \cong E\otimes_{\bZ/p^n\bZ} \bZ^c_U(0)/p^n[-e].\]
\end{corollary}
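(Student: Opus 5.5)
We deduce \autoref{cor:pf-Geisser-cyc} from \autoref{cor:pf-log}, working with $r=1$ and $q=p$. Since $k$ is perfect, the structure morphism $\pi\colon X\to\Spec k$ gives the unit dualizing complex $W_n\omega^\bullet_X=\pi^!W_n(k)$ (\autoref{basic_example_unit_dc}). By \autoref{rmk:dualizing-dRW} its log version satisfies
\[W_n\omega^\bullet_{X,\log}|_{X_{\et}}\simeq \bZ^c_X(0)/p^n.\]
The complex $\cF^\bullet\coloneqq W_n\omega^\bullet_{X,\log}=\Sol(W_n\omega^\bullet_X)$ is dual-constructible by \autoref{cor:Sol-CartierCrys}, because $W_n\omega^\bullet_X$ is a bounded complex with coherent Cartier crystal cohomology. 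We may also see this directly: $\cF^\bullet=\bD_K(W_n(\bF_p))$, and \autoref{prop:Dc-is-dc} applies.

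Apply \autoref{cor:pf-dc} and its refinement \autoref{cor:pf-log} to $f$. This produces a regular dense open $U\subset Y$, which we may shrink so that it is smooth over $k$ (generic smoothness, $k$ perfect), and a lisse sheaf $E$ with
\[(R^if_*\cF^\bullet)|_{U_{\et}}\cong E\otimes W_n\Omega^{e'}_{U_{\et},\log}.\]
Here $e'=\rk\Omega^1_{U/\bF_p}$. Since $k$ is perfect, $\Omega^1_{U/\bF_p}=\Omega^1_{U/k}$, so $e'=e$ after shrinking $U$ to the pure $e$-dimensional part; we may take $U$ inside the top-dimensional components and enlarge $E$ by zero elsewhere, or simply remove the lower-dimensional components from the dense open.

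Next we pass from $X_{\FRP}$ to $X_{\et}$. By \autoref{Sol_exact_and_commutes_with_pushforwards}, $Rf_*$ commutes with restriction to the étale site on solutions of ind-coherent Cartier modules, the same statement holding for $\Sol_{\et}$. Hence
\[(R^if_*\cF^\bullet)|_{Y_{\et}}=R^if_*(\bZ^c_X(0)/p^n).\]
Finally, on the smooth $U$, \autoref{rmk:dualizing-dRW} gives
\[\bZ^c_U(0)/p^n\simeq W_n\omega^\bullet_{U,\log}|_{U_{\et}}=W_n\Omega^e_{U,\log}[e].\]
Therefore $W_n\Omega^e_{U_{\et},\log}=\bZ^c_U(0)/p^n[-e]$, which yields the claimed isomorphism.

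The main point to check is that \autoref{cor:pf-log} was stated for a separated morphism of finite type with the unit dualizing complex on $X$, while Geisser's identification uses $\pi^!W_n(k)$. Both are compatible by choosing $W_n\omega^\bullet_Y=\pi_Y^!W_n(k)$ and $W_n\omega^\bullet_X=f^!W_n\omega^\bullet_Y$. A second point is that the indexing of $R^if_*$ of a complex is consistent with the shift $[-e]$. This is bookkeeping: the shift is exactly the degree placement $[e]$ of $W_n\Omega^e_{U,\log}$ in $\bZ^c_U(0)/p^n$.
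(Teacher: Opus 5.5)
Your argument is essentially the paper's: identify $\bZ^c_X(0)/p^n$ with $W_n\omega^\bullet_{X,\log}|_{X_{\et}}$ via Ren's theorem (\autoref{rmk:dualizing-dRW}), apply \autoref{cor:pf-log} to the dual-constructible complex $W_n\omega^\bullet_{X,\log}$, and use $\bZ^c_U(0)/p^n\simeq W_n\Omega^e_{U,\log}[e]$ on the smooth open $U$. One caveat: your patch for non-equidimensional $Y$ does not work, since discarding lower-dimensional components destroys density and extending $E$ by zero gives the wrong sheaf there; the statement with a single shift $[-e]$ really needs $Y$ equidimensional, or $e$ read as the locally constant ${\rm rk}(\Omega^1_{U/\bF_p})$ as in \autoref{cor:pf-log}, which is the reading the paper's proof tacitly uses.
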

\begin{proof}
Using the isomorphism  $W_n\omega^{\bullet}_{X,\log}|_{X_{\et}}\cong \bZ^c_X(0)/p^n$ from \cite[Theorem 6.1]{Ren_Bloch_cycle_complex_coherent_dualizing_complexes}, see \autoref{rmk:dualizing-dRW}, 
we can argue as in \autoref{cor:pf-log}.
\end{proof}

\bibliographystyle{amsalpha} 
\bibliography{bibliography}
\end{document}